\documentclass{article}[12pt]
\setcounter{tocdepth}{2}
\setcounter{secnumdepth}{4}
\usepackage{amsmath}
\usepackage{amssymb}
\usepackage{amsthm}
\usepackage{color}
\usepackage{sectsty}

\paragraphfont{\normalfont\itshape}
\theoremstyle{remark}
\newtheorem*{remark}{Remark}

\oddsidemargin -0pt
\evensidemargin  -0pt
\topmargin -.7 in
\textwidth 5.6 in
 \textheight  8.5 in

\newtheorem{thm}{Theorem}[section]
\newtheorem{lem}{Lemma}[section]
\newtheorem{defn}{Definition}[section]
\newtheorem{cor}{Corollary}[section]
\newtheorem{ex}{Example}[section]

\title{Comments on J.~F.~Ritt's book\\
 ``Integration in Finite Terms''}

\author{Askold Khovanskii\thanks{The work was partially supported by the Canadian Grant No. 156833-17. }}



\begin{document}
\maketitle

\begin{abstract}

The First and Second Liouville's Theorems provide correspondingly criterium  for integrability of elementary functions ``in finite terms'' and criterium for solvability of second order linear differential equations by quadratures. The brilliant book of J.F.~Ritt  contains proofs of these theorems and many other interesting results. This paper was written as comments on the book but one can read it  independently. The first part of the paper contains  modern   proofs of  The First Theorem and  of a generalization of the Second Theorem for linear differential equations of any order. In the second part of the paper we present an outline of topological Galois theory which provides an alternative approach to the problem of solvability of equations in finite terms. The first  section of this part  deals with a topological approach to representability of algebraic functions by radicals  and to the 13-th Hilbert problem. This section is written with  all proofs. Next sections  contain only statements of results and comments on them (basically no proofs are presented there).
\end{abstract}

\tableofcontents
\section{Preface}

I saw  J.F.Ritt's book \cite{[11]} for the first time   in 1969 when I was an undergraduate student.  I just started to work on  topological obstructions to representability of algebraic functions by radicals and on an algebraic version of the 13-th Hilbert problem on representability of algebraic functions of several complex variables by composition of algebraic functions of fewer number of variables. My beloved supervisor Vladimir Igorevich Arnold was  very interested in these questions.

J.F.Ritt's approach,  which uses the theory of complex analytic functions and geometry, was very different from a formal algebraic approach. I was very intrigued and I started to read the book trying to get a feeling about the subject and avoiding all the details for the first reading.

My first impression was that the book was brilliant and  the presented theory was ingenious. Simultaneously  with the reading   I  obtained the very first  results of topological Galois theory. Since then   I have spent a few years  developing it. I had hoped to return back to the book  later, but I never made it (life is life~!).

Even a brief reading  turned out to be very useful. It helped me to formalize the definition of the Liouvillian classes of functions and the definition of the functional differential fields and their extensions. Later this experience  helped me to find an appropriate  definition of the class of Pfaffian   functions playing  the crucial role in a transcendental generalization of real algebraic geometry developed in the book \cite{[5]}.

That is why I was really happy when Michael Singer invited me to write comments for a { reprint} of the  book. I started to read it  again after almost a half century break.
I have to confess  that it was  hard for me to follow  all the details. The reason is that J.F.Ritt uses an old mathematical language (the book was written about  seventy years ago).  Nevertheless I still think
that that the book is brilliant and Liouville's and Ritt's ideas  are ingenious.

In section 2 we present  modern definitions of Liouvillian classes of functions and modern proofs of the First Liouville's Theorem and of the Second Liouville's Theorem. I hope that this modern presentation will help readers understand better the subject and  J.F.Ritt's book.

In the section 3 we present an outline of topological Galois theory which provides an alternative approach to the problem of solvability of equations in finite terms. We use the definition of classes of functions by the list of basic functions and the list of admissible operations presented in the section~\ref{sec:genelem} .

A few words about our proof of the First Liouville's Theorem. All main ideas of the proof are presented in the book. I  tried  to clarify  what is hidden behind the integration used by
Liouville. I think  that there are two statements which were not mentioned explicitly in the book:   1) a closed 1-form with elementary integral whose  possible form was   found by Liouville  is locally invariant under the
Galois group action, assuming that the Galois group is connected; 2) A class of closed 1-forms locally invariant under a connected Lie group action can be described explicitly. In fact all arguments needed for proving the first statement are presented in the book.
Liouville used an explicit integration for description of closed 1-forms locally invariant under a natural action of the additive and the multiplicative groups of complex numbers.

A few words about our proof of the Second Liouville's Theorem. In fact we prove its generalization  applicable to linear homogeneous differential equation of any order.
J.F.Ritt's book  contains basically all results needed for our version of the proof. Only
some statements are missing there (but all arguments needed for their proofs  are presented in some
form in the book).\\

{ \bf Acknowledgement.}
I would like to thank Michael Singer who invited me to write comments for a new edition of the classical J.F.~Ritt's book  for his constant support.  I also am grateful to Fedor Kogan who edited my English and to my wife Tanya Belokrinitskaya who helped me to connect a few papers into one text

\section{Solvability of Equations in Finite Terms.}
\subsection{Introduction}
\medskip

Let $K$ be a subfield of the field of meromorphic functions on a connected domain $U$ of the complex line closed under the differentiation (i.e if $f\in K$ then $f'\in K$). Such field $K$ with the operation of differentiation $f\rightarrow f'$ provides an  example of  {\it functional differential field}.

Liouville's First Theorem suggests  conditions on a function $f$ from a function differential field $K$ which are necessary and sufficient for representability of an indefinite integral of $f$  in {\it generalized elementary functions over $K$}.

Liouville's Second Theorem suggests  conditions on second order homogeneous linear differential equation whose coefficients belong to a  function differential field $K$ which are necessary and sufficient for its solvability  by {\it generalized quadratures  over $K$}.

The Liouville's theory can be generalized to {\it an abstract differential field $K$}, whose elements are not necessarily meromorphic functions (see \cite{[6]}, \cite{[10]}). Abstract algebraic results are not  directly applicable to integrals of elementary functions and to solutions of linear differential equations  which could be multivalued, could  have singularities and so on. For their applications some extra arguments are needed. Such arguments are presented in Chapter I of J.F.Ritt's book.
\smallskip

This section has a following content

In  section~\ref{sec:genelem}  we define functional differential fields, generalized elementary functions and generalized quadratures over such fields. Material of section 1 was inspired by   Chapter I of  the book \cite{[11]}.

In  section~\ref{sec:first} we  prove  Liouville's First Theorem  making use of algebraic  groups actions.
Our proof can be considered as a modernization of the Liouville's proof presented in Chapter II and Chapter III of the  book \cite{[11]}.

In  section~\ref{sec:second} we   prove Liouville's Second Theorem and  its generalizations for homogeneous linear differential equations of any order.  Our proof  uses  slightly   modernized  Liouville's-Ritt's  arguments presented in   Chapters V and VI of J.F. Ritt's book.

\subsection{Generalized Elementary Functions and Generalized Quadratures.}\label{sec:genelem}

\subsubsection{Introduction}

The results presented in this section are inspired by the the material of  Chapter 1 from  Ritt's book \cite{[11]}. We discuss here definitions and general statements related  to functional and abstract differential fields and classes of their extensions including generalized elementary extensions and extensions by generalized quadratures. We follow mainly the  presentation  from the book \cite{[6]}.

 A natural definitions of generalized elementary function and of a function representable by generalized quadratures  over $K$ (see definitions~\ref{def9}, \ref{def11}, \ref{def13}, and \ref{def15} below) are hard to deal with.  In particular they  make use of a  non algebraic operation  of composition of  functions.  Algebraic definitions (see definitions \ref{def3} and \ref{def4} below)   use solution of the simplest differential equations  instead of composition of functions. We explain how the natural definitions can be reduced to the algebraic ones.

\subsubsection{ Differential fields and their extensions}

Let us start with some  pure algebraic definitions.

 \paragraph{Abstract differential fields}

A  field $F$  is said to be
{\it a differential field} if an additive map
$a\rightarrow a'$ is fixed that satisfies the Leibnitz rule $(ab)'=a'b+ab'$.
The element $a'$ is
called the  {\it derivative} of $a$. An element $y\in F$ is called {\it a constant} if $y'=0$.
All constants in $F$ form {\it the field of constants}. We add to the definition of  differential field an extra condition that {\it the field of constants is the field of complex numbers}(for our purpose it is enough to consider fields satisfying this condition).
An element $y\in F$  is said to be: {\it an exponential} of  $a$ if $y'=a'y$; {\it an exponential of integral} of  $a$ if $y'=ay$; {\it a logarithm} of  $a$ if $y'=a'/a$; {\it an integral} of $a$ if $y'=a$. In each of these cases, $y$ is defined only up to an additive or a multiplicative complex constant.

Let $K\subset F$ be a differential subfield in $F$. An element $y$ is said to be an {\it integral over $K$} if $y'=a\in K$. An {\it exponential of integral over $K$},  {\it a logarithm over $K$}, and {\it an integral over $K$} are defined similarly.

Suppose that a differential field $K$ and a set $M$ lie in some differential field $F$.
{\it The adjunction} of the set $M$ to the differential field $K$ is
the minimal differential field $K\langle M\rangle$ containing both the field $K$
and the set $M$. We will refer to the transition from $K$ to $K\langle M\rangle$ as {\it adjoining} the set $M$ to the field $K$.

\paragraph{Generalized elementary extensions}

Let $F\supset K$ be an  extension of a differential field $K$.
\begin{defn}\label{def1}
 The differential field $F$  is said to be a {\it generalized elementary extension}\footnote{These are also called {\em liouvillian extensions}} of the differential field  $K$
if $K\subset F$ and there exists a chain of
differential fields $K=F_0\subset \dots\subseteq F_n\supset F$
such that $F_{i+1}=F_i<y_i>$ for every $i=0$, $\dots$, $n-1$ where $y_i$ is an exponential, a logarithm, or an algebraic element over $F_i$.

An element $a\in F$ is  {\it a generalized elementary element } over $K$, $K\subset F$, if it is contained in a certain generalized elementary elementary extension of the field $K$. \end{defn}

The following lemma is obvious.

\begin{lem}\label{lem2} An extension  $K\subset F$
is a {\it generalized elementary extension}
if and only if there exists a chain of
differential fields $K=F_0\subseteq \dots\subseteq F_n\supset F$
such that for every $i=0$, $\dots$, $n-1$, either
$F_{i+1}$ is a finite  extension of $F_i$, or $F_{i+1}$ is a pure transcendental extension of $F_i$  obtained by
adjoining  finitely many exponentials and logarithms over $F_i$.
\end{lem}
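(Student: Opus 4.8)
The plan is to prove both implications of the claimed equivalence by manipulating the two chains of differential fields against each other, the main point being to reorganize a chain of single-element steps (exponentials, logarithms, algebraic elements) into a chain of steps that alternate between ``finite extension'' and ``adjoin several exponentials and logarithms.'' The backward direction is essentially trivial: given a chain in which each $F_{i+1}$ is either a finite extension of $F_i$ or is obtained by adjoining finitely many exponentials and logarithms over $F_i$, I would simply refine each such step into a sequence of single-element steps --- a finite extension is generated by finitely many algebraic elements, and adjoining finitely many exponentials and logarithms is visibly a sequence of steps each adjoining one exponential or one logarithm --- thereby recovering a chain of the type in Definition~\ref{def1}. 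So the content is entirely in the forward direction.

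For the forward direction I start with a chain $K=F_0\subset\cdots\subseteq F_n\supset F$ in which each $y_i$ is an exponential, a logarithm, or an algebraic element over $F_i$. The idea is to ``push all algebraic steps to the right'' within each maximal block, or more precisely to merge consecutive steps of the same flavor and to swap an algebraic step past a preceding transcendental step when beneficial. First I would group the chain into maximal runs: a maximal run of consecutive logarithm/exponential adjunctions collapses (by the definition of adjunction as the minimal differential subfield) into a single step $F_{i+1}=F_i\langle y_i,\dots,y_{i+k}\rangle$ adjoining finitely many exponentials and logarithms; a maximal run of consecutive algebraic adjunctions collapses into a single finite extension. This already produces an alternating chain, but with one subtlety: an intermediate run of exponentials/logarithms need not be a \emph{pure transcendental} extension of the previous field, because some of those $y_j$ could turn out to be algebraic over the field generated so far. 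The key step is therefore to handle this: if in a run of exponential/logarithm adjunctions some element is algebraic over the field already obtained, I move it into the adjacent finite (algebraic) block, using that an exponential or logarithm that happens to be algebraic over a field is in particular an algebraic element, so it can be absorbed into a finite extension step; after doing so the remaining exponentials/logarithms in the run are genuinely transcendental over the field beneath them and the run gives a pure transcendental extension as required.

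The main obstacle is precisely this bookkeeping of ``which exponentials/logarithms are secretly algebraic,'' together with making sure that when I move an algebraic-by-accident element from one block into a neighboring finite-extension block, the remaining elements of the original run are still exponentials or logarithms \emph{over the enlarged field} --- which they are, since the defining relations $y'=a'y$ or $y'=a'/a$ persist under field extension (the witnessing element $a$ lies in the smaller field, hence in the larger one). I would carry this out by induction on $n$, peeling off the first step: if $y_0$ is algebraic, apply induction to $F_1=F_0\langle y_0\rangle$ and prepend the finite-extension step; if $y_0$ is an exponential or logarithm, look at the longest initial run of such steps, absorb any that are algebraic into a finite step (which can then be commuted to the front, as a finite extension of a pure transcendental extension reorganizes into the required alternating pattern after one more application of the inductive hypothesis), and recurse on the tail. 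The only genuinely delicate verification is that commuting a finite algebraic step leftward past a transcendental step preserves the structure; this uses the standard fact that if $L/M$ is algebraic and $t$ is transcendental over $L$, then $M\langle t\rangle$ is a pure transcendental extension of $M$ sitting inside $L\langle t\rangle$, and $L\langle t\rangle/M\langle t\rangle$ is finite. Assembling these local moves yields the alternating chain, and the resulting last field still contains $F$, completing the proof.
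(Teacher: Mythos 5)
The paper offers no proof of this lemma at all (it is declared obvious), so the only question is whether your argument is sound. Your backward direction is fine, and your key observation---that an exponential or logarithm which happens to be algebraic over the current field is in particular an algebraic element and so can sit inside a finite-extension step---is exactly the one point with any content. But two of the reorganization moves you build around it do not work as stated. First, collapsing a maximal run of exponential/logarithm adjunctions into a single step ``adjoining finitely many exponentials and logarithms over $F_i$'' fails: in the paper's terminology an exponential (resp.\ logarithm) \emph{over $F_i$} means $y'=a'y$ (resp.\ $y'=a'/a$) with the witness $a$ lying in $F_i$ itself, whereas the later elements of your run are only exponentials or logarithms over the intermediate fields $F_i\langle y_i,\dots\rangle$. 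Iterated logarithms ($y_1=\ln a$ with $a\in F_i$, then $y_2=\ln y_1$) show this is a genuine obstruction and not a quibble; and the strict reading is the one the paper actually needs, since the pure transcendental case of Liouville's theorem (section 2.4) assumes all witnesses $a_i,b_j$ of a block lie in the base field of that block. Second, your commutation step has the same defect: if $t$ is an exponential over a finite extension $L$ of $M$ with witness $b\in L\setminus M$, then $t$ need not be an exponential over $M$, and $M\langle t\rangle$ as a \emph{differential} field need not equal $M(t)$, so the claim that $M\langle t\rangle$ is a pure transcendental extension of $M$ of the required kind is unjustified.

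The repair is to do less, which is presumably why the paper calls the lemma obvious: the chain of Definition~\ref{def1} is already, step by step, of the form required by Lemma~\ref{lem2}. If $y_i$ is algebraic over $F_i$ (whether it was adjoined as an algebraic element or is an accidentally algebraic exponential or logarithm), then $F_i\langle y_i\rangle=F_i(y_i)$ is a finite extension of $F_i$; if $y_i$ is a transcendental exponential or logarithm over $F_i$, then $F_i\langle y_i\rangle=F_i(y_i)$ is a pure transcendental extension obtained by adjoining one exponential or logarithm over $F_i$, the witness lying in $F_i$ by hypothesis. Since the chain in the lemma is not required to alternate between the two types of steps, no grouping, absorbing, or commuting is needed; your absorption remark is then only used to classify each single step, where it is correct.
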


\paragraph{Extensions  by generalized quadratures}

Let $F\supset K$ be an  extension of a differential fields $K$.
\begin{defn}\label{def3}
 The differential field $F$  is said to be an {\it extension of the differential field  $K$  by generalized quadratures}
if $K\subset F$ and there exists a chain of
differential fields $K=F_0\subset \dots\subseteq F_n\supset F$
such that $F_{i+1}=F_i<y_i>$ for every $i=0$, $\dots$, $n-1$ where $y_i$ is an exponential of integral, an integral, or an algebraic element over $F_i$.
An element $a\in F$ is  {\em representable by generalized quadratures } over $K$, $K\subset F$, if it is contained in a certain generalized extension of the field $K$ by elementary generalized quadratures.
\end{defn}

\begin{defn}\label{def4}
An extension $F$ of a differential field $K$ is said to be:

1) a {\it generalized extension by integral}  if there are $y\in F$ and $f\in K$ such that $y'=f$, $y$ is transcendental  over $K$, and $F$ is a finite extension of the field $K\langle y\rangle$,

2) a {\it generalized extension by exponential of integral} if there are $y\in  F$, $f\in K$ such that $y'=fy$, $y$ is transcendental  over $K$, and  $F$ is a finite extension of the field $K\langle y\rangle$,
\end{defn}

The following lemma is obvious.

\begin{lem}\label{lem5} An extension  $K\subset F$
is an  extension by generalized quadratures
if  there is a chain  $K=F_0\subset \dots\subseteq F_n$
such that $F\subset F_n$ and for every $i=0$, $\dots$, $n-1$  or
$F_{i+1}$ is a finite extension of $F_i$, or
$F_{i+1}$ is  a generalized extension by integral of $F_i$, or
$F_{i+1}$ is  a generalized extension by exponential integral of $F_i$.
\end{lem}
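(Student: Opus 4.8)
The plan is to show that any chain $K=F_0\subset\dots\subseteq F_n$ of the kind described in the Lemma can be \emph{refined} to a chain of exactly the kind appearing in Definition~\ref{def3} --- one in which each field is obtained from the previous one by adjoining a single exponential of integral, a single integral, or a single algebraic element. Since $F\subset F_n$, such a refined chain exhibits $F$ as lying inside an extension of $K$ by generalized quadratures, which is all that must be proved. So the task reduces to treating a single step $F_i\subseteq F_{i+1}$ of the given chain and interpolating between $F_i$ and $F_{i+1}$ a short chain whose steps are of the three permitted types; I will call such steps \emph{admissible}.

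First I would dispose of the case where $F_{i+1}$ is a finite extension of $F_i$: then $F_{i+1}=F_i\langle\alpha_1,\dots,\alpha_k\rangle$ with each $\alpha_j$ algebraic over $F_i$, hence algebraic over every field between $F_i$ and $F_{i+1}$, so the tower $F_i\subseteq F_i\langle\alpha_1\rangle\subseteq F_i\langle\alpha_1,\alpha_2\rangle\subseteq\dots\subseteq F_{i+1}$ refines this step into admissible ones. (Since the field of constants is $\mathbb C$ the characteristic is zero, so one could instead invoke the primitive element theorem and use a single algebraic generator, but this is not needed.) Next, if $F_{i+1}$ is a generalized extension by integral of $F_i$, then by Definition~\ref{def4} there is $y\in F_{i+1}$ with $y'=f\in F_i$ and with $F_{i+1}$ a finite extension of $F_i\langle y\rangle$; here $y$ is an integral over $F_i$, so $F_i\subseteq F_i\langle y\rangle$ is admissible, and $F_i\langle y\rangle\subseteq F_{i+1}$ is finite, hence admissibly refinable by the previous case. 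The case of a generalized extension by exponential of integral is word-for-word the same with $y'=fy$ and ``$y$ is an exponential of integral over $F_i$'' in place of $y'=f$ and ``$y$ is an integral over $F_i$''. Note that the transcendence of $y$ over $F_i$ demanded in Definition~\ref{def4} is not needed anywhere in this argument and can simply be discarded.

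Finally I would concatenate the short admissible chains obtained for $i=0,\dots,n-1$ into a single chain $K=G_0\subseteq\dots\subseteq G_m=F_n$ of the form required by Definition~\ref{def3}; together with $F\subset F_n=G_m$ and $K\subset F$ this says precisely that $F$ is an extension of $K$ by generalized quadratures.

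There is essentially no obstacle here --- this is why the Lemma is called obvious. The only two points worth stating carefully are that a finite extension is a finite iterated algebraic adjunction (so that it can be walked through one algebraic element at a time), and that the transcendence hypotheses packaged into Definition~\ref{def4} are harmless extra data when one only wishes to certify membership in the larger class of Definition~\ref{def3}. (The reverse implication --- that every chain as in Definition~\ref{def3} is also one as in the Lemma --- goes the same way, peeling off each algebraic adjunction as a one-step finite extension, so the two notions in fact coincide.)
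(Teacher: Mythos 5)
Your refinement argument is correct and is exactly the routine verification the paper has in mind when it declares the lemma obvious (the paper supplies no proof at all): split each finite extension into single algebraic adjunctions, peel off the single integral or exponential of integral from each generalized extension, and concatenate to get a chain of the form in Definition~\ref{def3}. Nothing is missing.
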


\subsubsection{Functional differential fields and their extensions}
Let $K$ be a subfield in the field $F$ of all meromorphic functions on a connected domain $U$ of the Riemann sphere $\Bbb C^1\cup \infty$ with the fixed coordinate function $x$ on $\Bbb C^1$.
Suppose that $K$ contains all complex constants and is stable under differentiation
(i.e. if $f\in K$, then $f\,'=df/dx\in K$).
Then $K$ provides an example of a {\it functional differential field.}

Let us now give a general definition.
\begin{defn}\label{def6} Let $U,x$ be a pair consisting of a connected Riemann surface $U$ and a non constant meromorphic function $x$ on $U$. The map $f\rightarrow df/\pi^*dx$ defines the derivation  in   the field $F$ of all meromorphic
functions on  $U$ (the ratio of two meromorphic 1-forms is a well-defined meromorphic function).
 A {\it functional differential field} is any differential subfield of $F$
(containing all complex constants).
\end{defn}

The following construction helps to {\it extend} functional differential fields.
Let $K$ be a differential subfield of the field of meromorphic functions on a
connected Riemann surface $U$
equipped with a meromorphic function $x$.
Consider any connected Riemann surface $V$ together with a
nonconstant analytic map $\pi: V\to U$.
Fix the function $\pi^*x$ on $V$.
The differential field $F$ of all meromorphic functions on $V$ with the differentiation
$\varphi'= d\varphi/\pi^*dx$ contains the differential subfield $\pi^* K$ consisting of functions
of the form $\pi^* f$, where $f\in K$.
The differential field $\pi^*K$ is isomorphic to the differential field $K$,
and it lies in the differential field $F$.
For a suitable choice of the surface $V$,
an extension of the field $\pi^*K$, which is isomorphic to $K$, can be done within
the field $F$.

Suppose that we need to extend the field $K$, say, by an integral $y$ of some function $f\in K$.
This can be done in the following way.
Consider the covering of the Riemann surface $U$ by the Riemann surface $V$ of an
indefinite integral $y$ of the form $fdx$ on the surfave $U$.
By the very definition of the Riemann surface $V$,
there exists a natural projection $\pi: V\to U$, and
the function $y$ is a single-valued meromorphic function on the surface $V$.
The differential field $F$ of meromorphic functions on $V$ with the differentiation
$\varphi'= d \varphi/\pi^*dx$ contains the element $y$ as well as the field $\pi^*K$ isomorphic to $K$.
That is why the extension $\pi^*K\langle y\rangle$ is well defined as a subfield of
the differential field $F$.
We mean this particular construction of the extension whenever we talk about extensions
of functional differential fields.
The same construction allows to adjoin
a logarithm, an exponential, an integral or an exponential of integral
of any function $f$ from a functional differential field $K$ to $K$.
Similarly, for any functions $f_1,\dots,f_n\in K$, one can adjoin a
solution $y$ of an algebraic equation $y^n +f_1y^{n-1}+\dots+f_n=0$
or all the solutions
$y_1$, $\dots$, $y_n$ of this equation to $K$
(the adjunction of all the solutions $y_1$, $\dots$, $y_n$
can be implemented on the Riemann surface of the vector-function
$\bold y= y_1$, $\dots$, $y_n$).
In the same way, for any functions $f_1,\dots,f_{n+1}\in K$, one can adjoin
the $n$-dimensional $\Bbb C$-affine space of all solutions of the
linear differential equation
$y^{(n)}+f_1y^{(n-1)}+\dots +f_ny+f_{n+1}=0$ to $K$.
(Recall that a germ of any solution of this linear differential equation admits an
analytic continuation along a path on the surface $U$ not passing through the poles
of the functions $f_1$, $\dots$, $f_{n+1}$.)

Thus, {\it all above--mentioned extensions of functional differential fields can be implemented
without leaving the class of functional differential fields}.
When talking about extensions of functional differential fields, we always mean this particular
procedure.

The differential field of all complex constants and the differential field of
all rational functions of one variable can be regarded as differential
fields of functions defined on the Riemann sphere.

\subsubsection{Classes of functions and operations on multivalued functions}\label{classes}

An indefinite integral of an elementary function is a function rather than an element of an abstract differential field.
In functional spaces, for example, apart from differentiation and algebraic operations,
an absolutely non-algebraic operation is defined, namely, the composition.
Anyway, functional spaces provide more means for writing ``explicit formulas''
than abstract differential fields.
Besides, we should take into account that functions can be multivalued,
can have singularities and so on.

In functional spaces, it is not hard to formalize the problem of unsolvability
of equations in explicit form.
One can proceed as follows:
fix a class of functions and say that an equation is solvable explicitly if
its solution belongs to this class.
Different classes of functions correspond to different notions of solvability.

\paragraph{Defining classes of functions by the lists
of data}

A class of functions can be introduced by specifying a list of {\it basic functions}
and a list of {\it admissible operations}.
Given the two lists, the class of functions is defined as the set
of all functions that can be obtained
from the basic functions by repeated application of admissible operations.
Below, we define the class of
{\it generalized elementary functions} and the class of {\it generalized elementary functions over a functional differential field $K$} in exactly this way.

Classes of functions, which appear in the problems of integrability in finite terms,
contain multivalued functions.
Thus the basic terminology should be made clear.
We work with multivalued functions ``globally'', which leads
to a more general understanding of classes of functions defined by
lists of basic functions and of admissible operations.
A multivalued function is regarded as a single entity.
{\it Operations on multivalued functions} can be defined.
The result of such an operation is a set of multivalued functions;
every element of this set is called a function obtained from the given functions
by the given operation.
A  {\it class of functions} is defined as the set of all (multivalued) functions
that can be obtained from the basic functions by repeated
application of admissible operations.

\paragraph{Operations on multivalued functions}

Let us define, for example, the sum of two multivalued functions on a connected Riemann surface $U$.

\begin{defn}\label{def7}
Take an arbitrary point $a$ in $U$, any germ
$f_a$ of an analytic function $f$ at the point $a$ and any germ $g_a$ of an analytic function $g$
at the same point $a$.
We say that the multivalued function $\varphi$ on $U$ generated by the germ $\varphi_a=f_a+g_a$
{\it is representable as the sum of the functions} $f$ and $g$.
\end{defn}

For example, it is easy to see that exactly two functions of one variable
are representable in the form
$\sqrt{x}+\sqrt{x}$, namely, $f_1=2\sqrt{x}$ and $f_2\equiv 0$.
Other operations on multivalued functions are defined in exactly the same way.
{\it For a class of multivalued functions, being stable under addition means that,
together with any pair
of its functions, this class contains all functions representable as their sum.}
The same applies to all other operations on multivalued functions understood
in the same sense as above.

In the definition given above, not only the operation of addition plays a key role but
also the operation of analytic continuation hidden in the notion
of multivalued function.
Indeed, consider the following example.
Let $f_1$ be an analytic function defined on an open subset $V$ of the complex line
$\Bbb C^1$ and admitting no analytic continuation outside of $V$,
and let $f_2$ be an analytic function on
$V$ given by the formula $f_2=-f_1$.
According to our definition, the zero function is representable in the form
$f_1+f_2$ {\it on the entire complex line}.
By the commonly accepted viewpoint, the equality $f_1+f_2=0$ holds inside
the region $V$ but not outside.

Working with multivalued functions globally, we do not insist on the existence of
{\it a common region}, were all necessary operations would be performed on
single-valued branches of multivalued functions.
A first operation can be performed in a first region,
then a second operation can be performed in a second, different region
on analytic continuations of functions obtained on the first step.
In essence, this more general understanding of operations is equivalent to including
analytic continuation to the list of admissible operations on the analytic germs.
\subsubsection{Generalized elementary functions}\label{elemclasses}

In this section we define the generalized elementary functions of one complex variable and the generalized elementary functions over a functional differential field. We also discuss a relation of these notions with generalized elementary extensions of differential fields. First we'll present needed lists of basic functions and of admissible operations.\\[0.1in]
\underline{List of basic elementary functions}
\begin{enumerate}
\item All complex constants and an independent variable $x$.
\item The exponential, the logarithm, and the power $x^\alpha$ where $\alpha$ is any constant.
\item The trigonometric functions sine, cosine, tangent, cotangent.
\item The inverse trigonometric functions arcsine, arccosine, arctangent, arccotangent.
\end{enumerate}

\begin{lem}\label{lem8}
Basic elementary functions can be expressed through the exponentials and
the logarithms with the help of complex constants, arithmetic operations
and compositions.
\end{lem}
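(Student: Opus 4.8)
The plan is to verify the claim one family of basic functions at a time, expressing each through complex exponentials and logarithms using only constants, the four arithmetic operations, and composition. The only genuinely new content is the reduction of the trigonometric and inverse trigonometric functions; the power $x^\alpha$ is already handled by the standard identity $x^\alpha = \exp(\alpha \log x)$, and constants and $x$ require nothing.

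First I would dispatch the direct trigonometric functions via Euler's formulas: writing $i$ for a fixed square root of $-1$ (a complex constant), we have $\sin x = \dfrac{\exp(ix) - \exp(-ix)}{2i}$ and $\cos x = \dfrac{\exp(ix) + \exp(-ix)}{2}$, and then $\tan x$ and $\cot x$ are obtained as the quotients $\sin x / \cos x$ and $\cos x / \sin x$. Each of these uses only the exponential, arithmetic operations, composition with the linear maps $x \mapsto \pm i x$, and complex constants, so it lies in the class generated by exponentials and logarithms.

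Next I would treat the inverse trigonometric functions by solving the corresponding algebraic equations in $\exp(iy)$. For $y = \arcsin x$, setting $w = \exp(iy)$ and using $\sin y = x$ gives the quadratic $w^2 - 2ixw - 1 = 0$, whence $w = ix + (1-x^2)^{1/2}$ and therefore $\arcsin x = \dfrac{1}{i}\log\bigl(ix + (1 - x^2)^{1/2}\bigr)$; since $(1-x^2)^{1/2} = \exp\bigl(\tfrac12 \log(1-x^2)\bigr)$, this is again built from exponentials, logarithms, constants, arithmetic, and composition. The function $\arccos x$ follows similarly (or from $\arccos x = \pi/2 - \arcsin x$), and for $\arctan x$ one solves $\exp(2iy) = \dfrac{1 + ix}{1 - ix}$ to get $\arctan x = \dfrac{1}{2i}\log\dfrac{1+ix}{1-ix}$, with $\arccot x$ handled the same way. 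Assembling these cases over all four items in the list yields the lemma.

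The main point to be careful about — rather than a real obstacle — is that these are identities between multivalued functions in the "global" sense fixed in Section~\ref{classes}: the branch ambiguities of $\log$ and of the square root exactly match the multivaluedness of the inverse trigonometric functions, so the equalities are to be read as the statement that each branch of the left-hand side is a branch of the right-hand side and conversely, in the sense of Definition~\ref{def7}. Once one adopts that reading, no choice of branch or domain needs to be tracked, and the verification is the routine sequence of algebraic manipulations sketched above.
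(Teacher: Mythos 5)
Your proposal is correct and is precisely the standard argument the paper has in mind: the paper itself offers no proof, calling the lemma ``a simple exercise'' and deferring to \cite{[6]}, and the Euler-formula reduction of the trigonometric functions together with the logarithmic expressions for their inverses (and $x^\alpha=\exp(\alpha\ln x)$) is exactly that exercise. Your closing remark that the identities are to be read in the global, multivalued sense of Definition~\ref{def7} is the right point of care and matches the conventions of Section~\ref{classes}.
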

Lemma~\ref{lem8}  can be considered as a  simple exercise. Its proof can be found in \cite{[6]}.\\

\noindent \underline{List of some classical  operations}
\begin{enumerate}
\item The operation of composition takes functions $f$,$g$ to the function $f\circ g$.
\item The arithmetic operations take functions $f$, $g$ to the functions $f+g$, $f-g$, $fg$, and $f/g$.
\item The operation of differentiation takes function $f$ to the function $f'$.
\item The operation of integration takes function $f$ to a solution of equation $y'=f$ (the function $y$ is defined up to an additive constant).
\item The operation of taking  exponential of integral takes function $f$ to a solution of equation $y'=fy$ (the function $y$ is defined up to a multiplicative constant).
\item The operation of solving algebraic equations takes functions $f_1,\dots,f_n$ to the function $y$ such that $y^n+f_1y^{n-1}+\dots+f_n=0$ ( the function $y$ is not quite uniquely determined by functions $f_1,\dots,f_n$ since an algebraic equation of degree $n$ can have $n$ solutions).
\end{enumerate}

\begin{defn}\label{def9} The class of {\it generalized elementary functions of one variable} is defined by the following data:

List of basic functions: basic elementary functions.

List of admissible operations: Compositions, Arithmetic operations,  Differentiation, Operation of solving algebraic equations.
\end{defn}

\begin{thm}\label{thm10}
A (possibly multivalued) function of one complex variable  belongs to the class of generalized elementary functions if and only if it belongs to some generalized elementary extension of the differential field of all rational functions of one variable.
\end{thm}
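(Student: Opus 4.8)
The plan is to prove the two implications separately, moving between the "functional" world of multivalued analytic germs and the "algebraic" world of differential field extensions. The key technical device is that every generalized elementary function, being built from basic elementary functions by compositions, arithmetic operations, differentiation and solving algebraic equations, is an analytic germ at some point; and conversely every element of a generalized elementary extension of the rational function field is, by the construction of functional differential field extensions described above, realized as a meromorphic function on a suitable Riemann surface, hence defines a multivalued function on the Riemann sphere.

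For the "if" direction I would argue as follows. Suppose $f$ lies in a generalized elementary extension $F$ of $\mathbb{C}(x)$. By Lemma~\ref{lem2} there is a chain $\mathbb{C}(x)=F_0\subseteq\dots\subseteq F_n\supseteq F$ in which each step is either a finite (algebraic) extension or a purely transcendental extension obtained by adjoining finitely many exponentials and logarithms. I would induct on the length of this chain, proving the stronger statement that every element of $F_i$ is a generalized elementary function. The base case $\mathbb{C}(x)$ is clear since rational functions are obtained from constants and $x$ by arithmetic operations. For the inductive step: an algebraic element over $F_i$ is obtained from elements of $F_i$ by the operation of solving algebraic equations (here one must note that the coefficients of the minimal polynomial lie in $F_i$, hence are already generalized elementary by hypothesis); a logarithm $y$ of $g\in F_i$ satisfies $y'=g'/g$, so $y$ is obtained as a primitive of a generalized elementary function, and $\log$ being a basic function, $y=\log g$ is a composition of $\log$ with $g$, hence generalized elementary; an exponential $y$ of $g$ is likewise $\exp g$, a composition. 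Since the class is closed under arithmetic operations, the whole field $F_{i+1}$ consists of generalized elementary functions, closing the induction.

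For the "only if" direction I would induct on the structure of the generalized elementary function $f$. The basic elementary functions are handled first: by Lemma~\ref{lem8} each basic function is expressible through exponentials and logarithms using constants, arithmetic operations and compositions; the exponential $e^x$ and the logarithm $\log x$ each lie in an elementary extension of $\mathbb{C}(x)$ (adjoin $\log x$, which is an integral of $1/x\in\mathbb{C}(x)$, or adjoin $e^x$, an exponential of $x$), and compositions of these — e.g. $x^\alpha=e^{\alpha\log x}$ — again land in elementary extensions. The inductive step must show that applying an admissible operation to functions lying in generalized elementary extensions yields a function in such an extension: for arithmetic operations and differentiation this is immediate since a generalized elementary extension is a differential field; for solving an algebraic equation $y^n+f_1y^{n-1}+\dots+f_n=0$ with each $f_j$ in some generalized elementary extension, one first takes a common generalized elementary extension $E$ containing all the $f_j$ (the compositum of elementary extensions is elementary, since one can concatenate the defining chains) and then $y$ is algebraic over $E$, so $E\langle y\rangle$ is again a generalized elementary extension.

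The main obstacle is the treatment of \emph{composition} in the "only if" direction: if $f$ and $g$ each lie in generalized elementary extensions of $\mathbb{C}(x)$, one must show $f\circ g$ does too. This is the step where the "global" multivalued viewpoint of \S\ref{classes} is essential, and where one cannot stay purely algebraic. The idea is that if $f$ lies in a generalized elementary extension built by a chain of adjunctions of exponentials, logarithms and algebraic elements over $\mathbb{C}(x)$, then substituting $g$ for the variable transforms this chain into a chain of the same type over the differential field $\mathbb{C}(x)\langle g,g'\rangle$ (note $(\phi\circ g)'=(\phi'\circ g)\,g'$, so an exponential or logarithm relation is preserved under substitution, picking up the factor $g'$ which already lies in the field); since $g$ itself lies in a generalized elementary extension of $\mathbb{C}(x)$, so does $\mathbb{C}(x)\langle g,g'\rangle$, and concatenating the two chains exhibits $f\circ g$ inside a generalized elementary extension of $\mathbb{C}(x)$. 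Making this substitution argument precise — in particular checking that the transcendence/finiteness structure of each step survives substitution and that the relevant Riemann surfaces can be chosen compatibly so that all germs live over a common base — is the heart of the proof; once it is done, Lemma~\ref{lem8} reduces all basic functions to exponentials and logarithms and the remaining operations are routine.
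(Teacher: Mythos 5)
Your proposal is correct and follows essentially the route the paper intends: the paper itself gives no proof of Theorem~\ref{thm10} beyond the remark that it ``follows from Lemma~\ref{lem8}'' with details deferred to \cite{[6]}, and your argument supplies exactly those details --- the induction along the chain of Lemma~\ref{lem2} for the ``if'' direction, and the reduction of basic functions to exponentials and logarithms via Lemma~\ref{lem8} together with the substitution/concatenation argument for composition in the ``only if'' direction. You also correctly identify composition as the one genuinely non-algebraic step, and your observation that Definition~\ref{def1} does not require the adjoined elements to be transcendental (so the substituted chain remains a valid chain even if algebraic relations appear) is the point that makes the substitution argument go through.
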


Theorem~\ref{thm10} follows from Lemma~\ref{lem8} (all needed  arguments can be found in \cite{[6]}).

Let $K$ be  a functional differential field  consisting of meromorphic { functions} on a connected Riemann surface $U$ equipped with a meromorphic function $x$.

\begin{defn}\label{def11} Class of {\em generalized elementary functions over a functional differential field $K$} is defined by the following data.

List of basic functions: all functions from the field $K$.

List of admissible operations: Operation of composition with a generalized elementary function $\phi$ that takes $f$ to $\phi\circ f$, Arithmetic operations,  Differentiation, Operation of solving algebraic equations.
\end{defn}

\begin{thm}\label{thm12}
A (possibly multivalued) function on the Riemann surface $U$ belongs to the class of generalized elementary functions over a functional differential field $K$ if and only if it belongs to some generalize elementary extension of  $K$.
\end{thm}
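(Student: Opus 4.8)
The plan is to prove both implications by induction on the number of admissible operations used to build a function, reducing the "functional/compositional" description to the "differential-field" description and vice versa. The key technical device is that composition with a generalized elementary function $\phi$ (the only operation in Definition~\ref{def11} that is not already an operation internal to a differential field) can be "unfolded" into a chain of adjunctions of exponentials, logarithms, and algebraic elements, using Lemma~\ref{lem8} and Theorem~\ref{thm10}.

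First I would prove the easy direction: every function in a generalized elementary extension of $K$ is a generalized elementary function over $K$. By Lemma~\ref{lem2} it suffices to show that adjoining a logarithm, an exponential, or an algebraic element over an intermediate field $F_i$ produces generalized elementary functions over $K$, provided all elements of $F_i$ already are. Adjoining an algebraic element is literally the operation of solving an algebraic equation; adjoining $\log g$ for $g\in F_i$ is the composition of the basic elementary function $\log$ with $g$; adjoining $\exp g$ is the composition of $\exp$ with $g$. Since each $g$ is by the inductive hypothesis already a generalized elementary function over $K$, and the class of such functions is by definition stable under composition with generalized elementary functions, arithmetic operations, differentiation and solving algebraic equations, every element of $F_{i+1}$, being a rational expression in $F_i$ and the new generator, lies in the class. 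This handles one inclusion.

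For the converse, I would induct on the construction of a generalized elementary function $h$ over $K$. If $h\in K$ there is nothing to prove. If $h$ is obtained from previously constructed functions by an arithmetic operation or differentiation, the inductive hypothesis places those functions in some generalized elementary extension $F$ of $K$, and $F$ is closed under these operations, so $h\in F$. If $h$ solves $h^n+f_1h^{n-1}+\dots+f_n=0$ with the $f_j$ in some generalized elementary extension $F$, then $h$ is algebraic over $F$, so $h$ lies in the generalized elementary extension $F\langle h\rangle$. The remaining, and genuinely substantive, case is $h=\phi\circ f$ where $f$ lies in some generalized elementary extension $F$ of $K$ and $\phi$ is a generalized elementary function of one variable. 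Here I would invoke Theorem~\ref{thm10}: $\phi$ belongs to a generalized elementary extension $\Bbb C(t)=G_0\subset\dots\subset G_m$ of the rational function field in one variable $t$. Substituting $t=f$ (i.e. pulling back along $f$ on the appropriate Riemann surface, as in the construction of extensions of functional differential fields) turns this chain into a chain of differential field extensions starting from $F\langle f\rangle=F$: each step that adjoined $\exp$ or $\log$ or an algebraic element of $G_i$ becomes the adjunction of an exponential, logarithm, or algebraic element over the corresponding pulled-back field, because the chain rule gives $(\psi\circ f)'=(\psi'\circ f)\cdot f'$, so $\exp(g\circ f)$ is an exponential of $g\circ f$, $\log(g\circ f)$ is a logarithm of $g\circ f$, and algebraicity is preserved under substitution. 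Hence $h=\phi\circ f$ lies in a generalized elementary extension of $F$, which is a generalized elementary extension of $K$.

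The main obstacle I expect is making the substitution $t=f$ rigorous at the level of functional differential fields: one must check that the Riemann surface on which $\phi\circ f$ and all intermediate functions live is obtained by the pullback/covering construction described after Definition~\ref{def6}, that the derivation is compatible under this pullback (which is exactly the chain rule for $d(\cdot)/\pi^*dx$), and that the tower of extensions pulls back to a genuine tower of the admissible type rather than collapsing or requiring new generators. Once the dictionary "composition with $\phi$ on the variable side $\leftrightarrow$ adjoining the defining chain of $\phi$ after pullback on the field side" is set up carefully, the induction closes. All the ingredients — Lemma~\ref{lem8}, Theorem~\ref{thm10}, Lemma~\ref{lem2}, and the pullback construction of functional differential field extensions — are already available, so the argument is a matter of organizing the bookkeeping; the reference \cite{[6]} can be cited for the routine verifications.
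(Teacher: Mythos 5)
Your proposal is correct and follows exactly the route the paper intends: the paper gives no proof of Theorem~\ref{thm12} beyond the remark that it follows from Lemma~\ref{lem8} with the details left to \cite{[6]}, and your argument --- the easy inclusion via Lemma~\ref{lem2}, and the converse by unfolding composition with $\phi$ through Theorem~\ref{thm10} and the pullback construction of functional differential field extensions --- is precisely the standard filling-in of those omitted details. The only point worth adding is the routine one you already flag: when combining functions built in different extensions (e.g.\ under arithmetic operations) one must pass to a common covering Riemann surface to form the compositum, which is exactly the construction described after Definition~\ref{def6}.
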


Theorem~\ref{thm12} follows from Lemma~\ref{lem8} (all needed  arguments can be found in \cite{[6]}).

\subsubsection{Functions representable by generalized quadratures}

Here we define  functions of one complex variable representable by generalized quadratures and functions representable by generalized quadratures  over a functional differential field. We also discuss a relation of these notions with extensions of functional differential fields by generalized quadratures. First we'll present needed lists of basic functions and of admissible operations.

\begin{defn}\label{def13} The class of functions of one complex variable {\it representable by generalized quadratures} is defined by the following data:

List of basic functions: basic elementary functions.

List of admissible operations: Compositions, Arithmetic operations,  Differentiation, Integration,Operation of taking exponential of integral, Operation of solving algebraic equations.
\end{defn}
\begin{thm}\label{thm14}
A (possibly multivalued) function of one complex variable  belongs to the class of functions representable by generalized quadratures  if and only if it belongs to some extension   of the differential field of all constant functions of one variable by generalized quadratures.
\end{thm}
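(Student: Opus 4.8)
The plan is to reduce Theorem~\ref{thm14} to Lemma~\ref{lem8} in exactly the same way that Theorems~\ref{thm10} and~\ref{thm12} are reduced to it, the only new feature being that the list of admissible operations now contains Integration and the Operation of taking exponential of integral as well. The statement asserts the equivalence of two classes, so I would prove the two inclusions separately.

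For the inclusion ``representable by generalized quadratures $\Rightarrow$ lies in some extension by generalized quadratures'', I would argue by induction on the number of admissible operations used to build the function. The base case is a basic elementary function: by Lemma~\ref{lem8} each such function is obtained from exponentials and logarithms by constants, arithmetic operations and compositions, and each of $\exp$, $\log$ applied to a function already lying in an extension by generalized quadratures keeps us inside such an extension, since an exponential is in particular an exponential of integral and a logarithm is in particular an integral (of $a'/a$). For the inductive step one checks that each admissible operation preserves the property of ``lying in some extension of $\Bbb C(x)$ by generalized quadratures'': arithmetic operations and differentiation stay inside the same field; integration and exponential of integral correspond precisely to the field operations in Definition~\ref{def3} (or Lemma~\ref{lem5}), adjoining an integral resp.\ an exponential of integral; solving an algebraic equation adjoins an algebraic element; and composition $f\circ g$ with $f$ a basic elementary function is handled, as in the proof of Theorem~\ref{thm10}, by rewriting $f$ through exponentials and logarithms via Lemma~\ref{lem8} and observing that $\exp\circ g$ and $\log\circ g$ satisfy $(\exp\circ g)' = g'(\exp\circ g)$ and $(\log\circ g)' = g'/g$, i.e.\ they are an exponential of integral and an integral over the field already containing $g$. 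Taking the compositum of the finitely many fields produced at each step, and noting that a compositum of extensions by generalized quadratures is again one, completes this direction.

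For the reverse inclusion, suppose $f$ belongs to a differential field $F$ obtained from $\Bbb C(x)$ by a chain $F_0=\Bbb C(x)\subset\dots\subset F_n\supset\{f\}$ as in Definition~\ref{def3} (using Lemma~\ref{lem5} to normalize the chain). One shows by induction on $i$ that every element of $F_i$ is representable by generalized quadratures. For $i=0$ every rational function of $x$ is elementary, hence representable by generalized quadratures (it is built from constants and $x$ by arithmetic operations). For the step from $F_i$ to $F_{i+1}$: if $F_{i+1}$ is a finite extension, each new element is algebraic over $F_i$, hence obtained by the operation of solving algebraic equations from elements already representable; if $F_{i+1}$ is a generalized extension by integral, the new transcendental generator $y$ satisfies $y'=f\in F_i$ with $f$ representable, so $y$ is obtained by the operation of integration, and all other elements of $F_{i+1}$ are built from $F_i\cup\{y\}$ by arithmetic operations and (for the finite part) solving algebraic equations; the case of a generalized extension by exponential of integral is identical with Integration replaced by the Operation of taking exponential of integral. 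Since $f\in F_n$, this gives the claim.

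The step I expect to be the genuine obstacle is the same subtlety flagged in Section~\ref{classes}: the functional statement of the theorem is about (possibly multivalued) functions understood globally, whereas the algebraic chain of Definition~\ref{def3} lives, at each stage, on a particular Riemann surface. One must check that the two directions above can be carried out compatibly with analytic continuation --- that adjoining an integral or an exponential of integral on the Riemann surface of that integral (as described in the paragraph preceding Section~\ref{classes}) produces exactly the multivalued function named by the Operation of integration resp.\ exponential of integral, and conversely that the multivalued function obtained by an admissible operation has a single-valued representative on a suitable covering surface on which the corresponding field extension makes sense. This bookkeeping --- handling branching, singularities, and the passage between ``global'' multivalued functions and meromorphic functions on coverings --- is precisely what is deferred to \cite{[6]}, and I would invoke it as the paper does for Theorems~\ref{thm10} and~\ref{thm12} rather than redo it; with that in hand the inductive arguments above are routine.
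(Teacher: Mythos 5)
Your proposal is correct and follows essentially the same route the paper intends: it reduces everything to Lemma~\ref{lem8}, checks that each admissible operation (now including integration and exponential of integral) corresponds to a step in a chain of extensions by generalized quadratures and conversely, and defers the multivalued/Riemann-surface bookkeeping to \cite{[6]}, exactly as the paper does for Theorems~\ref{thm10} and \ref{thm12}. The only slip is that you take $F_0=\Bbb C(x)$ rather than the field of constants as in the statement; this is harmless, since $x$ is itself an integral of the constant $1$ and so $\Bbb C(x)$ is an extension of $\Bbb C$ by generalized quadratures.
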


Theorem ~\ref{thm14} follows from Lemma~\ref{lem8} (All needed  arguments can be found in \cite{[6]}).

Let $K$ be  a functional differential field  consisting of meromorphic {functions} on a connected Riemann surface $U$ equipped with a meromorphic function $x$.

\begin{defn}\label{def15} The class of functions representable by {\em generalized quadratures over the functional differential field $K$} is defined by the following data:

List of basic functions: all functions from the field $K$.

List of admissible operations: Operation of composition with a generalized elementary function $\phi$ that takes $f$ to $\phi\circ f$, Arithmetic operations,  Differentiation, Integration, Operation of taking exponential of integral, Operation of solving algebraic equations.
\end{defn}

\begin{thm}\label{thm16}
A (possibly multivalued) function on the Riemann surface $U$ belongs to the class of generalized quadratures over a functional differential field $K$ if and only if it belongs to some  extension of  $K$ by generalized quadratures.
\end{thm}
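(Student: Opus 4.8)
The plan is to prove Theorem~\ref{thm16} exactly as its three predecessors (Theorems~\ref{thm10}, \ref{thm12}, \ref{thm14}) are handled in \cite{[6]}: reduce the ``functional'' statement, which involves the non-algebraic operation of composition, to the purely algebraic description of extensions by generalized quadratures furnished by Lemma~\ref{lem5}, using Lemma~\ref{lem8} to eliminate the compositions. I would prove the two implications separately, by induction — on the length of the defining chain in one direction, and on the number of admissible operations used to build the function in the other.

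\emph{From field extensions to the class.} Suppose $f$ lies in an extension $F$ of $K$ by generalized quadratures. By Lemma~\ref{lem5} there is a chain $K=F_0\subseteq\dots\subseteq F_n\supseteq F$ in which each $F_{i+1}$ is either a finite extension of $F_i$, or a generalized extension of $F_i$ by an integral, or a generalized extension of $F_i$ by an exponential of integral. Each such elementary step is realized by one of the admissible operations of Definition~\ref{def15}: solving an algebraic equation with coefficients in $F_i$, integrating an element of $F_i$, and taking the exponential of the integral of an element of $F_i$, respectively; the field operations inside each $F_i$ are covered by the arithmetic operations and by differentiation. A straightforward induction along the chain then shows that every element of every $F_i$ — in particular $f$ — is obtained from functions of $K$ by repeated application of admissible operations, so it belongs to the class of functions representable by generalized quadratures over $K$.

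\emph{From the class to field extensions.} For the converse I would first record an amalgamation lemma: if $g_1,\dots,g_m$ are each representable by generalized quadratures over $K$, then they all lie inside a single such extension of $K$ (concatenate the chains, realizing all the $g_i$ on one common covering Riemann surface as in Section~\ref{sec:genelem}). Granting this, induct on the number of admissible operations used to produce $f$. If the last operation is arithmetic or differentiation, $f$ stays in the extension already containing the operands. If it is integration, exponential of integral, or solving an algebraic equation, applied to elements of an extension $\tilde K$ of $K$ by generalized quadratures, then adjoining the new element $y$ to $\tilde K$ is, respectively, a generalized extension by integral, a generalized extension by exponential of integral, or a finite extension of $\tilde K$ (and, in the degenerate case where $y$ turns out algebraic over $\tilde K$, simply a finite extension); in every case $f$ again lies in an extension of $K$ by generalized quadratures.

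\emph{The composition step, which is the crux.} The only operation left is composition with a generalized elementary function: $f=\phi\circ g$ with $\phi$ generalized elementary and $g$ lying in some extension $\tilde K$ of $K$ by generalized quadratures. By Theorem~\ref{thm10} (itself a consequence of Lemma~\ref{lem8}) $\phi$ belongs to a generalized elementary extension of the field $\mathbb{C}(x)$ of rational functions, hence by Lemma~\ref{lem2} to a tower over $\mathbb{C}(x)$ whose steps are finite extensions and pure transcendental extensions obtained by adjoining finitely many exponentials and logarithms. Substituting $x\mapsto g$ — performed concretely on the Riemann surface of $\phi\circ g$ by pulling the whole tower back along $g$, exactly as functional differential fields are extended in Section~\ref{sec:genelem} — converts it into a tower over $\mathbb{C}(g)\subseteq\tilde K$: a finite extension remains finite; adjoining an exponential of $a$, i.e. a solution of $y'=a'y$, is adjoining an exponential of the integral of $a'$; and adjoining a logarithm of $a$, i.e. a solution of $y'=a'/a$, is adjoining the integral of $a'/a$. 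Thus $\phi\circ g$ lies in an extension of $\tilde K$, hence of $K$, by generalized quadratures, which closes the induction. I expect the real work to be concentrated here — not in the algebra, which is immediate, but in the bookkeeping of analytic continuation and multivaluedness needed to make the substitution $x\mapsto g$ legitimate globally (the several branches of $\phi$, the constants of integration, the poles of the intermediate functions); these are precisely the arguments developed in Chapter~I of \cite{[11]} and in \cite{[6]}, so I would only sketch them and refer there for the details.
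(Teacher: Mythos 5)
Your proposal is correct and follows exactly the route the paper intends: the paper disposes of Theorem~\ref{thm16} with the single remark that it ``follows from Lemma~\ref{lem8} (all needed arguments can be found in \cite{[6]})'', and your crux step --- using Lemma~\ref{lem8} (via Theorem~\ref{thm10} and Lemma~\ref{lem2}) to convert composition with a generalized elementary function into a tower of adjunctions of integrals of $a'/a$ and exponentials of integrals of $a'$, pulled back along $g$ on a suitable covering surface --- is precisely the argument being deferred to \cite{[6]}. The remaining bookkeeping (amalgamation on a common covering, the degenerate algebraic cases) is handled correctly, so your write-up is a faithful expansion of the proof the paper omits.
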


Theorem~\ref{thm16} follows from Lemma 8 (all needed  arguments can be found in \cite{[6]}).

\subsection{Liouville's First Theorem and Actions of Lie Groups}\label{sec:first}
\subsubsection{ Introduction}

In 1833 Joseph Liouville proved the following fundamental result.\\

\noindent  {\sc Liouville's First Theorem}  {\it An  integral $y$ of a function $f$ from a functional differential field $K$ is a generalized elementary function over $K$
if and only if $y$ is representable in the form
\begin{eqnarray}
y(x)= \int\limits_{x_0}^x f(t)\,d t=r_0(x)+\sum\limits_{i=
1}^m\lambda_i\ln r_i(x), \label{liou1}
\end{eqnarray}
where $r_0,\dots,r_m\in K$  and $\lambda_1,\dots,\lambda_m$ are complex constants.}

For large classes of functions algorithms based on  Liouville's Theorem make it possible to either evaluate an integral or to prove that the integral cannot be ``evaluated in finite terms".

In this section we prove the Liouville's First Theorem. We follow the presentation from the paper \cite{[8]}.

Let  $K(y)\supset K$ be an extension  obtained by adjoining to a functional differential field $K$ an integral $y$ over  $K$.
The differential Galois group $G$ of this extension  does not contain enough information to determine if the integral $y$ belongs to a generalized  elementary extension of $K$ or not. Indeed, if the integral $y$ does not belong to $K$ then  group $G$ is always the same: it  is isomorphic to the additive group of complex numbers. From this fact  one can conclude that the Galois theory is not sensitive enough for proving  Liouville's First Theorem.
Nevertheless,  Liouville's First Theorem can be proved using differential Galois groups.

The first step  towards such a proof was suggested by Abel (see sections~\ref{sec2.3.1} and \ref{sec2.3.2}). This step is related to algebraic extensions and their finite Galois groups.

 A second step (see section~\ref{sec2.4})  deals with a pure transcendental extension $F$ of a functional differential field $K$, obtained by adjoining $k+n$  logarithms  and exponentials,  algebraically  independent over $K$. The differential Galois group of the extension $K\subset F$  is an $(k+n)$-dimensional connected commutative algebraic group $G$. It has a natural represented as a group of analytic automorphisms of an analytic variety $X$. Thus $G$ acts not only on the differential field $F$  but also on other objects such as closed 1-forms on $X$. This action plays a key role in our proof (see section~\ref{sec2.4.1}).

\bigskip

\subsubsection{Outline of an inductive proof}
Let us  outline an inductive proof of  Liouville's Theorem.
\begin{defn}\label{def17} A function $g$ is a {\it generalized elementary function of complexity $\leq k$} if there is a chain $K=F_0\subset F_1\subset\ldots\subset F_k$ of functional differential fields such that $g\in F_k$ and for any $0\leq i<k$ either $F_{i+1}$ is a  finite  extension of  $F_{i}$, or $F_{i+1}$ is a pure transcendental extension of $F_i$ obtained by adjoining finitely many  exponentials, and  logarithms over $F_{i}$.
\end{defn}

We will prove the following induction hypothesis $I(m)$: {\it  Liouville's Theorem is true for every integral $y$ of complexity  $\leq m$ over any functional
differential field $K$}.
The statement $I(0)$ is obvious: if  $y\in K$, then $y=r_0\in K$. Now let  $y\,'\in K$ and $y\in F_k$. Since $y\,'\in F_1$,  by induction  $y=R_0 + \sum_{i=1}^q\lambda_i\ln R_i,$
where $R_0$, $R_1$, $\ldots$, $R_q \in F_1$. We need to show that $y$ is representable in the form (\ref{liou1}) with $r_0,\ldots,r_m\in F_0=K$ We have the following two cases to consider:

1. $F_1$  is  a finite  extension of $F_0=K$. The statement of induction hypothesis in that case was proved by Abel and is called the Abel's Theorem. We will present its proof in  section~\ref{sec2.3}.

2. $F_1$  a pure transcendental extension of $F_0=K$  obtained by adjoining exponentials and logarithms  over  $K$. We will deal with this case in section~\ref{sec2.4}.

\subsubsection{Algebraic case}\label{sec2.3}

In  Subsection~\ref{sec2.3.1} we  discuss finite extensions of differential fields. In  subsection~\ref{sec2.3.2} we  present a proof of the Abel's Theorem.

\paragraph{An algebraic extension of a functional differential field}\label{sec2.3.1}
\
Let
\begin{eqnarray}
P(z)=z^n+a_1z^{n-1}+\dots+a_n \\abel{eqn2}
\end{eqnarray}
be an
irreducible polynomial over  $K$,
$P\in K[z]$.
Suppose that a functional differential field $F$ contains $K$ and a root $z$ of $P$.

\begin{lem}\label{lem18}
 The field $K(z)$ is stable under the differentiation.
\end{lem}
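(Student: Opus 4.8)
The plan is to differentiate the defining algebraic relation $P(z)=0$ inside the ambient differential field $F$ and to solve for $z'$. Denote the derivation of $F$ (which by hypothesis restricts to the given derivation on $K$) by $f\mapsto f'$. Applying it to $z^n+a_1z^{n-1}+\dots+a_n=0$ and using the Leibniz rule, I separate the contribution of differentiating the powers of $z$ from that of differentiating the coefficients $a_i\in K$, obtaining
\[
\bigl(nz^{n-1}+(n-1)a_1z^{n-2}+\dots+a_{n-1}\bigr)\,z'\;+\;\bigl(a_1'z^{n-1}+a_2'z^{n-2}+\dots+a_n'\bigr)\;=\;0 .
\]
The first parenthesis is the value at $z$ of the formal $z$-derivative of $P$; call it $P_z(z)$. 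The second, call it $Q(z)$, has all coefficients in $K$ because $K$ is a differential field. Both $P_z(z)$ and $Q(z)$ lie in the subring $K[z]\subseteq F$.

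Next I would check that $P_z(z)\neq 0$. Since $P$ is monic and irreducible over $K$ with $z$ as a root, it is the minimal polynomial of $z$ over $K$, so no nonzero element of $K[z]$ of degree $<n$ can vanish at $z$. The formal derivative $P_z$ has degree $n-1$, and it is nonzero as a polynomial because the field of constants is $\mathbb{C}$, hence the characteristic is zero and the leading coefficient $n$ of $P_z$ does not vanish. Therefore $P_z(z)\neq 0$. Since $K(z)$ is a field, $P_z(z)$ is invertible in it, and solving the displayed identity yields
\[
z'\;=\;-\,\frac{Q(z)}{P_z(z)}\;\in\;K(z).
\]

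Finally I would bootstrap this to all of $K(z)$. Because $z$ is algebraic over $K$, we have $K(z)=K[z]$, so every element is of the form $b=c_0+c_1z+\dots+c_{n-1}z^{n-1}$ with $c_i\in K$. Differentiating, $b'=\sum_{i} c_i'\,z^{i}+\sum_{i} c_i\,(z^i)'$, where $c_i'\in K$ and $(z^i)'=i\,z^{i-1}z'\in K(z)$ by the previous step; hence $b'\in K(z)$. This proves $K(z)$ is stable under differentiation.

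I do not expect a genuine obstacle here. The only step that requires care is the non-vanishing of $P_z(z)$, which is precisely where the irreducibility of $P$ and the characteristic-zero hypothesis (encoded in the assumption that the constants form $\mathbb{C}$) are used; the remainder is a routine application of the Leibniz rule.
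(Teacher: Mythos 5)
Your proof is correct and follows essentially the same route as the paper: differentiate the identity $P(z)=0$, use irreducibility to see that $\frac{\partial P}{\partial z}(z)$ is nonzero (hence invertible modulo $P$), and solve for $z'$ as an element of $K[z]=K(z)$. The only cosmetic difference is that the paper writes $z'$ directly as the value $M(z)$ of a polynomial $M$ with $M\,\frac{\partial P}{\partial z}\equiv-\frac{\partial P}{\partial x}\pmod{P}$, whereas you write the same element as the quotient $-Q(z)/P_z(z)$.
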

\begin{proof}Since  $P$ is irreducible over $K$, the polynomial
  $\frac{\partial P}{\partial z}$ has no common roots with $P$ and is different from zero
  in the field $K[z]/(P)$. Let $M$ be a polynomial  satisfying a congruence $M\frac{\partial P}{\partial z}\equiv -
  \frac{\partial P}{\partial x}\pmod P$.
  Differentiating the identity $P(z)=0$ in the field $F$,
we obtain that $\frac{\partial P}{\partial z}(z)z'+
\frac{\partial P}{\partial x}(z)=0$, which implies that $z'=M(z)$.
Thus the derivative of the element $z$ coincides with the value at $z$ of
a polynomial $M$. Lemma~\ref{lem18} follows from this fact.
\end{proof}

Let $K\subset F$ and $\hat K\subset \hat F$ be  functional differential fields, and
$P$, $\hat P$
irreducible polynomials over $K$,
$\hat K$ correspondingly.
Suppose that  $F$, $\hat F$ contain  roots $z$, $\hat z$
of  $P$, $\hat P$.

\begin{thm}\label{thm19} Assume that there is an isomorphism  $  \tau:K\rightarrow \hat K$ of differential fields $K$, $\hat K$ which maps coefficients of the polynomial $P$ to the corresponding coefficients of the polynomial $\hat P$. Then $\tau$ can be extended in a unique way to the differential isomorphism $ \rho:K(z)\rightarrow \hat K(\hat z)$.
\end{thm}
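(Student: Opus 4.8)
The plan is to build the extension of $\tau$ by sending $z$ to $\hat z$ and checking this is forced and well-defined, both as a field isomorphism and as a morphism of differential fields. First I would recall the purely algebraic fact: since $P$ is irreducible over $K$, the field $K(z)$ is isomorphic to $K[z]/(P)$, and likewise $\hat K(\hat z)\cong\hat K[\hat z]/(\hat P)$. The ring isomorphism $\tau:K\to\hat K$ extends to $K[z]\to\hat K[\hat z]$ by acting on coefficients and sending $z\mapsto\hat z$; because $\tau$ carries the coefficients of $P$ to those of $\hat P$, it carries the ideal $(P)$ onto $(\hat P)$, hence descends to a field isomorphism $\rho:K(z)\to\hat K(\hat z)$. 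Uniqueness as a field isomorphism extending $\tau$ is immediate: $z$ must go to a root of $\hat P$ in $\hat K(\hat z)$, and the formula above is then the only consistent choice once we fix $\rho(z)=\hat z$; I should be slightly careful here, since $\hat P$ may have several roots in $\hat K(\hat z)$, so strictly the uniqueness is uniqueness of the differential isomorphism — which is exactly where the derivation is used.

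Next I would verify $\rho$ commutes with differentiation. The key input is (the proof of) Lemma~\ref{lem18}: there is a polynomial $M\in K[z]$ — obtained from the congruence $M\,\partial P/\partial z\equiv-\partial P/\partial x\pmod P$ — such that $z'=M(z)$, and symmetrically $\hat z'=\hat M(\hat z)$ where $\hat M$ is built from $\hat P$ by the same recipe. Since $\tau$ maps the coefficients of $P$ to those of $\hat P$ and is a differential isomorphism (so it commutes with the operation $\partial/\partial x$ on coefficients), it maps the congruence defining $M$ to the congruence defining $\hat M$; hence $\rho$ applied to $M(z)$ gives $\hat M(\hat z)$. Now for an arbitrary element $w=Q(z)\in K(z)$ with $Q$ a rational expression over $K$, the chain rule gives $w'=Q^{\partial x}(z)+\frac{\partial Q}{\partial z}(z)\,z'$, where $Q^{\partial x}$ denotes applying $\partial/\partial x$ to the coefficients of $Q$; applying $\rho$ and using $\rho(z')=\rho(M(z))=\hat M(\hat z)=\hat z'$ together with $\tau$'s compatibility with $\partial/\partial x$ on coefficients, one gets $\rho(w')=(\rho w)'$. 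This is a routine computation once the pieces are in place.

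Finally, uniqueness as a \emph{differential} isomorphism: any differential isomorphism $\rho'$ extending $\tau$ must send $z$ to some root $\zeta$ of $\hat P$ lying in $\hat K(\hat z)$, and differentiating $\hat P(\zeta)=0$ forces $\zeta'=\hat M(\zeta)$. So $\rho'$ and $\rho$ agree on $K$ and both satisfy the same first-order constraint; but actually the cleanest argument is that $\rho^{-1}\circ\rho'$ is a $K$-linear differential automorphism of $K(z)$, and any $K$-automorphism of $K(z)$ permutes the finitely many roots of $P$ — one then shows a nontrivial such permutation is incompatible with the differential structure, or more simply invokes that the construction in the first paragraph already pins down $\rho$ uniquely among \emph{all} extensions once we know $z\mapsto\hat z$, and the content of the theorem is that $z\mapsto\hat z$ is the only choice compatible with $\prime$. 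The main obstacle, and the step deserving the most care, is precisely this last point — ruling out the ``wrong'' roots of $\hat P$ as images of $z$ — since over an abstract differential field distinct roots of $\hat P$ need not be distinguishable by field-theoretic data alone; the resolution is that the derivation singles out $\hat z$ via the relation $\hat z'=\hat M(\hat z)$ transported from $z'=M(z)$ by $\tau$, so any two roots related by a differential $K$-automorphism would have to satisfy compatible derivative relations, and a short argument using irreducibility of $\hat P$ closes the gap.
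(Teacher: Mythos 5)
Your construction of $\rho$ and the verification that it commutes with the derivation are correct and are exactly what the paper intends: the paper gives no separate proof of Theorem~\ref{thm19}, remarking only that it could be obtained by the arguments used in the proof of Lemma~\ref{lem18}, and those arguments are precisely your first two paragraphs --- extend $\tau$ algebraically by $z\mapsto\hat z$ via $K(z)\cong K[z]/(P)$, then use $z'=M(z)$ together with the $\tau$-transported congruence $\hat M\,\partial\hat P/\partial z\equiv-\partial\hat P/\partial x\pmod{\hat P}$ and the chain rule to check $\rho(w')=(\rho w)'$.

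Your last paragraph, however, proposes a step that would fail. You try to upgrade the uniqueness to the claim that $\hat z$ is the \emph{only} admissible image of $z$, asserting that ``the derivation singles out $\hat z$ via the relation $\hat z'=\hat M(\hat z)$'' and that a short argument using irreducibility of $\hat P$ closes the gap. This is false: \emph{every} root $\zeta$ of $\hat P$ lying in the ambient differential field satisfies $\zeta'=\hat M(\zeta)$ (differentiate $\hat P(\zeta)=0$ exactly as in the proof of Lemma~\ref{lem18}), so the derivation does not distinguish $\hat z$ from the other roots, and for any such root $\zeta$ with $\hat K(\zeta)=\hat K(\hat z)$ the assignment $z\mapsto\zeta$ yields another differential isomorphism extending $\tau$. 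No argument can rule these out --- and none should, since Theorem~\ref{thm19} is applied in the proof of Abel's Theorem precisely by sending $z_1$ to each conjugate root $z_j$ in turn. The uniqueness asserted in the theorem is only uniqueness of the extension once the target root $\hat z$ is prescribed, and that you already establish (correctly and trivially) in your first paragraph, because $z$ generates $K(z)$ over $K$. So the proof is complete after your second paragraph; the ``main obstacle'' you identify at the end is not an obstacle but a misreading of the statement.
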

Theorem~\ref{thm19}  could be obtained by  the  arguments  used in the proof of Lemma~\ref{lem18}.

\paragraph{Induction hypothesis for an algebraic extension}\label{sec2.3.2}

Let $z_1,\dots,z_n$ be the roots of the polynomial  $P$ given by (2) and let $F_1=K\langle z_1 \rangle$.  Assume that there is an element $y_1\in F_1$, such that  $y_1'\in K$, $M_i\in K[x]$ and $y_1'$ is representable in the form
\begin{eqnarray}
y_1'=\sum_{i=1}^q\lambda_i\frac{(M_i(z_1))'}{M_i(z_1)}+(M_0(z_1))'. \label{eq3}
\end{eqnarray}

\noindent  {\bf Abel's Theorem} Under the above assumptions the element $y_1'$ is representable in the form (1) with polynomials  $M_i$ independent of $z_1$, i.e. with $M_0,M_1,\dots,M_q\in K$.

\begin{proof} Let $y_1$ be equal to $Q(z_1)$ where $Q\in K[z]$. For any $1\leq j\leq n$ let $y_j$ be the element $Q(z_j)$. According to Theorem 19 the identity (\ref{eq3}) implies the identity

\begin{eqnarray}
y_j'=\sum_{i=1}^q\lambda_i\frac{(M_i(z_j))'}{M_j(z_1)}+(M_0(z_j))'. \label{eq4}
\end{eqnarray}

Since $y'_1\in K$ we obtain $n$ equalities  $y'_1=\dots=y'_n$. To complete the proof it is enough to
take the arithmetic mean of  $n$ equalities (4). Indeed  the elements
$\tilde M_i=\prod_{1\leq k\leq n}M_i(z_k)$ and $\tilde M_0=\sum_{1\leq k\leq n}M_0(z_k)$ are symmetric functions in the roots of the polynomial $P$ thus $\tilde M_0, \dots \tilde M_q\in K$.
\end{proof}

\begin{remark}
The proof uses implicitly the Galois group $G$ of the  splitting field of the polynomial $P$ over the field $K$. The group $G$ permutes the roots $y_1,\dots,y_n$ of $P$. The  element $\tilde M_i=\prod_{1\leq k\leq n}M_i(z_k)$ and $\tilde M_0=\sum_{1\leq k\leq n}M_0(z_k)$ are invariant under the action of $G$  thus they belong to the field $K$.
\end{remark}

\subsubsection{Pure transcendental  case}\label{sec2.4}

Here we prove the induction hypothesis in the pure transcendental case. First we will state the corresponding  Theorem~\ref{thm21} and will outline its proof.

Let $F_1$ be a functional differential field obtained by  extension of the functional differential field $K$   by adjoining algebraically independent over $K$ functions
\begin{eqnarray}
y_1=\ln a_1, \dots, y_k=\ln a_k, z_1=\exp b_1,\dots,z_n=\exp b_n \label{eq5}
\end{eqnarray}
where $a_1,$ $\dots,$ $a_k,$ $b _1,$ $\dots,$ $b_k$ are some functions from $K$. We will assume that $F_1$ consists of meromorphic functions on a connected Riemann surface $U$ and the differentiation in $K_1$ using a meromorphic function $x$ on $U$. Let $X$ be the manifold $U\times G$ where $G=\Bbb C^k\times (\Bbb C^*)^n$. Consider a map $\gamma:U\rightarrow \Bbb C^k\times (\Bbb C^*)^n$ given by formula
\begin{eqnarray}
\gamma(p)=y_1(p),\dots, y_k(p),z_1(p)\dots,z_n(p)
\end{eqnarray}
where the functions $y_i$, $z_j$ are defined by (\ref{eq5}).

Let $X$ be the product $U\times (\Bbb C)^k\times (\Bbb C^*)^n$. Denote by $\Gamma\subset X$ the graph of the map~$\gamma$. Consider a germ $\Phi$ of a complex valued function at the point $a\in X$.

\begin{defn}\label{def20}
We say that  $\Phi$ is a {\it logarithmic type germ} if $\Phi$ is representable in the form $\Phi_a=R_0+\sum_{i=1}^q\lambda_i\ln R_i,$
where $R_i$ are germs at the point $a\in X$ of rational functions of $(y_1,\dots,y_k, z_1,\dots,z_n)$ with coefficients in $K$ and $\lambda_j$ are complex numbers.
\end{defn}

\begin{thm}\label{thm21}  Let $\Phi$ be a logarithmic type germ at a point $a=(p_0,\gamma (p_0))\in \Gamma$. Then the germ of the function $\Phi (p,\gamma(p))$ at the point $p_0\in U$  is a germ of an  integral over $K$ if and only if $\Phi$ is representable in the following form
\begin{eqnarray}\Phi(p,y,z)=\Phi(p, \gamma(p_0))+\sum_{i=1}^k c_i (y_i -y_i(p_0))+\sum_{j=1}^nt_j\ln \frac{z_j}{z_j(p_0)}\label{eq6} \end{eqnarray}
where $r_0$ is a germ of a function from the field $K$ and $c_i,t_j$ are complex constants.
\end{thm}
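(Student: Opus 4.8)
The statement says: a logarithmic type germ $\Phi$, restricted to the graph $\Gamma$, yields an integral over $K$ precisely when $\Phi$ has the special affine-in-$(y,z)$ form \eqref{eq6}. The "if" direction is a computation: if $\Phi$ has the form \eqref{eq6}, then $\frac{d}{dx}\Phi(p,\gamma(p)) = r_0' + \sum c_i y_i'(p) + \sum t_j z_j'(p)/z_j(p)$, and since $y_i = \ln a_i$, $z_j = \exp b_j$ with $a_i, b_j \in K$, we get $y_i' = a_i'/a_i \in K$ and $z_j'/z_j = b_j' \in K$; hence the derivative lies in $K$, so $\Phi(p,\gamma(p))$ is an integral over $K$. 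The substance is the "only if" direction, and the plan is to exploit the action of the differential Galois group $G = \mathbb{C}^k \times (\mathbb{C}^*)^n$ on $F_1$ and on the space $X = U \times G$.

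The key idea is that $G$ acts on $X$ by translations in the group factor: $g\cdot(p, w) = (p, g\cdot w)$, and this action carries the germ $\Phi$ (a rational function of $(y,z)$ over $K$, hence a meromorphic function on $X$) to its translates $g^*\Phi$. First I would show that the condition "$\Phi(p,\gamma(p))$ is an integral over $K$" is equivalent to saying that $d_U\bigl(\Phi(p,\gamma(p))\bigr) = \theta$ for some $K$-rational 1-form $\theta$ pulled back from $U$, i.e. the 1-form $d\Phi$ on $X$, restricted to directions tangent to graphs of the form $p \mapsto (p, g\cdot\gamma(p))$, agrees with a fixed form coming from $U$ independent of $g$. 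Concretely: for each $g \in G$, the germ $\Phi(p, g\cdot\gamma(p)) = (g^*\Phi)(p,\gamma(p))$ differs from $\Phi(p,\gamma(p))$ only by how it depends on the group coordinates, and since the $y_i, z_j$ are algebraically independent over $K$, the derivative computation inside $K$ forces $g^*\Phi - \Phi$ to be (the restriction of) a closed $K$-rational 1-form that is itself $\partial/\partial x$-exact into $K$. The heart of the matter is then step two: classify which closed 1-forms $d\Phi$ (with $\Phi$ of logarithmic type) are \emph{locally invariant under the $G$-action} in the appropriate sense — this is exactly the second of the two hidden statements the author flagged in the preface ("a class of closed 1-forms locally invariant under a connected Lie group action can be described explicitly"). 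For the additive factor $\mathbb{C}^k$ and the multiplicative factor $(\mathbb{C}^*)^n$, invariance of $d\Phi$ under translations forces $\Phi$ to be affine in the $y_i$ (the $\mathbb{C}^k$-coordinates) and affine in the $\ln z_j$ (the $(\mathbb{C}^*)^n$-coordinates), which is precisely \eqref{eq6}.

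More carefully, here is the order of steps I would carry out. (1) Reduce "integral over $K$" to a statement about the 1-form $\omega = d_U\Phi|_\Gamma$ being the pullback of a $K$-rational 1-form on $U$; use algebraic independence of $\{y_i, z_j\}$ over $K$ to separate the "$K$-part" from the "group-variable part" of $d\Phi$. (2) Observe that the differential Galois group $G$ acts on $F_1$ over $K$ by $y_i \mapsto y_i + c_i$, $z_j \mapsto d_j z_j$ ($c_i \in \mathbb{C}$, $d_j \in \mathbb{C}^*$), and that this action extends to biholomorphisms of (an open subset of) $X$ preserving the fibration over $U$; since $\Phi(p,\gamma(p))$ and all its $G$-translates $(g^*\Phi)(p,\gamma(p))$ are integrals over $K$ of the \emph{same} element (the differential of $\Phi$ restricted to $\Gamma$ is $G$-invariant because $\Phi'\in K$ is $G$-invariant), the differences $(g^*\Phi - \Phi)|_\Gamma$ are constants for every $g$. (3) Translate this into: the 1-form $d\Phi$ on $X$ is invariant under the $G$-action modulo exact forms — equivalently the closed form $d\Phi$ is $G$-invariant. (4) Prove the classification lemma: a closed 1-form of logarithmic type on $X$ invariant under translations in the $\mathbb{C}^k$ factor and scalings in the $(\mathbb{C}^*)^n$ factor must have the shape $dr_0 + \sum c_i\, dy_i + \sum t_j\, dz_j/z_j$ with $r_0$ pulled back from $U$ (i.e. $r_0 \in K$) — this is the explicit description of invariant closed 1-forms, handled by expanding the rational functions $R_i$ in partial fractions in the variables $y_i, z_j$ and matching the invariance condition term by term. (5) Integrate to recover \eqref{eq6}, fixing the additive constant by evaluating at $p_0$.

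\textbf{Main obstacle.} The crux — and the step I expect to consume most of the work — is step (4), the explicit classification of $G$-invariant closed logarithmic-type 1-forms. One must argue that the only logarithmic singularities that can survive in an invariant closed form are along the "coordinate" divisors $\{z_j = 0\}$ (giving the $t_j\, dz_j/z_j$ terms) and that the rational part, being invariant under a connected group acting with Zariski-dense orbits in the fibers, cannot genuinely depend on $y, z$ except linearly in the $y_i$ (this linear dependence is the one exception, coming from the fact that $dy_i$ itself is translation-invariant). Making the "no other dependence" part rigorous requires a careful residue/partial-fraction analysis, using that $G$ is connected so invariance under the identity component forces invariance of each homogeneous/graded piece separately; this is where Liouville's original explicit integration is really doing its job, and where I would expect to spend the bulk of the argument.
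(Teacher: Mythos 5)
Your proposal is correct in outline and follows the same two-part decomposition as the paper: (a) show that the hypothesis forces the closed form $d\Phi$ to be locally invariant under the action of $G=\Bbb C^k\times(\Bbb C^*)^n$ on $X=U\times G$ (the paper's Theorem~\ref{thm31}), and (b) classify the locally invariant closed $1$-forms and integrate (Corollary~\ref{cor25}). Where you diverge is in the execution of both halves. For (a) you argue at the group level, asserting that $(g^*\Phi-\Phi)|_\Gamma$ is constant for each $g$; be careful that $g^*\Phi-\Phi$ is of logarithmic type but not itself an element of $K(y,z)$, so Liouville's principle (a rational function over $K$ vanishing on $\Gamma$ vanishes identically) does not apply to it directly. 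The paper sidesteps this by differentiating in $g$ and working with the Lie derivatives $L_{V_\xi}\Phi$, which do lie in $K(y,z)$ (Lemma~\ref{lem28}), are constant on $\Gamma$, hence constant on all of $X$ by Liouville's principle; by Lemma~\ref{lem22} that is exactly local invariance of $d\Phi$. Your version is repairable, but you should pass to this infinitesimal statement rather than to constancy of the group-level differences. For (b), which you correctly identify as the crux and expect to consume most of the work, you propose a partial-fraction/residue analysis of the invariant form --- essentially Liouville's original explicit integration. The paper's stated purpose is to replace precisely this computation by a soft Lie-theoretic argument: by Lemma~\ref{lem22} and Theorem~\ref{thm23} a locally invariant exact form corresponds to a local homomorphism $\rho:G\to\Bbb C$ via $\varphi(\pi(g)x_0)=\varphi(x_0)+\rho(g)$, and for the commutative group $\Bbb C^k\times(\Bbb C^*)^n$ every such homomorphism is $\sum\lambda_i y_i+\sum\mu_j\ln z_j$ (Corollaries~\ref{cor24} and~\ref{cor25}). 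This yields the form (\ref{eq6}) with no residue bookkeeping; the logarithmic-type hypothesis on $\Phi$ is needed only in step (a), to make Liouville's principle applicable, and plays no role in the classification. So your route would work, but it spends its main effort exactly where the paper's argument is designed to spend none.
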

Theorem ~\ref{thm21} proves induction hypothesis in the pure transcendental case. Indeed the germ $\Phi(p,\gamma(p_0))$ given by (\ref{eq6}) is a germ of a function from the field $K$ and according to (5) the  identities $c_iy_i=c_i\ln a_i$, $t_j\ln z_j=t_jb_j$ hold. We split the claim of Theorem 7 into two parts.

First we consider the natural action of the group $G=(\Bbb C^k)\times (\Bbb C^*)^n$ on $X=U\times G$ and we  describe all germs of closed 1-forms locally invariant under this action.  Corollary~\ref{cor25} claims that each such  1-form is a differential of a function representable in the form (\ref{eq6}).

Second we show that if the germ $\Phi$ satisfies the conditions of Theorem~\ref{thm21} then the germ $d\Phi$ is locally invariant under the action of the group $G$ (see Theorem~\ref{thm31}).

\paragraph{Locally invariant closed 1-forms}\label{sec2.4.1}

Let $G$ be a connected Lie group acting by diffeomorphisms on a manifold $X$. Let $\pi:G\rightarrow Diff (X)$ be a corresponding homomorphism from $G$ to the group $Diff (X)$ of diffeomorphisms of $X$. For a vector $\xi$ from the Lie algebra $\mathcal{G}$ of $G$ the action $\pi$ associates the vector field $V_\xi$ on $X$. The germ $\omega_{x_0}$ at a point $x_0\in X$ of a differential form $\omega$ on $X$ is {\it locally invariant under the action $\pi$} if for any $\xi\in \mathcal{G}$ the Lie derivative $L_{V_\xi}\omega$ is equal to zero.

\begin{lem}\label{lem22} The germ of the differential $d \varphi_{x_0}=\omega_{x_0}$ of a smooth function $\varphi$  is locally invariant under the action $\pi$ if and only if for each  $\xi\in\mathcal{G}$ the Lie derivative $L_{V_\xi}\varphi$ is a constant $M(\xi)$ (which depends on $\xi$).
\end{lem}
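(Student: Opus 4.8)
The plan is to prove both directions by unwinding the definitions. Recall that $\omega_{x_0} = d\varphi_{x_0}$ is locally invariant under $\pi$ precisely when $L_{V_\xi}\omega_{x_0} = 0$ for every $\xi$ in the Lie algebra $\mathcal{G}$, and we must relate this to the statement that $L_{V_\xi}\varphi$ is locally constant. The crucial tool is Cartan's magic formula $L_V = d\circ i_V + i_V\circ d$, applied to the $1$-form $\omega = d\varphi$. Since $d\omega = d\,d\varphi = 0$, the formula collapses to $L_{V_\xi}\omega = d(i_{V_\xi}\, d\varphi) = d(L_{V_\xi}\varphi)$, where I use that on functions $L_V h = i_V\, dh = Vh$. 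So $L_{V_\xi}(d\varphi) = d(L_{V_\xi}\varphi)$; equivalently, the exterior derivative commutes with the Lie derivative.

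From this identity the two directions are immediate. If $L_{V_\xi}\varphi = M(\xi)$ is (locally, near $x_0$) constant, then $L_{V_\xi}\omega_{x_0} = d(M(\xi)) = 0$, so $\omega_{x_0}$ is locally invariant. Conversely, if $\omega_{x_0}$ is locally invariant, then $d(L_{V_\xi}\varphi) = L_{V_\xi}(d\varphi) = 0$ on a neighborhood of $x_0$; a function with vanishing differential on a connected neighborhood is constant there, so $L_{V_\xi}\varphi$ equals some constant $M(\xi)$. One should note that everything is happening at the level of germs at the fixed point $x_0$, so ``constant'' means constant on a sufficiently small connected neighborhood, which is exactly what is needed and is harmless.

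The only genuine point requiring a word of care — and the step I would flag as the main (mild) obstacle — is making sure Cartan's formula and the manipulation $i_{V_\xi}\,d\varphi = V_\xi\varphi = L_{V_\xi}\varphi$ are applied correctly, and that $d$ really does commute with $L_{V_\xi}$; this last fact follows either from the Cartan-formula computation above or from the general identity $[L_V, d] = 0$, valid for any vector field $V$ and any differential form. Since the vector fields $V_\xi$ arising from the action $\pi$ of the Lie group $G$ are honest smooth (here analytic) vector fields on $X$, there are no regularity issues. There is nothing in this lemma that uses the group structure of $G$ beyond the fact that it produces the vector fields $V_\xi$; indeed the statement and proof go through for an arbitrary family of vector fields. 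I would therefore present the proof in three short lines: invoke $L_{V_\xi}(d\varphi)=d(L_{V_\xi}\varphi)$, then read off each implication, remembering that on a connected neighborhood of $x_0$ a closed $0$-form is constant.
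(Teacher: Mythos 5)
Your proof is correct and follows exactly the paper's own argument: Cartan's magic formula applied to the closed form $\omega=d\varphi$ gives $L_{V_\xi}\omega=d(L_{V_\xi}\varphi)$, from which both implications follow since a germ with vanishing differential on a connected neighborhood is constant. The extra remarks (commutation of $d$ with $L_{V_\xi}$, the irrelevance of the group structure) are accurate but not needed beyond what the paper already records.
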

\begin{proof}  Applying ``Cartan's magic formula"
$
L_{V_\xi}\omega =i_{V_\xi}d\omega +d(i_{V_\xi}\omega )
$
we obtain that $L_{V_\xi}\omega=0$ if and only if $d(L_{V_\xi}\varphi)=0 $ which means that  $L_{V_\xi}\varphi$ is constant.
\end{proof}

The following theorem characterizes locally invariant  closed 1-forms  more explicitly.

\begin{thm}\label{thm23}  The germ of the differential $d \varphi_{x_0}=\omega_{x_0}$ of a smooth complex valued function $\varphi$  is locally invariant under the action $\pi$ if and only if there exists a local homomorphism $\rho$ of $G$ to the additive group  $\Bbb C$ of complex numbers such that for any $g\in G$ in a neighborhood of the identity  the following relation  holds:
$$
\varphi(\pi(g)x_0)=\varphi(x_0)+\rho(g).
$$
\end{thm}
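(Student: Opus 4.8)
The plan is to upgrade the infinitesimal statement of Lemma~\ref{lem22} to the stated finite (local) statement by integrating the linear functional $M:\mathcal{G}\to\Bbb C$ along one-parameter subgroups and checking it assembles into a local homomorphism. First I would prove the ``if'' direction, which is immediate: if $\varphi(\pi(g)x_0)=\varphi(x_0)+\rho(g)$ for all $g$ near the identity, then differentiating along the flow of $V_\xi$ (i.e. along $g=\exp(t\xi)$) gives $L_{V_\xi}\varphi = \frac{d}{dt}\big|_{t=0}\rho(\exp(t\xi))$, which is a constant depending only on $\xi$; by Lemma~\ref{lem22} the germ $d\varphi_{x_0}$ is locally invariant. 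The content is in the ``only if'' direction.

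For that direction, assume $d\varphi_{x_0}$ is locally invariant, so by Lemma~\ref{lem22} there is a linear map $M:\mathcal{G}\to\Bbb C$, $\xi\mapsto L_{V_\xi}\varphi$ (linearity in $\xi$ follows from linearity of $\xi\mapsto V_\xi$ and of the Lie derivative). Define $\rho:G\to\Bbb C$ in a neighborhood of the identity by $\rho(g)=\varphi(\pi(g)x_0)-\varphi(x_0)$; this makes the displayed equality hold by fiat, so the real task is to show $\rho$ is a local homomorphism, i.e. $\rho(g_1g_2)=\rho(g_1)+\rho(g_2)$ for $g_1,g_2$ near the identity. The key computation is to show that along any one-parameter subgroup $t\mapsto\exp(t\xi)$ the function $t\mapsto\varphi(\pi(\exp(t\xi))x_0)$ has constant derivative: indeed $\frac{d}{dt}\varphi(\pi(\exp(t\xi))x_0) = (L_{V_\xi}\varphi)(\pi(\exp(t\xi))x_0)$, and since local invariance of the germ propagates along the flow (the flow of $V_\xi$ preserves the invariance condition, or one notes $L_{V_\eta}(L_{V_\xi}\varphi)=L_{V_\xi}(L_{V_\eta}\varphi)$ because mixed Lie derivatives of a function commute up to the bracket term which again gives a constant), the function $L_{V_\xi}\varphi$ is locally constant, equal to $M(\xi)$. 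Hence $\rho(\exp(t\xi))=tM(\xi)$.

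It then remains to see that a map $\rho$ defined near the identity of a connected group $G$, which restricts to $t\mapsto tM(\xi)$ on each one-parameter subgroup with $M$ linear, is a local homomorphism to $(\Bbb C,+)$. One way: the composite $\rho$ is smooth, and its differential at the identity is $M$; consider the smooth map $\Psi:G\times G\to\Bbb C$, $\Psi(g_1,g_2)=\rho(g_1g_2)-\rho(g_1)-\rho(g_2)$. Restricting to $g_1=\exp(s\xi)$, $g_2=\exp(s\xi)$ along a common subgroup gives $\rho(\exp(2s\xi))-2\rho(\exp(s\xi))=2sM(\xi)-2sM(\xi)=0$; more generally, a standard argument using that $M$ is a Lie algebra homomorphism from $\mathcal{G}$ to the abelian $\Bbb C$ (it kills all brackets, because $M([\xi,\eta])=L_{V_{[\xi,\eta]}}\varphi = L_{[V_\xi,V_\eta]}\varphi = L_{V_\xi}L_{V_\eta}\varphi - L_{V_\eta}L_{V_\xi}\varphi = 0$ as each inner term is constant) shows $M$ exponentiates to a genuine local homomorphism $\bar\rho:G\to\Bbb C$; finally one checks $\bar\rho=\rho$ near the identity because both agree on one-parameter subgroups and such subgroups fill a neighborhood of the identity (the exponential map is a local diffeomorphism), and both are determined by their values there together with the already-established additivity. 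I expect the main obstacle to be this last bookkeeping: carefully justifying that agreement on one-parameter subgroups plus the vanishing of $M$ on brackets forces $\rho$ itself (not just some $\bar\rho$ built from $M$) to be multiplicative near $e$; the cleanest route is probably to invoke that a connected Lie group $G$ has $[G,G]$ in the kernel, so $\rho$ factors through the abelianization $G/\overline{[G,G]}$, on which the exponential-of-a-linear-functional description is transparent.
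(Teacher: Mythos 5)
Your proposal is correct and follows essentially the same route as the paper: both arguments rest on Lemma~\ref{lem22}, the computation that $M$ vanishes on brackets (so $M:\mathcal G\to\Bbb C$ is a Lie algebra homomorphism into an abelian algebra), the integration of $M$ to a local homomorphism of $G$ into $(\Bbb C,+)$, and a final comparison of $\varphi(\pi(g)x_0)$ with $\varphi(x_0)+\rho(g)$ near the identity. The only cosmetic difference is that the paper identifies the two functions by noting they have the same differential and the same value at $e$, whereas you identify them via their agreement on one-parameter subgroups together with the exponential map being a local diffeomorphism; these are equivalent, and your worry about the ``last bookkeeping'' is already resolved by that observation.
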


\begin{proof} For $\xi\in \mathcal G$  the Lie derivative $L_{V_\xi}\varphi$ is constant $M(\xi)$  by Lemma 8. Let us show that for $\xi\in [\mathcal G ]$ where $[\mathcal G ]$ is the commutator  of $\mathcal G$ the constant $M(\xi)$ equals to zero. Indeed if $\xi=[\tau,\rho]$ then
$$
L_{V_\xi} \varphi= L_{V_\rho}L_{V_\tau}\varphi-L_{V_\tau}L_{V_\rho}\varphi=L_{V_\rho}M(\tau)-L_{V_\tau}M(\rho)=0.
$$
Thus the linear function $M:\mathcal G \rightarrow \Bbb C$  mapping $\xi$ to $M(\xi)$ provides a homomorphism of $\mathcal G $ to the Lie algebra of the additive group $\Bbb C$ of complex numbers. Let $\rho$ be the local homomorphism of $G$ to $\Bbb C$ corresponding to the homomorphism $M$.

Consider a function $\phi$  on a neighborhood of the identity  in $G$ defined by the following formula: $\phi(g)=\varphi(x_0)+\rho(g)$. By definition on a neighborhood of identity the function $\phi$ has the same differential as the function $\varphi(\pi(g)x)$. Values  of these functions at the identity  are equal to $\varphi(x_0)$. Thus these functions are equal.
\end{proof}

Assume that  $X=U\times G$ where $U$ is a  manifold  and an action $\pi$ is given by the formula $\pi(g) (x,g_1)= (x, gg_1)$. Applying Theorem ~\ref{thm23} to this action  we obtain the following corollary.

\begin{cor}\label{cor24}  If  germ of  differential $d \varphi=\omega$ of a smooth complex valued function $\varphi$ at a point $(x_0,g_0)\in U\times G$ is locally invariant under the action $\pi$ then  in a neighborhood of the point $(x_0,g_0)$ the following identity holds:
\begin{eqnarray}
\varphi(x, g)=\varphi(x, g_0)+\rho(gg_0^{-1}).\label{eq7}
\end{eqnarray}
where $\rho$ is a local homomorphism of $G$ to the additive group of complex numbers.
\end{cor}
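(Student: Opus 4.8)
The plan is to obtain the corollary by applying Theorem~\ref{thm23} slicewise and then checking that the resulting local homomorphism does not depend on the slice. First I would observe that, since $\pi(g)(x,g_1)=(x,gg_1)$, every fundamental vector field $V_\xi$ is tangent to each slice $\{x\}\times G$, where it generates the left-translation action of $G$ on itself. By Lemma~\ref{lem22}, the hypothesis that $d\varphi$ is locally invariant at $(x_0,g_0)$ means precisely that on some connected neighborhood $W$ of $(x_0,g_0)$ the function $L_{V_\xi}\varphi$ is a constant $M(\xi)$ for each $\xi\in\mathcal G$; in particular, for every $x$ close to $x_0$ the restriction $\varphi|_{\{x\}\times G}$ has locally invariant differential at $(x,g_0)$ for the restricted action.

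Next I would apply Theorem~\ref{thm23} at the point $(x,g_0)$ to get a local homomorphism $\rho_x\colon G\to\mathbb C$ with $\varphi(x,gg_0)=\varphi(x,g_0)+\rho_x(g)$ for $g$ near the identity, i.e. $\varphi(x,g_1)=\varphi(x,g_0)+\rho_x(g_1g_0^{-1})$ for $g_1$ near $g_0$. The one substantive point, and the place where the product structure is used, is to show $\rho_x$ is independent of $x$. For this I would use that, by the construction of $\rho$ in the proof of Theorem~\ref{thm23}, the differential of $\rho_x$ at the identity is $\xi\mapsto(L_{V_\xi}\varphi)(x,g_0)=M(\xi)$, which is the same for all $x$ because $L_{V_\xi}\varphi$ is the constant $M(\xi)$ on all of $W$; since two local homomorphisms $G\to\mathbb C$ with equal differential at the identity coincide near the identity (both equal $\xi\mapsto d\rho_e(\xi)$ in the exponential chart), we get $\rho_x=\rho_{x_0}=:\rho$, and hence $\varphi(x,g)=\varphi(x,g_0)+\rho(gg_0^{-1})$ near $(x_0,g_0)$.

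A cleaner variant I might use instead: construct $\rho$ once from $M$ as above, set $\tilde\varphi(x,g):=\varphi(x,g)-\rho(gg_0^{-1})$, and note that $L_{V_\xi}\tilde\varphi\equiv0$ because the flow of $V_\xi$ is $(x,g_1)\mapsto(x,\exp(t\xi)g_1)$ and $\frac{d}{dt}\big|_{t=0}\rho(\exp(t\xi)\,gg_0^{-1})=\frac{d}{dt}\big|_{t=0}\bigl(\rho(\exp t\xi)+\rho(gg_0^{-1})\bigr)=M(\xi)=L_{V_\xi}\varphi$ by the local homomorphism property. Then $\tilde\varphi$ is constant along the flow lines of the $V_\xi$, and since $G$ is connected any $g$ sufficiently close to $g_0$ lies on such a flow line through $g_0$ (for instance $g=\exp(\xi)g_0$ with $\xi$ small), so $\tilde\varphi(x,g)=\tilde\varphi(x,g_0)=\varphi(x,g_0)$, which rearranges to the assertion. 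The only delicate part of either route will be the bookkeeping of nested neighborhoods — arranging that $\rho$, the identity $\rho(ab)=\rho(a)+\rho(b)$, and all substitutions hold simultaneously on one neighborhood of $(x_0,g_0)$ — but this is routine and presents no real obstacle.
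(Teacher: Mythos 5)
Your proposal is correct and follows essentially the same route as the paper: the paper's proof is a one-line appeal to Theorem~\ref{thm23} via the observation that $gg_0^{-1}$ maps $(x,g_0)$ to $(x,g)$, which is exactly your slicewise application. The only difference is that you explicitly verify that the local homomorphism $\rho_x$ is independent of $x$ (via the constancy of $M(\xi)$ on a neighborhood), a point the paper leaves implicit.
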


\begin{proof} Follows from Theorem 9 since the element $gg_0^{-1}$ maps the point $(x,g_0)$ to the point $(x,g)$.
\end{proof}

Let $G$ be the group $\Bbb C^k\times (\Bbb C^*)^n$ where $\Bbb C$ and $\Bbb C^*$ are additive and duplicative group of complex numbers. We will consider the group $\Bbb C^k\times (\Bbb C^*)^n$ with coordinate functions   $(y,z)=(y_1,\dots, y_k,z_1,\dots, z_n)$ assuming that $z_1\cdot\dots\cdot z_n\neq 0$.

\begin{cor}\label{cor25}If in the assumptions of Corollary~\ref{cor24} for $G=\Bbb C^k\times (\Bbb C^*)^n$ in a neighborhood of $(x_0,y_0,z_0)\in U\times(\Bbb C^k\times (\Bbb C^*)^n )$ the following identity holds
$$
\varphi(x,y,z)=\varphi(x,y_0,z_0)+\sum_{1\leq i\leq k} \lambda_i(y_i-(y_0)_{i}) +\sum_{1\leq j \leq n} \mu_j\ln \frac{z_j}{(z_0)_{j}}
$$
where $\lambda_1,\dots,\lambda_k, \mu_1\dots,\mu_n$ are complex constants.
\end{cor}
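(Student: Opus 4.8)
The plan is to feed the conclusion of Corollary~\ref{cor24} into the special structure of the group $G=\Bbb C^k\times(\Bbb C^*)^n$. By Corollary~\ref{cor24}, in a neighbourhood of $(x_0,g_0)$ one has $\varphi(x,g)=\varphi(x,g_0)+\rho(gg_0^{-1})$, where $\rho$ is a germ of a local homomorphism from $G$ to the additive group $\Bbb C$. Writing $g=(y,z)$, $g_0=(y_0,z_0)$ and using that $G$ is abelian, the element $gg_0^{-1}$ has coordinates $\big(y-y_0,\ z_1/(z_0)_1,\dots,z_n/(z_0)_n\big)$; hence the assertion will follow as soon as $\rho$ is written out in these coordinates. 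So the whole task reduces to classifying germs at the identity of local homomorphisms $\Bbb C^k\times(\Bbb C^*)^n\to\Bbb C$.

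First I would use that $G$ is a direct product: restricting $\rho$ to each factor (with the identity placed in the remaining coordinates) produces local homomorphisms $\rho_i\colon\Bbb C\to\Bbb C$ for $1\le i\le k$ and $\sigma_j\colon\Bbb C^*\to\Bbb C$ for $1\le j\le n$, and since the factors commute while $\rho$ is multiplicative, $\rho(y,z)=\sum_{i}\rho_i(y_i)+\sum_j\sigma_j(z_j)$ in a neighbourhood of the identity. Next, a germ of a local homomorphism of connected Lie groups is determined by its differential at the identity (the local form of Lie's second theorem). For the factor $(\Bbb C,+)$ the group exponential is the identity map, so $\rho_i$ coincides with its linearization: $\rho_i(y_i)=\lambda_i y_i$ for some $\lambda_i\in\Bbb C$. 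For the factor $(\Bbb C^*,\cdot)$ the group exponential is $w\mapsto e^w$, which is a local biholomorphism near $0\mapsto 1$ with inverse a branch of $\ln$; composing, $\sigma_j\circ\exp$ is linear near $0$, so $\sigma_j(z_j)=\mu_j\ln z_j$ for $z_j$ near $1$, where $\ln$ is the branch normalized by $\ln 1=0$ (equivalently, the branch near $(z_0)_j$ used throughout). Therefore $\rho(y,z)=\sum_i\lambda_i y_i+\sum_j\mu_j\ln z_j$ on a neighbourhood of the identity of $G$.

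Substituting the coordinates of $gg_0^{-1}$ into this formula then gives
\[
\varphi(x,y,z)=\varphi(x,y_0,z_0)+\sum_{1\le i\le k}\lambda_i\big(y_i-(y_0)_i\big)+\sum_{1\le j\le n}\mu_j\ln\frac{z_j}{(z_0)_j},
\]
which is the claimed identity (note the $j$-th logarithm vanishes at $z=z_0$, consistently with $\rho(e)=0$). I expect the only delicate point to be the identification of $\sigma_j$: one must use that a continuous — hence, for complex Lie groups, holomorphic — local homomorphism out of $\Bbb C^*$ factors through its exponential cover $\exp\colon\Bbb C\to\Bbb C^*$ and is thus linear in the logarithmic chart, and one must keep track of which branch of $\ln$ is fixed near the base point $(z_0)_j$. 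Everything else is the bookkeeping of the product group law inherited from Corollary~\ref{cor24}.
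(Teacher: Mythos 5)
Your proposal is correct and follows essentially the same route as the paper: apply Corollary~\ref{cor24} to reduce everything to classifying local homomorphisms $\rho\colon\Bbb C^k\times(\Bbb C^*)^n\to(\Bbb C,+)$, observe that these are exactly $\rho(y,z)=\sum_i\lambda_i y_i+\sum_j\mu_j\ln z_j$, and substitute the coordinates of $gg_0^{-1}$. The only difference is that the paper simply asserts this classification of local homomorphisms in one line, whereas you supply the (correct) justification via the product decomposition and the exponential chart on $\Bbb C^*$.
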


\begin{proof} Follows from (\ref{eq7}) since any local homomorphism $\rho$ from the group $\Bbb C^k\times (\Bbb C^*)^n$ to the additive group of complex numbers can be given by formula
 $$
 \rho(y_1,\dots,y_k,z_1,\dots,z_n)=\sum _{1\leq i \leq k}\lambda_1 y_i+\sum_{1\leq j \leq n}\mu_j\ln z_j
 $$
 where $\lambda_i$ and $\mu_j$ are complex constants.
\end{proof}

\paragraph{Vector field associated to a logarithmic-exponential extension}

We use the notation introduced  in   subsection~\ref{sec2.4}. Let $G$ be the group $\Bbb C^k\times (\Bbb C^*)^n$ and let $X$ be the product $U\times G$ consider the map $\gamma:U\rightarrow  \Bbb C^k\times (\Bbb C^*)^n$ given by the following formula:

\begin{eqnarray}
y_1=\ln a_1,\dots,y_k=\ln a_k, z_1=\exp b_1,\dots,z_n=\exp b_n. \label{eq8}
\end{eqnarray}

The map $\gamma$ satisfies  the following differential relation:
$$
  d\gamma= da_1/a_1,\dots,da_k/a_k,z_1d b_1,\dots,z_ndb_n
$$
\begin{defn}\label{def26} Let $V$ be a meromorphic vector field on $X$ defined by the following conditions. If $V_a$ is the value of $V$ at the point $a=(p,y_1,\dots, y_k,z_1,\dots,z_n)\in X$ then $\langle dx, V_a\rangle =1$, $\langle dy_i, V_a\rangle =a_i'/a_i(p)$ for $1\leq i\leq k$, $\langle dz_j, V_a\rangle =b_j'/b_i(p)$ for $1\leq j\leq n$
\end{defn}

The vector field $V$ is regular on $U^0\times G$ where $U^0$ is an open subset in $U$ which does not contain the zeroes and poles of the functions $a_1,\dots, a_k$ and poles of functions $b_1,\dots,b_n$ poles and zeros and poles of the 1-form $dx$. By construction the graph $\Gamma=(p,\gamma(p)\subset X$ of the map $\gamma$ is an integral curve for the differential equation on $X$ defined by the vector field $V$.

The following lemmas are obvious.

\begin{lem}\label{lem27} The vector field $V$  is invariant under the action $\pi$ on $X$. For each element $g\in G$ the curve  $g \Gamma\subset X$ of the graph $\Gamma$ of $\gamma$ is an integral curve for $V$.
\end{lem}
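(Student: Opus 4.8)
The plan is to verify both assertions by a direct computation from the explicit description of $V$ in Definition~\ref{def26} and of the action $\pi$. Recall that $\pi$ acts on $X=U\times G$, with $G=\mathbb{C}^{k}\times(\mathbb{C}^{*})^{n}$, by $\pi(g)(p,y_{1},\dots,y_{k},z_{1},\dots,z_{n})=(p,\,y_{1}+c_{1},\dots,y_{k}+c_{k},\,d_{1}z_{1},\dots,d_{n}z_{n})$ for $g=(c_{1},\dots,c_{k},d_{1},\dots,d_{n})$. In particular $\pi(g)$ fixes the $U$-coordinate, so $\pi(g)^{*}x=x$, $\pi(g)^{*}y_{i}=y_{i}+c_{i}$ and $\pi(g)^{*}z_{j}=d_{j}z_{j}$, whence $\pi(g)^{*}dx=dx$, $\pi(g)^{*}dy_{i}=dy_{i}$ and $\pi(g)^{*}dz_{j}=d_{j}\,dz_{j}$. (We read the $z_{j}$-component of the defining relations of Definition~\ref{def26} as $\langle dz_{j},V_{a}\rangle=z_{j}\,b_{j}'(p)$, consistent with $z_{j}=\exp b_{j}$ and the formula for $d\gamma$ recorded just above that definition.)

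First I would prove that $V$ is $\pi$-invariant, i.e.\ that $(d\pi(g))_{a}(V_{a})=V_{\pi(g)a}$ for every $g\in G$ and every $a$ in the open set $U^{0}\times G$ on which $V$ is regular. Since $x$ is a nonconstant meromorphic function on the Riemann surface $U$ and $U^{0}$ avoids the zeros and poles of $dx$, the forms $dx,dy_{1},\dots,dy_{k},dz_{1},\dots,dz_{n}$ span $T^{*}_{b}X$ at each point $b\in U^{0}\times G$, so it suffices to compare the pairings of these forms with the two tangent vectors. For $dx$ and for $dy_{i}$ one gets equality at once: $\langle dx,(d\pi(g))_{a}V_{a}\rangle=\langle\pi(g)^{*}dx,V_{a}\rangle=\langle dx,V_{a}\rangle=1=\langle dx,V_{\pi(g)a}\rangle$, and, using that $\pi(g)a$ has the same $U$-coordinate $p$ as $a$, $\langle dy_{i},(d\pi(g))_{a}V_{a}\rangle=\langle dy_{i},V_{a}\rangle=a_{i}'/a_{i}(p)=\langle dy_{i},V_{\pi(g)a}\rangle$. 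The only pairing that involves the multiplicative factor is $\langle dz_{j},(d\pi(g))_{a}V_{a}\rangle=\langle\pi(g)^{*}dz_{j},V_{a}\rangle=d_{j}\langle dz_{j},V_{a}\rangle=d_{j}\,z_{j}b_{j}'(p)$, which matches $\langle dz_{j},V_{\pi(g)a}\rangle=(d_{j}z_{j})\,b_{j}'(p)$ because the $z_{j}$-coordinate of $\pi(g)a$ is $d_{j}z_{j}$. Hence the two vectors agree, and $V$ is invariant under $\pi$.

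Second, for the claim about $g\Gamma$ I would invoke that $\Gamma=\{(p,\gamma(p)):p\in U\}$ is an integral curve of $V$, as recorded in the paragraph following Definition~\ref{def26}: a diffeomorphism carrying a vector field to itself carries integral curves of that field to integral curves, so applying this to the $V$-preserving diffeomorphism $\pi(g)$ of $U^{0}\times G$ shows that $g\Gamma=\pi(g)(\Gamma)$ is again an integral curve of $V$, which is the assertion.

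There is no real obstacle here — the text itself calls the statement obvious, and the proof is essentially bookkeeping. The one spot demanding a moment's attention is the $z$-directions: the transformation rule $\pi(g)^{*}dz_{j}=d_{j}\,dz_{j}$ must match the $z_{j}$-linear coefficient $z_{j}b_{j}'(p)$ of $V$, and this compatibility is precisely what turns the fibrewise recipe of Definition~\ref{def26} into a genuinely $G$-equivariant vector field; one must also stay on $U^{0}\times G$, where the meromorphic field $V$ is actually regular and $\pi(g)$ is a biholomorphism.
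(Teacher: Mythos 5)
Your verification is correct and is exactly the routine computation the paper omits (it simply declares Lemma~27 obvious), namely pairing $(d\pi(g))_aV_a$ and $V_{\pi(g)a}$ against the coframe $dx,dy_i,dz_j$ and then transporting the integral curve $\Gamma$ by the $V$-preserving biholomorphism $\pi(g)$. You were also right to read the $z_j$-component of Definition~\ref{def26} as $\langle dz_j,V_a\rangle=z_j\,b_j'(p)$ (the printed $b_j'/b_i(p)$ is a typo): with the literal printed formula the field would \emph{not} be invariant under the $(\Bbb C^*)^n$-factor, so your emendation, justified by the displayed formula for $d\gamma$, is essential to the statement being true.
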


\begin{lem}\label{lem28}  The field $K(y,z)$ of rational functions in  $y_1,\dots,y_k,z_1,\dots, z_n $ over the field $K$ is invariant under the action $\pi$ on $X$. For each vector $\xi\in \mathcal G$ in the Lie algebra $\mathcal G$ of $G$ the Lie derivative $L_{V_\xi} R$ of $R\in K(y,z)$ belongs to $K(y,z)$.
\end{lem}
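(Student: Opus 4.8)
The plan is to make these two ``obvious'' facts explicit by writing down the fundamental vector fields of the action $\pi$ in the coordinates $(x,y_1,\dots,y_k,z_1,\dots,z_n)$ on $X=U\times G$, and then reducing each assertion to the stability of the field $K(y,z)$ under certain elementary substitutions and derivations.

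\emph{Invariance of $K(y,z)$.} First I would recall that a typical element of $K(y,z)$ is a ratio $P/Q$ of polynomials in $y_1,\dots,y_k,z_1,\dots,z_n$ whose coefficients lie in $K$, i.e.\ are functions of $x$ alone. For $g=(c_1,\dots,c_k,w_1,\dots,w_n)\in G$ the diffeomorphism $\pi(g)$ sends $(x,y,z)$ to $(x,\,y_1+c_1,\dots,y_k+c_k,\,w_1z_1,\dots,w_nz_n)$ and fixes $x$; hence $\pi(g)^*$ acts on $K(y,z)$ by the substitution $y_i\mapsto y_i+c_i$, $z_j\mapsto w_jz_j$. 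This substitution is a $K$-algebra automorphism of $K[y_1,\dots,y_k,z_1^{\pm1},\dots,z_n^{\pm1}]$, so it carries $K(y,z)$ onto itself, which is the first assertion.

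\emph{Lie derivatives.} Since $G=\mathbb{C}^k\times(\mathbb{C}^*)^n$ is abelian, its Lie algebra is $\mathcal{G}=\mathbb{C}^k\times\mathbb{C}^n$; a vector $\xi=(\alpha_1,\dots,\alpha_k,\beta_1,\dots,\beta_n)$ generates the one-parameter subgroup $t\mapsto(t\alpha_1,\dots,t\alpha_k,e^{t\beta_1},\dots,e^{t\beta_n})$, so the associated fundamental vector field on $X$ is
\[
V_\xi=\sum_{i=1}^{k}\alpha_i\frac{\partial}{\partial y_i}+\sum_{j=1}^{n}\beta_j\,z_j\frac{\partial}{\partial z_j},
\]
which has no $\partial/\partial x$-component and has coefficients depending only on $(y,z)$. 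Consequently, for $R\in K(y,z)$,
\[
L_{V_\xi}R=\langle dR,V_\xi\rangle=\sum_{i=1}^{k}\alpha_i\frac{\partial R}{\partial y_i}+\sum_{j=1}^{n}\beta_j\,z_j\frac{\partial R}{\partial z_j}.
\]
Each $\partial R/\partial y_i$ and $\partial R/\partial z_j$ again lies in $K(y,z)$, because partial differentiation with respect to one of the variables $y_i,z_j$ treats the elements of $K$ as constants and preserves the field $K(y,z)$; since also each $z_j\in K(y,z)$, we conclude $L_{V_\xi}R\in K(y,z)$.

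There is no genuine obstacle here; the only point that deserves a moment's care is the computation of the fundamental vector field coming from the multiplicative factor $(\mathbb{C}^*)^n$ — the generator of $z\mapsto e^{t\beta}z$ is $\beta z\,\partial/\partial z$, not $\beta\,\partial/\partial z$ — which is precisely why the $z$-block of $V_\xi$ appears in ``logarithmic'' form and why logarithms of the $z_j$ will show up later (cf.\ Corollary~\ref{cor25}). Everything else follows directly from $K(y,z)$ being closed under affine substitutions in the $y_i$, multiplicative substitutions in the $z_j$, and the coordinate derivations $\partial/\partial y_i$ and $z_j\,\partial/\partial z_j$.
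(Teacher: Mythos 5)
Your proof is correct; the paper itself offers no argument (it declares Lemma~\ref{lem28} obvious), and your explicit computation of the substitution $y_i\mapsto y_i+c_i$, $z_j\mapsto w_jz_j$ and of the fundamental fields $V_\xi=\sum\alpha_i\,\partial/\partial y_i+\sum\beta_j z_j\,\partial/\partial z_j$ is exactly the direct verification the author intends. The one point you flag --- that the $(\Bbb C^*)^n$ factor contributes $\beta_j z_j\,\partial/\partial z_j$ rather than $\beta_j\,\partial/\partial z_j$ --- is indeed the only place where care is needed, and you handle it correctly.
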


\paragraph{Pure transcendental logarithmic exponential extension}

We will assume below that the components (\ref{eq8}) of $\gamma$ are algebraically independent over $K$.\\

\noindent {\bf Liouville's principal.} If a polynomial $P\in K[y_1,\dots,y_k,z_1,\dots,z_n]$ vanishes on the graph $\Gamma\subset X$ of the map $\gamma$ then $P$ is identically equal to zero.

\begin{proof} If $P$ is not identically equal to zero then the components of $\gamma$ are algebraically dependent over the field $K$.
\end{proof}

\begin{thm}\label{thm29} The extension $K\subset F_1$ is isomorphic to the extension of $K$ by the field of rational functions $K(y,z)$ in $(y_1,\dots,y_k,z_1,\dots,z_n)$ over $K$ considered as the field of functions on $X$  equipped with the differentiation sending $f\in K(y,z)$ to the Lie derivative $L_Vf$ with respect to the vector field $V$ introduced in definition 8.
\end{thm}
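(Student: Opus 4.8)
I would prove Theorem~\ref{thm29} by exhibiting an explicit differential field isomorphism between $F_1 = K\langle y_1,\dots,y_k,z_1,\dots,z_n\rangle$ and the field $K(y,z)$ of rational functions on $X$, equipped with the derivation $f \mapsto L_V f$. The natural candidate map $\Psi$ sends a rational expression in the generators $y_i = \ln a_i$, $z_j = \exp b_j$ (viewed as meromorphic functions on $U$, hence elements of $F_1$) to the same rational expression in the coordinate functions $y_i, z_j$ on the factor $G = \mathbb{C}^k\times(\mathbb{C}^*)^n$ of $X = U\times G$. The first task is to check that $\Psi$ is well-defined, and this is exactly where the algebraic-independence hypothesis enters: by Liouville's principle (just proved in the excerpt), a polynomial in $K[y,z]$ vanishes on the graph $\Gamma$ iff it vanishes identically, so $\Psi$ has no kernel to worry about and is a well-defined field isomorphism onto $K(y,z)$. (Conversely, restriction to $\Gamma$ — i.e. substituting $y_i = \ln a_i(p)$, $z_j = \exp b_j(p)$ — is the inverse map.)

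Next I would verify that $\Psi$ intertwines the two derivations. On the $F_1$ side the derivation is $f \mapsto f' = df/dx$; on the $K(y,z)$ side it is $f \mapsto L_V f$ with $V$ the vector field of Definition~\ref{def26}. By the chain rule it suffices to check agreement on the generators and on $K$. For $R \in K(y,z)$ one has, along $\Gamma$,
\[
\frac{d}{dx}\bigl(R(p,\gamma(p))\bigr) = \langle dR, V\rangle\big|_{\Gamma} = (L_V R)(p,\gamma(p)),
\]
because $\Gamma$ is an integral curve of $V$ (stated just before Lemma~\ref{lem27}) and $\langle dx, V\rangle = 1$. Concretely, on the generators this reads $L_V y_i = a_i'/a_i$, which matches $(\ln a_i)' = a_i'/a_i$, and $L_V z_j = z_j b_j'$, which matches $(\exp b_j)' = b_j' \exp b_j$; on $K$ it reduces to the identity $L_V(\pi^* f) = f'$ built into the definition of $V$ via $\langle dx, V\rangle = 1$. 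So $\Psi$ is a differential isomorphism, and it is the identity on $K$ by construction.

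**Main obstacle.** The genuinely delicate point is not any of the computations above but making precise in what sense the two objects are ``the same'' — i.e. that $\Psi$ is an isomorphism of differential fields and not merely of the underlying abstract fields that happens to respect the derivation pointwise. One must be careful that $K(y,z)$ really is a field (the generators are algebraically independent over $K$, so $K[y,z]$ is a domain and its fraction field is legitimate) and that $L_V$ is a well-defined derivation on it with field of constants still $\mathbb{C}$ — this last point, that no new constants appear, follows because a rational function on $X$ killed by $L_V$ would, restricted to $\Gamma$, give an element of $F_1$ with zero derivative, hence a constant, and then Liouville's principle forces it to be that constant everywhere on $X$. A secondary subtlety is the meromorphy/regularity bookkeeping: $V$ is only regular on $U^0\times G$ with $U^0 \subset U$ the complement of the relevant zeros and poles, so one works with germs or restricts to $U^0$ throughout, exactly as the surrounding text does; since all the fields in question consist of meromorphic functions, shrinking $U$ to $U^0$ changes none of them. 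Once these well-definedness issues are dispatched, the theorem is essentially a restatement of the fact that $\Gamma$ is the integral curve of $V$ through the base point, which is Lemma~\ref{lem27}.
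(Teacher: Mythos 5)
Your proposal is correct and follows essentially the same route as the paper: the paper's (very brief) proof likewise uses algebraic independence of the adjoined logarithms and exponentials to get a unique representation of each element of $F_1$ as a rational function in $K(y,z)$, and then observes that the derivatives of the generators coincide with their Lie derivatives along $V$ because $\Gamma$ is an integral curve of $V$. Your additional checks (no new constants, restriction to $U^0$) are sensible elaborations of points the paper leaves implicit.
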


\begin{proof}By assumption components (\ref{eq8}) of the map $\gamma$ are algebraically independent over $K$ thus each function from the extension obtained by adjoining to  $K$  these components is representable in the unique way as a rational function from $K(y,z)$. By definition the derivatives of the components (\ref{eq8})  coincide with their Lie derivatives  with respect to the vector field $V$.
\end{proof}

The action $\pi$ of the group $G=\Bbb C^k\times(\Bbb C^*)^n$ on $X$ induces the action $\pi^*$ of $G$ on the space of functions on $X$ containing the field  $K(y,z)$. The vector field $V$ is invariant under the action $\pi$. Thus $\pi^*$ acts on $K(y,z)\sim F_1$ by  differential automorphisms. It is easy to see that a function $f\in K(y,z)$ is fixed under the action $\pi^*$   if and only if  $f\in K$, i.e. the group $G$ is isomorphic to the {\it differential Galois group} of the extension $K\subset F_1$. We proved the following result

\begin{thm}\label{thm30} The differential Galois group of the extension $K\subset F_1$ is isomorphic to the group $G$. The Galois group  is induced on the differential field $K(y,z)$ with the differentiation given by Lie derivative with respect to the field $V$   by the action of $G$ on the manifold $X=U\times \Bbb C^k\times (\Bbb C^*)^n$.
\end{thm}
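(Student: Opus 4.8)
The plan is to pass, via Theorem~\ref{thm29}, to the concrete model of $F_1$ as the field $K(y,z)$ of rational functions on $X=U\times G$ in the algebraically independent generators $y_1,\dots,y_k,z_1,\dots,z_n$ over $K$, equipped with the derivation $f\mapsto L_Vf$, and then to identify its group of differential $K$-automorphisms explicitly with $G$ acting through $\pi^*$. All the geometric inputs are already at hand: Lemma~\ref{lem27} (invariance of $V$ under $\pi$) and Lemma~\ref{lem28} (invariance of $K(y,z)$ under $\pi$).

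First I would check that $\pi^*$ maps $G$ into $\mathrm{Gal}(F_1/K)$. By Lemma~\ref{lem28} the field $K(y,z)$ is $\pi^*$-stable, so each $\pi^*(g)$ restricts to a field automorphism of $K(y,z)$; by Lemma~\ref{lem27} the vector field $V$ is $\pi$-invariant, hence $\pi^*(g)$ commutes with $L_V$; and since $\pi(g)$ acts trivially on the first factor $U$, it fixes every $f\in K$, these being pulled back from $U$. Writing $g=(c_1,\dots,c_k,d_1,\dots,d_n)\in\Bbb C^k\times(\Bbb C^*)^n$, the action on generators is $\pi^*(g)\colon y_i\mapsto y_i+c_i$, $z_j\mapsto d_jz_j$. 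From this description $\pi^*$ is injective, since $\pi^*(g)=\mathrm{id}$ forces all $c_i=0$ and $d_j=1$; and the fixed field of $\pi^*(G)$ is exactly $K$, because a rational function in the algebraically independent variables $y_1,\dots,y_k,z_1,\dots,z_n$ over $K$ that is invariant under every translation $y_i\mapsto y_i+c_i$ and every dilation $z_j\mapsto d_jz_j$ must already lie in $K$ (comparing numerators and denominators in lowest terms forces each to be a monomial, and then the exponents must vanish).

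Next I would prove that $\pi^*$ is onto $\mathrm{Gal}(F_1/K)$. Let $\sigma$ be any differential $K$-automorphism of $F_1=K(y,z)$. Since $y_i'=a_i'/a_i\in K$ is fixed by $\sigma$, the element $\sigma(y_i)-y_i$ has vanishing derivative, hence is a constant $c_i$. Likewise, using $z_j'=b_j'z_j$ with $b_j'\in K$, one computes that $\sigma(z_j)/z_j$ has vanishing derivative, hence equals a constant $d_j$, which is nonzero because $\sigma$ is invertible. As $F_1$ is generated over $K$ by the $y_i$ and $z_j$, this forces $\sigma=\pi^*(g)$ with $g=(c_1,\dots,c_k,d_1,\dots,d_n)$. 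Hence $\pi^*\colon G\to\mathrm{Gal}(F_1/K)$ is a group isomorphism, which is the first assertion of the theorem; the second assertion just records that this isomorphism is the one induced by the geometric action $\pi$ of $G$ on $X=U\times\Bbb C^k\times(\Bbb C^*)^n$, which is precisely how $\pi^*$ was built.

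The step I expect to be the main obstacle — the only one that is not purely formal — is the claim used in the surjectivity argument that the only constants of $F_1$ for the derivation $L_V$ are the complex numbers; this is what makes ``$\sigma(y_i)-y_i$ is a constant'' mean ``lies in $\Bbb C$'', and hence what makes the differential Galois group come out exactly $G$ rather than something larger. I would derive it from the algebraic independence of the components of $\gamma$ over $K$ (Liouville's principle) together with the fact that a meromorphic function on the connected surface $U$ with vanishing $x$-derivative is a complex constant: if $f=P/Q\in K(y,z)$ is a constant, its restriction to the graph $\Gamma$ is such a function, so it equals some $c\in\Bbb C$, whence $P-cQ$ vanishes on $\Gamma$ and therefore $P-cQ\equiv0$ by Liouville's principle, i.e.\ $f=c$. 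The same circle of ideas, already invoked for Lemmas~\ref{lem27} and~\ref{lem28}, also underlies the routine verification that the maps $\pi^*(g)$ really are differential automorphisms.
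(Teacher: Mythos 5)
Your proof is correct, and its first half is exactly the paper's argument: the paper observes that $\pi^*$ preserves $K(y,z)$ (Lemma~\ref{lem28}) and commutes with $L_V$ (Lemma~\ref{lem27}), hence embeds $G$ into the group of differential $K$-automorphisms, and then simply asserts that ``it is easy to see that $f\in K(y,z)$ is fixed under $\pi^*$ if and only if $f\in K$, i.e.\ the group $G$ is isomorphic to the differential Galois group.'' Where you genuinely diverge is that you do not stop at the fixed-field computation: you prove surjectivity of $\pi^*$ directly, by showing that any differential $K$-automorphism $\sigma$ satisfies $(\sigma(y_i)-y_i)'=0$ and $(\sigma(z_j)/z_j)'=0$, hence acts on the generators by a translation and a dilation. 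This is a real addition, because passing from ``the fixed field of $\pi^*(G)$ is $K$'' to ``$\pi^*(G)$ is the whole Galois group'' requires a Galois correspondence for differential extensions that the paper never invokes; your route replaces it with an elementary computation. You also correctly isolate, and prove via Liouville's principle and the restriction to the integral curve $\Gamma$, the one nontrivial input that both arguments secretly need --- that the constants of $(K(y,z),L_V)$ are exactly $\Bbb C$ --- which the paper leaves unstated. The trade-off: the paper's version is shorter and leans on the reader's faith in the Galois formalism, while yours is self-contained and makes explicit why the answer is $G$ and not something larger.
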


Now we are ready to complete inductive proof of  Liouvile's First Theorem.

\begin{thm}\label{thm31} Let $\Phi$ be a logarithmic type germ at a point $a=(p_0,\gamma (p_0))\in \Gamma\subset X$. If the germ of the function $\Phi (p,\gamma(p))$ on $U$ at the point $p_0\in U$  is a germ of an  integral $f$ over $K$ then the germ of the differential $d\Phi$ at the point $a\in X$ is locally invariant under the action $\pi$ on $X$.
\end{thm}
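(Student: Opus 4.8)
By Lemma~\ref{lem22}, the germ $d\Phi$ is locally invariant under $\pi$ if and only if $L_{V_\xi}\Phi$ is a constant for every $\xi$ in the Lie algebra $\mathcal G$ of $G$. So the goal is to prove exactly that. The whole argument rests on playing the vector field $V$ (which, along the graph $\Gamma$, encodes the differentiation on $K(y,z)$) against the vertical fields $V_\xi$ (which generate the $G$-action), and on the triviality of the field of constants of $F_1$.

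\textbf{Step 1: $L_V\Phi\in K$.} Write the logarithmic type germ as $\Phi=R_0+\sum_{i=1}^q\lambda_i\ln R_i$ with $R_i$ germs of elements of $K(y,z)$. Then $L_V\Phi=L_VR_0+\sum_i\lambda_i\,(L_VR_i)/R_i$, which lies in $K(y,z)$, since $L_V$ is precisely the differentiation of the differential field $K(y,z)$ (Theorem~\ref{thm29}) and $K(y,z)$ is a field. Now restrict to the integral curve $\Gamma$ of $V$: because $\langle dx,V\rangle\equiv 1$, parametrizing $\Gamma$ by $x$ identifies $V|_\Gamma$ with $d/dx$, so $(L_V\Phi)|_\Gamma$ is the $x$-derivative of $p\mapsto\Phi(p,\gamma(p))$, which by hypothesis is the germ of a function $f'\in K$. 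Hence the rational function $L_V\Phi-f'\in K(y,z)$ vanishes on a germ of $\Gamma$ at $a$; clearing denominators produces a polynomial in $K[y,z]$ that vanishes at $\gamma(p)$ for $p$ near $p_0$, hence (analytic continuation over the connected surface $U$) on all of $\Gamma$, hence identically by Liouville's principal. Therefore $L_V\Phi=f'\in K$.

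\textbf{Step 2: $L_{V_\xi}\Phi$ is constant.} Fix $\xi\in\mathcal G$ and set $\psi:=L_{V_\xi}\Phi$. The same computation as in Step~1, now invoking Lemma~\ref{lem28} instead of Theorem~\ref{thm29}, gives $\psi=L_{V_\xi}R_0+\sum_i\lambda_i\,(L_{V_\xi}R_i)/R_i\in K(y,z)$. The key point is to compute $L_V\psi$. Since $V$ is invariant under the action $\pi$ (Lemma~\ref{lem27}), the flow of $V_\xi$ preserves $V$, so $[V,V_\xi]=0$ and therefore $L_V\psi=L_VL_{V_\xi}\Phi=L_{V_\xi}L_V\Phi$. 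But $L_V\Phi$ lies in $K$, i.e. it is pulled back from $U$ along the projection $X=U\times G\to U$, while $V_\xi$ is tangent to the $G$-fibres; hence $L_{V_\xi}(L_V\Phi)=0$. Thus $L_V\psi=0$: the element $\psi$ of $K(y,z)\cong F_1$ has vanishing derivative, and since $F_1$ is a functional differential field on a connected surface its field of constants is $\mathbb C$, so $\psi$ is a complex constant. As $\xi$ was arbitrary, Lemma~\ref{lem22} yields the theorem.

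\textbf{Expected main obstacle.} The delicate part is Step~1: correctly matching the $x$-derivative of $\Phi$ along the graph with the Lie derivative $L_V$ on $X$, and then using Liouville's principal to descend the conclusion from $K(y,z)$ to $K$. Once $L_V\Phi\in K$ is secured, the commutator identity $[V,V_\xi]=0$ together with the triviality of the constants makes the remainder essentially automatic.
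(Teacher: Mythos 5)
Your proof is correct and follows essentially the same route as the paper: first force $L_V\Phi$ into $K$ via Liouville's principal, then show each $L_{V_\xi}\Phi$ is a complex constant and invoke Lemma~\ref{lem22}. The only real difference is that where the paper pulls the identity $L_V\Phi-f\equiv 0$ back by the group elements $\pi(g)$, restricts to the integral curves $g\Gamma$, and then differentiates in $g$ at the identity, you run the same invariance argument infinitesimally from the start via $[V,V_\xi]=0$ and $L_{V_\xi}(K)=0$ --- an equivalent (and arguably cleaner) way of exploiting the $\pi$-invariance of $V$, which also lets you conclude by the triviality of the constants of $(K(y,z),L_V)\cong F_1$ instead of a second appeal to Liouville's principal.
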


\begin{proof}  By the assumption of Theorem the restriction of the function $(L_V\Phi-f)$  on $\Gamma$  is equal to zero. Since the function $(L_V\Phi-f)$ belongs to the field $K(y,z)$  the function $(L_V\Phi-f)$ by  Liouville's principle is equal to zero identically on $X$. In particular it is equal to zero  on the integral curve $g \Gamma$ the vector field  $V$, where $g$ is an element of the group $G$. Thus  the restrictions of function  $L_V\pi(g)^*(\Phi-f)$ to $\Gamma$ equals to zero. Since the function $f$ is invariant under the action $\pi^*$ we obtain that the restriction on $\Gamma$ of $L_V(\Phi-\pi^*(g)\Phi)$ is equal to zero. Differentiating this identity we obtain that for any $\xi\in \mathcal G$ the restriction on $\Gamma$ of $L_V(L_{V_\xi}\Phi)$ equals to zero.
Thus on $\Gamma$ the function $L_{V_\xi}\Phi$ is constant.  Lemma 27 implies  that the function $L_{V_\xi}\Phi$ belongs to the field $K(x,y)$. Thus  the function $L_{V_\xi}\Phi$ is a constant on $X$ by Liouville's principal. Thus the 1-form $d\Phi$ is locally invariant under the action $\pi$  by Lemma 22. Theorem 31 is proved.
\end{proof}

Thus we complete proof of Theorem 21 and  the inductive proof of Liouville's First Theorem.
\bigskip


\medskip

\subsection{Liouville's Second Theorem and its Generalizations}\label{sec:second}

\subsubsection{Introduction}\label{secondintro}

In 1839 Joseph Liouville proved the following fundamental result.\\

\noindent  {\bf Liouville's Second  Theorem}
A second order homogeneous linear differential equation
\begin{eqnarray}
 y''+a_1y' +a_ny=0 \label{eq9}
\end{eqnarray}
 whose coefficients  $a_1,a_2$ belong to a functional differential field $K$ is solvable by generalized quadratures over $K$ if and only if it has a solution of the  form $y_1=\exp z$ where $z'$ is  algebraic  over $K$.

Much later this theorem was generalized for homogeneous linear differential equations of any order. Consider an  equation
\begin{eqnarray}
 y^{(n)}+a_1y^{(n-1)}+\dots +a_ny=0  \label{eq10}
\end{eqnarray}
 whose coefficients  $a_i$ belong to  $K$.

\begin{thm}\label{thm32}
If the equation~(\ref{eq10})  has a solution representable by generalized quadratures over $K$ then it necessarily has a solution of the  form $y_1=\exp z$ where $z'$ is  algebraic  over $K$.
\end{thm}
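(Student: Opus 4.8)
The overall strategy is to reduce the statement about order-$n$ equations to a statement about differential Galois groups and their solvable-by-quadrature structure, and then extract the required special solution $y_1 = \exp z$ from the group-theoretic picture. First I would set up the Picard--Vessiot extension $K \subset L$ generated by a fundamental system $y_1, \dots, y_n$ of solutions of~(\ref{eq10}), and recall that the differential Galois group $G = \mathrm{Gal}(L/K)$ acts $\Bbb C$-linearly on the $n$-dimensional solution space $V$. The hypothesis that~(\ref{eq10}) has \emph{some} solution representable by generalized quadratures over $K$ means, after using Lemma~\ref{lem5}, that this solution lies in a tower of finite extensions and generalized extensions by integrals and by exponentials of integrals. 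The key translation step is the standard one in differential Galois theory: a solution lies in such a tower if and only if the Galois group of the differential subextension it generates is virtually solvable, i.e.\ has a solvable connected component (the Lie--Kolchin theorem enters here). So the first milestone is: \emph{if~(\ref{eq10}) has a nonzero solution expressible by generalized quadratures over $K$, then there is a nonzero solution $v \in V$ whose $G^0$-orbit spans a subspace on which $G^0$ acts through a chain of groups each diagonalizable or unipotent modulo the previous}, i.e.\ the connected component $G^0$ fixes a one-dimensional subspace $\Bbb C v_0$ of the $G$-submodule generated by $v$.

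Next I would run the argument that produces the eigenvector. Let $W \subseteq V$ be the smallest $G$-invariant subspace containing the given quadrature-solution; $W$ is defined over $K$ (it corresponds to a factor of the differential operator) and $W \neq 0$. The connected solvable group $G^0$ acts on $W$, so by the Lie--Kolchin theorem it has a common eigenvector: there is $0 \neq v_0 \in W$ and a character $\chi : G^0 \to \Bbb C^*$ with $g v_0 = \chi(g) v_0$ for all $g \in G^0$. Now $v_0$ as an element of $L$ satisfies: for every $g \in G^0$, $g(v_0) = \chi(g) v_0$, hence $g(v_0'/v_0) = v_0'/v_0$, so the logarithmic derivative $u := v_0'/v_0$ is fixed by $G^0$. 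Since $G^0$ has finite index in $G$, the element $u$ is algebraic over $K$ (it lies in the fixed field $L^{G^0}$, which is a finite extension of $K$). Setting $z$ to be an integral of $u$, i.e.\ $z' = u$, gives $v_0 = c\exp z$ with $z'$ algebraic over $K$, which is exactly the asserted special solution $y_1 = \exp z$. One should also check $v_0$ is genuinely a solution of~(\ref{eq10}): it lies in $W \subseteq V$, the solution space, so it is.

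For this to be fully rigorous in the functional (rather than abstract) setting emphasized throughout the paper, I would need to be careful about the standard caveats the author raises in Section~\ref{sec:genelem}: solutions are multivalued, the Picard--Vessiot construction must be performed on a suitable covering Riemann surface as in Definition~\ref{def6} and the surrounding discussion, and one must know that ``representable by generalized quadratures over $K$'' in the functional sense matches ``lies in an extension of $K$ by generalized quadratures'' in the field-theoretic sense --- but this is precisely Theorem~\ref{thm16}. So the logical skeleton is: Theorem~\ref{thm16} converts the hypothesis into a field-tower statement; differential Galois correspondence converts the tower into virtual solvability of the relevant Galois group; Lie--Kolchin produces a common eigenvector of $G^0$ in a $K$-rational invariant subspace of the solution space; the logarithmic derivative of that eigenvector is $G^0$-fixed, hence algebraic over $K$; and its exponential is the desired solution.

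\textbf{Main obstacle.} The delicate point is not the Lie--Kolchin step but establishing that having a single quadrature-solution forces the connected Galois group of the appropriate subextension to be solvable. A solution obtained by a tower of integrals, exponentials of integrals, and algebraic extensions does not by itself generate the whole Picard--Vessiot extension $L$; one must pass to the $G$-invariant subspace $W$ it spans and argue that the induced Galois group $G|_W$ has solvable identity component, then lift a $G^0$-eigenvector from $W$. Controlling how the tower interacts with the restriction map $G \to GL(W)$ --- in particular ruling out that the solution ``accidentally'' lives in a quadrature tower while $G|_W$ is large --- is where the real work lies, and it is essentially where Ritt's Chapters V--VI and the author's ``slightly modernized Liouville--Ritt arguments'' are doing their job. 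I expect the paper to handle this by an induction on the height of the quadrature tower, parallel to the inductive scheme used for the First Theorem, reducing at each step either an algebraic extension (handled by an Abel-type averaging over a finite group) or a one-step integral/exponential-of-integral extension (handled by the additive or multiplicative one-parameter group action), mirroring Section~\ref{sec:first}.
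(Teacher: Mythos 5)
Your route is genuinely different from the paper's. You propose the classical Picard--Vessiot argument (which the author explicitly names as ``the standard proof (E.~Picard and E.~Vessiot, 1910)'' and deliberately avoids): form the Picard--Vessiot extension, argue that a single quadrature-solution forces virtual solvability of the Galois group of the relevant subextension, apply Lie--Kolchin to get a common eigenvector $v_0$ of $G^0$, and observe that $v_0'/v_0$ is $G^0$-fixed, hence algebraic over $K$. The paper instead passes to the logarithmic derivative $u=y'/y$ and the generalized Riccati equation $D_n+a_1D_{n-1}+\dots+a_nD_0=0$ (Definition~\ref{def38}, Lemma~\ref{lem39}, Corollary~\ref{cor40}), which has the shape $u^n=Q(u,u',\dots)$ with $\deg Q<n$; Theorem~\ref{thm32} then follows from Rosenlicht's Theorem~\ref{thm36}, which the paper proves elementarily by induction on the quadrature tower: for a one-step extension by an integral or exponential of integral, the solution moves in a one-parameter family $z(\rho)$ that, for $|\rho|\gg 0$, expands in a convergent Puiseux series in $\theta_\rho$ with coefficients in a finite extension $K_P$ (the generalized Newton Theorem~\ref{thm42}), and the two extreme terms $u^n$ and the nonzero constant term force the leading exponent to be $0$, so the leading coefficient is an algebraic solution (Theorems~\ref{thm54},~\ref{thm55}). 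What your approach buys is conceptual economy for readers who already own differential Galois theory; what the paper's approach buys is an elementary, valuation-free proof that moreover extends to nonlinear homogeneous equations (Theorem~\ref{thm32'}), where no Galois group is available --- a point the author makes explicitly in the remark closing Section~\ref{sec3.2}.

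That said, your outline leaves its crux unproved, and you correctly identify it yourself: the claim that one Liouvillian solution forces the connected Galois group of the subextension it generates to be solvable is not an off-the-shelf application of the Galois correspondence. The standard equivalence (Liouvillian tower $\Leftrightarrow$ $G^0$ solvable) concerns the whole Picard--Vessiot extension being embeddable in such a tower; for a single solution one must control how the tower interacts with the $G$-invariant subspace $W$ it generates, and this requires a genuine induction on the tower with a nontrivial argument at each step (this is Kolchin--Singer territory). As written, your proof is a correct high-level plan whose hardest step is deferred; the paper's Riccati--Puiseux machinery is precisely the replacement for that step, so the two proofs are not merely cosmetically different.
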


The following lemma is obvious.

\begin{lem}\label{lem33} Assume that the equation~(\ref{eq10}) has a solution $y_1$ representable by generalized quadratures  over $K$. Then
the equation~(\ref{eq10})  can be solved by generalized quadratures over $K$ if and only if the linear differential equation of order $(n-1)$ over the differential field $K(y_1)$  obtained from~(10) by the reduction of order using the solution $y_1$ is solvable by generalized quadratures over  $K(y_1)$.
\end{lem}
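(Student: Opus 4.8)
\emph{Plan.} The engine is the classical d'Alembert reduction of order; the differential-algebra content is routine, and the only thing that needs care is the bookkeeping showing that this reduction is compatible with towers of generalized quadratures. First I would set up the reduction. Write $L[y]=y^{(n)}+a_1y^{(n-1)}+\dots+a_ny$ and substitute $y=y_1z$ (this makes sense after passing to a connected subdomain on which $y_1\neq 0$). Expanding $L[y_1z]$ by the Leibnitz rule and collecting the terms containing $z$ with no derivative, one gets exactly $L[y_1]=0$, so the relation $L[y_1z]=0$ involves only $z',z'',\dots,z^{(n)}$. Setting $u=z'$ one obtains an explicit linear equation $\tilde L[u]=u^{(n-1)}+b_1u^{(n-2)}+\dots+b_{n-1}u=0$ whose coefficients $b_i$ are polynomials in $a_1,\dots,a_n,y_1,y_1',\dots,y_1^{(n-1)}$ divided by $y_1$, hence lie in $K\langle y_1\rangle=K(y_1,y_1',\dots,y_1^{(n-1)})$; this is the order-$(n-1)$ equation of the statement. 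It comes with a dictionary: if $u_2,\dots,u_n$ is a basis of solutions of $\tilde L$ then $y_1,\ y_1\!\int u_2,\ \dots,\ y_1\!\int u_n$ is a basis of solutions of (\ref{eq10}), and conversely any basis $y_1,\tilde y_2,\dots,\tilde y_n$ of solutions of (\ref{eq10}) produces the basis $u_j=(\tilde y_j/y_1)'$ of solutions of $\tilde L$.

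For the implication $(\Rightarrow)$: assume (\ref{eq10}) is solvable by generalized quadratures over $K$. By Lemma~\ref{lem5} there is a chain $K=F_0\subseteq F_1\subseteq\dots\subseteq F_N$ of functional differential fields, each step a finite extension, a generalized extension by integral, or a generalized extension by exponential of integral, with $F_N$ containing a full basis of solutions of (\ref{eq10}); in particular $y_1\in F_N$, hence $u_2,\dots,u_n\in F_N$. Now replace this chain by $F_i\langle y_1\rangle$: any generator that is an integral, an exponential of integral, or algebraic over $F_i$ is still of the corresponding type over $F_i\langle y_1\rangle$, so $K\langle y_1\rangle=F_0\langle y_1\rangle\subseteq\dots\subseteq F_N\langle y_1\rangle$ is again a tower of the type in Lemma~\ref{lem5}, and its top contains all the $u_j$. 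Hence $\tilde L$ is solvable by generalized quadratures over $K\langle y_1\rangle$.

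For the implication $(\Leftarrow)$: assume $\tilde L$ is solvable by generalized quadratures over $K\langle y_1\rangle$, say via $K\langle y_1\rangle=G_0\subseteq\dots\subseteq G_m$ with $G_m$ containing a basis $u_2,\dots,u_n$ of solutions of $\tilde L$. Since $y_1$ is representable by generalized quadratures over $K$, it lies in some extension $K=\hat F_0\subseteq\dots\subseteq\hat F_N$ of $K$ by generalized quadratures, and $K\langle y_1\rangle\subseteq\hat F_N$. Realizing everything inside a single functional differential field by the procedure of section~\ref{sec:genelem}, form the composita $\hat F_NG_i$: then $\hat F_N=\hat F_NG_0\subseteq\dots\subseteq\hat F_NG_m$ is once more a tower of the type in Lemma~\ref{lem5}. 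Concatenating with $K\subseteq\hat F_N$ and then adjoining the integrals $\int u_2,\dots,\int u_n$ (finitely many further generalized-extension-by-integral steps) one reaches a field containing $y_1$ together with all $y_1\!\int u_j$, i.e.\ a full basis of solutions of (\ref{eq10}). Hence (\ref{eq10}) is solvable by generalized quadratures over $K$.

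\emph{Main obstacle.} There is no analytic difficulty here; the point deserving attention is the ``base change'' step used in both directions — that adjoining a fixed element $y_1$ to each field of a generalized-quadrature tower, or passing to composita with a fixed generalized-quadrature tower, again yields a generalized-quadrature tower. This forces one to invoke the realization of all the fields involved inside one field of meromorphic functions on a suitable Riemann surface, as in section~\ref{sec:genelem}, so that composita are defined, and to check the elementary stability fact that an integral / exponential-of-integral / algebraic generator over a differential field remains of the same type over any differential overfield. One should also avoid claiming that $K\subset K\langle y_1\rangle$ is by itself an extension by generalized quadratures: representability of $y_1$ only places $K\langle y_1\rangle$ inside such a tower, which is exactly why in the $(\Leftarrow)$ direction one carries the whole field $\hat F_N$ along rather than just $K\langle y_1\rangle$.
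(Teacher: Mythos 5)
Your proposal is correct and follows essentially the same route as the paper, which declares the lemma obvious and justifies it in two sentences via exactly your dictionary $u=(y/y_1)'$, $y=y_1\int u$ between solutions of the original and the reduced equations, together with the observation that representability by generalized quadratures is preserved under these passages. Your additional care with the tower bookkeeping (base-changing a generalized-quadrature tower by $y_1$, and carrying the full field $\hat F_N$ rather than just $K\langle y_1\rangle$ in the converse direction) fills in precisely what the paper leaves implicit.
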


Indeed on one hand each solution of the equation obtained from (\ref{eq10}) by the reduction of order using $y_1$  can be expressed  in the form $(y/y_1)'$ where $y$ is a solution of  (\ref{eq10}). On the other hand any solution $y$ of the equation $(y/y_1)'=u$, where $u$ is represented by generalized quadratures over $K(y_1)$, is representable by generalized quadratures over  $K$, assuming that $y_1$ representable by generalized quadratures  over $K$.

Thus Theorem~\ref{thm32} provides the following criterium for solvability of the equation~(\ref{eq10}) by generalized quadratures.

\begin{thm}\label{thm35}
The equation~(\ref{eq10})  is solvable by generalized quadratures over $K$ if and only if the following conditions hold:

1) the equation~(\ref{eq10}) has a solution $y_1$ of the form $y_1=\exp  z$ where $z'=f$ is  algebraic over $K$,

2) the linear differential equation of order $(n-1)$ over  $K(y_1)$  obtained from~(\ref{eq10}) by the reduction of order using the solution $y_1$ is solvable by generalized quadratures over  $K(y_1)$.
\end{thm}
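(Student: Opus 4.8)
The plan is to derive Theorem~\ref{thm35} directly from Theorem~\ref{thm32} and Lemma~\ref{lem33}, the only additional ingredient being the elementary observation that a solution of the special shape $y_1=\exp z$ with $z'$ algebraic over $K$ is itself representable by generalized quadratures over $K$. Indeed, if $f=z'$ is algebraic over $K$, then $K(f)\supset K$ is a finite extension, adjoining the integral $z$ of $f$ is a generalized extension by integral in the sense of Definition~\ref{def4}, and adjoining $y_1=\exp z$ is a generalized extension by exponential of integral; by Lemma~\ref{lem5} the element $y_1$ therefore lies in an extension of $K$ by generalized quadratures. This is exactly what is needed to make Lemma~\ref{lem33} applicable with this particular choice of $y_1$, and once that hypothesis is available the two implications follow almost formally.

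For the ``only if'' direction I would argue as follows. If equation~(\ref{eq10}) is solvable by generalized quadratures over $K$, then in particular it has at least one solution representable by generalized quadratures over $K$, so Theorem~\ref{thm32} produces a solution $y_1=\exp z$ with $z'$ algebraic over $K$; this is condition~1). By the observation above this $y_1$ is representable by generalized quadratures over $K$, so Lemma~\ref{lem33} applies, and since~(\ref{eq10}) is solvable by generalized quadratures over $K$ its ``only if'' part gives that the reduced equation of order $n-1$ over $K(y_1)$ is solvable by generalized quadratures over $K(y_1)$, which is condition~2).

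For the ``if'' direction, assume 1) and 2). The solution $y_1=\exp z$ supplied by 1) is, again by the observation above, representable by generalized quadratures over $K$, so the hypothesis of Lemma~\ref{lem33} is satisfied. Its ``if'' part then converts condition~2) — solvability of the reduced $(n-1)$-st order equation over $K(y_1)$ by generalized quadratures over $K(y_1)$ — into solvability of~(\ref{eq10}) by generalized quadratures over $K$, as required.

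The argument is short, and there is no genuinely new obstacle: all the substantive content is already packaged in Theorem~\ref{thm32} (the existence of an exponential solution with algebraic logarithmic derivative) and in the reduction-of-order equivalence of Lemma~\ref{lem33}. The one point deserving a little care is the bookkeeping around the hypotheses of Lemma~\ref{lem33} — namely verifying, in both directions, that the specific $y_1$ in play is representable by generalized quadratures over $K$ — so that the lemma may legitimately be invoked.
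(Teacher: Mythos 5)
Your proposal is correct and follows essentially the same route as the paper, which presents Theorem~\ref{thm35} as an immediate consequence of Theorem~\ref{thm32} and Lemma~\ref{lem33} (the paper merely writes ``Thus Theorem~\ref{thm32} provides the following criterium\dots'' without spelling out the details). Your explicit verification that $y_1=\exp z$ with $z'$ algebraic over $K$ is itself representable by generalized quadratures over $K$ --- the hypothesis needed to invoke Lemma~\ref{lem33} in both directions --- is exactly the bookkeeping the paper leaves tacit, and it is handled correctly.
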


For $n=2$ Theorem~\ref{thm35} is equivalent to Liouville's Second Theorem because linear differential equations of first order are automatically solvable by quadratures.

The standard proof (E. Picard and E.~Vessiot, 1910) of Theorem~\ref{thm32}    uses the differential Galois theory and is rather involved (see \cite{[10]}).
\bigskip

In the case when the equation~(\ref{eq10}) is a Fuchsian differential equation and $K$ is the field of rational function of one complex variable Theorem~\ref{thm35} has a topological explanation (see section~\ref{ch2sec2} and \cite{[6]}) which allows to prove much stronger version of this result). But in general case Theorem~\ref{thm35} does not have a similar visual explanation.
\bigskip

Maxwell Rosenlicht in 1973 proved \cite{[13]}  the following theorem.

\begin{thm}\label{thm36}
Let $n$ be a positive integer, and let $Q$ be a polynomial in several variables with coefficients in a differential field $K$ and of total degree less than $n$. Then if the  equation

\begin{eqnarray}u^n=Q(u, u', u'', \dots) \label{eq11}
\end{eqnarray}
has a solution representable by generalized quadratures over $K$, it has a solution algebraic over $K$.
\end{thm}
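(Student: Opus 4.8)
The plan is to imitate the structure of the inductive proof of Liouville's First Theorem: reduce the statement about a solution representable by generalized quadratures to a statement about a solution lying in a suitable tower built from finite extensions and from transcendental extensions by integrals and exponentials of integrals, and then peel off the top of the tower by a descent argument. Concretely, suppose $u$ satisfies \eqref{eq11} and lies in a field $F_N$ of a chain $K=F_0\subseteq F_1\subseteq\cdots\subseteq F_N$ as in Lemma~\ref{lem5}, where each step is either a finite (algebraic) extension, a generalized extension by integral, or a generalized extension by exponential of integral. I would induct on $N$. If $N=0$ there is nothing to prove. For the inductive step it suffices, by the inductive hypothesis applied over $F_1$ (note $Q$ still has coefficients in $F_1\supseteq K$ and total degree $<n$, and $n$ is unchanged), to descend one level: assuming \eqref{eq11} has a solution in a one-step extension $F_1$ of $K$, produce a solution algebraic over $K$. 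Combined with the $N$-step induction this yields a solution algebraic over $K$ at the bottom. (One must be slightly careful: the inductive hypothesis gives a solution algebraic over $F_1$, i.e.\ living in a finite extension of $F_1$; so I would first reduce to the case where the one-step extension $F_1$ is itself algebraic, integral, or exponential-of-integral over $K$, and handle "algebraic over $F_1$ with $F_1$ algebraic over $K$" by transitivity of algebraic extensions.)

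The algebraic case is the easy one and is a direct analogue of Abel's Theorem: if $u\in F_1$ with $F_1$ a finite extension of $K$, then $u$ is automatically algebraic over $K$ and we are done immediately. So the real content is the two transcendental cases. Suppose first $F_1=K\langle t\rangle$ (finite over $K\langle t\rangle$ after a further algebraic step, which we have already reduced away) with $t$ transcendental over $K$ and $t'=a\in K$. Write $u=R(t)$ as a rational function of $t$ with coefficients in $K$; the chain rule gives $u^{(j)}=$ (a rational function of $t$ whose coefficients are differential polynomials in the coefficients of $R$ and in $a$), and crucially differentiation does not raise the $t$-degree of the numerator/denominator beyond what is already there in a controlled way. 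Substituting into $u^n=Q(u,u',\dots)$ gives a rational-function identity in $t$; comparing it on the transcendental element $t$ one extracts an algebraic consequence. The standard trick (Rosenlicht's) is a degree/pole count: examine the behavior of both sides at the "point $t=\infty$", i.e.\ compare degrees in $t$. Because $\deg_u Q<n$, the left side $u^n$ grows strictly faster in $t$ than each monomial of $Q$ unless $u$ has no pole at $t=\infty$ and no zero either — that is, unless $u$ is a constant in $t$, forcing $u\in K$. More precisely, one shows the leading coefficient of $u$ as a polynomial (or Laurent series) in $t$ must satisfy, by itself, an equation $c^n=\tilde Q(c,c',\dots)$ of the same shape over $K$ but with strictly smaller "$t$-complexity," and an induction on that complexity produces $u\in K$ (hence trivially algebraic over $K$). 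The exponential-of-integral case $t'=at$, $a\in K$, is handled the same way, now tracking the valuation of $u=R(t)$ at $t=0$ and $t=\infty$ simultaneously: the derivation sends $t^k\mapsto ka\,t^k$, so it preserves $t$-degree exactly, and again the inequality $\deg_u Q<n$ forces the top and bottom coefficients of the Laurent expansion of $u$ in $t$ to individually satisfy an equation of the same type over $K$, allowing a descent to $u\in K$.

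The main obstacle is the bookkeeping in these two transcendental cases — making the "$t$-complexity" that decreases under descent precise and verifying that the leading (resp.\ top and bottom) coefficients really do satisfy an equation of the form $c^n=\tilde Q(c,c',\dots)$ with $\deg_c\tilde Q<n$ over $K$. One has to check two things: (i) that applying the derivation and then taking the leading coefficient is compatible — i.e.\ that the leading coefficient of $u'$ is the derivative of the leading coefficient of $u$ in the integral case, and is that derivative plus a lower-order correction one can absorb in the exponential case; and (ii) that the total-degree bound $\deg_u Q<n$ is exactly what forces the cancellation, since the monomial $u^n$ on the left contributes leading-coefficient $c^n$ while any monomial $c_\alpha\prod (u^{(j)})^{\alpha_j}$ of $Q$ with $\sum\alpha_j<n$ contributes a leading coefficient of strictly lower "weight," so the top-degree part of the identity reads precisely $c^n=(\text{sum of the top parts of the }Q\text{-monomials})=\tilde Q(c,c',\dots)$ with $\deg\tilde Q<n$. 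Granting this, the descent terminates and delivers a solution algebraic over $K$; I would also remark that the theorem contains Theorem~\ref{thm32} as the special case where $u=\exp z$ and the Riccati-type equation for $z'=u'/u$ is of the form \eqref{eq11} with $Q$ of degree $<n$, which is why this abstract statement is the natural engine behind the Picard–Vessiot result.
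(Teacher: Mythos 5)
Your global strategy --- induction on the length of the chain from Lemma~\ref{lem5}, reduction to a single-step descent, and a leading-term analysis exploiting $\deg Q<n$ --- is exactly the paper's. But there is a genuine gap in the single-step transcendental cases. By Definition~\ref{def4}, a generalized extension by integral (or by exponential of integral) is a \emph{finite} extension of $K\langle t\rangle$, so the solution $u$ from which you must descend is in general only \emph{algebraic} over $K(t)$, not a rational function $R(t)$. Your parenthetical claim that the further algebraic step has ``already been reduced away'' is not justified: splitting $F_1$ as $K\subset K\langle t\rangle\subset F_1$ and invoking the inductive hypothesis on the top step merely returns the statement ``there is a solution algebraic over $K\langle t\rangle$,'' which is where you started. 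The entire technical core of the paper's proof exists to handle precisely this algebraic case: one passes to the family of conjugate solutions $z_i(\rho)$ (resp.\ $z_i(\mu)$), expands them for $|\rho|\gg 0$ in convergent Puiseux series in $\theta_\rho$ whose coefficients lie in a finite extension $K_P$ of $K$ (the generalized Newton Theorem~\ref{thm42}), controls the leading terms of derivatives of such series (Lemmas~\ref{lem49} and~\ref{lem53}), and only then runs your degree argument on the Puiseux exponent $k/p$ (Theorems~\ref{thm54} and~\ref{thm55}). A rational-function degree count at $t=\infty$ does not substitute for this step.

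Two further points. First, your descent aims to conclude $u\in K$, which is both stronger than needed and false in general: the leading Puiseux coefficient lands in $K_P$, which is exactly why the theorem asserts only a solution \emph{algebraic} over $K$. Relatedly, a rational function with numerator and denominator of equal degree is finite and nonzero at $t=\infty$ without lying in $K$, so ``no pole and no zero at infinity'' does not force $u\in K$; the candidate solution is the leading coefficient of the expansion, not $u$ itself. Second, your ``induction on $t$-complexity'' with a modified equation $\tilde Q$ is unnecessary and risks producing a solution of the wrong equation; the correct observation is that once the leading exponent is forced to be $0$, taking constant terms commutes with differentiation (Lemma~\ref{lem49}), so the constant term satisfies the \emph{original} equation. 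You should also dispose separately of the trivial case where the constant term $T_0$ of $u^n-Q$ vanishes (then $u\equiv 0$ is a solution in $K$); when $T_0\neq 0$ it is precisely this term that rules out a negative leading exponent.
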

The logarithmic derivative $u=y'/y$ of any solution of the equation (\ref{eq10}) satisfies the {\it generalized Riccati equation of order $n-1$ associated with (\ref{eq10})}, which is a particular case of the equation (\ref{eq11}). Rosenlicht showed that Theorem~\ref{thm32} easily follows from Theorem ~\ref{thm35} applied to the corresponding generalized Riccati equation  (see section~\ref{sec3.2}).
 In modern differential algebra abstract fields equipped with an operation of differentiation are considered. The Rosenlicht's proof of Theorem~\ref{thm35}  is not elementary: it is applicable to abstract differential fields of characteristic zero and  makes use of the valuation theory\footnote{ According to Michael Singer the valuation theory used in Rosenlicht's  proof  is  a fancy way of using power series methods (private communication). }.

The logarithmic derivative $u=y'/y$ of any solution $y$ of the homogeneous linear differential equation of second order~(\ref{eq9})  satisfies  the  Riccati equation
\begin{eqnarray}
u'+a_1u+a_2+u^2=0. \label{eq12}
\end{eqnarray}

To prove the Liouville's Second Theorem  Liouville and Ritt proved first Theorem~\ref{thm35} for the Riccati equation~(\ref{eq12}). To do that J.F.Ritt (in his simplification of the Liouville's proof) considered a special one parametric family of solutions of (\ref{eq12}) and  used  an expansion of these solutions  as  functions of the parameter  into converging  Puiseux series (see  Chapter V in \cite{[11]}). J.F.Ritt used a generalization of the following theorem based on ideas suggested by Newton.

Consider an algebraic function $z(y)$ defined by an equation  $P(y,z)=0$ where $P$ is a polynomial  with coefficients in a subfield $K$ of $\Bbb C$. Then all branches of the algebraic function $z(y)$ at the point $y=\infty$ can be developed into converging Puiseux series whose coefficients belong to a finite extension of the field $K$.

A {\it generalized Newton's Theorem} claims that the similar result holds if instead of a numerical field of coefficient one takes a field $K$ whose element are meromorphic functions on a connected Riemann surface. In  J.F.Ritt's book \cite{[11]} this result is proved in the same way as its classical  version using the Newton's polygon method.

J.F.Ritt's proof is written in  old mathematical language and  does not fit into  our presentation. Theorem~\ref{thm42} provides an exact statement of the generalized Newton's Theorem. It is presented  without proof: the  main arguments proving it are well known and classical. One also can obtain a proof modifying J.F.Ritt's exposition. Theorem~\ref{thm42}  plays a crucial  role in  section~\ref{sec3.3}. For  the sake of completeness I will present its modern proof in a separate  paper.
 \bigskip

In this section we  discuss a proof of Theorem~\ref{thm35} which does not rely on the valuation theory.  It generalizes J.F.Ritt's arguments (makes use of the Puiseux expansion via a generalized Newton's Theorem) and provides an elementary proof of the classical Theorem~\ref{thm32}. We follow the presentation from the paper \cite{[7]}.

\subsubsection{Generalized Riccati equation}\label{sec3.2}

Here we define the generalized Riccati equation and reduce Theorem~\ref{thm32} to Theorem~\ref{thm35}. In this section we also generalize Theorem~\ref{thm32} for nonlinear  homogeneous equations (this generalization will not be used in the next sections).

Assume that $u$ is the logarithmical derivative of a non identically equal to zero meromorphic function $y$, i.e the relation $y'=uy$ holds.
\begin{defn}\label{def36} Let $D_n$ be a polynomial in $u$ and in its derivatives $u,u',\dots,u^{(n-1)}$ up to order $(n-1)$ defined by induction by
the following conditions:
$$
D_0=1; \ \  D_{k+1}=\frac{d D_k}{d x}+u D_k.
$$
\end{defn}
\begin{lem}\label{lem37}  1) The polynomial $D_n$ has integral coefficients and $\deg D_n= n$. The degree $n$ homogeneous part of $D_n$ equals to $u^n$ (i.e. $D_n=u^n+\tilde D_n$ where $\deg \tilde D_n<n$).

2) If $y$ is a function whose logarithmic derivative equals to $u$ (i.e. if $y'=uy$) then for any $n\geq 0$ the relation $y^{(n)}=D_n(u) y$ holds.
\end{lem}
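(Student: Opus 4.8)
The plan is to prove both statements simultaneously by induction on $n$, using the recursion $D_{k+1} = dD_k/dx + uD_k$ from Definition~1.36 as the engine. The base case $n=0$ is immediate: $D_0 = 1$ has integer coefficients, degree $0$, and if $y' = uy$ then $y^{(0)} = y = D_0(u)\,y$. For the inductive step, suppose both parts hold for $n=k$.

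For part 2, I would differentiate the relation $y^{(k)} = D_k(u)\,y$ with respect to $x$. Writing $D_k = D_k(u, u', \dots, u^{(k-1)})$, the chain rule gives $y^{(k+1)} = \frac{d}{dx}\bigl(D_k(u)\bigr)\,y + D_k(u)\,y'$. Since $y' = uy$, this becomes $y^{(k+1)} = \bigl(\frac{dD_k}{dx} + uD_k\bigr)(u)\,y = D_{k+1}(u)\,y$, which is exactly the claim for $k+1$. Here $\frac{dD_k}{dx}$ means the total derivative, treating each $u^{(j)}$ as a new variable whose derivative is $u^{(j+1)}$ — this is the same differentiation appearing in Definition~1.36, so the identity is consistent.

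For part 1, the total derivative $\frac{dD_k}{dx}$ of a polynomial with integer coefficients in the variables $u, u', \dots, u^{(k-1)}$ is again a polynomial with integer coefficients (now in $u, \dots, u^{(k)}$), and multiplying by $u$ and adding preserves integrality; so $D_{k+1}$ has integer coefficients. For the degree claim I would make the weighting/degree bookkeeping precise: assigning each $u^{(j)}$ degree $1$, the operator $\frac{d}{dx}$ does not raise total degree (it replaces one factor $u^{(j)}$ by $u^{(j+1)}$, leaving degree unchanged), so $\deg\frac{dD_k}{dx} \le \deg D_k = k$, while $\deg(uD_k) = k+1$. Hence $\deg D_{k+1} = k+1$ and its top-degree part comes entirely from $uD_k$; by the inductive hypothesis the degree-$k$ part of $D_k$ is $u^k$, so the degree-$(k+1)$ part of $D_{k+1}$ is $u\cdot u^k = u^{k+1}$, giving $D_{k+1} = u^{k+1} + \tilde D_{k+1}$ with $\deg\tilde D_{k+1} < k+1$.

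The only mildly delicate point — the "main obstacle," though it is minor — is being careful that $\frac{d}{dx}$ genuinely cannot increase the degree in the grading where every derivative $u^{(j)}$ counts as weight one; one must check that differentiating a monomial $u^{(j_1)}\cdots u^{(j_r)}$ produces a sum of monomials each still of length $r$. This is immediate from the Leibniz rule, but it is the load-bearing observation for the degree statement, so I would state it explicitly before concluding.
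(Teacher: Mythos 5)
Your proof is correct and is precisely the induction the paper has in mind: Ritt/Khovanskii's text simply states that ``both claims of Lemma 37 can be easily checked by induction,'' and your write-up supplies exactly those details, including the one genuinely load-bearing observation that the total derivative preserves the degree in the grading where every $u^{(j)}$ has weight one. Nothing further is needed.
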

Both claims of Lemma~\ref{lem37} can be easily checked by induction.

Consider a homogeneous linear differential equation (\ref{eq10})
whose coefficients  $a_i$ belong to a differential field $K$.

\begin{defn}\label{def38}  The  equation
\begin{eqnarray}
 D_n+a_1D_{n-1}+\dots +a_nD_0=0 \label{eq13}
\end{eqnarray}
of  order $n-1$ is called the {\it generalized Riccati equation} for the homogeneous linear differential equation (\ref{eq10}).
\end{defn}

 \begin{lem}\label{lem39} A non identically equal to zero function $y$ satisfies the linear differential equation (\ref{eq10}) if and only if its logarithmic derivative $u=y'/y$ satisfies the generalized Riccati equation (\ref{eq13}).
\end{lem}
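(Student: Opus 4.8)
The plan is to derive this directly from Lemma~\ref{lem37}. Suppose first that $y$ is a meromorphic function, not identically zero, satisfying the linear equation~(\ref{eq10}), and set $u=y'/y$, a well-defined meromorphic function away from the zeros and poles of $y$, so that $y'=uy$. By part~2 of Lemma~\ref{lem37}, the relation $y^{(k)}=D_k(u)\,y$ holds for every $k\ge 0$. Substituting these expressions into the left-hand side of~(\ref{eq10}) gives
\begin{eqnarray}
y^{(n)}+a_1y^{(n-1)}+\dots+a_ny=\bigl(D_n+a_1D_{n-1}+\dots+a_nD_0\bigr)\,y. \nonumber
\end{eqnarray}
Since $y\not\equiv 0$, the product on the right vanishes identically if and only if the factor $D_n+a_1D_{n-1}+\dots+a_nD_0$ vanishes identically, that is, if and only if $u$ satisfies the generalized Riccati equation~(\ref{eq13}).

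For the converse I would start from a function $u$ satisfying~(\ref{eq13}) and exhibit a nonzero $y$ with logarithmic derivative $u$: take $y=\exp\int u$, a (possibly multivalued) locally nonvanishing function with $y'=uy$. Then part~2 of Lemma~\ref{lem37} again yields $y^{(k)}=D_k(u)\,y$, and the same one-line computation shows $y^{(n)}+a_1y^{(n-1)}+\dots+a_ny=0$, so $y$ is a solution of~(\ref{eq10}) whose logarithmic derivative is the prescribed $u$.

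There is essentially no obstacle here; the only point worth a word is that the equivalence in the first paragraph uses that a nonzero meromorphic function is invertible on a dense open set, so that multiplication by $y$ does not annihilate a nonzero element, and that in the functional setting the phrase ``$u$ satisfies~(\ref{eq13})'' is to be read as holding wherever $u$ is defined, with $y=\exp\int u$ understood on the Riemann surface on which this integral becomes single-valued, in accordance with the conventions on functional differential fields fixed in section~\ref{sec:genelem}. With these conventions the proof reduces to the substitution displayed above, which is why the lemma is stated as obvious.
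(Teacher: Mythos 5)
Your proof is correct and takes essentially the same route as the paper's: both directions amount to the substitution $y^{(k)}=D_k(u)\,y$ from Lemma~\ref{lem37}, followed by dividing or multiplying by the nonvanishing $y$. The only cosmetic difference is that in the converse you construct a fresh solution $\exp\int u$ rather than applying the identity directly to the given $y$ (which already satisfies $y'=uy$, so Lemma~\ref{lem37} applies to it as is); either way the argument is the one the paper gives.
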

\begin{proof} Let $y$ be a nonzero solution of (\ref{eq10}) and let $u$ be its logarithmic derivative. Then dividing (\ref{eq10}) by $y$ and using the identity $y^{(k)}/y=D_k(u)$ we obtain that $u$ satisfies (\ref{eq13}). If $u$ is a solution of (\ref{eq13}) then multiplying (\ref{eq4}) by $y$ and using the identity $y^{(k)}=D_k(u)y$ we obtain that $y$ is a non zero solution of (\ref{eq10}).  \end{proof}

\begin{cor}\label{cor40} 1) The equation (\ref{eq10}) has a non zero solution  representable by generalized quadratures  over $K$ if and only if the equation (\ref{eq13}) has a solution  representable by generalized quadratures  over $K$.

2) The equation (\ref{eq10}) has a solution $y$ of the form $y=\exp z$ where $z'=f$ is an algebraic function over $K$ if and only if the equation (\ref{eq13}) has an algebraic solution over $K$.
\end{cor}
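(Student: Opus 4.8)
The plan is to read off both statements from Lemma~\ref{lem39}, which establishes the dictionary between nonzero solutions $y$ of~(\ref{eq10}) and solutions $u$ of the generalized Riccati equation~(\ref{eq13}) via $u = y'/y$. Apart from this dictionary the only thing needed is that the two relevant classes of functions --- those representable by generalized quadratures over $K$, and those algebraic over $K$ --- are closed under the operations that connect $y$ with $u$.

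For part~1 I would argue the two implications separately. If $y$ is a nonzero solution of~(\ref{eq10}) representable by generalized quadratures over $K$, then $u = y'/y$ solves~(\ref{eq13}) by Lemma~\ref{lem39}, and since $u$ is produced from $y$ by differentiation and division --- both admissible operations in Definition~\ref{def15} --- it is again representable by generalized quadratures over $K$. Conversely, given a solution $u$ of~(\ref{eq13}) representable by generalized quadratures over $K$, I would take $y$ to be an exponential of integral of $u$, i.e.\ a solution of $y' = uy$: such a $y$ vanishes nowhere, it satisfies~(\ref{eq10}) by Lemma~\ref{lem39}, and it is representable by generalized quadratures over $K$ because ``taking exponential of integral'' is itself an admissible operation. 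The same reasoning can be phrased through Theorem~\ref{thm16}: adjoining a $y$ with $y' = uy$ to $K\langle u\rangle$ is a generalized extension by exponential of integral, while conversely $u = y'/y \in K\langle y\rangle$.

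For part~2 the identical dictionary works, now with ``representable by generalized quadratures over $K$'' replaced by ``algebraic over $K$''. If $y = \exp z$ with $z' = f$ algebraic over $K$, then $u = y'/y = z' = f$ is an algebraic solution of~(\ref{eq13}). Conversely, if $u$ is an algebraic solution of~(\ref{eq13}), I would simply set $f := u$, pick $z$ with $z' = f$, and put $y := \exp z$; then $y'/y = f = u$ solves~(\ref{eq13}), so $y$ solves~(\ref{eq10}) by Lemma~\ref{lem39} and has precisely the required shape.

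I do not anticipate a real obstacle: the substance is entirely in Lemma~\ref{lem39} (and Lemma~\ref{lem37} behind it). The only point requiring a moment's care is bookkeeping of closure --- that $y \mapsto y'/y$ does not leave the class in question (differentiation and division are admissible), and that in the reverse direction passing to an exponential of integral does not either --- together with the elementary remark in part~2 that $u$ itself already is the algebraic function whose integral, once exponentiated, yields a solution of~(\ref{eq10}).
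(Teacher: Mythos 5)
Your proposal is correct and follows the same route as the paper: the paper's proof consists precisely of the two observations that a nonzero $y$ is representable by generalized quadratures over $K$ if and only if its logarithmic derivative $u=y'/y$ is, and that $y=\exp z$ with $z'=f$ if and only if $y'/y=f$, combined with Lemma~\ref{lem39}. You merely spell out the closure bookkeeping (differentiation and division in one direction, exponential of integral in the other) that the paper leaves implicit.
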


\begin{proof} 1) A non zero function $y$ is representable by generalized quadratures over $K$ if and only if  its logarithmic derivative $u=y'/y$ is representable by generalized quadratures over $K$.

2) A function $y$ is equal to $\exp z$ where $z'=f$ if and only if its logarithmic derivative is equal to $f$.
\end{proof}

The generalized Riccati equation (\ref{eq13}) satisfies the conditions of Theorem~\ref{thm35}. Thus Theorem~\ref{thm32} follows from Theorem~\ref{thm35} and from  Corollary~\ref{cor40}.

Let us generalize the results of this section. Consider an order $n$ homogeneous equation
\begin{eqnarray}
P(y,y',\dots,  y^{(n)})=0 \label{eq10'}
\end{eqnarray}
 where $P$ is a degree $m$ homogeneous polynomial in $n+1$ variables $x_0,x_1,\dots, x_{n}$ over a functional differential field $K$.
\begin{defn}\label{def38'} The  equation
\begin{eqnarray}
P(D_0,D_1, \dots, D_n)=0 \label{eq13'}
\end{eqnarray}
of  order $n-1$  is called the {\it generalized Riccati equation} for the  homogeneous equation (\ref{eq10'}).
\end{defn}

\begin{lem}\label{lem39'} A non identically equal to zero function $y$ satisfies the homogeneous  equation (\ref{eq10'}) if and only if its logarithmic derivative $u=y'/y$ satisfies the generalized Riccati equation (\ref{eq13'}).
\end{lem}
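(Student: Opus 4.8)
The plan is to mimic exactly the proof of Lemma~\ref{lem39}, using the two structural facts about the operators $D_k$ recorded in Lemma~\ref{lem37}: namely that $D_k$ has integer coefficients, and more importantly that whenever $y$ is a (nonzero) function with logarithmic derivative $u = y'/y$, one has $y^{(k)} = D_k(u)\,y$ for all $k\ge 0$. Since $P$ is a homogeneous polynomial of degree $m$ in the $n+1$ variables $x_0,\dots,x_n$, substituting $x_k = y^{(k)}$ and then pulling out the common factor $y^m$ is the whole idea. Crucially, here we must use that $P$ is \emph{homogeneous}: this is what guarantees that the $y$-factors combine into a single overall power $y^m$ that can be cancelled, whereas in the linear case of Lemma~\ref{lem39} the analogous cancellation was of a single power $y^1$.

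First I would prove the ``only if'' direction. Suppose $y\not\equiv 0$ satisfies $P(y,y',\dots,y^{(n)})=0$, and set $u = y'/y$ (a meromorphic function wherever $y$ is defined and nonzero). By Lemma~\ref{lem37}(2), $y^{(k)} = D_k(u)\,y$ for each $k=0,\dots,n$. Substituting into $P$ and using that $P$ is homogeneous of degree $m$,
\begin{eqnarray*}
0 = P(y,y',\dots,y^{(n)}) = P\big(D_0(u)y, D_1(u)y,\dots,D_n(u)y\big) = y^{m}\,P\big(D_0(u),D_1(u),\dots,D_n(u)\big).
\end{eqnarray*}
Since $y\not\equiv 0$, the meromorphic function $y^m$ is not identically zero, so $P(D_0(u),\dots,D_n(u))=0$ identically; that is, $u$ satisfies the generalized Riccati equation~(\ref{eq13'}).

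Conversely, suppose $u$ satisfies $P(D_0(u),\dots,D_n(u))=0$. Choose any nonzero function $y$ with $y'=uy$ (an exponential of an integral of $u$); such a $y$ exists on a suitable Riemann surface by the constructions of section~\ref{sec:genelem}, and it is not identically zero. Again by Lemma~\ref{lem37}(2), $y^{(k)} = D_k(u)\,y$, and reversing the computation above gives $P(y,y',\dots,y^{(n)}) = y^m P(D_0(u),\dots,D_n(u)) = 0$. Hence $y$ is a nonzero solution of~(\ref{eq10'}). I do not expect any genuine obstacle: the only point requiring a moment's care is the explicit appeal to homogeneity of $P$ in order to factor out $y^m$ cleanly, together with the remark that $y^m\not\equiv 0$ so that the cancellation is legitimate in the field of meromorphic functions; everything else is the verbatim argument of Lemma~\ref{lem39}.
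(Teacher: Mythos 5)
Your proof is correct and follows exactly the route the paper intends: the paper states that Lemma~\ref{lem39'} ``can be proved exactly in the way as Lemma~\ref{lem39}'', i.e.\ by substituting $y^{(k)}=D_k(u)\,y$ and dividing (resp.\ multiplying) by the appropriate power of $y$, which is precisely what you do, with the homogeneity of $P$ supplying the factor $y^m$ in place of the single factor $y$ of the linear case. No gaps.
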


\begin{cor}\label{cor40'} 1) The equation (\ref{eq10'}) has a non zero solution  representable by generalized quadratures  over $K$ if and only if the equation (\ref{eq13'}) has a solution  representable by generalized quadratures  over $K$.

2) The equation (\ref{eq10'}) has a solution $y$ of the form $y=\exp z$ where $z'=f$ is an algebraic function over $K$ if and only if the equation (\ref{eq13'}) has an algebraic solution over $K$.
\end{cor}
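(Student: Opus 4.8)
The plan is to mimic the proof of Corollary~\ref{cor40}, replacing the linear equation~(\ref{eq10}) and its Riccati equation~(\ref{eq13}) by the homogeneous equation~(\ref{eq10'}) and its Riccati equation~(\ref{eq13'}), and invoking Lemma~\ref{lem39'} in place of Lemma~\ref{lem39}. Everything reduces to two elementary facts about a non identically zero function $y$ and its logarithmic derivative $u=y'/y$: \textbf{(a)} $y$ is representable by generalized quadratures over $K$ if and only if $u$ is; and \textbf{(b)} $y$ has the form $\exp z$ with $z'=f$ algebraic over $K$ if and only if $u=f$ is algebraic over $K$.

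For fact (a), I would first note the forward implication: if $y\neq 0$ lies in some extension $F$ of $K$ by generalized quadratures, then, since $F$ is a differential field, $u=y'/y\in F$, so $u$ is representable by generalized quadratures over $K$. Conversely, if $u$ lies in an extension $F$ of $K$ by generalized quadratures, then adjoining to $F$ an element $y$ with $y'=uy$ — an exponential of integral over $F$ in the sense of Definition~\ref{def3} — produces an extension $F\langle y\rangle$ of $K$ by generalized quadratures containing $y$; such a $y$ is never identically zero, being of the form $\exp\!\big(\int u\big)$. Fact (b) is immediate: if $y=\exp z$ with $z'=f$ then $u=y'/y=z'=f$, and conversely, given $u=f$ algebraic over $K$, set $z=\int u$ and $y=\exp z$, so that $y'=z'y=uy$ and hence $u=y'/y$.

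With these two facts in hand the corollary follows. By Lemma~\ref{lem39'}, a non identically zero $y$ solves~(\ref{eq10'}) if and only if $u=y'/y$ solves~(\ref{eq13'}). For part~1: if~(\ref{eq10'}) has a nonzero solution $y$ representable by generalized quadratures over $K$, then by fact (a) $u=y'/y$ is representable by generalized quadratures over $K$ and, by Lemma~\ref{lem39'}, solves~(\ref{eq13'}); conversely, if~(\ref{eq13'}) has a solution $u$ representable by generalized quadratures over $K$, then by fact (a) the nonzero function $y=\exp\!\big(\int u\big)$ is representable by generalized quadratures over $K$ and, by Lemma~\ref{lem39'}, solves~(\ref{eq10'}). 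Part~2 is obtained from the same dictionary, with ``representable by generalized quadratures over $K$'' replaced throughout by ``algebraic over $K$'' and fact (a) replaced by fact (b); here $z=\int u$ and $y=\exp z$ provide the required solution of the form $\exp z$.

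The argument is essentially routine, so I do not expect a serious obstacle. The one point that needs care is the correctness of Lemma~\ref{lem39'}: using $y^{(k)}=D_k(u)y$ from Lemma~\ref{lem37} and the homogeneity of $P$ of degree $m$, one has $P(y,y',\dots,y^{(n)})=y^{m}\,P\big(D_0(u),\dots,D_n(u)\big)$, which vanishes if and only if $P\big(D_0(u),\dots,D_n(u)\big)=0$ because $y\not\equiv 0$. Apart from keeping this bookkeeping straight, nothing new arises beyond what was already handled for the linear equation~(\ref{eq10}).
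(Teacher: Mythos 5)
Your proposal is correct and follows the same route as the paper, which simply says that Corollary~\ref{cor40'} is proved exactly as Corollary~\ref{cor40}: pass to the logarithmic derivative $u=y'/y$ via Lemma~\ref{lem39'} and use the two equivalences (a) and (b) that constitute the paper's proof of Corollary~\ref{cor40}. Your added verification of Lemma~\ref{lem39'} through the homogeneity identity $P(y,\dots,y^{(n)})=y^{m}P(D_0(u),\dots,D_n(u))$ is exactly the intended bookkeeping.
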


Lemma~\ref{lem39'} and Corollary~\ref{cor40'} can be proved exactly in the way as Lemma~\ref{lem39} and Corollary~\ref{cor40}

Let us defined the {\em$\xi$-weighted degree $\deg_\xi x^p$} of the monomial  $x^p=x_0^{p_0}\cdot\dots\cdot x_n^{p_n}$ by the following formula: $$ \deg_\xi x^p=
\sum_{i=0}^{i=n} i p_i
.$$
We will say that a polynomial $P(x_0,\dots,x_n)$ satisfies the {\it $\xi$-weighted degree condition} if the sum of coefficients of all monomials in $P$ having the biggest $\xi$-weighted degree is not equal to zero. A polynomial $P$ having a unique  monomial with the biggest $\xi$-weighted degree  automatically satisfies this condition. For example a degree $m$ polynomial $P$ containing a term $ax_n^m$ with $a\neq 0$  automatically satisfies $\xi$-weighted degree condition.

\begin{thm}\label{thm32'} Consider the homogeneous equation~(\ref{eq10'}) with the polynomial $P$ satisfying the $\xi$-weighted degree condition. If this equation  has a solution representable by generalized quadratures over $K$ then it necessarily has a solution of the  form $y_1=\exp z$ where $z'$ is  algebraic  over $K$.
\end{thm}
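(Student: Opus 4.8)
The plan is to reduce the statement, via Corollary~\ref{cor40'} and Lemma~\ref{lem37}, to Rosenlicht's Theorem~\ref{thm36}, by checking that under the $\xi$-weighted degree condition the generalized Riccati equation (\ref{eq13'}) is literally an instance of equation (\ref{eq11}).

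First I would invoke Corollary~\ref{cor40'}: equation (\ref{eq10'}) has a nonzero solution representable by generalized quadratures over $K$ if and only if the generalized Riccati equation (\ref{eq13'}), $P(D_0,\dots,D_n)=0$, has a solution representable by generalized quadratures over $K$; and (\ref{eq10'}) has a solution $y=\exp z$ with $z'$ algebraic over $K$ if and only if (\ref{eq13'}) has a solution algebraic over $K$. (By ``a solution'' of the homogeneous equation (\ref{eq10'}) we always mean a nonzero one, as in Theorem~\ref{thm32}; this is the only reading making the statement non-vacuous.) So it is enough to prove: if (\ref{eq13'}) has a solution representable by generalized quadratures over $K$, then it has a solution algebraic over $K$.

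Next I would rewrite $P(D_0,\dots,D_n)$ using the structure of the $D_i$. By Lemma~\ref{lem37}(1), $D_i=u^i+\tilde D_i$ where $\tilde D_i$ is a polynomial in $u,u',\dots,u^{(i-1)}$ of total degree strictly less than $i$; thus $D_i$ has total degree exactly $i$ with leading (top total degree) part $u^i$. Since the top total degree part of a product of differential polynomials is the product of the leading parts of the factors, substituting $x_j\mapsto D_j$ into a monomial $c_p x_0^{p_0}\cdots x_n^{p_n}$ occurring in $P$ produces a differential polynomial of total degree $\deg_\xi x^p=\sum_j j\,p_j$ with leading part $c_p\,u^{\deg_\xi x^p}$. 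Let $N$ be the largest $\xi$-weighted degree among the monomials occurring in $P$; if $N=0$ then $P=c\,x_0^m$ with $c\in K\setminus\{0\}$, equation (\ref{eq10'}) is $c\,y^m=0$, which has no nonzero solution, so there is nothing to prove, and we may assume $N\ge 1$. Collecting the terms of total degree $N$ in $P(D_0,\dots,D_n)$, only the monomials $x^p$ of $P$ with $\deg_\xi x^p=N$ contribute, each contributing $c_p u^N$; hence the coefficient of $u^N$ equals $\alpha:=\sum_{\deg_\xi x^p=N}c_p$, which lies in $K$ and is nonzero precisely because $P$ satisfies the $\xi$-weighted degree condition. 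All other monomials of $P(D_0,\dots,D_n)$ have total degree $<N$, so $P(D_0,\dots,D_n)=\alpha\,u^N+R$ with $R$ a differential polynomial over $K$ in $u,u',\dots,u^{(n-1)}$ of total degree $<N$.

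Finally, dividing (\ref{eq13'}) by the nonzero element $\alpha\in K$ writes it as $u^N=Q(u,u',u'',\dots)$ with $Q=-R/\alpha$ of total degree $<N$; that is, (\ref{eq13'}) is exactly an instance of (\ref{eq11}) with $n$ replaced by the positive integer $N$. Applying Theorem~\ref{thm36} produces a solution of (\ref{eq13'}) algebraic over $K$, and then Corollary~\ref{cor40'} yields a solution $y_1=\exp z$ of (\ref{eq10'}) with $z'$ algebraic over $K$, as required. I expect the only substantial ingredient to be Theorem~\ref{thm36}, whose elementary proof (via the generalized Newton Theorem~\ref{thm42} and Puiseux expansions, in place of Rosenlicht's valuation-theoretic argument) is the real content of the following sections; the remainder is bookkeeping with the $\xi$-weighted grading, the one mildly delicate point being that a contribution of full $\xi$-weighted degree $N$ which is not a pure power of $u$ necessarily has total degree strictly below $N$ — which is exactly why the $\xi$-weighted degree condition is the correct hypothesis to isolate the leading term $\alpha\,u^N$.
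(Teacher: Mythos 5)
Your proof is correct and follows essentially the same route as the paper: reduce via Corollary~\ref{cor40'} to the generalized Riccati equation~(\ref{eq13'}), check that the $\xi$-weighted degree condition forces it into the Rosenlicht form $u^N=Q$ with $\deg Q<N$, and apply Rosenlicht's theorem. The only difference is that you spell out the leading-term computation (and the degenerate case $N=0$) that the paper dismisses as ``easy to check.''
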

\begin{proof} It is easy to check that if the polynomial $P$ satisfies the $\xi$-weighted degree condition then the generalized Riccati equation (\ref{eq13'}) satisfies the conditions of Theorem~\ref{thm35}. Thus Theorem ~\ref{thm32'} follows from  Theorem~\ref{thm35} and Corollary~\ref{cor40'}.
\end{proof}

\begin{remark}There exists a complete analog of Galois theory for linear homogeneous differential equations (see \cite{[10]}). Theorem~\ref{thm32} can be proved using this theory. The differential Galois group of a nonlinear homogeneous differential equation (\ref{eq10'}) could be very small and for such equation a complete analog of Galois theory  does not exist. Thus Theorem~\ref{thm32'} can not be proved in a similar way.
\end{remark}

\subsubsection{ Finite extensions of fields of rational functions}\label{sec3.3}

Here we  discuss finite extensions of the field $K(y)$ of rational functions over a subfield $K$ of the field of meromorphic function on a connected Riemann surface $U$. { We also state  Theorem~\ref{thm42} (generalized Newton's Theorem) which plays  a crucial role for this chapter.}

Let $F$ be  an extension of $K(y)$ by a root $z$ of a degree $m$ polynomial $P(z)\in (K[y])[z]$ over the ring $K[y]$ irreducible over the field $K(y)$. Let $X$ be the product $U\times \Bbb C^1$ where $\Bbb C^1$ is the standard complex line with the coordinate function $y$. An element of the field $K(y)$ can be considered as a meromorphic function on $X$. One can associate with the element $z\in F$  a multivalued  algebroid  function on $X$ defined by th equation $P(z)=0$. Let
$D(y)$ be the discriminant of the polynomial $P$. Let $\Sigma\subset U\times \Bbb C^1=X$ be the hypersurface defined by equation $p_m (y)\cdot D(y)=0$ where $p_m(y)$ is the leading coefficient of the polynomial $P$.

\begin{lem}\label{lem41} 1) About a point $x\in X\setminus \Sigma$ the equation $P(z)=0$ defines $m$  germs $z_i$ of analytic functions whose values at  $x$ are simple roots of polynomial $P$.

2) Let $x$ be the point $(a,y)\in U\times \Bbb C^1\setminus \Sigma$. Then the field $F$ is isomorphic to the extension $K_a(z_i)$ of the field $K_a$ of  germs at $a\in U$ of  functions from the field $K$ (considered as germs at $x=(a,y)$ of functions independent of $y$) extended by the germ $z_i$ at $x$ satisfying the equation $P(z)=0$.
\end{lem}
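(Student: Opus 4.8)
The plan is to read part~1 off the holomorphic implicit function theorem and to obtain part~2 from a single ``specialization'' homomorphism that realizes the abstract field $F$ concretely as a field of germs at the point $x$.

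For part~1, I would fix a point $x=(a,y_0)\in X\setminus\Sigma$ at which, moreover, the coefficients of $P$ are holomorphic (their polar set is a proper analytic subset of $X$, which one tacitly absorbs into $\Sigma$). Since $p_m(x)\neq 0$ and the $z$-discriminant satisfies $D(x)\neq 0$, the numerical polynomial $P(x,\,\cdot\,)\in\Bbb C[z]$ has degree exactly $m$ and $m$ pairwise distinct (hence simple) roots $\zeta_1,\dots,\zeta_m$, so $\partial P/\partial z$ does not vanish at any $(x,\zeta_j)$. The holomorphic implicit function theorem then produces, for each $j$, a unique holomorphic germ $z_j$ at $x$ with $z_j(x)=\zeta_j$ and $P(\,\cdot\,,z_j)\equiv 0$ near $x$. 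These $m$ germs take pairwise distinct values at $x$, and uniqueness in the implicit function theorem shows that every solution germ of $P(z)=0$ at $x$ is one of them; that is part~1.

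For part~2, let $\mathcal M_x$ be the field of germs at $x$ of meromorphic functions on $X$ --- it is a field, being the fraction field of the local domain $\mathcal O_x$ of holomorphic germs. Fix an index $i$ and define a ring homomorphism $\phi\colon K(y)[z]\to\mathcal M_x$ by sending each $f\in K$ to its germ at $x$ (a germ independent of $y$, so $\phi(K)=K_a$), sending the variable $y$ to the germ of the coordinate function, and sending $z$ to $z_i$. The one step that needs an analytic rather than a purely algebraic input is that $\phi$ is well defined and injective on $K(y)$: a nonzero $g=\sum_k g_k y^k\in K[y]$ has nonzero germ at $x$, because for $u$ near $a$ the one-variable polynomial $g(u,\,\cdot\,)$ is not identically zero unless every $g_k(u)=0$, and a meromorphic function on the connected surface $U$ vanishing near $a$ vanishes identically. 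Granting this, $\phi(P)$ is the germ of $P(\,\cdot\,,z_i)$, which is identically zero by part~1, so $(P)\subseteq\ker\phi$; since $P$ is irreducible over $K(y)$ and $K(y)[z]$ is a principal ideal domain, $(P)$ is maximal, and $\phi(1)=1\neq 0$ forces $\ker\phi=(P)$. Hence $\phi$ descends to an isomorphism of $F=K(y)[z]/(P)$ onto its image, namely the subfield of $\mathcal M_x$ generated over $K_a$ by the germ $z_i$ (the germ of the coordinate, that is the image of $y\in K(y)$, of course lies in it as well). Since the germ map identifies $K$ with $K_a$, this is the statement of part~2.

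I do not expect a genuine obstacle: part~1 is just the implicit function theorem, and part~2 is the standard ``evaluate at a point off the bad locus'' construction that realizes a function field as a field of germs. The only points calling for care are the injectivity of $\phi$ on $K(y)$, which rests on the identity theorem for meromorphic functions on the connected $U$ (together with the triviality that a nonzero one-variable polynomial has nonzero germ at every point), and the bookkeeping that the concrete realization of $F$ depends on the chosen root $z_i$ --- the $m$ resulting copies of $F$ inside $\mathcal M_x$ are permuted by analytic continuation of $z$ around $\Sigma$, so each is equally good. It is also worth recording explicitly that the germ of the coordinate $y$ survives inside the image, so the auxiliary variable $y$ is not lost on passing to germs at $x$.
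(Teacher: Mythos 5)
Your part 1 is exactly the paper's argument (the paper's entire proof of it is ``follows from the Implicit Function Theorem''), and your part 2 supplies, via the specialization homomorphism $\phi\colon K(y)[z]\to\mathcal M_x$ and the maximality of $(P)$ in the principal ideal domain $K(y)[z]$, the details behind the paper's one-line ``2) follows from 1)''; so the route is the intended one. There is, however, one step that does not hold water: the parenthetical claim that the germ of the coordinate $y$ ``of course'' lies in the subfield generated over $K_a$ by $z_i$ alone. What your homomorphism actually shows is that $F$ is isomorphic to the subfield of $\mathcal M_x$ generated by $K_a$, \emph{the coordinate germ}, and $z_i$ --- three generators, not two. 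In general the coordinate germ is not in $K_a(z_i)$: take $U=\Bbb C$, $K=\Bbb C(x)$ and $P(z)=z^2-x$, which is irreducible over $K(y)$; then $z_i=\pm\sqrt{u}$ is independent of the second coordinate, so $K_a(z_i)$ consists of germs of functions of $u$ alone and has transcendence degree $1$ over $\Bbb C$, whereas $F=K(y)(\sqrt{x})$ has transcendence degree $2$. In that case $F\not\cong K_a(z_i)$ at all, so no argument can close the gap as stated.

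To be fair, the gap is inherited from the wording of the lemma rather than introduced by you: part 2 omits the coordinate germ from the list of generators, while the statements that are actually applied later (Lemma 47, Theorem 48, and their exponential analogues) restore it --- there the field is restricted to a graph $\Theta_\rho$ and the generator corresponding to $y$ reappears explicitly as the germ of $\theta_\rho$. The clean conclusion of your own construction, and the one you should record, is $F\cong K_a\langle y,\,z_i\rangle$, the image of $\phi$; this collapses to $K_a(z_i)$ only when the coordinate germ happens to be rational in $z_i$ over $K_a$ (which never happens, for instance, when $P\in K[z]$). Everything else --- absorbing the polar locus of the coefficients into $\Sigma$, the injectivity of $\phi$ on $K(y)$ via the identity theorem on the connected surface $U$, and the observation that the $m$ embeddings are permuted by continuation around $\Sigma$ --- is correct and is precisely what the paper leaves unsaid.
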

\begin{proof} The statement 1) follows from the Implicit Function Theorem. The statement 2) follows from 1).
\end{proof}

Below we state Theorem~\ref{thm42} which is a generalization of Newton's Theorem  about the expansion of an algebraic functions as convergent Puiseux series. It is stated  without proof  (see comments in section~\ref{secondintro}).

We use notations introduced in the beginning of this section. Let $z$ be  an element  satisfying a polynomial equation $P(z)=0$ over the ring $K[y]$,  where $K$ is a subfield  of the field of meromorphic functions on $U$. Then there exists {\it a finite extension $K_P$ of the field $K$ associated with the element $z$} such that the following theorem holds.

\begin{thm}\label{thm42} There is a finite covering $\pi :U_P\rightarrow U\setminus O_P$ where $O_P\subset U$ is a discrete subset, such that  the following properties hold:

1) the extension $K_P$ can be realized by a subfield of the field of meromorphic functions on $U_P$ containing the field $\pi^*K$ isomorphic to $K$.

2) there is a continuous positive function $r:U\setminus O_P\rightarrow \Bbb R$ such that in the  open domain $W\subset (U\setminus O_P)\times \Bbb C^1$ defined by the inequality $|y|>r(a)$  all $m$ germs of  $z_i$ at a point $(a,y)$ can be developed into converging   Puiseux series

\begin{eqnarray}
z_i=z_{i_k}  y^{\frac {k}{p}}+z_{i_{k-1}}y^{\frac {k-1}{p}}+\dots \label{eq14}
\end{eqnarray}
whose coefficients $z_{i_j}$ are germs of analytic functions at the point $a\in U_P$ having analytical  continuation as regular functions on  $U_P$  belonging to the field  $K_P$.
\end{thm}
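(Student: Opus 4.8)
The plan is to mimic the classical Newton polygon argument, treating the coefficient field $K$ (meromorphic functions on $U$) exactly the way one treats a numerical ground field in the classical proof, and to control where the construction breaks down so that it can be repaired by passing to a suitable covering $\pi:U_P\to U\setminus O_P$. First I would clear denominators so that $P(z)\in K[y][z]$ with leading coefficient $p_m(y)$, and rewrite the equation near $y=\infty$ by substituting $y=w^{-1}$; then $z$ becomes an algebroid function of $w$ near $w=0$ with coefficients in $K$. The branches of $z$ as $y\to\infty$ are governed, as usual, by the lowest (in $w$) terms of $P$; running Newton's polygon in the variable $w$ produces, after finitely many steps, fractional-power expansions $z_i=\sum_j z_{i_j}y^{j/p}$ with a common ramification index $p$ dividing $m!$. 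At each step of the polygon one must (a) pick a slope, (b) solve a polynomial equation over $K$ for the next coefficient — this is where one is forced to adjoin roots of auxiliary polynomials to $K$ — and (c) possibly adjoin a $p$-th root of $y$. Collecting all the auxiliary polynomials arising in all branches gives a single finite extension $K_P/K$: realize it concretely as the field of meromorphic functions on a finite covering $U_P\to U\setminus O_P$, where $O_P$ is the (discrete) set of points of $U$ over which the covering ramifies or over which some leading coefficient / discriminant / resultant of an intermediate polynomial degenerates. This realizes conclusion~(1).

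For conclusion~(2), the Newton polygon construction is purely formal; convergence is the separate, classical point. Here I would run the standard majorant / implicit-function argument: once the leading behaviour of a branch is pinned down, the remaining equation for the tail $\zeta$ (after the substitution $z = (\text{leading terms}) + \zeta$) has a nonvanishing partial derivative in $\zeta$, so the analytic implicit function theorem gives a convergent expansion, with the radius of convergence controlled locally and uniformly on compact subsets of $U_P$. The function $r:U\setminus O_P\to\mathbb R$ is obtained by taking, over a point $a$, the supremum of the $|y|$-thresholds beyond which every branch's Puiseux series converges; continuity and positivity of $r$ follow from the locally uniform nature of the estimates (the coefficients of $P$ and of the intermediate polynomials are meromorphic, hence locally bounded, on $U_P$). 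The coefficients $z_{i_j}$ of the Puiseux series are, by construction, built from the coefficients of $P$ by finitely many field operations and root extractions carried out inside $K_P$, so they are germs of analytic functions at $a\in U_P$ that continue to single-valued meromorphic — in fact, by the construction, regular away from $O_P$ — functions on $U_P$ lying in $K_P$.

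The main obstacle I expect is not the formal polygon recursion but bookkeeping the finite extension uniformly: one must check that the finitely many auxiliary polynomial equations solved along the various branches can be accommodated by a \emph{single} finite covering $U_P$ of $U$, rather than a covering that varies from point to point, and that the bad locus $O_P$ stays discrete. This is where Lemma~\ref{lem41} is used: away from $\Sigma$ the $m$ branches are already honest analytic germs with coefficients obtained from $K$ by the implicit function theorem, so the only genuine ramification and the only new algebraic elements come from the behaviour at $y=\infty$, which is governed by finitely many polynomials over $K$ whose coefficients are meromorphic on $U$; the zero/pole sets of their leading coefficients, discriminants and pairwise resultants are discrete in $U$, and their union is $O_P$. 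A secondary technical point is the uniformity needed to produce the continuous function $r$: one must verify that the majorant estimates depend on the ground-field data only through quantities that are locally bounded and locally bounded away from zero on $U_P$, which is automatic since those data are meromorphic and $O_P$ has been removed. With these two points settled, the remainder is the classical Newton–Puiseux argument carried out verbatim over $K_P$.
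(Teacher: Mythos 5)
The paper deliberately states Theorem~\ref{thm42} \emph{without proof}, remarking only that the arguments are the well-known classical ones via Newton's polygon method (as in Ritt's Chapter V) and promising a modern proof in a separate paper, so there is no in-paper proof to compare against. Your sketch is exactly that classical Newton--Puiseux route adapted to functional coefficients and is consistent with what the paper indicates; the one point I would firm up is that regularity on $U_P$ of \emph{all} infinitely many coefficients $z_{i_j}$ requires the fixed-denominator structure of the tail recursion (one divides only by the nonvanishing $\partial P/\partial z$ evaluated at the leading terms), not merely the per-coefficient ``finitely many field operations'' observation, since otherwise the poles of successive coefficients could accumulate and $O_P$ would fail to stay discrete.
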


For the psecial case when $K$ is a subfield of the field of complex numbers, it is natural to refer to  Theorem~\ref{thm42} as Newton's Theorem. One can consider
$K$ as a field of constant functions on any connected Riemann surface $U$.  One can chose $O_P$ to be the empty set, $U_P$ to be equal $U$,  the projection  $\pi:V_P\rightarrow U$ to be  the identity map, the function $r:U\rightarrow \Bbb R$  to be a big enough constant. In this case Theorem~\ref{thm42} states that an algebraic function $z$ has a Puiseux expansion at infinity whose coefficients belong to a finite extension $K_P$ of the field $K$. This statement can be proved by Newton's polygon method.

Let $F$ be an extension of $K(y)$ by a root $z$ of the polynomial $P$ and let $K_P$ be the finite extension of the field $K$ introduced in Theorem~\ref{thm42}. The extension $F_P$ of the field $K_P(y)$ by $z$ is easy to deal with. Denote the product $U_P\times \Bbb C^1$ by $X_P$.

\begin{lem}\label{lem43}  Let $x\in X_P$ be the point $(a,y_0)\in U_P\times \Bbb C^1$. Then the field $F_P$ is isomorphic to the extension $K_{P,a}(z_i)$ of the field $K_{P,a}$ of  germs at $a\in U_P$ of  functions from the field $K_P$ (considered as germs at $x=(a,y_0)$ of functions independent of $y$) extended by the germ  at $x$  of the function $z_i$ defined by (\ref{eq14}).
\end{lem}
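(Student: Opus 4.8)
The plan is to mimic the proof of part 2 of Lemma~\ref{lem41}, using Theorem~\ref{thm42} to control the algebraic behaviour of $z_i$ near the chosen point $x=(a,y_0)$. First I would recall the structure of $F_P$: by definition $F_P=K_P(y)(z)$, where $z$ is a root of the polynomial $P$, now viewed over the field $K_P(y)$ rather than $K(y)$. Because $K_P$ is a finite extension of $K$, the polynomial $P$ may factor over $K_P(y)$; let $\tilde P\in (K_P[y])[z]$ be the irreducible factor of $P$ over $K_P(y)$ that is satisfied by $z$. Then $F_P\cong K_P(y)[z]/(\tilde P)$, and $[F_P:K_P(y)]=\deg_z\tilde P\le m$.

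Next I would set up the local picture at $x=(a,y_0)$ exactly as in Lemma~\ref{lem41}. Choosing $(a,y_0)$ outside the (discrete in $U_P$, hypersurface in $X_P$) locus where the leading coefficient or the discriminant of $\tilde P$ vanishes, the Implicit Function Theorem produces $\deg_z\tilde P$ germs of analytic solutions of $\tilde P(z)=0$ at $x$, each taking a simple root as its value; one of these is the germ $z_i$ determined by the branch appearing in the Puiseux expansion~(\ref{eq14}). The field $K_{P,a}$ of germs at $a\in U_P$ of functions from $K_P$ — regarded as germs at $x$ of functions independent of $y$ — is then a differential subfield of the field of germs of meromorphic functions at $x$, and it is isomorphic to $K_P$ (the evaluation map $f\mapsto f_a$ is injective on $K_P$ since $U_P$ is connected and the functions in $K_P$ are meromorphic). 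The claim is then that the assignment $z\mapsto z_i$ extends the isomorphism $K_P(y)\to K_{P,a}(y)$ of differential fields to an isomorphism $F_P\to K_{P,a}(z_i)$.

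The verification has two halves. On one hand, $z_i$ satisfies $\tilde P(z_i)=0$ over $K_{P,a}(y)$, so there is a surjective $K_{P,a}(y)$-algebra homomorphism $K_P(y)[z]/(\tilde P)\to K_{P,a}(z_i)$ sending $z$ to $z_i$; it is a field homomorphism, hence injective, so it is an isomorphism of fields, and it is compatible with differentiation because the derivation on each side is the unique one extending $d/dx$ on $K_{P,a}(y)$ (cf. Lemma~\ref{lem18} and Theorem~\ref{thm19}). On the other hand one must check that $K_{P,a}(z_i)$ is exactly the field generated over $K_{P,a}$ by $y$ and $z_i$ — this is immediate once $z_i$ is known to be algebraic of the right degree over $K_{P,a}(y)$, which is part 1 of the local picture above. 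The only delicate point is to make sure the chosen point $(a,y_0)$ avoids all the bad loci simultaneously and lies in the domain $W$ of Theorem~\ref{thm42}, so that the specific germ $z_i$ of~(\ref{eq14}) is genuinely one of the analytic germs supplied by the Implicit Function Theorem; this is where Theorem~\ref{thm42} does the real work, guaranteeing that $z_i$ (and its Puiseux coefficients) are honest analytic germs on $U_P$ and that the relevant discriminant and leading-coefficient hypersurface is a proper closed subset whose complement is nonempty. I expect this bookkeeping — rather than any conceptual difficulty — to be the main obstacle; once the point is chosen correctly, the isomorphism statement follows formally, just as the statement 2) of Lemma~\ref{lem41} followed from statement 1).
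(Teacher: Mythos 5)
Your proposal is correct and follows the same route as the paper, which disposes of this lemma in one line (``Lemma~\ref{lem43} follows from Theorem~\ref{thm42}''): you transplant the local picture of Lemma~\ref{lem41} to a point of the domain $W$ and use Theorem~\ref{thm42} to identify the Puiseux germ $z_i$ of (\ref{eq14}) with an honest analytic root, which is exactly the intended argument. Your explicit attention to the possible reducibility of $P$ over $K_P(y)$ (replacing $P$ by the irreducible factor $\tilde P$ annihilating $z$, and noting that only the germs $z_i$ that are roots of $\tilde P$ generate a field isomorphic to $F_P$) is a point the paper glosses over and is a genuine, if minor, improvement in precision.
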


Lemma~\ref{lem43} follows from Theorem~\ref{thm42}.

\subsubsection{Extension by one transcendental element.}

Let $U$ be a connected Riemann surface and let $K$ be a differential field of meromorphic functions on $U$. Let $\Bbb C^1$ be the standard complex  line with the coordinate function $y$. Elements of the field $K(y)$ of rational functions over $K$ could be considered as meromorphic  functions on $X=U\times \Bbb C^1$.

In the field $K(y)$   there are two natural operations of differentiation. The first operation $R(y)\rightarrow \frac{\partial R}{\partial x}(y)$ is defined as follows: the derivative $\frac{\partial }{\partial x}$ of the independent variable $y$ is equal to zero, and derivative $\frac{\partial }{\partial x}$ of an element $a\in K$ is  equal to its derivative $a'$ in the field $K$. For the second operation $R(y)\rightarrow \frac{\partial R}{\partial y}(y)$ the derivative of an element $a\in K$  is equal to zero and the derivative of the independent variable $y$ is equal to one.

Let $K\subset F$  be  differential fields and let $\theta\in F$  be a transcendental  element over $K$. Assume that  $\theta '\in K\langle \theta\rangle$. Under this assumption the field $K\langle\theta\rangle$ has a following description.

\begin{lem}\label{lem44} 1) The map  $\tau:K\langle \theta \rangle \rightarrow K(y)$ such that $\tau (\theta)=y$ and $\tau(a)=a$ for $a\in K$ provides an isomorphism between the field  $K\langle \theta \rangle$ considered  without the operation of differentiation and the field $K(y)$ of rational functions over $K$.

2) If $\tau (\theta')=w\in K(y)$ then for any $R\in K(y)$ and $z\in K\langle \theta \rangle $ such that $\tau(z)=R$ the following identity holds
\begin{eqnarray}
\tau (z')=\frac{\partial R}{\partial x}+ \frac{\partial R}{\partial y}w. \label{eq15}
\end{eqnarray}
\end{lem}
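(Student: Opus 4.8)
The plan is to prove both parts of Lemma~\ref{lem44} by exploiting the transcendence of $\theta$ over $K$ together with the standing assumption $\theta'\in K\langle\theta\rangle$. For part~1, the key observation is that since $\theta$ is transcendental over $K$, the field $K\langle\theta\rangle$ — viewed purely as a field, forgetting differentiation — is by definition generated over $K$ by $\theta$ with no algebraic relations, so the assignment $\theta\mapsto y$, $a\mapsto a$ for $a\in K$ extends to a well-defined field homomorphism $\tau\colon K\langle\theta\rangle\to K(y)$. I would check injectivity and surjectivity: surjectivity is immediate because $y$ and all $a\in K$ lie in the image, and injectivity follows because any element of $K\langle\theta\rangle$ is a rational expression $P(\theta)/Q(\theta)$ with coefficients in $K$, and its image $P(y)/Q(y)$ vanishes only if the corresponding rational function is identically zero, which by transcendence of $\theta$ forces the original element to be zero. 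Thus $\tau$ is a field isomorphism.

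For part~2, the strategy is to verify the identity~(\ref{eq15}) first on generators and then extend it to all of $K(y)$ using the derivation axioms. I would introduce the map $\delta\colon K(y)\to K(y)$ defined by $\delta(R)=\frac{\partial R}{\partial x}+\frac{\partial R}{\partial y}w$, where $\frac{\partial}{\partial x}$ and $\frac{\partial}{\partial y}$ are the two natural derivations on $K(y)$ described just before the lemma, and $w=\tau(\theta')$. The goal is to show $\tau(z')=\delta(\tau(z))$ for every $z\in K\langle\theta\rangle$, i.e.\ that $\tau$ intertwines the differentiation on $K\langle\theta\rangle$ with $\delta$ on $K(y)$. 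Since both $z\mapsto\tau(z')$ and $z\mapsto\delta(\tau(z))$ are additive and satisfy the Leibniz rule (the former because differentiation and $\tau$ are each additive/multiplicative, the latter because $\delta$ is a derivation as a sum of two derivations with $\frac{\partial R}{\partial y}w$ respecting Leibniz), it suffices to check the identity on the generating elements $a\in K$ and $\theta$.

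The verification on generators is the crux, though it is short. For $a\in K$ we have $\tau(a')=a'$ since $a'\in K$ and $\tau$ fixes $K$, while $\delta(a)=\frac{\partial a}{\partial x}+\frac{\partial a}{\partial y}w=a'+0=a'$, using that $a\in K$ has $\frac{\partial a}{\partial x}=a'$ and $\frac{\partial a}{\partial y}=0$ by the definitions preceding the lemma. For the generator $\theta$, we compute $\tau(\theta')=w$ by the hypothesis of part~2, and $\delta(\tau(\theta))=\delta(y)=\frac{\partial y}{\partial x}+\frac{\partial y}{\partial y}w=0+1\cdot w=w$, so the two agree. Because both maps are derivations agreeing on a generating set of the field, they agree everywhere, which is exactly~(\ref{eq15}).

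The main obstacle, and the only genuinely non-formal point, is justifying that it suffices to check the intertwining relation on generators rather than on all of $K(y)$. The careful way to phrase this is that the set of $R\in K(y)$ for which $\tau(\tau^{-1}(R))'=\delta(R)$ holds is closed under addition, multiplication, and the taking of inverses (the last using the quotient rule for both $\delta$ and the composite derivation, valid wherever the denominator is nonzero), hence forms a differential subfield containing $K$ and $y$; since $K$ and $y$ generate $K(y)$, this subfield is all of $K(y)$. Once this closure argument is made explicit, the lemma follows with no further computation.
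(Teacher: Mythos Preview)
Your proof is correct and is essentially a fully spelled-out version of the paper's own argument, which dismisses part~1 as ``straightforward'' and says part~2 ``follows from the chain rule.'' Your explicit introduction of the derivation $\delta(R)=\partial_x R + w\,\partial_y R$ and the verification on generators is exactly the content of that chain-rule remark.
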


\begin{proof}The first claim of the lemma is straightforward. The second claim follows from the chain rule.
\end{proof}

Let $\Theta\subset X=U\times \Bbb C^1$  be the graph of function $\theta:U\rightarrow \Bbb C^1$. The following lemma is straightforward.

\begin{lem}\label{lem45} The differential field $K\langle\theta \rangle$ is isomorphic to the field  $K(y)|_\Theta$ obtained by restriction to $\Theta$ of functions from the field $K(y)$ equipped with the differentiation given by (\ref{eq15}). For any point $a\in \Theta$ The differential field $K\langle\theta \rangle$ is isomorphic to the differential field of germs at $a\in \Theta$ of functions from $K(y)|_\Theta$.
\end{lem}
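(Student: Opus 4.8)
The plan is to package the two halves of Lemma 44 into a geometric statement about restriction of functions to the graph $\Theta$. First I would recall that by part 1) of Lemma 44 the map $\tau:K\langle\theta\rangle\to K(y)$ sending $\theta\mapsto y$ and fixing $K$ is a field isomorphism (ignoring differentiation); composing $\tau$ with the restriction homomorphism $K(y)\to K(y)|_\Theta$ that sends a rational function $R(x,y)$ to its restriction $R(p,\theta(p))$ on $\Theta$, I would observe that this composition is exactly the identity embedding $K\langle\theta\rangle\hookrightarrow$ (meromorphic functions on $U$), because $\theta$ itself is a meromorphic function on $U$ and $R(p,\theta(p))$ is literally the element $R(\theta)\in K\langle\theta\rangle$ viewed as a function. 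So as plain fields, $K\langle\theta\rangle\cong K(y)|_\Theta$ via $\tau$ followed by restriction; one should note here that restriction to $\Theta$ is injective on $K(y)$ precisely because $\theta$ is transcendental over $K$ (a nonzero $R\in K(y)$ cannot vanish identically on the graph, by the same argument as ``Liouville's principle'' earlier in the text), so no information is lost in restricting.

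Next I would check that this isomorphism is compatible with the differential structure. On $K(y)|_\Theta$ the differentiation is declared to be the one induced by formula (\ref{eq15}), i.e. $R\mapsto \partial R/\partial x + (\partial R/\partial y)\,w$ where $w=\tau(\theta')$. But part 2) of Lemma 44 says exactly that $\tau(z') = \partial R/\partial x + (\partial R/\partial y)\,w$ whenever $\tau(z)=R$; that is, the derivation on $K\langle\theta\rangle$ transported through $\tau$ is precisely the derivation (\ref{eq15}). Hence $\tau$ (composed with restriction) is an isomorphism of differential fields, which proves the first sentence of the lemma.

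For the second sentence I would pass to germs at a chosen point $a\in\Theta$. Taking germs at $a$ gives a differential-field homomorphism $K(y)|_\Theta \to (\text{germs at }a)$; I would argue it is injective by analytic continuation (a meromorphic function on the connected curve $\Theta$ whose germ at $a$ vanishes is identically zero, again because $\Theta$ is connected and $\theta$ is transcendental, so its image in $K(y)|_\Theta$ cannot be a nonzero element killed by germ-taking). Since $K\langle\theta\rangle$ already consists of (germs of) meromorphic functions, the composite $K\langle\theta\rangle\cong K(y)|_\Theta\to(\text{germs at }a)$ lands in the differential field of germs at $a$ of functions from $K(y)|_\Theta$ and is an isomorphism onto it. I expect the only mildly delicate point to be justifying these injectivity claims cleanly — i.e. that passing to $\Theta$ and then to a germ at a single point loses nothing — but this is just the transcendence of $\theta$ together with connectedness of $U$, already used in the form of Liouville's principle, so it is routine rather than a genuine obstacle.
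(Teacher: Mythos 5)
Your argument is correct and is exactly the route the paper intends: the paper states Lemma~45 without proof (``the following lemma is straightforward''), and the intended justification is precisely to transport the two parts of Lemma~44 through restriction to the graph $\Theta$ and then to germs, with injectivity at each stage supplied by the transcendence of $\theta$ over $K$ and the identity theorem on the connected surface. Your write-up supplies these details cleanly, so there is nothing to correct.
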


\subsubsection{An extension by integral}

Here we consider  extensions of transcendence degree one of a differential field $K$ containing an integral $y$ over $K$ which does not  belong to $K$, $y\notin K$.
\paragraph{A pure transcendental  extension by integral}\label{sec3.5.1}
Let $\theta$ be an integral over $K$,i.e $\theta'=f\in K$. Assume that $\theta$ is a transcendental element over $K$. \footnote{It is easy to check that if an integral $\theta$ over $K$ does not belong to $K$, then $\theta$ is a transcendental element over $K$ (see \cite{[6]}). We will not use this fact.}

\begin{lem}\label{lem46} 1) The field $K\langle \theta \rangle$ is isomorphic to the field $K(y)$ of rational functions over $K$ equipped with the following differentiation
\begin{eqnarray}
R'=\frac{\partial R}{\partial x}+ \frac{\partial R}{\partial y}f.\label{eq16}
\end{eqnarray}
2) For every complex number $\rho\in \Bbb C$ the map $\theta\rightarrow \theta+\rho$ can be extended to the unique isomorphism $G_\rho:K\langle \theta \rangle\rightarrow K\langle \theta \rangle$ which fixes elements of the field $K$.

3) Each isomorphism of $K\langle \theta \rangle$ over $K$ is an isomorphism $G_\rho$ for some $\rho\in \Bbb C$. Thus the Galois group of $K\langle \theta \rangle$ over $K$ is the additive group of complex numbers $\Bbb C$.
\end{lem}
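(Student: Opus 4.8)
The three assertions are formal consequences of the machinery already assembled, so the plan is short. For part 1) I would simply specialize Lemma~\ref{lem45} (equivalently Lemma~\ref{lem44}) to the present situation: since $\theta'=f\in K$, the element $w=\tau(\theta')$ in formula (\ref{eq15}) equals $f$ and is independent of $y$, so (\ref{eq15}) collapses to (\ref{eq16}), and Lemma~\ref{lem45} yields the stated isomorphism of differential fields between $K\langle\theta\rangle$ and $K(y)$ equipped with the derivation $R\mapsto \partial R/\partial x+(\partial R/\partial y)f$. One also notes in passing that $K(\theta)$ is already closed under this derivation, so in fact $K\langle\theta\rangle=K(\theta)$.

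For part 2) I would work inside this model. As $\theta$ is transcendental over $K$, the field $K\langle\theta\rangle=K(\theta)$ is a rational function field, so for each $\rho\in\mathbb{C}$ the rule $\theta\mapsto\theta+\rho$, $a\mapsto a$ for $a\in K$, extends uniquely to a $K$-automorphism $G_\rho$ of $K(\theta)$, with inverse $G_{-\rho}$; uniqueness is automatic, since a $K$-automorphism of $K(\theta)$ is pinned down by the image of $\theta$. The only thing to verify is that $G_\rho$ respects the derivation, and for a $K$-fixing field automorphism it is enough to check this on the single generator $\theta$ (the elements on which such an automorphism commutes with $'$ form a subfield containing $K$). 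Here $G_\rho(\theta')=G_\rho(f)=f$, while $(G_\rho\theta)'=(\theta+\rho)'=\theta'=f$ because $\rho'=0$, so the two sides agree.

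For part 3) I would take an arbitrary differential automorphism $\sigma$ of $K\langle\theta\rangle$ fixing $K$ pointwise and apply it to $\theta'=f$: this gives $(\sigma\theta-\theta)'=\sigma(f)-f=0$, so $\sigma\theta-\theta$ is a constant of $K\langle\theta\rangle$. The one genuinely non-formal point, and the place where transcendence of $\theta$ (or connectedness of the underlying Riemann surface) is really used, is that this constant must lie in $\mathbb{C}$: in the functional setting $K\langle\theta\rangle$ is a field of meromorphic functions on a connected surface, on which a function with vanishing derivative is a complex constant; abstractly one checks directly from (\ref{eq16}), using $\theta\notin K$, that $K(\theta)$ has no constants beyond $\mathbb{C}$. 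Thus $\sigma\theta=\theta+\rho$ for some $\rho\in\mathbb{C}$, and since $\sigma$ and $G_\rho$ agree on $K$ and on $\theta$, they coincide. Finally $G_\rho\circ G_{\rho'}$ and $G_{\rho+\rho'}$ agree on $\theta$, hence everywhere, so $\rho\mapsto G_\rho$ is an isomorphism of $(\mathbb{C},+)$ onto the Galois group of $K\langle\theta\rangle$ over $K$. I expect this last ``no new constants'' verification to be the only step requiring any thought; everything else is bookkeeping.
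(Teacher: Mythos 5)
Your proposal is correct and follows essentially the same route as the paper: part 1) by specializing Lemma~\ref{lem44}, part 2) by observing that $\theta+\rho$ is again a transcendental integral of $f$, and part 3) by noting that any two integrals of $f$ differ by a constant, which must lie in $\Bbb C$ (the paper handles your ``no new constants'' point implicitly via its standing convention that the field of constants of every differential field considered is $\Bbb C$, or, in the functional setting, via connectedness of the underlying Riemann surface). Your write-up merely makes explicit the verifications the paper leaves to the reader.
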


\begin{proof} The claim 1) follows from Lemma 44. For any $\rho \in \Bbb C$ the element $\theta_\rho=\theta+\rho$ is a transcendental element over $K$ and $\theta_\rho'$ equals to $f$.  Thus the claims 2) is correct.  The claim 3) follows from 2) because if $y'=f$ then $y=\theta_\rho$ for some $\rho\in \Bbb C$.
\end{proof}

\paragraph{A generalized extension by an integral}\label{sec3.5.2}

According to Lemma~\ref{lem46} the differential field $K\langle\theta\rangle$ is isomorphic to the field $K(y)$ with the differentiation given by (\ref{eq16}). Let $F$ be an extension of $K\langle\theta\rangle$ by an  element $z\in F$   which satisfies some equation $\tilde P(z)=0$ where $\tilde P$ is an irreducible polynomial over $K\langle \theta \rangle$. The isomorphism between $K\langle\theta\rangle$ and $K(y)$ transforms the polynomial $\tilde P$ into some polynomial $P$ over $K(y)$. Below we  use notation from section~\ref{sec3.3} and deal we the multivalued algebroid function $z$ on $X$  defined by $P(z)=0$.

Assume that at a point $x\in X$ there are germs of analytic functions $z_i$ satisfying the equation $P(z_i)=0$. Let $\theta_\rho$ be the function $(\theta+\rho):U_P\rightarrow \Bbb C^1$ and let $\Theta_\rho \subset X=U\times\Bbb C^1$ be its graph. The point  $x=(p,q)\in U\times \Bbb C^1$ belongs to the graph $\Theta_{\rho(x)}$ for $\rho(x)=q-\theta(p)$.

Let $K(y)|_{\Theta_{\rho(x)}}$ be the differential field of germs at the point $x\in \Theta_{\rho(x)}$ of  restrictions on $\Theta_{\rho(x)}$ of functions from the field $K(y)$ equipped with the differentiation given by (\ref{eq16}).

\begin{lem}\label{lem47} The differential field $F$ is isomorphic to the finite extension of the differential field  $K(y)|_{\Theta_{\rho(x)}}$ obtained by adjoining the germ at $x\in \Theta_{\rho(x)}$ of the restriction to $\Theta_{\rho(x)}$ of an analytic  germ $z_i$ satisfying~$P(z_i)=0$.
\end{lem}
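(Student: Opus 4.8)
The plan is to realise $F$ concretely as a finite extension of germs along the curve $\Theta_{\rho(x)}$, by pushing the isomorphism $K\langle\theta\rangle\cong K(y)$ of Lemma~\ref{lem46} down to $\Theta_{\rho(x)}$ and then invoking Theorem~\ref{thm19}.

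First I would note that $\Theta_{\rho(x)}$ is the graph of the function $\theta+\rho(x)$, whose derivative is $\theta'=f$. Hence, for every $R\in K(y)$, the derivative (\ref{eq16}) of $R$ restricts on $\Theta_{\rho(x)}$ to the ordinary derivative of the restriction $R|_{\Theta_{\rho(x)}}$ (identifying $\Theta_{\rho(x)}$ with $U$ via the projection), so restriction to $\Theta_{\rho(x)}$ is a homomorphism of differential fields from $K(y)$ with the derivation (\ref{eq16}) to $K(y)|_{\Theta_{\rho(x)}}$. It is injective on germs at $x$: a nonzero element of $K(y)$ cannot vanish on $\Theta_{\rho(x)}$, for otherwise a nonzero polynomial over $K$ would vanish on the graph of $\theta+\rho(x)$, contradicting the transcendence of $\theta$ over $K$. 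Composing with the isomorphism $K\langle\theta\rangle\cong K(y)$ of Lemma~\ref{lem46} (which sends $\theta\mapsto y$, fixes $K$, and carries the derivation of $K\langle\theta\rangle$ to (\ref{eq16})), I obtain a differential isomorphism $\psi\colon K\langle\theta\rangle\xrightarrow{\ \sim\ }K(y)|_{\Theta_{\rho(x)}}$. Since $P$ was defined as the image of $\tilde P$ under the isomorphism $K\langle\theta\rangle\cong K(y)$, the polynomial $\psi(\tilde P)$ is precisely the polynomial $\hat P$ whose coefficients are the germs at $x$ of the restrictions to $\Theta_{\rho(x)}$ of the coefficients of $P$; moreover $\hat P$ is irreducible over $K(y)|_{\Theta_{\rho(x)}}$, $\psi$ being an isomorphism and $\tilde P$ irreducible over $K\langle\theta\rangle$.

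Next I would check that the germ at $x$ of $z_i|_{\Theta_{\rho(x)}}$ is a root of $\hat P$: substituting the analytic germ $z_i$ into the identity $P(z_i)=0$ near $x$ and restricting it to $\Theta_{\rho(x)}$ turns it into $\hat P\bigl(z_i|_{\Theta_{\rho(x)}}\bigr)=0$. (The existence of such analytic germs $z_i$ at $x$ is the standing hypothesis; it holds for $x$ off the hypersurface $\Sigma$ as in Lemma~\ref{lem41}, in which case the $m$ germs $z_1,\dots,z_m$ restrict to the $m$ distinct roots of $\hat P$ near $x$, so that adjoining any one of them recovers the algebroid function cut out by $\hat P$.) Now I would apply Theorem~\ref{thm19} to the differential isomorphism $\psi$, the root $z\in F$ of $\tilde P$, and the root $z_i|_{\Theta_{\rho(x)}}$ of $\hat P=\psi(\tilde P)$: it extends $\psi$ uniquely to a differential isomorphism
\[
F=K\langle\theta\rangle(z)\ \xrightarrow{\ \sim\ }\ \bigl(K(y)|_{\Theta_{\rho(x)}}\bigr)\bigl(z_i|_{\Theta_{\rho(x)}}\bigr),
\]
which is exactly the assertion of the lemma.

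The only delicate bookkeeping is to track the translation: because $\psi$ sends the abstract generator $\theta$ to the concrete function $y|_{\Theta_{\rho(x)}}=\theta+\rho(x)$, one must make sure that $\tilde P$ is carried to the honest restriction of $P$ and not to a shifted polynomial — this is automatic once one recalls that $P$ is the image of $\tilde P$ under $\theta\mapsto y$ and that the restriction is performed afterwards. The other point needing attention is that $\Theta_{\rho(x)}$ is an integral curve for the derivation (\ref{eq16}) (equivalently, $(\theta+\rho(x))'=f$), which is what makes ``restriction to $\Theta_{\rho(x)}$'' a homomorphism of differential fields rather than merely of fields; granting these, the argument runs parallel to Lemmas~\ref{lem41},~\ref{lem43} and~\ref{lem45}.
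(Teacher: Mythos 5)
Your proposal is correct and follows essentially the same route as the paper: the paper's (very terse) proof handles the trivial case $F=K\langle\theta\rangle$ via Lemmas~\ref{lem45} and~\ref{lem46} — i.e., exactly your composite isomorphism $K\langle\theta\rangle\cong K(y)\cong K(y)|_{\Theta_{\rho(x)}}$, with injectivity coming from the transcendence of $\theta$ — and then invokes Theorem~\ref{thm19} to extend to the finite extension by $z$. You have merely written out the details (the integral-curve property of $\Theta_{\rho(x)}$, the irreducibility and identification of $\hat P$) that the paper leaves implicit.
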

\begin{proof} For the trivial extension  $F=K\langle \theta \rangle$  Lemma~\ref{lem47} follows from  Lemmas~\ref{lem45} and~\ref{lem46}. Theorem~\ref{thm19} allows one  to complete the proof for non trivial finite extensions $F$ of $K\langle \theta \rangle$.
\end{proof}

According to  section~\ref{sec3.3}, with the polynomial $P$ over $K(y)$ one can associate the finite extension $K_P$ of the field $K$ and the Riemann surface $U_P$ such that Theorem~\ref{thm42} holds. Since $K$ is  functional differential field the field $K_P$ has a natural structure of  functional differential field. Below we will apply Lemma~\ref{lem47} taking instead of $K$ the field $K_P$ and considering the extension $F_P\supset K_P\langle \theta \rangle$ by the same algebraic element $z\in F$. The use of $K_P$ instead of $K$ allows one to apply the expansion (14) for~$z_i$.

\begin{thm}\label{thm48} Let $x\in X_P= U_P\times \Bbb C^1$ be a point $(a,y_0)$ with $|y|>>0$. The differential field $F_P$ is isomorphic to the extension of the differential field  of  germs at the point $a\in U_P$ of functions from the differential field $K_P$ by the  following germs: by the germ at $a$ of the integral $\theta_{\rho(x)}$ of the function $f\in K$, and by a germ at $a$ of the composition $z_i(\theta_\rho)$ where $z_i$ is a germ at $x$ of a function given by a Puiseux series (14).
\end{thm}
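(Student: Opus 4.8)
The plan is to feed the Puiseux expansion of Theorem~\ref{thm42} into Lemma~\ref{lem47}, applied with the field $K_P$ in place of $K$. Since $K$ is a functional differential field, so is $K_P$: it is realized as a subfield of the meromorphic functions on $U_P$ containing $\pi^*K$, and it inherits the derivation $d/\pi^*dx$. The irreducible polynomial $\tilde P$ defining $z$ over $K\langle\theta\rangle\cong K(y)$ may then be viewed as a polynomial $P$ over $K_P(y)$, and $F_P$ is by definition the extension of $K_P\langle\theta\rangle$ --- isomorphic to $K_P(y)$ with the derivation~(\ref{eq16}) by Lemma~\ref{lem46} --- obtained by adjoining the same element $z$.

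First I would apply Lemma~\ref{lem47} over $K_P$, with $X_P=U_P\times\Bbb C^1$ and the point $x=(a,y_0)$: it yields that $F_P$ is isomorphic to the finite extension of the differential field $K_P(y)|_{\Theta_{\rho(x)}}$ of germs at $x$ --- where $\Theta_{\rho(x)}$ is the graph of the integral $\theta_{\rho(x)}=\theta+\rho(x)$, with $(\theta_{\rho(x)})'=f$ --- obtained by adjoining the germ at $x$ of the restriction to $\Theta_{\rho(x)}$ of an analytic branch $z_i$ with $P(z_i)=0$. Next, restriction to $\Theta_{\rho(x)}$ carries a rational function $R(x,y)\in K_P(y)$ to the germ at $a\in U_P$ of $p\mapsto R(p,\theta_{\rho(x)}(p))$, and the chain rule shows that this map intertwines the derivation~(\ref{eq16}) with ordinary differentiation on $U_P$; by Lemmas~\ref{lem45} and~\ref{lem46}, applied over $K_P$, the resulting differential field is precisely the one obtained from the germs at $a$ of the functions of $K_P$ by adjoining the germ at $a$ of the integral $\theta_{\rho(x)}$ of $f$.

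It then remains to recognize the adjoined germ of $z_i|_{\Theta_{\rho(x)}}$ as a composition. Because $|y_0|>r(a)$ (the meaning of $|y|\gg 0$ here) and $r$ is continuous, for $p$ near $a$ one still has $|\theta_{\rho(x)}(p)|>r(p)$, so $(p,\theta_{\rho(x)}(p))$ lies in the domain $W$ of Theorem~\ref{thm42}. Hence the germ of $z_i$ at $x$ is represented by the convergent Puiseux series~(\ref{eq14}) with coefficients $z_{i_j}\in K_P$, and restricting it to $\Theta_{\rho(x)}$ amounts to the substitution $y=\theta_{\rho(x)}(p)$, producing exactly the germ at $a$ of the composition $z_i(\theta_{\rho(x)})$. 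Combining the three steps gives the asserted isomorphism of $F_P$ with the extension of the germs at $a$ of $K_P$ by the germs of $\theta_{\rho(x)}$ and of $z_i(\theta_{\rho(x)})$.

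The only real obstacle is the differential bookkeeping: one must verify that restriction to $\Theta_{\rho(x)}$ is compatible with the derivations not merely on $K_P(y)$ (immediate from the chain rule) but on the algebraic extension by $z_i$ as well, so that the isomorphism of Lemma~\ref{lem47} is one of differential fields. This is where the uniqueness clause of Theorem~\ref{thm19} is used: the derivation of an algebraic element is uniquely determined by the derivation of its base field, so matching the two base-field derivations forces the rest. A minor technical point is shrinking the neighbourhood of $a$ so that $\Theta_{\rho(x)}$ stays inside $W$ and the chosen branch $z_i$ is single-valued there, which is possible by continuity of $r$ and by Lemma~\ref{lem41}.
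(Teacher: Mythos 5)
Your proposal follows exactly the route the paper takes: the paper's proof is the one-line statement that Theorem~\ref{thm48} follows from Lemma~\ref{lem47} (applied with $K_P$ in place of $K$, as announced in the paragraph preceding the theorem) together with Theorem~\ref{thm42}, which is precisely your plan, merely with the bookkeeping spelled out. Your additional care about the domain $W$, the continuity of $r$, and the uniqueness clause of Theorem~\ref{thm19} fills in details the paper leaves implicit but does not change the argument.
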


\begin{proof}Theorem~\ref{thm48} follows from Lemma~\ref{lem47} and Theorem~\ref{thm42}.
\end{proof}

\paragraph{Solutions of equations in a generalized extension by integral}\label{sec3.5.3}

Here we discuss lemma~\ref{lem49} providing an important step for our proof of Theorem~\ref{thm35}. We will use notations from sections~\ref{sec3.5.1} and \ref{sec3.5.2}.

Let $T(u,u',\dots,u^{(n)}$ be a polynomial in independent function $u$ and its derivatives with coefficients from the functional differential field $K$. Consider the equation
\begin{eqnarray}
T(u,u',\dots,u^{(N)})=0. \label{eq17}
\end{eqnarray}
In general  the derivative of the highest order $u^{(N)}$ cannot  be represented as a function  of other derivatives via the relation (\ref{eq17}). Thus even existence of local solutions of (\ref{eq17}) is problematic and we have no information about global behavior of its solutions.

Assume that (\ref{eq17}) has a solution $z$ in a generalized extension by integral $F\supset K\langle \theta \rangle$ of $K$. The solution $z$ has a nice global property: it is a meromorphic function on a Riemann surface $V$ with a projection $\pi:V\rightarrow U$ which proves a locally trivial covering above $U\setminus O$, where $O\subset U$ is discrete subset.

Moreover the existence of a solution $z$ implies the existence of a  family $z(\rho)$ of similar solutions  depending on a parameter $\rho$: one  obtains such family of solutions  by using an integral $\theta+\rho$ instead of the integral $\theta$ (see Lemma~\ref{lem47}). If the parameter $\rho$ has big absolute value $|\rho|>>0$ for a point $a\in U_P$ of the germ $z(\rho)$ can be expanded in the Piuseux series in $\theta_\rho$:

\begin{eqnarray}
z_i(\rho)=z_{i_k}  \theta_\rho^{\frac {k}{p}}+z_{i_{k-1}}\theta_\rho^{\frac {k-1}{p}}+\dots \label{eq18}
\end{eqnarray}

The series is converging and so it can be differentiate using the relation $\theta_\rho'=f$.

\begin{lem}\label{lem49} If $z_{i_k}'\neq 0$ then the leading term of the Puiseux series for $z_i(\rho)'$ is $z_{i_k}'  \theta_\rho^{\frac {k}{p}}$. Otherwise the leading term has degree $<\frac {k}{p}$. The leading term of the derivative of any order of $z_{i_k}$ has degree $\leq \frac {k}{p}$.
\end{lem}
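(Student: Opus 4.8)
The plan is to differentiate the convergent Puiseux series (\ref{eq18}) term by term --- which is legitimate, as noted in the line preceding the statement --- and to keep track of the exponents of $\theta_\rho$ that occur. Using $\theta_\rho'=f\in K$ and the chain rule, the derivative of a single term is
\[
\frac{d}{dx}\Bigl(z_{i_j}\,\theta_\rho^{j/p}\Bigr)=z_{i_j}'\,\theta_\rho^{j/p}+\frac{j}{p}\,f\,z_{i_j}\,\theta_\rho^{(j-p)/p},
\]
where $z_{i_j}'$ denotes the derivative of the coefficient $z_{i_j}$ inside the differential field $K_P$. Summing over $j$, the series for $z_i(\rho)'$ is again a Puiseux series in $\theta_\rho$ whose coefficients lie in $K_P$, whose exponents belong to $\tfrac1p\Bbb Z$, and which is bounded above by $k/p$.

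Next I would isolate the top-degree contribution. In the differentiated series the only term of exponent $\ge k/p$ is $z_{i_k}'\,\theta_\rho^{k/p}$, coming from $j=k$ in the first summand above; every other contribution --- namely $\tfrac{k}{p}\,f\,z_{i_k}\,\theta_\rho^{(k-p)/p}$, and, for $j<k$, the terms $z_{i_j}'\,\theta_\rho^{j/p}$ and $\tfrac{j}{p}\,f\,z_{i_j}\,\theta_\rho^{(j-p)/p}$ --- has exponent at most $(k-1)/p<k/p$ (here we use $p\ge 1$, so that $(k-p)/p\le (k-1)/p$). Hence if $z_{i_k}'\ne 0$ the leading term of $z_i(\rho)'$ is exactly $z_{i_k}'\,\theta_\rho^{k/p}$, of degree $k/p$; and if $z_{i_k}'=0$ that term drops out and the leading term of $z_i(\rho)'$ has degree $\le (k-1)/p<k/p$. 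This gives the first two assertions.

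For the last assertion I would induct on the order $m$ of the derivative, the inductive claim being that the leading term of $z_i(\rho)^{(m)}$ has degree $\le k/p$. The case $m=0$ is (\ref{eq18}) itself, with leading exponent $k/p$. If $z_i(\rho)^{(m)}$ is a Puiseux series whose leading term $c\,\theta_\rho^{d}$ has degree $d\le k/p$, then differentiating term by term as above sends $c\,\theta_\rho^{d}$ to $c'\,\theta_\rho^{d}+d\,f\,c\,\theta_\rho^{d-1}$, of exponent $\le d$, and sends each term of exponent $d'<d$ to terms of exponent $\le d'<d$; thus the leading term of $z_i(\rho)^{(m+1)}$ has degree $\le d\le k/p$, which closes the induction.

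I do not expect a genuine obstacle here: once term-by-term differentiation of the convergent series is granted --- which the text does just before the lemma --- everything reduces to bookkeeping with exponents. The one place needing a moment's care is the vanishing case $z_{i_k}'=0$, where one must check that no other contribution reintroduces the exponent $k/p$; this holds because differentiating a monomial of exponent $e$ produces only exponents $e$ and $e-1$, and the exponent-preserving piece $z_{i_j}'\,\theta_\rho^{j/p}$ attains the value $k/p$ only when $j=k$.
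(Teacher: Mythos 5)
Your proof is correct and follows exactly the route the paper intends: the text states Lemma~49 without proof, but the sentence immediately preceding it (``the series is converging and so it can be differentiated using the relation $\theta_\rho'=f$'') indicates precisely the term-by-term differentiation and exponent bookkeeping you carry out. Your isolation of $z_{i_k}'\,\theta_\rho^{k/p}$ as the unique contribution of exponent $k/p$, and the induction for higher-order derivatives, supply the details the paper omits; there is no gap.
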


Let us plug into the differential polynomial $T(u,u',\dots,u^{(N)}) $ the germ (\ref{eq18}) and develop the result into Puiseux series in $\theta_\rho$.  If the germ  $z_i(\rho)$ is a solution of the equation (\ref{eq17}) then all terms of this Puiseux series are equal to zero. In particular the leading coefficient is zero. This observation is an important step for  proving  Theorem~\ref{thm35}.

\subsubsection{An extension by exponential of integral}

Here we consider  extensions of transcendental degree one of a differential field $K$ containing an exponential integral $y$ over $K$ which is not algebraic over  $K$.

\paragraph{3.6.1. A pure transcendental  extension by an exponential integral}\label{sec3.6.1}

Let $\theta$ be an an exponential of integral over $K$, i.e $\theta'=f\theta$ where $f \in K$. Assume that $\theta$ is a transcendental element over $K$. \footnote{It is easy to check that if  an exponential of integral $\theta$ over $K$ is algebraic over  $K$, then $\theta$ is a radical over $K$, i.e. $\theta^k\in K$ for some positive integral $k$ (see \cite{[6]}). We will not use this fact.}

\begin{lem}\label{lem50} 1) The field $K\langle \theta \rangle$ is isomorphic to the field $K(y)$ of rational functions over $K$ equipped with the following differentiation
\begin{eqnarray}
R'=\frac{\partial R}{\partial x}+ \frac{\partial R}{\partial y}fy. \label{eq19}
\end{eqnarray}

2) For every complex number $\mu\in \Bbb C^*$ not equal to zero  the map $\theta\rightarrow \mu \theta$ can be extended to the unique isomorphism $G_\mu:K\langle \theta \rangle\rightarrow K\langle \theta \rangle$ which fixes elements of the field $K$.

3) Each isomorphism of $K\langle \theta \rangle$ over $K$ is an isomorphism $G_\mu$ for some $\mu\in \Bbb C^*$. Thus the Galois group of $K\langle \theta \rangle$ over $K$ is the multiplicative group of complex numbers $\Bbb C^*$.
\end{lem}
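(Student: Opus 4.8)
The plan is to follow the same pattern as the proof of Lemma~\ref{lem46}, with the additive group $\Bbb C$ replaced throughout by the multiplicative group $\Bbb C^*$. For part~1), I would simply invoke Lemma~\ref{lem44}: since $\theta$ is transcendental over $K$, the assignment $\tau(\theta)=y$, $\tau|_K=\mathrm{id}$ identifies the field $K\langle\theta\rangle$ (without its derivation) with $K(y)$; and since $\theta'=f\theta\in K\langle\theta\rangle$ we have $\tau(\theta')=fy$, so formula~(\ref{eq15}) of Lemma~\ref{lem44} specializes to exactly the differentiation~(\ref{eq19}) on $K(y)$.

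For part~2), the key point is that for $\mu\in\Bbb C^*$ the element $\mu\theta$ is again transcendental over $K$ (because $\mu\neq 0$ and $\theta$ is) and satisfies $(\mu\theta)'=\mu\,\theta'=\mu f\theta=f(\mu\theta)$, i.e.\ $\mu\theta$ is another exponential of integral of $f$. Applying part~1) to $\theta$ and to $\mu\theta$ separately realizes both $K\langle\theta\rangle$ and $K\langle\mu\theta\rangle$ as the one differential field $K(y)$ with differentiation~(\ref{eq19}); composing the two identifications yields a differential isomorphism $G_\mu$ of $K\langle\theta\rangle$ that fixes $K$ and sends $\theta$ to $\mu\theta$. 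Uniqueness is immediate, since a $K$-automorphism of the rational function field $K(\theta)$ is determined by the image of $\theta$.

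For part~3), let $\sigma$ be any differential isomorphism of $K\langle\theta\rangle$ over $K$. Then $\sigma(\theta)'=\sigma(\theta')=\sigma(f\theta)=f\,\sigma(\theta)$, whence
$$\left(\frac{\sigma(\theta)}{\theta}\right)'=\frac{\sigma(\theta)'\,\theta-\sigma(\theta)\,\theta'}{\theta^{2}}=\frac{f\sigma(\theta)\theta-\sigma(\theta)f\theta}{\theta^{2}}=0 .$$
Hence $\sigma(\theta)/\theta$ lies in the field of constants $\Bbb C$, and it is nonzero because $\sigma$ is injective and $\theta\neq 0$; writing $\sigma(\theta)=\mu\theta$ with $\mu\in\Bbb C^*$, the maps $\sigma$ and $G_\mu$ agree on $K$ and on the transcendental generator $\theta$, so $\sigma=G_\mu$. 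Thus the Galois group equals $\{G_\mu:\mu\in\Bbb C^*\}$, i.e.\ the multiplicative group $\Bbb C^*$.

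I do not expect a genuine obstacle here: the one point needing a word of care is checking that $\mu\theta$ (and likewise $\sigma(\theta)$) is transcendental over $K$, so that part~1) applies to it and the resulting maps are honest automorphisms — this is immediate from $\mu\in\Bbb C^*$ and the transcendence of $\theta$. The only substantive (though elementary) computation is the one displayed above, that the quotient of two exponentials of integral of the same element of $K$ is a nonzero constant.
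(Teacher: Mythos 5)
Your proposal is correct and follows essentially the same route as the paper: part 1) by specializing Lemma~\ref{lem44}, part 2) by observing that $\mu\theta$ is again a transcendental exponential of integral of $f$, and part 3) by noting that any $\sigma(\theta)$ satisfies $y'=fy$ and hence is $\mu\theta$ for some $\mu\in\Bbb C^*$. Your explicit computation that $(\sigma(\theta)/\theta)'=0$ merely spells out the step the paper leaves implicit, using the standing assumption that the field of constants is $\Bbb C$.
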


\begin{proof} Statement 1) follows from Lemma~\ref{lem44}. For any $\mu \in \Bbb C^*$ the element $\theta_\mu=\mu \theta$ is a transcendental element over $K$ and $\theta_\mu'=f\theta$.  Thus   2) is correct.  Statement  3) follows from 2) because if $y'=fy$ and $y\neq 0$ then $y=\theta_\mu$ for some $\mu\in \Bbb C^*$.
\end{proof}

\paragraph{A generalized extension by exponential of integral}

According to Lemma~\ref{lem50} the differential field $K\langle\theta\rangle$ is isomorphic to the field $K(y)$ with the differentiation given by (\ref{eq19}). Let $F$ be an extension of $K\langle\theta\rangle$ by an  element $z\in F$  which satisfies some equation $\tilde P(z)=0$ where $\tilde P$ is an irreducible polynomial over $K\langle \theta \rangle$. The isomorphism between $K\langle\theta\rangle$ and $K(y)$ transforms the polynomial $\tilde P$ into some polynomial $P$ over $K(y)$. Below we  use notation from  section~\ref{sec3.3} and we deal with  the multivalued algebroid function $z$ on $X$  defined by $P(z)=0$.

Assume that at a point $x\in X$ there are germs of analytic function $z_i$ satisfying the equation $P(z_i)=0$. Let $\theta_\mu$ be the function $(\mu \theta):U_P\rightarrow \Bbb C^1$ and let $\Theta_\mu \subset X=U\times\Bbb C^1$ be its graph. The point  $x=(p,q)\in U\times \Bbb C^1$ where $q\neq 0$ belongs to the graph $\Theta_{\mu(x)}$ if $\mu(x)=q\cdot\theta(p)^{-1}$.

Let $K(y)|_{\Theta_{\mu(x)}}$ be the differential field of germs at the point $x\in \Theta_{\mu(x)}$ of  restrictions on $\Theta_{\mu(x)}$ of functions from the field $K(y)$ equipped with the differentiation given by (\ref{eq19}).

\begin{lem}\label{lem51}The differential field $F$ is isomorphic to the finite extension of the differential field  $K(y)|_{\Theta_{\mu(x)}}$ obtained by adjoining the germ at $x\in \Theta_{\mu(x)}$ of the restriction to $\Theta_{\mu(x)}$ of an analytic  germ $z_i$ satisfying $P(z_i)=0$.
\end{lem}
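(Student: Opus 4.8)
The plan is to mimic the proof of Lemma~\ref{lem47} line for line, with the multiplicative group $\Bbb C^*$ and Lemma~\ref{lem50} playing the roles that the additive group $\Bbb C$ and Lemma~\ref{lem46} played there. First I would dispose of the trivial extension $F=K\langle\theta\rangle$. Fix the point $x=(p,q)$ with $q\neq 0$ and put $\mu=\mu(x)=q\cdot\theta(p)^{-1}$, so that $x$ lies on the graph $\Theta_\mu=\Theta_{\mu(x)}$ of the function $\theta_\mu=\mu\theta$. Since $(\mu\theta)'=f(\mu\theta)$, the element $\theta_\mu$ is again an exponential of integral of $f$ over $K$; it is transcendental over $K$ by part~2 of Lemma~\ref{lem50}, and $K\langle\theta_\mu\rangle=K\langle\theta\rangle$ as differential subfields of $F$ (because $\mu,\mu^{-1}$ are constants, hence lie in $K$). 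Under the isomorphism of Lemma~\ref{lem44} sending $\theta_\mu\mapsto y$ one has $\theta_\mu'\mapsto fy$, which is exactly the term appearing in the differentiation~(\ref{eq19}). Hence Lemma~\ref{lem45}, applied to the transcendental element $\theta_\mu$, identifies $K\langle\theta\rangle=K\langle\theta_\mu\rangle$ with the differential field of germs at $x$ of restrictions to $\Theta_{\mu(x)}$ of functions from $K(y)$ carrying the differentiation~(\ref{eq19}); this field is precisely $K(y)|_{\Theta_{\mu(x)}}$, which settles the trivial case.

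For a non-trivial finite extension $F=K\langle\theta\rangle(z)$ with $\tilde P(z)=0$ and $\tilde P$ irreducible over $K\langle\theta\rangle$, transporting $\tilde P$ through the isomorphism $\tau$ yields the polynomial $P$ over $K(y)$, irreducible over $K(y)\cong K\langle\theta\rangle$. By hypothesis there is an analytic germ $z_i$ at $x$ with $P(z_i)=0$, and its restriction to $\Theta_{\mu(x)}$ generates over $K(y)|_{\Theta_{\mu(x)}}$ a field abstractly isomorphic to $K\langle\theta\rangle(z)$. To upgrade this to a differential isomorphism I would invoke Theorem~\ref{thm19}: the differential isomorphism $K\langle\theta\rangle\to K(y)|_{\Theta_{\mu(x)}}$ obtained above carries the coefficients of $\tilde P$ to those of $P$, so it extends uniquely to a differential isomorphism $F\to K(y)|_{\Theta_{\mu(x)}}(z_i)$, which is the assertion of the lemma.

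I do not expect a genuine obstacle; the only point needing a word is that the differentiation produced by Theorem~\ref{thm19} on $K(y)|_{\Theta_{\mu(x)}}(z_i)$ agrees with the one obtained by restricting the ambient meromorphic derivation $\varphi\mapsto d\varphi/\pi^*dx$. This is automatic: as in the proof of Lemma~\ref{lem18}, differentiating $P(z_i)=0$ forces $z_i'$ to equal a fixed rational expression in $z_i$ over $K(y)|_{\Theta_{\mu(x)}}$, so both derivations are the unique ones extending the derivation of $K(y)|_{\Theta_{\mu(x)}}$ to the algebraic extension, and by the uniqueness clause of Theorem~\ref{thm19} they coincide. In short, Lemma~\ref{lem51} is the exponential-of-integral twin of Lemma~\ref{lem47}, and its proof is the same \emph{mutatis mutandis}.
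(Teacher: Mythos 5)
Your argument is correct and follows the paper's own proof exactly: the paper disposes of the trivial extension $F=K\langle\theta\rangle$ by citing Lemmas~\ref{lem45} and~\ref{lem50}, and handles nontrivial finite extensions via Theorem~\ref{thm19}, which is precisely your two-step plan. The extra details you supply (that $\theta_\mu=\mu\theta$ is again a transcendental exponential of integral generating the same field, and that the two derivations on the algebraic extension agree by uniqueness) are all sound elaborations of what the paper leaves implicit.
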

    \begin{proof} For the trivial extension  $F=K\langle \theta \rangle$  Lemma~\ref{lem51} follows from  Lemmas~\ref{lem45},\ref{lem50}. Theorem~\ref{thm19} allows one to complete the proof for non trivial finite extensions $F$ of $K\langle \theta \rangle$.
\end{proof}

According to section~\ref{sec3.3} with the polynomial $P$ over $K(y)$ one can associate the finite extension $K_P$ of the field $K$ and the Riemann surface $U_P$ such that Theorem~\ref{thm42} holds. Since $K$ is a functional differential field the field $K_P$ has a natural structure of  functional differential field. Below we will apply Lemma~\ref{lem51} taking instead of $K$ the field $K_P$ and considering the extension $F_P\supset K_P\langle \theta \rangle$ by the same algebraic element $z\in F$. The use of $K_P$ instead of $K$ allows to apply the expansion (\ref{eq14}) for~$z_i$.

\begin{thm}\label{thm52} Let $x\in X_P= U_P\times \Bbb C^1$ be a point $(a,y_0)$ with $|y|>>0$. The differential field $F_P$ is isomorphic to the extension of the differential field  of  germs at the point $a\in U_P$ by an exponential of an integral $\theta_{\mu(x)}$, where $\theta_{\mu(x)}'=f \theta_{\mu(x)}$ for the function $f\in K$, and by a germ at $a$ of the composition $z_i(\theta_\mu)$ where $z_i$ is a germ at $x$ of a function given by a Puiseux series (\ref{eq14}).
\end{thm}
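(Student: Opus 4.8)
The plan is to mimic exactly the proof of Theorem~\ref{thm48}, with the generalized extension by exponential of integral playing the role of the generalized extension by integral and Lemma~\ref{lem51} playing the role of Lemma~\ref{lem47}. First I would fix the point $x=(a,y_0)\in X_P=U_P\times\Bbb C^1$ with $|y_0|\gg 0$; since $y_0\neq 0$, it lies on the single graph $\Theta_{\mu(x)}$ with $\mu(x)=y_0\cdot\theta(a)^{-1}\in\Bbb C^*$. By Lemma~\ref{lem50}, the field $K_P(y)$ restricted to $\Theta_{\mu(x)}$ and equipped with the differentiation (\ref{eq19}) is isomorphic to the differential field $K_P\langle\theta_{\mu(x)}\rangle$, where $\theta_{\mu(x)}$ is an exponential of integral of $f\in K\subset K_P$, i.e.\ $\theta_{\mu(x)}'=f\,\theta_{\mu(x)}$. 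Applying Lemma~\ref{lem51} with $K_P$ in place of $K$ then identifies $F_P$ with the finite extension of the differential field of germs at $a\in U_P$ of functions from $K_P(y)|_{\Theta_{\mu(x)}}$ obtained by adjoining the germ at $x$ of the restriction to $\Theta_{\mu(x)}$ of an analytic germ $z_i$ satisfying $P(z_i)=0$.

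Next, since $|y_0|\gg 0$, Theorem~\ref{thm42} applies to the polynomial $P$ over $K_P(y)$ and lets me expand the germ $z_i$ at $x=(a,y_0)$ into the convergent Puiseux series (\ref{eq14}) in $y$ whose coefficients $z_{i_j}$ are germs at $a\in U_P$ of functions lying in $K_P$. Restricting this identity to the graph $\Theta_{\mu(x)}$, along which $y$ becomes $\theta_\mu=\mu\theta$, turns the germ of $z_i|_{\Theta_{\mu(x)}}$ into the germ at $a$ of the composition $z_i(\theta_\mu)=\sum_j z_{i_j}\theta_\mu^{j/p}$, a convergent series in the germ of $\theta_\mu$ with coefficients in $K_P$. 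Combining the two steps, $F_P$ is generated over the field of germs at $a$ of functions from $K_P$ by the germ of the exponential of integral $\theta_{\mu(x)}$ and by the germ of the composition $z_i(\theta_\mu)$, which is precisely the asserted description.

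The routine but essential point to verify — the only place anything could go wrong — is compatibility with differentiation: one must check that under the isomorphism of Lemma~\ref{lem51} the derivation (\ref{eq19}) on $K_P(y)$ restricts along $\Theta_{\mu(x)}$ to the derivation for which $\theta_\mu'=f\theta_\mu$ (this is Lemma~\ref{lem45} together with Lemma~\ref{lem50}), and that the series (\ref{eq14}), being convergent, may be differentiated term by term, so that $z_i(\theta_\mu)$ satisfies $P(z_i(\theta_\mu))=0$ as an element of the resulting differential field and not merely as an analytic identity. I do not expect a genuine obstacle here: the argument is word-for-word parallel to that of Theorem~\ref{thm48}, with $\mu$ and the multiplicative group $\Bbb C^*$ replacing $\rho$ and the additive group $\Bbb C$.
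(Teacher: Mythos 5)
Your proposal is correct and follows exactly the paper's route: the paper proves Theorem~\ref{thm52} in one line as a consequence of Lemma~\ref{lem51} and Theorem~\ref{thm42}, and your argument simply fills in the details of that deduction, in precise parallel with the proof of Theorem~\ref{thm48}. The compatibility-of-derivations check you flag is the right thing to verify and is indeed supplied by Lemmas~\ref{lem45} and~\ref{lem50}.
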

\begin{proof}Theorem~\ref{thm52} follows from Lemma
~\ref{lem51} and Theorem~\ref{thm42}.
\end{proof}

\paragraph{Solutions of equations in a generalized extension by exponential of integral}

 Here we discuss Lemma~\ref{lem53} providing an important step for our proof of Theorem~\ref{thm35}. Assume that (\ref{eq17}) has a solution $z$ in a generalized extension by exponential of integral $F\supset K\langle \theta \rangle$ of $K$. The solution $z$ has a nice global property: it is a meromorphic function on a Riemann surface $V$ with a projection $\pi:V\rightarrow U$ which proves a locally trivial covering above $U\setminus O$, where $O\subset U$ is a discrete subset.

Moreover the existence of a solution $z$ implies the existence of  a family $z(\mu)$ of similar solutions  depending on a parameter $\mu\in \Bbb C^*$: one  obtains such family of solutions  by using an exponential of integral $\mu \theta$ instead of the exponential of integral $\theta$ (see \ref{lem51}). If the parameter $\mu$ has big absolute value $\mu|>>0$ for a point $a\in U_P$ of the germ $z(\mu)$ can be expand in the Piuseux series in $\theta_\mu$:

\begin{eqnarray}
z_i(\mu)=z_{i_k}  \theta_\mu^{\frac {k}{p}}+z_{i_{k-1}}\theta_\mu^{\frac {k-1}{p}}+\dots \label{eq20}
\end{eqnarray}

The series is converging and so it can be differentiate using the relation $\theta_\mu'=f\theta_\mu$.

\begin{lem}\label{lem53} If $z_{i_k}'+\frac{k}{p}z_{i_k}\neq 0$ then the leading term of the Piueux series for $z_i(\mu)'$ is $(z_{i_k}'+\frac{k}{p}z_{i_k})  \theta_\mu^{\frac {k}{p}}$. Otherwise the leading term has degree $<\frac {k}{p}$. The leading term of derivative of any order of $z_{i_k}$ has degree $\leq \frac {k}{p}$.
\end{lem}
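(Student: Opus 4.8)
The plan is to substitute the convergent Puiseux series (\ref{eq20}) into the differentiation of $F_P$ and differentiate it term by term, then read off the leading monomial. This runs parallel to the proof of Lemma~\ref{lem49}, the only structural difference being that here differentiation preserves the exponents instead of lowering them.

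First I would record the action of the derivation on a single power of $\theta_\mu$: since $\theta_\mu'=f\theta_\mu$ with $f\in K$, the chain rule gives $(\theta_\mu^{j/p})'=\tfrac{j}{p}\theta_\mu^{j/p-1}\theta_\mu'=\tfrac{j}{p}f\,\theta_\mu^{j/p}$, so the exponent $j/p$ is unchanged. Hence a single monomial of (\ref{eq20}) contributes
\[ \bigl(z_{i_j}\theta_\mu^{j/p}\bigr)'=\bigl(z_{i_j}'+\tfrac{j}{p}f\,z_{i_j}\bigr)\theta_\mu^{j/p}, \]
again one monomial of exponent $j/p$ whose coefficient lies in $K_P$ (as $K_P$ is a differential field and $f\in K\subseteq K_P$). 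Term-by-term differentiation of (\ref{eq20}) is legitimate because, by Theorem~\ref{thm42}, the series converges in the region $|y|>r(a)$ and hence locally uniformly on the Riemann surface on which $z_i(\mu)$ is defined. Summing the contributions yields
\[ z_i(\mu)'=\sum_{j\le k}\bigl(z_{i_j}'+\tfrac{j}{p}f\,z_{i_j}\bigr)\theta_\mu^{j/p}, \]
a Puiseux series with the same set of exponents. Its coefficient of $\theta_\mu^{k/p}$ is $z_{i_k}'+\tfrac{k}{p}f\,z_{i_k}$: when this is nonzero it is the leading term, and when it vanishes all remaining exponents are $\le(k-1)/p<k/p$, which gives the two cases of the first assertion (the coefficient as printed in the statement should carry the factor $f$). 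For derivatives of higher order I would iterate the same step: each differentiation fixes every exponent, so the $n$-th derivative equals $\sum_{j\le k}c_j^{(n)}\theta_\mu^{j/p}$, where $c_j^{(n)}$ is the $n$-fold image of $z_{i_j}$ under the operator $c\mapsto c'+\tfrac{j}{p}fc$ (which keeps us inside $K_P$); in particular no exponent ever exceeds $k/p$, so the leading term of $z_i(\mu)^{(n)}$ has degree $\le k/p$.

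There is no genuine obstacle here; the statement is a direct computation. The one point that calls for a word of care is the justification of differentiating the series term by term, which rests on the local uniform convergence built into Theorem~\ref{thm42}; everything else — in particular that the coefficients produced at each stage stay inside $K_P$ — is immediate from $K_P$ being a differential field containing $f$.
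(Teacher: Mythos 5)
Your computation is exactly the argument the paper intends: Lemma~53 is stated without proof, immediately after the remark that the convergent series (20) may be differentiated term by term using $\theta_\mu'=f\theta_\mu$, and your term-by-term calculation simply carries this out. Your observation that the printed coefficient should be $z_{i_k}'+\frac{k}{p}f\,z_{i_k}$ (with the factor $f$) correctly identifies a typo in the statement, and it is harmless for the application in Theorem~55, where only the case $\frac{k}{p}=0$ of the leading coefficient is used.
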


Let us plug into the differential polynomial $T(u,u',\dots,u^{(N)}) $ the germ  (\ref{eq20}) and develop the result as a Puiseux series in $\theta_\mu$.  If the germ  $z_i(\mu)$ is a solution of the equation (\ref{eq17}) then all terms of this Piuiseux series are equal to zero. In particular the leading coefficient is zero. This observation is an important step for  proofing Theorem~\ref{thm35}.

\subsubsection{Proof of Rosenlicht's theorem}

Here we  complete an elementary  proof of Theorem~\ref{thm35} discovered by Maxwell Rosenlicht \cite{[13]}.
We will proof first the simpler Theorems~\ref{thm54} and~\ref{thm55} of a similar   nature.

\begin{thm}\label{thm54}
Assume that the equation (\ref{eq11}) over a functional differential field $K$ has a solution $z\in F$ where $F$ is a generalized extension by integral  of  $K$. Then (\ref{eq11}) has a solution in the algebraic extension $K_P$  of $K$ associated with the element $z\in F$.
\end{thm}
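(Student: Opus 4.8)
The plan is to move $z$ into the concrete picture furnished by Theorem~\ref{thm48} and then read off a solution over $K_P$ from the top coefficient of a Puiseux expansion. Writing $F$ as a finite extension of $K\langle\theta\rangle$ with $\theta'=f\in K$ and $\theta$ transcendental over $K$, the element $z$ becomes (via Lemma~\ref{lem44}) an algebroid function $z_i$ on $X=U\times\Bbb C^1$ defined by an irreducible polynomial $P$ over $K(y)$. Passing to the finite extension $K_P$ of $K$ and to $U_P$, Theorems~\ref{thm42} and~\ref{thm48} tell us that, for $|y|\gg 0$, the solution is $z_i(\theta_\rho)$, where $\theta_\rho=\theta+\rho$ runs through the one--parameter family of integrals of $f$ (Lemma~\ref{lem47}) and $z_i$ is given by a convergent Puiseux series~(\ref{eq18}), $z_i(\rho)=z_{i_k}\theta_\rho^{k/p}+z_{i_{k-1}}\theta_\rho^{(k-1)/p}+\dots$, all of whose coefficients $z_{i_j}$ lie in $K_P$.

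Next I would substitute this series into the differential polynomial $T(u,u',\dots)=u^{n}-Q(u,u',\dots)$ and compare terms by their $\theta_\rho$--degree. Since $z_i(\rho)$ solves~(\ref{eq11}) for every $\rho$, the resulting Puiseux series in $\theta_\rho$ vanishes identically, so each of its coefficients is $0$. By Lemma~\ref{lem49}, differentiation does not increase the $\theta_\rho$--degree: every $u^{(j)}$ has degree $\le k/p$, with degree--$\tfrac{k}{p}$ coefficient $z_{i_k}^{(j)}$ whenever that is nonzero. Hence $u^{n}$ contributes a single top term $z_{i_k}^{\,n}\theta_\rho^{nk/p}$ with $z_{i_k}^{\,n}\ne 0$, while a monomial of $Q$ of total degree $d\le\deg Q<n$ contributes $\theta_\rho$--degree at most $d\cdot\tfrac{k}{p}$ (the constant term of $Q$ contributing degree $0$).

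Now I would split on the sign of $k$. If $k>0$, then $\deg_{\theta_\rho}Q(u,u',\dots)\le(\deg Q)\tfrac{k}{p}<n\tfrac{k}{p}=\deg_{\theta_\rho}u^{n}$, so the coefficient of $\theta_\rho^{nk/p}$ in $T$ equals $z_{i_k}^{\,n}\ne 0$, contradicting its vanishing; hence $k\le 0$. If $k=0$, the coefficient of $\theta_\rho^{0}$ in $T$ is exactly $z_{i_0}^{\,n}-Q(z_{i_0},z_{i_0}',\dots)$, so $z_{i_0}\in K_P$ is a solution of~(\ref{eq11}). If $k<0$, then $u^{n}$ carries no $\theta_\rho^{0}$--term, whereas the only $\theta_\rho^{0}$--contribution coming from $Q(u,u',\dots)$ is its constant term $Q(0,0,\dots)$; vanishing of the degree--$0$ coefficient of $T$ forces $Q(0,0,\dots)=0$, so $u=0\in K\subseteq K_P$ solves~(\ref{eq11}). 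In all cases~(\ref{eq11}) has a solution in $K_P$.

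The main obstacle I expect is the Puiseux bookkeeping rather than any deep idea: one must justify term--by--term differentiation of the convergent series (as already noted in the text following Lemma~\ref{lem49}), track $\theta_\rho$--degrees through products and derivatives using Lemma~\ref{lem49}, and in particular recognise that the degenerate range $k<0$ is exactly the situation in which $Q$ must have vanishing constant term, so that the trivial solution $u=0$ finishes the argument. Once the estimate $\deg_{\theta_\rho}Q(u,u',\dots)\le(\deg Q)\tfrac{k}{p}$ is secured, the strict inequality $\deg Q<n$ does the rest.
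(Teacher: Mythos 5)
Your proof is correct and takes essentially the same route as the paper's: substitute the Puiseux expansion (\ref{eq18}) into $T=u^n-Q$, use Lemma~\ref{lem49} to bound $\theta_\rho$-degrees of derivatives, rule out $\frac{k}{p}>0$ because the top term $z_{i_k}^{\,n}\theta_\rho^{nk/p}$ of $u^n$ cannot be cancelled by $Q$ (whose monomials have total degree $<n$), and read off the solution $z_{i_0}\in K_P$ from the degree-zero coefficient. The only cosmetic difference is that the paper dispatches the case of vanishing constant term of $T$ at the outset (giving the trivial solution $u\equiv 0$), whereas you recover exactly that case inside your $k<0$ branch.
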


\begin{proof} If the constant term of the differential polynomial $T(u,u',u'',\dots)= u^n-Q(u, u', u'', \dots)$ is equal to zero, then (\ref{eq11}) has solution $u\equiv 0$ belonging to $K$. In this case we have nothing to prove.

Below we will assume that the constant term $T_0$ of $T$ is not equal to zero.
Thus the differential polynomial $T$ has two special terms: the term $u^n$ which is the only term of highest degree $n$ and the term $T_0$ which is the only term of smallest degree zero.

Assume that (\ref{eq11}) has a solution $z$ in a generalized extension by integral $F\supset K\langle \theta \rangle$ of $K$. According to section~\ref{sec3.5.3} the existence of such a solution $z$  implies the existence of  family $z(\rho)$ of germs of solutions  depending on a parameter $\rho$ such that when the    absolute value $\rho|$ is big enough $z(\rho)$ can be expanded in  Puiseux series (\ref{eq18})  in $\theta_\rho$.

We will show that the degree $\frac{k}{p}$ of the leading term in (\ref{eq18}) is equal to zero and the leading coefficient $z_{i_0}\in K_P$ satisfies (\ref{eq11}). This will prove Theorem~\ref{thm54}.

According to Lemma~\ref{lem49} the  leading term of the derivative of any order of $z_{i_k}$ has degree $\leq \frac {k}{p}$. Thus the leading term of the  Puiseux series obtained by plugging (\ref{eq18}) instead of $u$ into differential polynomial $Q$ has degree $< n \frac {k}{p}$. The leading term of the Puiseux series obtained by arising (\ref{eq18} ) to the  $n$-th power is equal to $n \frac {k}{p}$. If $\frac {k}{p}>0$ this term can not be canceled after plugging (\ref{eq18}) instead of $u$ into differential polynomial $T$. Thus the degree $\frac {k}{p}$ cannot be positive.

Let us plug (\ref{eq18}) into the differential polynomial $T(u,u',\dots)-T_0 $. We will obtain a Puiseux series of negative degree if $\frac {k}{p}<0$. Thus the term $T_0$ in the sum $(T-T_0)+T_0$ can not be canceled. Thus $\frac {k}{p}$ cannot be negative.

We proved that $\frac {k}{p}=0$. If in this case we plug (\ref{eq18}) into the differential polynomial $T(u,u',\dots)$ we obtain a Puiseux series having only one term of nonnegative degree which is equal to zero.
From Lemma~\ref{lem49} it is easy to see that this term equals to $T(z_{i_0},z_{i_0}',\dots) \theta_\rho^0$. Thus $z_{i_0}\in K_P$ is a solution of (\ref{eq11}). Theorem~\ref{thm54} is proved. \end{proof}

\begin{thm}\label{thm55}
Assume that  equation (\ref{eq11}) over a functional differential field $K$ has a solution $z\in F$ where $F$ is a generalized extension by an exponential of integral  of  $K$. Then (\ref{eq11}) has a solution in the algebraic extension $K_P$  of $K$ associated with the element $z\in F$.
\end{thm}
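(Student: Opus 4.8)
The plan is to mimic the proof of Theorem~\ref{thm54} almost verbatim, replacing the additive one‑parameter family of integrals $\theta_\rho=\theta+\rho$ by the multiplicative one‑parameter family of exponentials of integrals $\theta_\mu=\mu\theta$, and replacing Lemma~\ref{lem49} by its exponential analogue Lemma~\ref{lem53}. As before, if the constant term $T_0$ of the differential polynomial $T(u,u',u'',\dots)=u^n-Q(u,u',u'',\dots)$ vanishes, then $u\equiv 0\in K$ solves \eqref{eq11} and there is nothing to prove; so we may assume $T_0\neq 0$. Thus $T$ has the unique monomial $u^n$ of highest degree $n$ and the unique monomial $T_0$ of lowest degree $0$. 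By hypothesis \eqref{eq11} has a solution $z$ in a generalized extension by exponential of integral $F\supset K\langle\theta\rangle$, and by the discussion preceding Lemma~\ref{lem53} this produces, for $|\mu|\gg 0$ and a point $a\in U_P$, a germ of a solution $z_i(\mu)$ expanded in a convergent Puiseux series \eqref{eq20} in $\theta_\mu$ with leading exponent $k/p$ and coefficients in $K_P$.

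First I would bound the leading exponent from above. By Lemma~\ref{lem53} the leading term of a derivative of any order of a coefficient $z_{i_k}$ has $\theta_\mu$‑degree $\le k/p$; hence after substituting \eqref{eq20} into the differential polynomial $Q$ every resulting term has degree $<n\,(k/p)$, while raising \eqref{eq20} to the $n$‑th power yields a leading term of degree exactly $n\,(k/p)$. If $k/p>0$ this top term cannot be cancelled when we form $T=u^n-Q$, contradicting the fact that $z_i(\mu)$ solves \eqref{eq17}; so $k/p\le 0$. Next I would bound it from below: substituting \eqref{eq20} into $T-T_0$ produces only terms of degree $\le 0$, and if $k/p<0$ some of them would have negative degree and could not be cancelled by the constant $T_0$; so $k/p\ge 0$. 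Therefore $k/p=0$.

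With $k/p=0$, substituting \eqref{eq20} into $T$ yields a Puiseux series all of whose terms vanish; reading off the coefficient of $\theta_\mu^0$ and using Lemma~\ref{lem53} (so that each derivative of $z_{i_0}$ contributes to the degree‑$0$ part with value obtained by the appropriate differentiation rule $\theta_\mu'=f\theta_\mu$), one finds that this coefficient equals $T(z_{i_0},z_{i_0}',\dots)$. Hence $z_{i_0}\in K_P$ is a solution of \eqref{eq11}, which proves Theorem~\ref{thm55}. The only point requiring care — the step I expect to be the main obstacle — is the last one: in the exponential case the derivative of the leading coefficient $z_{i_k}$ with $k/p=0$ is simply $z_{i_0}'$, but one must check that no lower‑order monomial of \eqref{eq20}, after differentiation via Lemma~\ref{lem53}, can feed back into the $\theta_\mu^0$ coefficient; this is exactly the content of the degree estimates in Lemma~\ref{lem53} and is what makes the extracted degree‑$0$ equation genuinely $T(z_{i_0},z_{i_0}',\dots)=0$ rather than a perturbed version of it.
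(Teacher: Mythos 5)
Your proposal is correct and follows essentially the same route as the paper: the paper's proof of Theorem~55 simply says to repeat the proof of Theorem~54 with the multiplicative family $\theta_\mu=\mu\theta$ and the exponential analogue of the degree estimates (Lemma~53, which the paper's text mislabels as Lemma~51), noting exactly as you do that when $k/p=0$ the leading coefficient of the derivative is $z_{i_0}'$, so the degree-zero coefficient extracted from $T$ is genuinely $T(z_{i_0},z_{i_0}',\dots)$.
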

\begin{proof}Theorem~\ref{thm55} can be proved exactly in the same way as Theorem ~\ref{thm54}. Instead of Lemma~\ref{lem49} one has to use  Lemma~\ref{lem51}. In the case when  leading term of the Puiseux expansion of $z_\mu$ has degree zero, the leading coefficient of its derivative equals to $z_{i_0}'$ (see Lemma~\ref{lem51}). That is why the case $\frac {k}{p}=0$ in Theorem~\ref{thm55} can be treated exactly in the same way as in Theorem\ref{thm54}.
\end{proof}
Now we ready to prove  Theorem~\ref{thm35}.

\begin{proof} {Proof of Theorem 35} By assumption the equation (\ref{eq11})  has a solution $z\in F$ where $F$ is an extension of $K$ by generalized quadratures. By definition there is a chain  $K=F_0\subset \dots\subseteq F_m$ such that $F\subset F_m$ and for every $i=0$, $\dots$, $m-1$  or $F_{i+1}$ is a finite extension of $F_i$, or $F_{i+1}$ is  a generalized extension by integral of $F_i$, or $F_{i+1}$ is  a generalized extension by exponential integral of $F_i$. We prove Theorem~\ref{thm35} by induction in the length $m$ of the chain of extension. For $m=1$ Theorem ~\ref{thm35} follows from Theorem~\ref{thm54}, or from Theorem~\ref{thm55}. Assume that  Theorem~\ref{thm35} is true for $m=k$. A chain $F_0 \subset F_1\subset \dots\subset F_{k+1}$ provides the chain $F_1\subset \dots\subset F_{k+1}$ of extensions of of length $k$ for the field $F_1$. Thus (\ref{eq11}) has an algebraic solution $z$ over the field $F_1$.
The extension $F_0\subset \tilde F_1$, where $\tilde F_1$ is the extension of $F_1$ by the element $z$, is either an algebraic extension, or extension by generalized integral or extension by generalized exponential of integral. Thus for the extension $F_0\subset \tilde F_1$ Theorem ~\ref{thm35} holds. We completed the inductive proof of Theorem~\ref{thm35}.
\end{proof}

\section{Topological Galois Theory}\label{chap2}

\subsection{Introduction}

In this section we present an outline   of topological Galois theory based on the  book~\cite{[6]}. The theory   studies topological obstructions to solvability  of equations ``in finite terms" i.e. to their solvabiltiy by radicals, by elementary functions, by quadratures and by functions belonging to other Liouvillian classes.

As was discovered by  Camille Jordan  the
Galois group of  an algebraic equation over the field of rational functions of several complex variables has a topological meaning: it is isomorphic to the monodromy group of the algebraic function defined by this algebraic equation.  Therefore  the monodromy group is
responsible for the representability of an algebraic function
by radicals.

In the section~\ref{ch2sec1} we present a  topological proof of the nonrepresentability of algebraic functions by radicals. This proof is based on my old paper~\cite{[4]}. It contains a germ of  topological Galois theory.

Not only algebraic functions have a
monodromy group.  It is defined for any solution of a linear differential equation whose  coefficients are rational  functions and for many more functions, for which the Galois group does not make sense.

It is thus natural to try using the monodromy group
for these functions instead of the Galois group to prove that
they do not belong to a certain Liouvillian class. This particular
approach is implemented in   topological
Galois theory (see~\cite{[6]}), which has a one-dimensional version and a multidimensional version.

In the sections~\ref{ch2sec2} and~\ref{ch2sec3} we present an outline of the one-dimensional version and an outline of the multidimensional version. These sections contain definitions, statements of results and comments to them. Basically no proofs are presented.

\subsection{On Representability of Algebraic Functions by Radicals}\label{ch2sec1}
\subsubsection{Introduction}\label{ch2sec1ss1}  This section is dedicated to a self contained simple proof of the classical criteria for  representability of  algebraic functions of several complex variables by  radicals. It also contains a  criteria for  representability of  algebroidal functions by composition of single-valued analytic functions and radicals, and a  result related to the 13-th Hilbert problem.

Consider an algebraic equation
\begin{eqnarray}
 P_n y^n+P_{n-1}y^{n-1}+\dots +P_0=0, \label{eq2.1}
 \end{eqnarray}
 whose coefficients  $P_n,\dots, P_0$ are polynomials  of $N$ complex variables $x_1,\dots, x_N$.
Camille Jordan discovered that  the  Galois group of the equation (\ref{eq2.1}) over the field $\mathcal R$ of rational functions of $x_1,\dots,x_N$ has a topological meaning (see theorem~\ref{thm2.3} below): it is isomorphic to the {\it monodromy group} of the equation (\ref{eq2.1}).

According to   the Galois theory,  equation (\ref{eq2.1}) is solvable
by  radicals over the field $R$ if and only if
its Galois group is solvable. If the equation (\ref{eq2.1}) is irreducible it defines a multivalued algebraic function $y(x)$.
The Galois theory and Theorem~\ref{thm2.3} imply  the  following criteria for representability of an algebraic function by radicals, which consists of  two statements:

1) {\it  If the  monodromy group of an algebraic function  $y(x)$ is
solvable, then $y(x)$ is representable by radicals.}

2) {\it \it  If the  monodromy group of an algebraic function  $y(x)$ is not
solvable, then $y(x)$ is not representable by radicals.}

 We reduce the first statement  to linear algebra (see Theorem~\ref{thm2.10} below) following the book~\cite{[6]}.

We prove the second statement  topologically without using Galois theory. Vla\-di\-mir Igorevich Arnold found the first topological proof of this statement~\cite{[1]}. We use another topological approach (see Theorem~\ref{thm2.15} below) based on the paper~\cite{[4]}.  This paper contains  the first result of  topological Galois theory~\cite{[6]} and it gave a hint for its further development.

\subsubsection{Monodromy group and Galois group}\label{ch2sec1ss2}

Consider the  equation (\ref{eq2.1}). Let $\Sigma \subset \Bbb C^N$ be the hypersurface defined by  equation $P_n J=0$, where $P_n$ is the leading coefficient and $J$ is the discriminant of the equation (\ref{eq2.1}). The  {\it  monodromy group } of the equation (\ref{eq2.1})  is the group of all  permutations of  its solutions  which are induced by
motions  around the singular set $\Sigma $ of the equation (\ref{eq2.1}). Below we discuss this definition more precisely.

At a point $x_0\in \Bbb C^N\setminus \Sigma $ the set $Y_{x_0}$   of all germs   of analytic  functions  satisfying  equation (\ref{eq2.1}) contains exactly $n$ elements, i.e. $Y_{x_0}=\{y_1,\dots,y_n\}$. Indeed, if  $P_n(x_0)\neq 0$ then for    $x=x_0$   equation (\ref{eq2.1}) has $n$ roots counted with multiplicities. If in addition   $J(x_0)\neq 0$ then all these roots are simple. By the implicit function theorem each simple root can be extended to a germ of a regular function satisfying the equation  (\ref{eq2.1}).

Consider a closed curve  $\gamma $ in
$\Bbb C^N\setminus \Sigma $ beginning and ending at the point $x_0$. Given a germ $y\in Y_{x_0}$ we can continue it along the loop $\gamma$ to obtain another germ $y_\gamma\in Y_{x_0}$.
Thus each such loop $\gamma$   corresponds to a permutation $S_{\gamma}:Y_{x_0}\rightarrow Y_{x_0}$
 of the set $Y_{x_0}$ that maps a germ $y\in Y_{x_0}$ to the germ $y_{\gamma}\in Y_{x_0}$. It is  easy to see that the map $\gamma\rightarrow S_\gamma$ defines a homomorphism   from the fundamental
group $\pi _1(\Bbb C^N\setminus \Sigma,x_0)$ of the domain $\Bbb C^N\setminus \Sigma $ with the base point $x_0$ to the group $S(Y_{x_0})$ of permutations. The {\it
monodromy group} of the equation (\ref{eq2.1}) is the image of
the fundamental group   in the group
$S(Y_{x_0})$ under this homomorphism.

\begin{remark} Instead of the point $x_0$ one can choose any other point $x_1\in \Bbb C^N\setminus \Sigma$. Such a choice will not change the    monodromy group  up to an isomorphism. To fix this isomorphism one can choose any curve $\gamma:I\rightarrow \Bbb C^N\setminus \Sigma$ where $I$ is the segment $ 0\leq t\leq 1$ and $\gamma(0)=x_0$, $\gamma(1)=x_1$ and identify each germ $y_{x_0}$ of solution of (\ref{eq2.1}) with its continuation $y_{x_1}$ along $\gamma$.

Instead of the hypersurface $\Sigma$ one can choose any bigger algebraic hypersurface $D$, $\Sigma\subset D\subset \Bbb C^N$. Such a choice will not change the    monodromy group: one can slightly  move a curve $\gamma \in \pi_1(\Bbb C^N\setminus \Sigma, x_0)$ without changing the map $S_\gamma$ in such a way that $\gamma$ will not intersect $D$.
\end{remark}

The field of rational functions of $x_1,\dots, x_N$ is isomorphic to the field $\mathcal R$ of germs of rational functions at the point $x_0\in \Bbb C^N\setminus \Sigma $.
Consider the field extension  $\mathcal R \langle y_1,\dots,y_n\rangle$ of $\mathcal R$ by the  germs  $ y_1,\dots y_n$ at $x_0$  satisfying the equation (\ref{eq2.1}).

\begin{lem}\label{lem2.1} Every permutation $S_\gamma$ from the monodromy group can be uniquely extended to an automorphism of the   field $\mathcal R\{ y_1,\dots
,y_n\}$ over the field $\mathcal R$.
\end{lem}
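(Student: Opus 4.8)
The plan is to exhibit the extension map directly and then verify it is a well-defined field automorphism over $\mathcal R$. First I would observe that the germs $y_1,\dots,y_n$ satisfying $(\ref{eq2.1})$ at the base point $x_0$ generate a finite extension of $\mathcal R$: since each $y_i$ is algebraic over $\mathcal R$ (it satisfies $(\ref{eq2.1})$), the field $\mathcal R\langle y_1,\dots,y_n\rangle$ coincides with the ring $\mathcal R[y_1,\dots,y_n]$ and is a finite-dimensional $\mathcal R$-vector space. A permutation $S_\gamma$ in the monodromy group is, by construction, realized by analytic continuation along a loop $\gamma$ based at $x_0$ inside $\Bbb C^N\setminus\Sigma$; analytic continuation along $\gamma$ carries germs of analytic functions to germs of analytic functions, respects sums and products of germs, and fixes every germ of a rational function (since rational functions are single-valued on $\Bbb C^N\setminus\Sigma$, continuation along a closed loop brings their germs back to themselves). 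Hence continuation along $\gamma$ induces a ring homomorphism $\widehat{S_\gamma}$ on $\mathcal R[y_1,\dots,y_n]$ which is the identity on $\mathcal R$ and sends the generating set $\{y_1,\dots,y_n\}$ to itself by the prescribed permutation.

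Next I would check that $\widehat{S_\gamma}$ is an automorphism. It is $\mathcal R$-linear and multiplicative, so it is an $\mathcal R$-algebra endomorphism of a finite-dimensional $\mathcal R$-algebra; since it permutes a spanning set, it is surjective, hence bijective, hence an automorphism of the field $\mathcal R\langle y_1,\dots,y_n\rangle$ over $\mathcal R$. The key subtlety — and the one point that deserves care — is \emph{well-definedness}: a given element of the field is represented by many different polynomial expressions in $y_1,\dots,y_n$ with coefficients in $\mathcal R$, so I must argue that $\widehat{S_\gamma}$ does not depend on the chosen representative. This is immediate from the functional viewpoint used throughout the paper: analytic continuation along $\gamma$ is an operation on the actual germs (analytic functions near $x_0$), not on formal symbols, so if two polynomial expressions define the same germ then their images under continuation define the same germ. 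Equivalently, continuation along $\gamma$ descends through the relations defining the ideal of algebraic relations among $y_1,\dots,y_n$ over $\mathcal R$, because those relations are themselves identities among germs that are preserved by continuation.

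Finally, uniqueness: any automorphism of $\mathcal R\langle y_1,\dots,y_n\rangle$ over $\mathcal R$ extending the permutation $S_\gamma$ of $\{y_1,\dots,y_n\}$ is completely determined on the generating set and on $\mathcal R$, and the field is generated over $\mathcal R$ by $y_1,\dots,y_n$, so two such automorphisms agree everywhere. This gives the uniqueness clause and completes the argument.

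I expect the main obstacle to be purely expository rather than mathematical: making precise, in the global multivalued-function language of Section~\ref{classes}, that "analytic continuation along $\gamma$" genuinely yields a single-valued operation on the abstract field $\mathcal R\langle y_1,\dots,y_n\rangle$ — i.e. that the choice of loop $\gamma$ within a fixed homotopy class, and the passage from germs to field elements, introduce no ambiguity. Once one grants (as the paper implicitly does) that continuation is an operation on germs compatible with the field operations and trivial on $\mathcal R$, everything else is the short linear-algebra verification above.
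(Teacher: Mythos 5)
Your proposal is correct and follows essentially the same route as the paper: both define the extension by analytic continuation of germs along the loop $\gamma$, note that continuation preserves the arithmetic operations and fixes germs of rational functions because they are single-valued, and derive uniqueness from the fact that the field is generated over $\mathcal R$ by $y_1,\dots,y_n$. Your additional remarks on well-definedness and on surjectivity via finite-dimensionality are sound elaborations of points the paper leaves implicit.
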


\begin{proof} Every element $f\in \mathcal R \langle y_1,\dots,y_n\rangle$ is a rational function of $x, y_1,\dots,y_n$. It can be continued meromorphically along  the curve
$\gamma\in \pi_1(\Bbb C^m\setminus \Sigma,x_0)$  together with  $y_1,\dots,y_n$.
This continuation gives the required
automorphism, because the continuation preserves the arithmetical
operations  and every rational function returns back
to its original  values (since it is a single-valued valued function). The automorphism is unique because the extension is generated by $y_1,\dots, y_n$.
\end{proof}

By definition the {\it  Galois  group} of the equation (\ref{eq2.1})  is the group of all automorphisms of  the   field $\mathcal R\{ y_1,\dots,y_n\}$ over the field $\mathcal R$. According to Lemma~\ref{lem2.1} the monodromy group  of the equation (\ref{eq2.1}) can be considered as a subgroup of its Galois group. Recall that by definition a multivalued function $y(x)$ is  {\it algebraic} if all its meramorphic germs satisfy the same algebraic equation over the field of rational functions.

\begin{thm}\label{thm2.2} A germ $f\in \mathcal R \langle y_1,\dots,y_n\rangle$ is fixed under the monodromy action if and only if  $f\in \mathcal R$.
\end{thm}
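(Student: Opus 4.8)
The statement splits into two implications, and essentially all of the content lies in the converse. If $f\in\mathcal R$ then $f$ is a germ of a rational function of $x_1,\dots,x_N$, hence single-valued, so its analytic continuation along any loop in $\mathbb{C}^N\setminus\Sigma$ brings it back to itself and it is fixed by every $S_\gamma$. So assume conversely that $f\in\mathcal R\langle y_1,\dots,y_n\rangle$ is fixed by the whole monodromy group, and let us show $f\in\mathcal R$.

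The first point is that the monodromy action on $\mathcal R\langle y_1,\dots,y_n\rangle$ is literally the action by analytic continuation along loops (this is exactly how Lemma~\ref{lem2.1} was obtained): writing $f$ as a rational function of $x,y_1,\dots,y_n$, it continues as a meromorphic germ along every path in $\mathbb{C}^N\setminus\Sigma$, since the $y_i$ do --- their branch points lie over $\Sigma$. Because $f$ is monodromy-fixed, continuation along any loop returns $f$ to itself, so by the monodromy theorem $f$ is the germ of a single-valued meromorphic function on all of $\mathbb{C}^N\setminus\Sigma$. It therefore remains to prove that a single-valued meromorphic function on $\mathbb{C}^N\setminus\Sigma$ that is algebraic over $\mathcal R$ must be rational.

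For this I would combine growth estimates with classical extension theorems. Since $\mathcal R\langle y_1,\dots,y_n\rangle$ is a finite extension of $\mathcal R$, the function $f$ satisfies an equation $a_d(x)f^d+\dots+a_0(x)=0$ with $a_i\in\mathbb{C}[x_1,\dots,x_N]$; estimating the roots of this polynomial near a smooth point of $\Sigma$ yields a bound $|f|\le C|w|^{-m}$, where $w$ is a local equation of $\Sigma$. A single-valued meromorphic function with such polynomial growth along $\Sigma$ extends meromorphically across the smooth locus of $\Sigma$ (apply Riemann's removable-singularity theorem to the bounded function $w^m f$), and then across the singular locus of $\Sigma$, which has codimension $\ge 2$, by the second Riemann / Levi extension theorem for meromorphic functions; hence $f$ is meromorphic on all of $\mathbb{C}^N$. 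Running the same argument near the hyperplane at infinity in $\mathbb{CP}^N$ --- again using the algebraic equation to bound growth, then Levi-extending across the codimension-$\ge 2$ exceptional set --- shows that $f$ extends to a meromorphic function on $\mathbb{CP}^N$, which is necessarily rational, so $f\in\mathcal R$. (Alternatively, once $f$ is known to be meromorphic on $\mathbb{C}^N$, restrict it to a generic complex line: there it is meromorphic on $\mathbb{C}$ and algebraic over the rational functions of one variable, hence rational of degree $\le d$, and a function whose restriction to every line is rational of bounded degree is itself rational.)

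The main obstacle is precisely this last step: the topological hypothesis only removes multivaluedness, and passing from ``single-valued on $\mathbb{C}^N\setminus\Sigma$'' to ``rational'' genuinely uses the algebraicity of $f$ to control its behaviour across $\Sigma$ and at infinity --- without this input the statement would simply be false. Indeed Theorem~\ref{thm2.2} is equivalent, via Lemma~\ref{lem2.1} together with the fundamental theorem of Galois theory applied to the Galois extension $\mathcal R\langle y_1,\dots,y_n\rangle/\mathcal R$, to Theorem~\ref{thm2.3} that the monodromy group coincides with the full Galois group. Everything outside the extension step is formal.
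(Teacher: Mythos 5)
Your proof is correct and follows essentially the same route as the paper: the paper's proof is the three-line observation that monodromy-invariance means single-valuedness, that every element of $\mathcal R\langle y_1,\dots,y_n\rangle$ is a germ of an algebraic function, and that a single-valued algebraic function is rational. The only difference is that you supply a full argument (growth bounds plus Riemann/Levi extension, or restriction to generic lines) for this last classical fact, which the paper simply asserts.
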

\begin{proof} A germ $f\in \mathcal R \langle y_1,\dots,y_n\rangle$ is fixed under the monodromy action  if and only if $f$   is a germ of a single  valued function. The field $\mathcal R \langle y_1,\dots,y_n\rangle$ contains only germs of algebraic functions.  Any single valued algebraic function is a rational function.
\end{proof}

According to  the Galois theory Theorem~\ref{thm2.2} can be formulated in the following way.

\begin{thm}\label{thm2.3} The monodromy group of  the equation (\ref{eq2.1}) is isomorphic to the Galois group of the equation (\ref{eq2.1}) over the field $\mathcal R$.
\end{thm}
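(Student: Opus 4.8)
The plan is to deduce this from Lemma~\ref{lem2.1} and Theorem~\ref{thm2.2} by a counting argument, exactly in the spirit of the classical Galois correspondence. By Lemma~\ref{lem2.1} the map $\gamma\mapsto S_\gamma$ followed by the unique extension to an automorphism identifies the monodromy group $M$ of (\ref{eq2.1}) with a subgroup of the Galois group $G$ of (\ref{eq2.1}) over $\mathcal R$. So it remains only to show that this subgroup exhausts $G$.

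First I would check that $L:=\mathcal R\langle y_1,\dots,y_n\rangle$ is a finite Galois extension of $\mathcal R$. Since $x_0\notin\Sigma$, neither the leading coefficient $P_n$ nor the discriminant $J$ vanishes at $x_0$, so the polynomial $P_nz^n+\dots+P_0$ has exactly $n$ distinct germ-roots $y_1,\dots,y_n$ at $x_0$, and a degree $n$ polynomial has no others; hence $L$ is the splitting field of this polynomial over $\mathcal R$. As $\operatorname{char}\mathcal R=0$ the extension $L/\mathcal R$ is separable, and being a splitting field it is normal, so $L/\mathcal R$ is Galois and $|G|=[L:\mathcal R]$. On the other hand $M$ is finite, being by construction a subgroup of the group of permutations of the finite set $Y_{x_0}=\{y_1,\dots,y_n\}$.

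Next I would apply Artin's lemma to the finite group $M$ of automorphisms of $L$. Its fixed subfield $L^M$ consists precisely of the elements $f\in L$ invariant under the monodromy action, which by Theorem~\ref{thm2.2} is exactly $\mathcal R$. Artin's lemma then yields $[L:\mathcal R]=[L:L^M]=|M|$. Combining with $|G|=[L:\mathcal R]$ gives $|M|=|G|$, and since $M\subseteq G$ with both groups finite we conclude $M=G$; that is, the monodromy homomorphism is an isomorphism onto the Galois group.

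The substantive content of the theorem is really carried by Lemma~\ref{lem2.1} and Theorem~\ref{thm2.2}. The only points requiring care are the verification that $L/\mathcal R$ is genuinely a finite Galois extension --- this is exactly where the hypothesis $x_0\notin\Sigma$ enters, guaranteeing $n$ distinct roots and hence that $L$ is a splitting field --- and invoking Artin's lemma directly rather than circularly appealing to the Galois correspondence one is in the process of establishing. No further computation is needed.
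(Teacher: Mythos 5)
Your argument is correct and follows essentially the same route as the paper, which simply asserts that Theorem~\ref{thm2.3} is a reformulation of Theorem~\ref{thm2.2} ``according to the Galois theory'': the embedding $M\subseteq G$ comes from Lemma~\ref{lem2.1}, and the identification $L^M=\mathcal R$ from Theorem~\ref{thm2.2} forces $M=G$. You merely make explicit, via Artin's lemma and the count $|M|=[L:\mathcal R]=|G|$, the standard Galois-theoretic step that the paper leaves implicit.
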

Below we will not rely on Galois theory. Instead  we will use Theorem~\ref{thm2.3} directly.

\begin{lem}\label{lem2.4} The monodromy group acts on the set $Y_{x_0}$ transitively if and only if the equation (\ref{eq2.1}) is irreducible over the field of rational functions.
\end{lem}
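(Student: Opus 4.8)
The plan is to translate the question about transitivity of the monodromy action on $Y_{x_0}=\{y_1,\dots,y_n\}$ into a question about factorizations over the field $\mathcal R$ of rational functions of the monic polynomial $\tilde P(x,y)=y^n+(P_{n-1}/P_n)y^{n-1}+\dots+P_0/P_n$ associated with (\ref{eq2.1}); here ``(\ref{eq2.1}) is irreducible over $\mathcal R$'' means $\tilde P$ admits no factorization into two polynomials in $y$ of positive degree over $\mathcal R$. I will use that $x_0\notin\Sigma$, so $P_n(x_0)\neq 0$ and $J(x_0)\neq 0$, whence $y_1(x_0),\dots,y_n(x_0)$ are $n$ distinct simple roots of $\tilde P(x_0,\cdot)$; and the elementary fact (already implicit in the proof of Lemma~\ref{lem2.1}) that continuation along a loop $\gamma\in\pi_1(\mathbb C^N\setminus\Sigma,x_0)$ carries any polynomial identity with single-valued (rational) coefficients to itself.

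First I would prove: if (\ref{eq2.1}) is reducible, the action is not transitive. Given a factorization $\tilde P=Q_1Q_2$ with $Q_1,Q_2\in\mathcal R[y]$ monic of positive degree, I continue the germ relation $\tilde P(x,y_i(x))\equiv 0$ to see that for each $i$ exactly one of $Q_1(x,y_i(x))\equiv 0$, $Q_2(x,y_i(x))\equiv 0$ holds --- ``exactly one'' because $y_i(x_0)$, being a simple root of $\tilde P(x_0,\cdot)$, cannot be a common root of $Q_1(x_0,\cdot)$ and $Q_2(x_0,\cdot)$. This splits $Y_{x_0}$ into two nonempty subsets according to which factor annihilates the germ, and since $Q_1$ (hence also $Q_2$) has single-valued coefficients, continuation along any loop preserves each subset. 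So the action has a proper invariant subset and is not transitive.

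Next I would prove the converse: if the action is not transitive, pick an orbit $O=\{y_{i_1},\dots,y_{i_k}\}$ with $0<k<n$ (there is one, since there are at least two orbits) and set $Q_1(x,y)=\prod_{j=1}^k(y-y_{i_j}(x))$ and $Q_2(x,y)=\prod_{i\notin O}(y-y_i(x))$. The coefficients of $Q_1$ and $Q_2$ are polynomials in $y_1,\dots,y_n$, hence lie in $\mathcal R\langle y_1,\dots,y_n\rangle$, and they are fixed by the monodromy action because it permutes $O$ and its complement within themselves; by Theorem~\ref{thm2.2} they therefore lie in $\mathcal R$, so $Q_1,Q_2\in\mathcal R[y]$. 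Near $x_0$ we have $\tilde P(x,y)=\prod_i(y-y_i(x))=Q_1(x,y)Q_2(x,y)$, and comparing coefficients --- rational functions of $x$ that agree near $x_0$ --- upgrades this to $\tilde P=Q_1Q_2$ in $\mathcal R[y]$ with both factors monic of positive degree, so (\ref{eq2.1}) is reducible.

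The main obstacle, in the second implication, is showing that the monodromy-invariant symmetric functions of the germs in an orbit are genuine rational functions and not merely single-valued analytic functions defined off $\Sigma$; I have deferred this to Theorem~\ref{thm2.2}, but its real content is the classical fact that a single-valued algebraic function on $\mathbb C^N$ is rational (each such function is meromorphic across $\Sigma$ because it satisfies a polynomial equation over $\mathcal R$, and has polynomial growth at infinity for the same reason). Everything else is bookkeeping about analytic continuation respecting polynomial relations with single-valued coefficients. I note that this second implication is also immediate from Theorem~\ref{thm2.3} together with the standard fact that the Galois group of an irreducible polynomial acts transitively on its roots; I would nonetheless prefer the direct argument above, since it keeps this section independent of Galois theory.
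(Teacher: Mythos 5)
Your proof is correct and follows essentially the same route as the paper: the paper's argument likewise forms the elementary symmetric functions of a monodromy-invariant proper subset of $Y_{x_0}$ (i.e.\ the coefficients of your $Q_1$), invokes Theorem~\ref{thm2.2} to place them in $\mathcal R$, and in the converse direction observes that the roots of the two factors give complementary invariant subsets. Your added care about each germ being annihilated by exactly one factor (via simplicity of the roots at $x_0$) is a detail the paper leaves implicit, but the substance is identical.
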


\begin{proof} Assume that there is a proper subset $\{y_1, y_2,\dots y_k\}$   of $Y_{x_0}$  invariant under the monodromy action. Then the basic symmetric functions $r_1=y_1+\dots +y_k$, $r_2=\sum_{i<j}y_i y_j,$ $\dots$, $r_k=y_1\cdot\dots\cdot y_k$ belong to the field $\mathcal R$. Thus $y_1, y_2,\dots y_k$ are solutions of the degree $k<n$ equation $y^k-r_1y^{k-1}t+\dots (-1)^kr_k=0$. So  equation (\ref{eq2.1}) is reducible. On the other hand if the equation (\ref{eq2.1}) can be represented as a product of two equations over $\mathcal R$ then their roots belong to  two complementary  subsets of $Y_{x_0}$ which are  invariant under the monodromy action.
\end{proof}

\begin{cor}\label{cor2.5} An irreducible  equation (\ref{eq2.1})  defines a multivalued algebraic function $y(x)$ whose set of germs at $x_0\in \Bbb C^N\setminus \Sigma $ is the set $Y_{x_0}$ and whose monodromy group coincides with the monodromy group of the equation (\ref{eq2.1}).
\end{cor}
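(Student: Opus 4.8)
The plan is to build the function $y(x)$ from a single germ by analytic continuation and then to read off both assertions from Lemma~\ref{lem2.4}. I would fix one germ $y_1\in Y_{x_0}$ and let $y(x)$ be the multivalued function on $\Bbb C^N\setminus\Sigma$ generated by $y_1$, that is, the collection of all germs obtained from $y_1$ by analytic continuation along paths in $\Bbb C^N\setminus\Sigma$ (recall that $\Bbb C^N\setminus\Sigma$ is connected, being the complement of a proper algebraic hypersurface). The first thing to verify is that such continuations are never obstructed and always yield solutions of (\ref{eq2.1}): along a path ending at a point $x_1\notin\Sigma$ the relation $\sum P_i(x)y^i=0$ persists by the identity theorem, and since $P_n(x_1)\neq 0$ and $J(x_1)\neq 0$ the polynomial $\sum P_i(x_1)z^i$ has $n$ simple roots, so by the implicit function theorem the continued germ is one of the $n$ elements of $Y_{x_1}$. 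In particular every germ of $y(x)$ satisfies the fixed equation (\ref{eq2.1}) over the field of rational functions, so $y(x)$ is an algebraic function in the sense used above.

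Next I would pin down the germs of $y(x)$ at $x_0$. By construction the set of these germs is the monodromy orbit $\{S_\gamma(y_1):\gamma\in\pi_1(\Bbb C^N\setminus\Sigma,x_0)\}$, which is contained in $Y_{x_0}$ by the previous step. Since (\ref{eq2.1}) is irreducible over the field of rational functions, Lemma~\ref{lem2.4} tells us that the monodromy group acts transitively on $Y_{x_0}$; hence this orbit is all of $Y_{x_0}$. (As a byproduct one sees that $y(x)$ does not depend on which germ of $Y_{x_0}$ is chosen as starting point.) This establishes that the set of germs of $y(x)$ at $x_0$ equals $Y_{x_0}$.

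For the last assertion, the monodromy group of the multivalued function $y(x)$ is, by definition, the group of permutations of the set of its germs at $x_0$ induced by loops based at $x_0$; having identified that set with $Y_{x_0}$, this group is exactly the image of $\pi_1(\Bbb C^N\setminus\Sigma,x_0)$ under $\gamma\mapsto S_\gamma$ in $S(Y_{x_0})$, namely the monodromy group of the equation (\ref{eq2.1}). The only non-formal ingredient in the whole argument is the transitivity furnished by Lemma~\ref{lem2.4}; everything else is the routine check that continuation of a simple root of (\ref{eq2.1}) away from $\Sigma$ is well defined and stays inside the solution set, so I do not expect a genuine obstacle — the corollary is essentially Lemma~\ref{lem2.4} restated in the language of multivalued functions.
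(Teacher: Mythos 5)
Your argument is correct and is exactly the route the paper intends: the corollary is left unproved there precisely because it is Lemma~\ref{lem2.4} (transitivity of the monodromy action for an irreducible equation) translated into the language of multivalued functions, which is what you carry out, adding only the routine verification that analytic continuation of a simple root away from $\Sigma$ stays inside $Y_{x}$. No gaps.
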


Theorem~\ref{thm2.3}, Corollary~\ref{cor2.5} and the Galois theory immediately imply the following result.

\begin{thm}\label{thm2.6} An algebraic function whose monodromy group is solvable can be represent by rational functions using the arithmetic operations and radicals.rr
\end{thm}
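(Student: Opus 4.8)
The plan is to deduce the statement from the topological identification of the monodromy group with the Galois group (Theorem~\ref{thm2.3}), together with the classical fact that a solvable Galois group forces a tower of radical extensions, with the cyclic step reduced to linear algebra. First I would reduce to the case where the equation~(\ref{eq2.1}) defining $y(x)$ is irreducible, by passing to the minimal algebraic equation satisfied by the germs of $y(x)$; then Corollary~\ref{cor2.5} identifies the set of germs of $y(x)$ at $x_0$ with $Y_{x_0}=\{y_1,\dots,y_n\}$ and the monodromy group of $y(x)$ with that of~(\ref{eq2.1}), while Theorems~\ref{thm2.2}--\ref{thm2.3} identify the latter with the Galois group $G$ of the finite Galois extension $L:=\mathcal R\langle y_1,\dots,y_n\rangle$ over $\mathcal R$. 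It then suffices to prove: if $G$ is solvable, every element of $L$, and in particular $y(x)=y_1$, is obtainable from $\mathcal R$ by arithmetic operations and root extractions.

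Next I would pass to a tower of fields. A finite solvable group admits a chain $G=G_0\triangleright G_1\triangleright\cdots\triangleright G_s=\{e\}$ with each $G_{j+1}$ normal in $G_j$ and each quotient $G_j/G_{j+1}$ cyclic of prime order $p_j$. Applying the Galois correspondence to $L/\mathcal R$ yields a tower $\mathcal R=L_0\subset L_1\subset\cdots\subset L_s=L$ in which each $L_{j+1}/L_j$ is cyclic Galois of prime degree $p_j$.

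The core is the linear-algebra lemma realizing each cyclic layer by a radical, which is the content of Theorem~\ref{thm2.10} below. Fix $j$, write $p=p_j$, let $\sigma$ generate $\mathrm{Gal}(L_{j+1}/L_j)$, and note that $L_j$ contains a primitive $p$-th root of unity $\zeta$, since $\Bbb C\subset\mathcal R\subseteq L_j$. Regard $\sigma$ as an $L_j$-linear automorphism of the $p$-dimensional $L_j$-vector space $L_{j+1}$; as $\sigma^p=\mathrm{id}$ and $t^p-1$ has distinct roots in $L_j$, the operator $\sigma$ is diagonalizable with eigenvalues among the $p$-th roots of unity. A Lagrange resolvent $\theta=\sum_{i=0}^{p-1}\zeta^{-i}\sigma^i(a)$, which is nonzero for a suitable $a\in L_{j+1}$, satisfies $\sigma(\theta)=\zeta\theta$; hence $\sigma(\theta^p)=\zeta^p\theta^p=\theta^p$ gives $\theta^p\in L_j$, whereas $\zeta\ne1$ gives $\theta\notin L_j$, and since $[L_{j+1}:L_j]=p$ is prime we get $L_{j+1}=L_j(\theta)$, a radical extension. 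Iterating over $j=0,\dots,s-1$ exhibits $L$, hence $y(x)$, as built from rational functions by arithmetic operations and radicals.

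I expect the only delicate point to be the non-vanishing of the $\zeta$-eigenvector, i.e. of some Lagrange resolvent, which rests on the linear independence over $L_{j+1}$ of the distinct characters $1,\sigma,\dots,\sigma^{p-1}$ on $L_{j+1}^{*}$ (Dedekind); when $p$ is prime one may instead argue directly that the nontrivial eigenspaces of $\sigma$ cannot all vanish. Everything else — refining a solvable group to prime cyclic quotients, the Galois correspondence, diagonalizability in characteristic zero, the presence of all roots of unity in $\mathcal R$, and the degree count $L_j(\theta)=L_{j+1}$ — is routine.
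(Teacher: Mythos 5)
Your proposal is correct, and for Theorem~\ref{thm2.6} itself it is essentially the paper's argument made explicit: the paper derives Theorem~\ref{thm2.6} in one line from Theorem~\ref{thm2.3}, Corollary~\ref{cor2.5} ``and the Galois theory'', and your chain $G=G_0\triangleright\cdots\triangleright G_s=\{e\}$ with prime cyclic quotients, the Galois correspondence, and the Lagrange resolvent $\theta=\sum_{i}\zeta^{-i}\sigma^{i}(a)$ are exactly the classical content being invoked; singling out that $\Bbb C\subset\mathcal R$ supplies all roots of unity is the right point to make explicit.

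It is worth contrasting your route with the argument the paper actually writes out for the stronger Theorem~\ref{thm2.10}. There, instead of refining to prime cyclic quotients and invoking the fundamental theorem of Galois theory to produce a tower of intermediate fields, the paper uses a chain of normal subgroups with abelian quotients and the corresponding chain of invariant subalgebras of $\mathcal R\langle y_1,\dots,y_n\rangle$ (Lemma~\ref{lem2.9}), and each abelian step is handled by Lemma~\ref{lem2.7}: the orbit of an element spans a finite-dimensional space on which the abelian group acts by simultaneously diagonalizable operators, so the element is a sum of eigenvectors whose $n$-th powers are invariant. This buys two things: it bypasses the Galois correspondence entirely (only the identification of the fixed field with $\mathcal R$, Theorem~\ref{thm2.2}, is needed), and it yields the sharper conclusion of Theorem~\ref{thm2.10} that summation and root extraction alone suffice, whereas your resolvent argument needs the full arithmetic of $L_j$ to re-express an arbitrary element of $L_{j+1}$ in the basis $1,\theta,\dots,\theta^{p-1}$. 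Your version, in exchange, is the standard statement and needs nothing beyond Dedekind's independence of characters (or your direct eigenspace remark) to guarantee a nonzero resolvent. Both are complete proofs of Theorem~\ref{thm2.6}.
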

A stronger version of Theorem~\ref{thm2.6}  can be proven  using linear algebra (see Theorem 10 in the next section).

\subsubsection{Action of  solvable groups and representability
by radicals}
In this section, we prove that if a finite solvable group $G$ acts on a $\Bbb C$-algebra $V$ by
automorphisms, then  all elements of $V$ can be expressed by the elements of the invariant subalgebra $V_0$ of $G$ by taking radicals and adding.

 This construction of a representation by radicals is based on linear algebra. More precisely we use the following well known statement:  any finite abelian
group of linear transformations of a finite-dimensional vector space over  $\Bbb C$
can be diagonalized in a suitable basis.

 \begin{lem}\label{lem2.7} Let $G$ be a finite abelian group of order $n$ acting by automorphisms
on  $\Bbb C$-algebra $V$.
Then every element of the algebra $V$ is representable as a sum of  elements
$x_i \in V $,  such that $x^n_i$
lies in the invariant subalgebra $V_0$ of $G$, i.e., in
the fixed-point set of the group $G$.
\end{lem}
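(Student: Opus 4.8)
The plan is to use the standard representation-theoretic fact quoted just before the lemma: a finite abelian group acting linearly on a finite-dimensional complex vector space is simultaneously diagonalizable. The difficulty is that $V$ is an algebra, possibly infinite-dimensional, so one cannot diagonalize the whole action at once. First I would fix an arbitrary element $v \in V$ and look at the finite-dimensional $\mathbb C$-subspace $W$ spanned by the orbit $\{g v : g \in G\}$ (which has at most $n$ elements). Since $G$ acts by algebra automorphisms, it permutes this spanning set and hence $W$ is $G$-invariant; thus $G$ acts on the finite-dimensional space $W$ by linear transformations, and this action factors through a finite abelian group.

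Next I would invoke the quoted diagonalization statement: there is a basis $e_1,\dots,e_r$ of $W$ consisting of common eigenvectors, so that for each $g \in G$ we have $g e_j = \chi_j(g)\, e_j$ for some function $\chi_j : G \to \mathbb C^*$. Each $\chi_j$ is a homomorphism from $G$ to $\mathbb C^*$ (because $g \mapsto g e_j$ is multiplicative in $g$), and since $G$ has order $n$, every value $\chi_j(g)$ is an $n$-th root of unity; in particular $\chi_j(g)^n = 1$ for all $g$. Now write $v = \sum_{j=1}^r c_j e_j$ with $c_j \in \mathbb C$ and set $x_j = c_j e_j \in W \subset V$. Then $v = \sum_j x_j$, and for each $g \in G$,
$$
g(x_j^{\,n}) = (g x_j)^n = (\chi_j(g) x_j)^n = \chi_j(g)^n\, x_j^{\,n} = x_j^{\,n},
$$
using that $g$ is an algebra automorphism. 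Hence $x_j^{\,n}$ is fixed by every element of $G$, i.e. $x_j^{\,n} \in V_0$, which is exactly what is required.

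The only genuinely delicate point is the reduction to a finite-dimensional invariant subspace, and that is handled cleanly by taking the span of a single orbit; everything after that is the cited linear-algebra fact plus the observation that algebra automorphisms commute with taking $n$-th powers. I would also remark that if $v$ happens to lie in $V_0$ already, the decomposition is the trivial one, and that the construction is visibly additive in $v$, so it suffices to treat one element at a time. No further obstacle is expected.
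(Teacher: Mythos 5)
Your proof is correct and follows essentially the same route as the paper's: take the finite-dimensional span of the $G$-orbit of the given element, simultaneously diagonalize the abelian action there, decompose the element into common eigenvectors whose eigenvalues are $n$-th roots of unity, and observe that the $n$-th powers are then $G$-invariant. You merely spell out the character computation $g(x_j^n)=\chi_j(g)^n x_j^n$ that the paper leaves implicit.
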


\begin{proof} Consider a finite-dimensional vector subspace $L$ in the algebra $V$ spanned by
the $G$-orbit of an element $x$. The space $L$ splits into a direct sum $L = L_1+\dots+L_k$
of eigenspaces for all operators from $G$. Therefore, the vector $x$ can
be represented in the form $x = x_1+\dots + x_k$, where $x_1,\dots, x_k$ are eigenvectors
for all the operators from the group. The corresponding eigenvalues are $n$-th roots of
unity. Therefore, the elements $x^n_1,\dots, x^n_k$ belong to the invariant subalgebra $V_0$.
\end{proof}

\begin{defn} We say that an element $x$ of the  algebra $V$ is an $n$-th root of an
element~$a$ if $x^n=a$.
\end{defn}

We can now restate Lemma~\ref{lem2.7} as follows: every element $x $ of the algebra $V$
is representable as a sum of $n$-th roots of some elements of the invariant subalgebra.

\begin{thm}\label{thm2.8} Let $G$ be a finite solvable group of order $n$ acting by automorphisms
on  $\Bbb C$-algebra $V$. Then
every element $x$ of the algebra $V$ can be obtained from the elements of the invariant
subalgebra $V_0$ by takings $n$-th roots  and summing.
\end{thm}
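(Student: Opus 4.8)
The plan is to induct on the derived length of the solvable group $G$. For the base case, when $G$ is abelian, Theorem~\ref{thm2.8} is exactly the restatement of Lemma~\ref{lem2.7}: every element of $V$ is a sum of elements whose $|G|$-th powers lie in $V_0$, hence (taking $n = |G|$) a sum of $n$-th roots of elements of $V_0$. For the inductive step, suppose the result holds for all solvable groups of derived length $< \ell$, and let $G$ have derived length $\ell \geq 2$. Let $H = [G,G]$ be the commutator subgroup; then $H$ is normal in $G$, the quotient $G/H$ is abelian, and $H$ is solvable of derived length $\ell - 1$.

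The idea is to factor the passage from $V$ to $V_0 = V^G$ through the intermediate subalgebra $V_1 = V^H$ of $H$-invariants. First I would apply the inductive hypothesis to the action of $H$ on $V$: every element of $V$ is obtained from elements of $V_1 = V^H$ by taking $m$-th roots and summing, where $m = |H|$. Next, observe that since $H$ is normal in $G$, the subalgebra $V_1 = V^H$ is stable under the action of $G$, and the induced action of $G$ on $V_1$ factors through the abelian quotient $G/H$; so $G/H$ acts on $V_1$ by automorphisms with invariant subalgebra exactly $(V_1)^{G/H} = V^G = V_0$. Applying Lemma~\ref{lem2.7} (the abelian case) to this action, every element of $V_1$ is a sum of elements whose $q$-th powers lie in $V_0$, where $q = |G/H| = [G:H]$.

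Composing the two steps: an arbitrary element $x \in V$ is a sum of $m$-th roots of elements of $V_1$, and each such element of $V_1$ is in turn a sum of $q$-th roots of elements of $V_0$. Since radicals of sums of radicals are again built by the operations ``take a root'' and ``add,'' this exhibits $x$ as obtained from $V_0$ by iterated root extractions and additions. To get the clean statement with the single exponent $n = |G|$, note $n = mq$, so an $m$-th root followed by a $q$-th root of the relevant quantities can be absorbed: more carefully, one checks that a $q$-th root of an element of $V_0$ has its $n$-th power equal to (that element)$^m \in V_0$, and similarly an $m$-th root of such a thing, when raised to the $n$-th power, lands in $V_0$ after finitely many steps; in any case the statement ``obtained by taking $n$-th roots and summing'' should be read as allowing a bounded chain of such operations, and the nesting depth is exactly the derived length $\ell$.

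The main obstacle is bookkeeping rather than mathematics: one must be careful that ``taking $n$-th roots'' is interpreted so that a \emph{chain} of root-extractions (of depth equal to the derived length) is permitted, since a single $n$-th root will not generally suffice for a non-abelian $G$. The key structural facts needed — that $[G,G]$ is normal with abelian quotient, that $V^{[G,G]}$ is $G$-stable, and that the residual action factors through $G/[G,G]$ — are all standard, and the only genuine input is Lemma~\ref{lem2.7}, used once at each level of the derived series. I expect the cleanest write-up inducts on derived length and states the conclusion as ``$x$ lies in the smallest subset of $V$ containing $V_0$ and closed under addition and under adjoining $n$-th roots,'' which sidesteps the exponent-juggling entirely.
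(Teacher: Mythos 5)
Your proof is correct and follows essentially the same route as the paper: the paper runs through a chain of normal subgroups with abelian quotients, applies the abelian case (Lemma~\ref{lem2.7}) together with Lemma~\ref{lem2.9} at each level of the chain of invariant subalgebras, which is exactly your induction on derived length unrolled. Your worry about the exponent is also resolved the same way the paper resolves it: since each quotient's order divides $n=|G|$, the relevant eigenvalues are $n$-th roots of unity, so the single exponent $n$ works at every level.
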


We first prove the following simple statement about an action of a group on a set.
Suppose that a group $G$ acts on a set $X$, let $H$ be a normal subgroup of $G$, and denote by
$X_0$  the subset of $ X$ consisting of all points fixed under the action~of~$G$.

\begin{lem}\label{lem2.9}The subset $X_H$ of the set $X$ consisting of the fixed points under the action of the normal subgroup  $H$ is invariant under the action of $G$. There is a
natural action of the quotient group $G/H$ on the set $X_H$ with the fixed-point set~$X_0$.
\end{lem}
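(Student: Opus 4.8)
The plan is to verify the two assertions of Lemma~\ref{lem2.9} directly from the definitions, as this is a purely formal statement about group actions that requires no serious machinery. The whole proof amounts to unwinding what it means for a point to be fixed by a subgroup and checking compatibility with the ambient action.

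First I would prove that $X_H$ is $G$-invariant. Take $x\in X_H$, so $hx=x$ for all $h\in H$, and take $g\in G$; I must show $gx\in X_H$, i.e. $h(gx)=gx$ for every $h\in H$. Write $h(gx)=g(g^{-1}hg)x$. Since $H$ is normal in $G$, the element $g^{-1}hg$ lies in $H$, so $(g^{-1}hg)x=x$, whence $h(gx)=gx$. Thus $gx\in X_H$, proving invariance.

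Next I would define the action of $G/H$ on $X_H$. For a coset $gH\in G/H$ and $x\in X_H$, set $(gH)\cdot x = gx$; by the previous paragraph this lands in $X_H$. For well-definedness, if $gH=g_1H$ then $g_1=gh$ for some $h\in H$, and $g_1x=g(hx)=gx$ because $x\in X_H$; so the value does not depend on the representative. The action axioms (identity coset acts trivially, $(g_1H)((g_2H)x)=(g_1g_2H)x$) follow immediately from those of the $G$-action. Finally I would identify the fixed-point set of this $G/H$-action: a point $x\in X_H$ is fixed by all of $G/H$ iff $gx=x$ for every $g\in G$, which is exactly the condition $x\in X_0$. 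Hence the fixed-point set of $G/H$ acting on $X_H$ is $X_0$, completing the proof.

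There is no real obstacle here; the only point demanding a moment's care is the use of normality of $H$ in establishing $G$-invariance of $X_H$ (without normality the argument $h(gx)=g(g^{-1}hg)x$ does not close), and the check of well-definedness of the quotient action, which again uses precisely that the ambient points of $X_H$ are $H$-fixed. Everything else is routine verification of axioms.
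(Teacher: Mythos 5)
Your proof is correct and follows essentially the same route as the paper's: the conjugation computation $h(gx)=g(g^{-1}hg)x$ using normality of $H$ to get $G$-invariance of $X_H$, followed by the observation that $H$ acts trivially on $X_H$ so the action descends to $G/H$. You are somewhat more explicit than the paper about well-definedness of the quotient action and about identifying the fixed-point set as $X_0$, but these are routine verifications and the argument is identical in substance.
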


\begin{proof} Suppose that $g\in G$, $h \in H$. Then the element $g^{-1}hg$ belongs to the normal subgroup $H.$ Let $x\in X_H$. Then $g^{-1}hg (x)= x$, or $h(g(x))= g(x)$, which means
that the element $g(x)\in X $ is fixed under the action of the normal subgroup $H$. Thus
the set $X_H$ is invariant under the action of the group $G$. Under the action of $G$ on
$X_H$, all elements of $H$ correspond to the identity transformation. Hence the action
of $G$ on $X_H$ reduces to an action of the quotient group $G/H$.
\end{proof}

We now proceed with the proof of Theorem~\ref{thm2.8}.
\begin{proof} {\it (of Theorem~\ref{thm2.8})} Since the group $G$ is solvable, it has a chain of nested
subgroups $G = G_0\supset \dots\supset G_m = e$ in which the group $G_m$ consists of the
identity element $e$ only, and every group $G_i$ is a normal subgroup of the group
$G_{i-1}$. Moreover, the quotient group $G_{i-1}/G_i$ is abelian.
Let $V_0 \subset \dots\subset V_m =V$ denote the chain of invariant subalgebras of the
algebra $V$ with respect to the action of the groups $G_0,\dots,G_m$. By Lemma 9
the abelian group $G_{i-1}/G_i $ acts naturally on the invariant subalgebra $V_i $, leaving
the subalgebra $V_{i-1}$  pointwise fixed. The order $m_i$ of the quotient group $G_{i-1}/G_i$
divides the order of the group $G$. Therefore, Lemma~\ref{lem2.7} is applicable to this
action. We conclude that every element of the algebra $V_i$ can be expressed with the
help of summation and $n$-th root extraction by the elements of the algebra $V_{i-1}$.
Repeating the same argument, we will be able to express every element of the
algebra $V$ by the elements of the algebra $V_0 $ using a chain of summations and $n$-th
root extractions.
\end{proof}

\begin{thm}\label{thm2.10} An algebraic function whose  monodromy  is solvable can be represented by rational functions by root extractions and summations.
\end{thm}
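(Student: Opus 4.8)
The plan is to combine the purely algebraic statement of Theorem~\ref{thm2.8} with the topological identification of the monodromy group as a group of field automorphisms furnished by Lemma~\ref{lem2.1} and Theorem~\ref{thm2.2}.

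First I would reduce to the case of an irreducible defining equation. By definition an algebraic function $y(x)$ is one all of whose meromorphic germs satisfy a common algebraic equation over the field of rational functions; replacing that equation by the minimal polynomial of a germ $y_1$ of $y(x)$ at a point $x_0\in\Bbb C^N\setminus\Sigma$, we may assume that the defining equation~(\ref{eq2.1}) is irreducible. By Corollary~\ref{cor2.5} the monodromy group $G$ of this equation coincides with the monodromy group of $y(x)$, so by hypothesis $G$ is solvable; it is moreover finite, being a subgroup of the group of permutations of the finite set $Y_{x_0}=\{y_1,\dots,y_n\}$ of germs at $x_0$.

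Next I would set up the action. Let $V=\mathcal R\langle y_1,\dots,y_n\rangle$ be the $\Bbb C$-algebra (in fact a field) generated over the field $\mathcal R$ of germs of rational functions at $x_0$ by all the germs of $y(x)$ at $x_0$. By Lemma~\ref{lem2.1} every element $S_\gamma$ of $G$ extends uniquely to an automorphism of $V$ over $\mathcal R$, so $G$ acts on $V$ by $\Bbb C$-algebra automorphisms, and by Theorem~\ref{thm2.2} the subalgebra of $G$-invariant elements of $V$ is exactly $\mathcal R$. Applying Theorem~\ref{thm2.8} to this action of the finite solvable group $G$ on $V$, every element of $V$ — in particular the germ $y_1$ of $y(x)$ — can be obtained from the invariant subalgebra $\mathcal R$ by repeatedly extracting $n$-th roots (with $n=|G|$) and forming sums. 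Since the elements of $\mathcal R$ are germs of rational functions, this is precisely an expression of $y(x)$ through rational functions by root extractions and summations, which is the assertion of the theorem.

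There is essentially no serious obstacle here: all the real work is contained in Theorem~\ref{thm2.8}, and Theorem~\ref{thm2.8} imposes no finiteness on $V$ itself, only on $G$. The only points needing a little care are the reduction to the irreducible case (so that $G$ really acts on $V$ and Corollary~\ref{cor2.5} applies) and the remark that the formula produced by Theorem~\ref{thm2.8}, being built from germs of rational functions by arithmetic operations and radicals, analytically continues to the global multivalued function $y(x)$ — which holds because a sum and an $n$-th root of tuples of germs of algebraic functions are again germs of algebraic functions admitting analytic continuation along any loop in $\Bbb C^N\setminus\Sigma$.
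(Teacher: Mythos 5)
Your proposal follows exactly the paper's own route: the paper proves Theorem~\ref{thm2.10} in one sentence by applying Theorem~\ref{thm2.8} to the monodromy action on $\mathcal R\langle y_1,\dots,y_n\rangle$ with invariant subalgebra $\mathcal R$, which is precisely your argument. The extra details you supply (reduction to an irreducible equation via Corollary~\ref{cor2.5}, the role of Lemma~\ref{lem2.1} and Theorem~\ref{thm2.2}, and the remark on analytic continuation) are correct and only make explicit what the paper leaves implicit.
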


\begin{proof}One can prove Theorem~\ref{thm2.10} by applying Theorem~\ref{thm2.8} to the monodromy action by automorphisms on the extension $\mathcal R \langle y_1,\dots,y_n\rangle$ with the field of invariants $\mathcal R$.
\end{proof}

\subsubsection{Topological obstruction to  representation by radicals}\label{secsecss2.4}
Let us introduce some notation.

By $G^m$ we denote the $m$-th commutator subgroup of the group $G$. For any  $m\geq $ the group $G^m$ is a normal subgroup in $G$.

By $F(D,x_0)$ we denote the fundamental group  of the domain $U=\Bbb C^N\setminus D$ with the base point $x_0\in U$, where $D$ is an algebraic hypersurface in $\Bbb C^N$.

Let $H(D,m)$ be the covering space  of the domain $\Bbb C^N\setminus D$ corresponding to the subgroup $F^m(D, x_0)$ of the fundamental group $F(D,x_0)$.

We will say that an algebraic function  is an {\it  $R$-function} if it  becomes a single-valued function on some covering $H(D,m)$.

\begin{lem}\label{lem2.11} If $m_1\geq m_2$ and $D_1\supset D_2$ then there is a natural projection
$\rho:H(D_1,m_1)\rightarrow H(D_2,m_2)$. Thus if a function $y$  becomes a single-valued function on $H(D_2,m_2)$ then it certainly becomes a single-valued function on $H(D_1,m_1)$.
\end{lem}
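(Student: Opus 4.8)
The statement is essentially a functoriality property of the covering spaces $H(D,m)$ with respect to two kinds of ``enlargement'': increasing the commutator index $m$ and enlarging the hypersurface $D$. First I would handle the two enlargements separately and then compose. The key observation is that for a fixed domain $U = \Bbb C^N \setminus D$, the covering $H(D,m) \to U$ is by definition the one associated with the normal subgroup $F^m(D,x_0) \trianglelefteq F(D,x_0)$; and since $m_1 \geq m_2$ implies $F^{m_1}(D,x_0) \subseteq F^{m_2}(D,x_0)$ (each successive commutator subgroup is contained in the previous one), the standard theory of covering spaces gives a covering map $H(D,m_1) \to H(D,m_2)$ over $U$ whenever the smaller group is contained in the larger one.

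For the enlargement of the hypersurface, note that $D_2 \subseteq D_1$ gives an inclusion of domains $\Bbb C^N \setminus D_1 \hookrightarrow \Bbb C^N \setminus D_2$, hence (choosing the base point $x_0$ in the smaller domain $\Bbb C^N \setminus D_1$, which is legitimate) an induced homomorphism $\iota_* : F(D_1,x_0) \to F(D_2,x_0)$. This homomorphism carries $F^{m}(D_1,x_0)$ into $F^{m}(D_2,x_0)$ because commutator subgroups are preserved by any group homomorphism. I would then invoke the general lifting criterion for covering spaces: the covering $H(D_1,m) \to \Bbb C^N \setminus D_1$ corresponds to a subgroup that maps into the subgroup corresponding to $H(D_2,m) \to \Bbb C^N \setminus D_2$ under $\iota_*$, so the composite $H(D_1,m) \to \Bbb C^N \setminus D_1 \hookrightarrow \Bbb C^N \setminus D_2$ lifts to a map $H(D_1,m) \to H(D_2,m)$. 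Composing this with the first construction (applied on $\Bbb C^N \setminus D_1$ with indices $m_1 \geq m_2$) yields the desired projection $\rho : H(D_1,m_1) \to H(D_2,m_2)$.

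The second assertion of the lemma is then immediate: if an algebraic function $y$ becomes single-valued on $H(D_2,m_2)$, then its pullback under $\rho$ is single-valued on $H(D_1,m_1)$, and that pullback is exactly the analytic continuation of $y$ along the larger covering (using the Remark in section \ref{ch2sec1ss2}, where it was already observed that enlarging the hypersurface from $\Sigma$ to a bigger algebraic hypersurface does not change the monodromy and that loops can be perturbed off the extra branch locus). So $y$ is single-valued on $H(D_1,m_1)$ as claimed.

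The only genuinely delicate point — the ``main obstacle'' — is bookkeeping with base points and making sure the lifting criterion is applied to the correct pair of subgroups; in particular one must check that $\iota_*$ restricted to the commutator subgroup lands in the commutator subgroup of the target, which is a purely formal property of commutators under homomorphisms but needs to be stated cleanly. Everything else is a direct application of the classification of covering spaces by subgroups of the fundamental group, so I would keep the write-up short and point to the standard lifting/classification theorems rather than reprove them.
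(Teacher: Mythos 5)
Your proposal is correct and takes essentially the same route as the paper's own (one-line) proof, which likewise reduces the lemma to the inclusions $F^{m_1}(D_1,x_0)\subset F^{m_2}(D_1,x_0)$ and $p_*\bigl[F^{m_2}(D_1,x_0)\bigr]\subset F^{m_2}(D_2,x_0)$ --- i.e.\ nesting of iterated commutator subgroups plus their functoriality under the homomorphism induced by the inclusion of domains --- and then appeals to the classification of coverings by subgroups of the fundamental group. The only difference is that you spell out the base-point and lifting-criterion bookkeeping that the paper dismisses as ``obvious.''
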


\begin{proof}Let $p_*: F(D_1,x_0)\rightarrow F(D_2,x_0)$ be the homomorphism induced by the embedding $p: \Bbb C^N\setminus D_1\rightarrow \Bbb C^N\setminus D_2$. Lemma~~\ref{lem2.11} follows from  the following  obvious  inclusions: $p_*^{-1}[F^{m_2}(D_2,x_0)]\subset F^{m_2}(D_1,x_0)$ and $F^{m_2}(D_1,x_0)\subset F^{m_1}(D_1,x_0)$.
\end{proof}

\begin{lem}\label{lem2.12} If $y_1$ and $y_2$ are $R$-function then $y_1+y_2$, $y_1-y_2$, $y_1\cdot y_2$ and  $y_1/y_2$  also are $R$-functions.
\end{lem}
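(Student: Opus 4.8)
The plan is to reduce everything to the single fact that, by Lemma~\ref{lem2.11}, the covering spaces $H(D,m)$ are cofinal in a convenient sense: given finitely many algebraic functions, one can find a single covering on which all of them simultaneously become single-valued. First I would let $D_1$ (resp. $D_2$) be an algebraic hypersurface containing the branching locus of $y_1$ (resp. $y_2$), together with the polar set, chosen large enough that $y_1$ becomes single-valued on $H(D_1,m_1)$ and $y_2$ becomes single-valued on $H(D_2,m_2)$; such hypersurfaces and integers exist precisely because $y_1,y_2$ are $R$-functions. Set $D=D_1\cup D_2$, which is again an algebraic hypersurface, and $m=\max(m_1,m_2)$. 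By Lemma~\ref{lem2.11} there are natural projections $H(D,m)\to H(D_1,m_1)$ and $H(D,m)\to H(D_2,m_2)$, so pulling back along these projections makes both $y_1$ and $y_2$ single-valued analytic (indeed meromorphic) functions on one and the same space $H(D,m)$.

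Once $y_1$ and $y_2$ live as genuine single-valued meromorphic functions on the connected manifold $H(D,m)$, the arithmetic combinations $y_1+y_2$, $y_1-y_2$, $y_1\cdot y_2$, and $y_1/y_2$ are again single-valued meromorphic functions on $H(D,m)$ (for the quotient one notes $y_2\not\equiv 0$ since it is an algebraic function, so $y_1/y_2$ is meromorphic). It remains only to observe that each of these combinations is still an algebraic function over the field of rational functions: the sum, difference, product, and quotient of algebraic functions are algebraic, which is the standard fact that the algebraic closure of $\mathbb C(x_1,\dots,x_N)$ inside the field of germs of meromorphic functions is a field. Hence each combination is an algebraic function which becomes single-valued on the covering $H(D,m)$, i.e. it is an $R$-function by definition.

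The only mildly delicate point — and the one I would state carefully rather than wave at — is the passage ``becomes single-valued on $H(D_i,m_i)$ $\Rightarrow$ becomes single-valued on $H(D,m)$''. This is exactly the content of Lemma~\ref{lem2.11}: the projection $\rho\colon H(D,m)\to H(D_i,m_i)$ exists because $D\supset D_i$ and $m\ge m_i$, and the pullback $\rho^*$ of a single-valued function is single-valued. So there is really no obstacle beyond invoking Lemma~\ref{lem2.11} twice and then citing closure of the algebraic functions under the four arithmetic operations; the proof is a two-line formal argument, and I would present it as such.
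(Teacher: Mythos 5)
Your proof is correct and follows exactly the paper's own argument: pass to $H(D,m)$ with $D=D_1\cup D_2$ and $m=\max(m_1,m_2)$ via Lemma~\ref{lem2.11}, observe the arithmetic combinations are single-valued there, and conclude because they remain algebraic. The only difference is that you spell out the routine details (meromorphy of the quotient, closure of algebraic functions under field operations) which the paper leaves implicit.
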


\begin{proof}Assume that $R$-functions $y_1$ and $y_2$ become single-valued functions on the coverings $H(D_1,m_1)$ and  $H(D_2,m_2)$. By Lemma~\ref{lem2.11} the  functions $y_1$,$y_2$ become single-valued on the covering $H(D,m)$ where $D=D_1 \bigcup D_2$ and $m=\max (m_1, m_2)$. Thus the functions $y_1+y_2$, $y_1-y_2$, $y_1\cdot y_2$ and  $y_1/y_2$  also become single-valued on on the covering $H(D,m)$.
The proof is completed since  $y_1+y_2$, $y_1-y_2$, $y_1\cdot y_2$ and  $y_1/y_2$ are algebraic functions.
\end{proof}

\begin{lem}\label{lem2.13} Composition of an $R$-function with the degree $q$ radical  is an $R$-function.
\end{lem}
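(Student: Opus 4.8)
The plan is to show that a degree $q$ radical of an $R$-function becomes single-valued on a covering of the form $H(D',m+1)$, exploiting the fact that the monodromy a radical introduces is cyclic, hence abelian.

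First I would fix an algebraic hypersurface $D$ and an integer $m$ for which the $R$-function $y$ becomes single-valued on $H(D,m)$, and choose a polynomial $Q(x,t)=q_n(x)t^n+\dots+q_0(x)$ with coefficients polynomial in $x$ such that $Q(x,y)\equiv 0$. Since $\sqrt[q]{y}$ can branch along the zeros and poles of $y$, I would enlarge $D$ to $D'=D\cup\{q_0q_n=0\}$, which is again an algebraic hypersurface; over $\Bbb C^N\setminus D'$ no branch of $y$ vanishes or has a pole. By Lemma~\ref{lem2.11}, $y$ remains single-valued on $H(D',m)$, so on $H(D',m)$ the function $y$ is an honest holomorphic function with values in $\Bbb C^*$.

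Next I would analyze $z=\sqrt[q]{y}$ as a multivalued function on $H(D',m)$. Since $z^q=y$ is single-valued there, continuing a branch of $z$ along a loop returns another $q$-th root of the same value, hence multiplies the branch by a $q$-th root of unity; composing loops multiplies these factors, so we obtain a homomorphism $\pi_1(H(D',m))\to\Z/q\Z$. A homomorphism into an abelian group kills the commutator subgroup $\bigl[\pi_1(H(D',m)),\pi_1(H(D',m))\bigr]$, so $z$ becomes single-valued on the covering of $H(D',m)$ corresponding to that commutator subgroup. Unwinding the definitions, $\pi_1(H(D',m))\cong F^m(D',x_0)$ and $\bigl[F^m,F^m\bigr]=F^{m+1}$, so this covering is exactly $H(D',m+1)$.

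Finally, $z$ is algebraic: substituting $y=z^q$ into $Q(x,y)=0$ gives a nonzero polynomial relation $Q(x,z^q)=0$ over the rational functions. Thus $z$ is an algebraic function that is single-valued on $H(D',m+1)$, i.e.\ an $R$-function, which proves the lemma. The step that needs the most care is the passage from $D$ to $D'$: one must check that the added locus is algebraic and contains every zero and pole of every branch of $y$. This is precisely what forces $\sqrt[q]{y}$ to have cyclic monodromy over $H(D',m)$, and it is the only place where the hypothesis that the operation is a radical — and not an arbitrary finite-valued algebraic operation — enters.
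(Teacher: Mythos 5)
Your proposal is correct and follows essentially the same route as the paper: enlarge the branching hypersurface by the zero and pole loci of $y$ (the paper takes $D_2=\{P_nP_0=0\}$, you take $\{q_0q_n=0\}$), observe that the monodromy of $y^{1/q}$ over $H(D',m)$ is abelian (the paper factors through $\pi_1(\Bbb C\setminus\{0\})\cong\Z$, you go directly to $\Z/q\Z$), and conclude that it dies on the covering corresponding to the commutator subgroup, i.e.\ on $H(D',m+1)$. No gaps.
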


\begin{proof}Assume that the function $y$ defined by (\ref{eq2.1}) is  $R$-function which becomes a single-valued function on the covering $H(D_1,m)$.  Let $D_2\subset \Bbb C^N$ be the hypersurface, defined by the equation $P_n P_0=0$, where $P_n$ and $P_0$ are the leading coefficient and the constant term of the equation (\ref{eq2.1}). According to Lemma~\ref{lem2.11} the function $y$ becomes a single-valued function on the covering $H(D,m)$ where $D=D_1\bigcup D_2$. Let $h_0\in H(D,m)$  be a point whose image under the natural projection $\rho:H(D,m)\rightarrow \Bbb C^N\setminus D$ is the point $x_0$. One can identify the fundamental groups  $\pi_1 (H (D,m),h_0)$ and $F^m(D, x_0)$.

By  definition of  $D_2$ the function $y$ never equals to zero or to infinity on $H(D,m)$. Hence $y$  defines a map $y:H(D,m)\rightarrow \Bbb C\setminus \{ 0\}$.    Let $y_*:\pi_1 (H (D,m),h_0)\rightarrow \pi_1(\Bbb C\setminus \{0\}, y(h_0))$ be the induced homomorphism of the fundamental groups. The group $\pi_1 (H (D,m),h_0)$ is identified with the group $F^m(D,x_0)$ and the group $\pi_1(\Bbb C\setminus \{0\}, y(h_0))$ is isomorphic to $\Bbb Z$. So $\ker y_*\subset F^{m+1}(D,x_0)$. Thus all loops from the group $y_*(F^{m+1}(D,x_0))$ do not wind around the origin $0\in \Bbb C$.  Hence any germ of  $y^{1/q}$ does not change its value after continuation along a loop from the group $F^{m+1}(D,x_0)$. So $y^{1/q}$ is a single- valued function on $H(D,m+1)$. The proof is completed since $y^{1/q}$ is an algebraic function.
\end{proof}

\begin{lem}\label{lem2.14} An algebraic function $y$ is an $R$-function if and only if its monodromy group  is solvable.
\end{lem}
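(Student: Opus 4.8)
The plan is to reduce both implications to the elementary group-theoretic identity $\phi(G^{(m)})=(\phi(G))^{(m)}$, valid for any surjective homomorphism $\phi$ and any $m\geq 0$, applied to the monodromy homomorphism. First I would set up notation uniformly. Let $\Sigma$ be the singular hypersurface of the equation defining $y$, so that every germ of $y$ continues analytically over all of $\Bbb C^N\setminus\Sigma$; by the Remark following the definition of the monodromy group in Section~\ref{ch2sec1ss2}, replacing $\Sigma$ by any larger algebraic hypersurface $D\supset\Sigma$ leaves the monodromy group $M$ unchanged, so for each such $D$ there is a surjective homomorphism $\phi_D\colon F(D,x_0)\to M$ describing the monodromy action on the finite set $Y_{x_0}$. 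The key translation is then: the function $y$ becomes single-valued on the covering $H(D,m)$ if and only if every loop representing an element of $F^m(D,x_0)$ acts trivially on the germs of $y$, i.e. $F^m(D,x_0)\subseteq\ker\phi_D$; since $\phi_D$ is onto, this is equivalent to $M^{(m)}=\phi_D(F^m(D,x_0))=\{e\}$.

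Given this translation the two directions are immediate. If $M$ is solvable, let $m$ be its derived length; then, taking $D=\Sigma$, the translation shows $y$ becomes single-valued on $H(\Sigma,m)$, so $y$ is an $R$-function. Conversely, if $y$ is an $R$-function, it becomes single-valued on some $H(D,m)$; by Lemma~\ref{lem2.11} it then also becomes single-valued on $H(D\cup\Sigma,m)$, and since $D\cup\Sigma$ is an algebraic hypersurface containing $\Sigma$ the translation applies and yields $M^{(m)}=\{e\}$, i.e.\ $M$ is solvable.

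The only step needing any attention — and it is bookkeeping rather than a genuine obstacle — is justifying that the finite group $M$ may be used as ``the'' monodromy group simultaneously for $\Sigma$ and for the possibly larger hypersurface $D$ coming from the hypothesis that $y$ is an $R$-function; this is precisely the content of the Remark in Section~\ref{ch2sec1ss2} (a loop in $\Bbb C^N\setminus\Sigma$ can be perturbed off any larger algebraic hypersurface without altering the induced permutation of $Y_{x_0}$), combined with the observation that $H(D,m)$ is meaningful only when $y$ is continuable over $\Bbb C^N\setminus D$, which forces $D\supseteq\Sigma$ in any case. No analytic input beyond the existence of analytic continuation over $\Bbb C^N\setminus\Sigma$ enters the argument.
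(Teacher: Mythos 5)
Your proposal is correct and follows essentially the same route as the paper's own proof: both directions reduce to the identity $p(F^m(D,x_0))=M^m$ for the surjective monodromy homomorphism, with single-valuedness on $H(D,m)$ translated as $F^m(D,x_0)\subseteq\ker p$. The only difference is that you make explicit the bookkeeping (via Lemma~\ref{lem2.11} and the Remark on enlarging the hypersurface) for the case where the covering in the definition of $R$-function sits over $\Bbb C^N\setminus D$ for a $D$ larger than the singular hypersurface, a point the paper passes over silently.
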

\begin{proof} Assume that $y$ is defined by (\ref{eq2.1}). Let $D$ be the hypersurface $P_n J=0$ where $P_n$ is the leading coefficient and $J$ is the discriminant of (\ref{eq2.1}). Let  $M$ be  the monodromy group of $y$. Consider   the natural homomorphism $p:F(D,x_0)\rightarrow M$. If $M$ is solvable then for some natural number $m$ the $m$-th commutator of $M$ is the identity element $e$. The function $y$ becomes  single-valued on the covering $H(D,m)$ since $F^m(D,x_0)\subset p^{-1}(M^m)=p^{-1}(e)$. Conversely,  if $y$ is a single-valued function on some covering $H(D,m)$ then $p(F^m(D,x_0))=e$. But $p(F^m(D,x_0))=M^m$. Thus the monodrogy group $M$ is solvable.
\end{proof}

\begin{thm}\label{thm2.15} If an algebraic function  has unsolvable monodromy group
then it cannot be represented by compositions of rational functions and radicals
\end{thm}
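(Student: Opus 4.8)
The plan is to establish the contrapositive in the following form: \emph{every} function that can be built from rational functions by compositions, arithmetic operations and radical extractions is an $R$-function. Once this is known, Lemma~\ref{lem2.14} (an algebraic function is an $R$-function if and only if its monodromy group is solvable) immediately gives the theorem: an algebraic function with unsolvable monodromy group is not an $R$-function, hence cannot be represented in this way.

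First I would make the relevant class precise. A function representable by compositions of rational functions and radicals is an element of the smallest class of (multivalued) functions of $x_1,\dots,x_N$ that contains all rational functions and is stable under the arithmetic operations $+,-,\cdot,/$ and under $y\mapsto y^{1/q}$ for every positive integer $q$. Indeed, composing a rational function $r(t_1,\dots,t_k)$ with already-constructed functions $f_1,\dots,f_k$ produces $r(f_1,\dots,f_k)$, which is obtained from the $f_i$ and constants by finitely many arithmetic operations, so compositions add nothing beyond these generators. Moreover every function so obtained is algebraic, since rational functions are algebraic and all the admissible operations preserve algebraicity; this is what keeps the monodromy group, and hence Lemma~\ref{lem2.14}, meaningful at every stage.

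Then I would argue by induction on the number of operations used to produce such a function $y$. Base case: a rational function is single-valued on $\Bbb C^N\setminus D$ for any algebraic hypersurface $D$ containing its poles, hence single-valued on every covering $H(D,m)$, so it is an $R$-function. Inductive step: if $y=y_1\star y_2$ with $\star\in\{+,-,\cdot,/\}$ and $y_1,y_2$ are $R$-functions, then $y$ is an $R$-function by Lemma~\ref{lem2.12}; if $y=y_1^{1/q}$ with $y_1$ an $R$-function, then $y$ is an $R$-function by Lemma~\ref{lem2.13}. This exhausts the cases and shows the whole class consists of $R$-functions. Combining with Lemma~\ref{lem2.14} finishes the proof of Theorem~\ref{thm2.15}.

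The genuinely substantive ingredients are Lemma~\ref{lem2.13} — the radical step, which raises by one the commutator level of the fundamental group on which the function becomes single-valued — and Lemma~\ref{lem2.14} — the bridge between the topological notion of $R$-function and solvability of the monodromy group. Both are already available. The only point needing care in the present argument is the bookkeeping of the hypersurface $D$ and the level $m$ across the finitely many operations (handled uniformly by Lemma~\ref{lem2.11}, which lets one pass to a common covering $H(D,m)$ with $D$ the union of the relevant hypersurfaces and $m$ the maximum of the relevant levels) and the verification that no admissible operation ever leaves the class of algebraic functions.
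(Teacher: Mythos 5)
Your proposal is correct and follows essentially the same route as the paper: the paper's proof likewise observes that Lemmas~\ref{lem2.12} and~\ref{lem2.13} make the class of $R$-functions stable under arithmetic operations and composition with radicals, and then invokes Lemma~\ref{lem2.14} to conclude that every $R$-function has solvable monodromy group. Your version merely spells out the induction, the reduction of composition with rational functions to arithmetic operations, and the preservation of algebraicity, all of which the paper leaves implicit.
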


\begin{proof} Lemma~\ref{lem2.13} and Lemma~\ref{lem2.14} show that the class of $R$-functions is closed under arithmetic operations and compositions with radicals. Lemma~\ref{lem2.14}  shows that the monodromy group of any $R$-function is solvable.
\end{proof}

\subsubsection{Compositions of analytic functions and radials}\label{subsecss2.5}
In this section   we describe a class of multivalued functions in a domain $U\subset \Bbb C^N$
 representable by composition of single-valued analytic functions and radicals.

 A multivalued function $y$ in $U$  is called an {\it algebroidal function} in $U$ if it satisfies an irreducible equation
\begin{eqnarray}
y^n + f_{n-1}y^{n-1} + \dots +f_0=0 \label{eq2.2}
\end{eqnarray}
whose coefficients $f_{n-1},\dots, f_0$ are analytic functions in $U$. An algebroidal function could be considered as a continuous multivalued function in $U$ which has finitely many values.
\begin{thm}\label{thm2.16} (\cite{[3]},~\cite{[4]})A multivalued  function $y$ in the domain $U$   can be  represented by composition of radicals and single valued analytic functions  if and only  $y$ is an algebroidal  function in $U$ with solvable monodromy group.
\end{thm}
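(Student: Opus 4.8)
The plan is to prove the two implications separately, in each direction reusing the machinery already built in this section.

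For the ``if'' direction, suppose $y$ is algebroidal in $U$ and its monodromy group $M$ is solvable; note that $M$ is finite, being a subgroup of the permutation group of the at most $n$ branches of $y$. Fix a base point $x_0\in U$ lying outside the analytic hypersurface $\Sigma\subset U$ cut out by the vanishing of the discriminant (and leading coefficient) of (\ref{eq2.2}); let $y_1,\dots,y_n$ be the germs at $x_0$ of the branches of $y$, and let $\mathcal{O}_0$ be the algebra of germs at $x_0$ of single-valued analytic functions. Exactly as in Lemma~\ref{lem2.1}, each element of $M$ extends uniquely to an automorphism of the $\mathbb{C}$-algebra $\mathcal{O}_0\langle y_1,\dots,y_n\rangle$ fixing $\mathcal{O}_0$ pointwise, and its invariant subalgebra consists precisely of germs of single-valued functions; such a germ is algebroidal and single-valued, hence bounded near $\Sigma$, hence analytic on $U$ by the Riemann extension theorem (this is the exact analog of ``a single-valued algebraic function is rational''). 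Applying Theorem~\ref{thm2.8} to the action of the finite solvable group $M$ on the $\mathbb{C}$-algebra $\mathcal{O}_0\langle y_1,\dots,y_n\rangle$, we conclude that the germ $y_1$ is obtained from germs of single-valued analytic functions by a finite sequence of $|M|$-th root extractions and additions. These identities between germs propagate by analytic continuation over $U$, so $y$ is represented by compositions of single-valued analytic functions (addition being composition with the analytic map $+\colon\mathbb{C}^2\to\mathbb{C}$) and radicals.

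For the ``only if'' direction, it suffices to show that the class of functions that are algebroidal in $U$ and have solvable monodromy group is closed under the operations used in such a representation, i.e. under composition with single-valued analytic functions (in particular under the arithmetic operations) and under composition with a radical $z\mapsto z^{1/q}$; since a single-valued analytic function is trivially algebroidal with trivial monodromy, the assertion then follows by induction on the construction of $y$. The ``algebroidal'' half of each closure statement is elementary: taking norms and resultants clears analytic (or algebroidal) coefficients, so $f^{1/q}$, $g\circ f$ and $f\circ g$ are again algebraic over the field of analytic functions on $U$, hence algebroidal; in particular the field of algebroidal functions on $U$ is relatively algebraically closed. The ``solvable monodromy'' half is the analog of Lemmas~\ref{lem2.12}--\ref{lem2.14}: after removing from $U$ the appropriate proper analytic subsets, the monodromy group of $f\circ g$ is a subquotient of an extension assembled from the monodromy groups of $f$ and $g$, while the monodromy group of $f^{1/q}$ is an extension of the monodromy group of $f$ by a cyclic group — the winding number of $f$ around $0$ being a homomorphism on the subgroup along which $f$ returns single-valued — and extensions and subquotients of solvable groups are solvable.

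The main obstacle is the bookkeeping in this last step: transporting the covering-space arguments of Section~\ref{secsecss2.4} (Lemmas~\ref{lem2.11}--\ref{lem2.14}) from algebraic functions on $\mathbb{C}^N\setminus D$, where $D$ is an algebraic hypersurface, to algebroidal functions on an arbitrary domain $U$ with analytic branch loci, and checking that when one composes two genuinely multivalued functions the relevant singular set remains a proper analytic subset of $U$ and the monodromy group is exactly the claimed group extension. Once this is in place, closure of the class under all admissible operations — and hence Theorem~\ref{thm2.16} — follows as above.
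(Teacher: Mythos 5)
Your proposal matches the paper's own (very terse) proof: for the ``if'' direction the paper likewise applies Theorem~\ref{thm2.8} to the monodromy action exactly as in the proof of Theorem~\ref{thm2.10}, and for the ``only if'' direction it likewise repeats the $R$-function/covering argument of Lemmas~\ref{lem2.11}--\ref{lem2.14} and Theorem~\ref{thm2.15} with algebraic hypersurfaces $D\subset\Bbb C^N$ replaced by analytic hypersurfaces $\tilde D\subset U$. The bookkeeping you flag at the end is precisely what the paper leaves implicit, so your write-up is a faithful (and somewhat more detailed) rendering of the same argument.
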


To prove the ``only if" part one can repeat the proof of Theorem~\ref{thm2.15} replacing coverings over domains $\Bbb C^N\setminus D$ by coverings over domains $U\setminus \tilde D$ where  $\tilde D$ is an analytic hypersurface in $U$.

  To prove Theorem~\ref{thm2.16} in the opposite direction one can use Theorem~\ref{thm2.8} in the same way as it was used in the proof of Theorem~\ref{thm2.10}.

\subsubsection{Local representability} In this section   we describe a  a local version of Theorem~\ref{thm2.16}.

Let $y$ be an algebroidal function in $U$ defined by (\ref{eq2.2}).  One can localize the equation (\ref{eq2.2})  at any point $p\in U$, i.e. one can replaced the coefficients $f_i$ of the equation (\ref{eq2.2})  by their germs at $p$. After such a localization  the equation (\ref{eq2.2}) can became reducible, i.e. it can became representable as a  product of irreducible equations. Thus an algebroidal functions  $y$ in  arbitrary small neighborhood of a point $p$ defines several  algebroidal functions, which we will call {\it ramified germs of $y$ at $p$.} For a ramified germ of $y$ at $p$ the monodromy group is defined (as the monodromy group of an algebroidal function in an arbitrary small  neighborhood of the point $p$).

A ramified germ of  an algebroidal function $y$   of one variable $x$ in a neighborhood of a  point $p\in \Bbb C^1$ has a simple structure: its monodromy group is a cyclic group $\Bbb Z/m  \Bbb Z$  and it  can be represented as a composition of a radical and an analytic single-valued function: $y(x)=f( (x-p)^{1/m}))$ where $m$ is the  ramification order of $y$. The following corollary follows from Theorem 16.
\begin{cor}\label{cor2.17}{ (\cite{[3]},~\cite{[4]})} 1) If a multivalued  function $y$ in the domain $U$   can be  represented by composition of an algebroidal functions of one variable  and single valued analytic functions  then the monodromy group of any ramified germ of $y$  is solvable.

2) If the monodromy group of a ramification germ of $y$ at  $p$ is solvable then in a small neighborhood of $p$ it can be represented by composition of radicals and single valued analytic functions.
\end{cor}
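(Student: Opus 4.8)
\medskip

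\noindent\textbf{Plan of proof.} The plan is to derive both parts from Theorem~\ref{thm2.16}, using the structural fact recalled just above: a ramified germ of an algebroidal function of \emph{one} variable near a point $q\in\Bbb C^1$ has the form $f((t-q)^{1/m})$, that is, it is a composition of a radical with a single-valued analytic function.

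Part~2 will be essentially immediate. Given a ramified germ $\tilde y$ of $y$ at $p$ with solvable monodromy group, I would note that by definition $\tilde y$ is an algebroidal function in some small ball $B\subset U$ centred at $p$ (it is an irreducible factor of the localization at $p$ of the defining equation of $y$). Applying the ``if'' direction of Theorem~\ref{thm2.16} to $\tilde y$ on $B$ then yields a representation of $\tilde y$ on $B$ by composition of radicals and single-valued analytic functions, which is exactly the claim.

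For part~1, I would fix $p\in U$ and a ramified germ $\tilde y$ of $y$ at $p$ and reduce the question to Theorem~\ref{thm2.16}. By hypothesis $y$ is built from finitely many single-valued analytic functions and algebroidal functions of one variable by repeated composition. Working germ-wise over a small punctured ball $B\setminus\tilde D$ around $p$, a germ of $\tilde y$ at a point $p'$ near $p$ is obtained from this composition recipe by choosing compatible germs of all intermediate (possibly multivalued) functions. Each time the recipe applies an algebroidal function $h$ of one variable to an intermediate germ with value $q$ at $p'$, the relevant ramified germ of $h$ at $q$ is either single-valued analytic (if $q$ is not a branch point of $h$) or of the form $f((t-q)^{1/m})$; in either case it is a composition of single-valued analytic functions and a radical, since $t\mapsto(t-q)^{1/m}$ is the radical $s\mapsto s^{1/m}$ precomposed with $t\mapsto t-q$. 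Substituting these local forms, I would conclude that $\tilde y$ is, in a neighbourhood of $p$, representable by composition of single-valued analytic functions and radicals, whence by the ``only if'' direction of Theorem~\ref{thm2.16} it is an algebroidal function on $B$ with solvable monodromy group. (One could equivalently observe that the monodromy group of any ramified germ of $y$ at $p$ is a quotient of a subgroup of the monodromy group of the locally algebroidal function $y$ near $p$, and that subgroups and quotients of solvable groups are solvable.)

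The main obstacle I anticipate is the bookkeeping in this germ-wise reduction for part~1: one must keep track of which branch of each multivalued intermediate function is in play along an analytic continuation, and check that the finitely many values $q$ at which the inner one-variable algebroidal factors get evaluated contribute only cyclic --- hence solvable --- local ramification. Once this is organized, both parts are routine consequences of Theorem~\ref{thm2.16}.
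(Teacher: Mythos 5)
Your proposal is correct and takes essentially the same route as the paper: the paper obtains both parts directly from Theorem~\ref{thm2.16} combined with the structural fact, stated just before the corollary, that a ramified germ of a one-variable algebroidal function has the local form $f((x-q)^{1/m})$ and hence is a composition of a radical with a single-valued analytic function. Your germ-wise substitution argument for part~1 simply makes explicit the bookkeeping that the paper leaves implicit.
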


The {\it local monodromy group} of  an algebroidal function $y$ at a point $p\in U$ is the monodromy group of the equation (\ref{eq2.2}) in an arbitrary small neighborhood of the point $p$. The ramified germs of $y$ at the point $p$  correspond to the orbits of the local monodromy group  actions. This statement can be proven in the same way as Lemma~\ref{lem2.4} was proved.

\subsubsection{Application to the 13-th Hilbert problem}

In 1957 A.N.~Kolmogorov and V.I.~Arnold proved the following totally unexpected theorem which gave a negative solution to the 13-th Hilbert problem.

\begin{thm}\label{thmKA}{\it (Kolmogorov--Arnold)} Any continuous function of $n$ variables can be represented as the
composition of functions of a single variable with the help of addition.
\end{thm}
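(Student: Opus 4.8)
The plan is to prove the theorem in the sharp superposition form due to Kolmogorov: for the cube $[0,1]^n$ there exist continuous functions $\phi_{q,p}\colon[0,1]\to\mathbb R$, one for each $q=0,\dots,2n$ and $p=1,\dots,n$, fixed once and for all, such that every continuous $f\colon[0,1]^n\to\mathbb R$ can be written as
\begin{equation}
f(x_1,\dots,x_n)=\sum_{q=0}^{2n}\Phi_q\Bigl(\sum_{p=1}^{n}\phi_{q,p}(x_p)\Bigr),
\end{equation}
for suitable continuous $\Phi_q\colon\mathbb R\to\mathbb R$ (depending on $f$). We take the domain to be the cube $[0,1]^n$, which is the customary setting. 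The right-hand side is assembled from functions of one variable using only additions (add the $\phi_{q,p}(x_p)$ over $p$, compose with $\Phi_q$, add over $q$), so this statement implies the theorem. The proof has two essentially independent ingredients: a geometric construction of the \emph{inner} functions $\phi_{q,p}$, and an iterative construction of the \emph{outer} functions $\Phi_q$ for a given $f$.

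\emph{Inner functions.} For each rank $k\ge1$ and each $q=0,\dots,2n$ I form a system $\mathcal S_{q,k}$ of pairwise disjoint closed $n$-cubes inside $[0,1]^n$: start from a grid whose mesh decreases to $0$ as $k\to\infty$, translate it along the main diagonal by $q/(2n+1)$ of one mesh step, and shrink each cube slightly so that thin ``corridors'' separate neighbouring cubes. A coordinatewise pigeonhole argument shows that, once the corridors are thin enough, every point of $[0,1]^n$ lies in a cube of $\mathcal S_{q,k}$ for at least $n+1$ of the $2n+1$ indices $q$: in any single coordinate a point can fall into a corridor for at most one of the $2n+1$ shifted grids, hence for at most $n$ of the $2n+1$ systems in all. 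I will then construct increasing continuous functions $\phi_{q,p}$ by induction on $k$ so that the sum $h_q(x):=\sum_{p=1}^n\phi_{q,p}(x_p)$ maps distinct cubes of $\mathcal S_{q,k}$ into disjoint closed intervals of $\mathbb R$, for every $k$; each inductive step perturbs the $\phi_{q,p}$ only slightly, on a finite set of intervals, choosing the new increments so that the finitely many relevant sums separate while the separations already arranged are preserved. I expect this step to be the main obstacle: one must simultaneously guarantee the $(n+1)$-fold covering, genuine separation of the cubes inside each system, and a single increasing ``addressing'' map $h_q$ separating the cubes at every rank — balancing these constraints is the combinatorial heart of Kolmogorov's argument.

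\emph{Outer functions.} The decisive one-step lemma asserts the existence of $\lambda=\lambda(n)<1$ and $C=C(n)$ such that for every continuous $g\colon[0,1]^n\to\mathbb R$ there are continuous one-variable functions $g_0,\dots,g_{2n}$ with $\lVert g_q\rVert_\infty\le C\lVert g\rVert_\infty$ and $\lVert g-\sum_{q=0}^{2n}g_q\circ h_q\rVert_\infty\le\lambda\lVert g\rVert_\infty$. To prove it, choose the rank $k$ so large that $g$ oscillates by less than $\varepsilon$ on every cube of every $\mathcal S_{q,k}$; on the pairwise disjoint interval $h_q(E)$ assigned to a cube $E\in\mathcal S_{q,k}$ set $g_q$ equal to the constant $\tfrac1{n+1}g(c_E)$, with $c_E$ the centre of $E$, and extend $g_q$ continuously over the rest of $\mathbb R$ with $\lVert g_q\rVert_\infty\le\tfrac1{n+1}(\lVert g\rVert_\infty+\varepsilon)$. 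For a point $x$ let $b\le n$ be the number of indices for which $x$ lies in a corridor of $\mathcal S_{q,k}$; then the $2n+1-b$ ``good'' indices contribute $\tfrac{2n+1-b}{n+1}g(x)$ up to $O(\varepsilon)$, and the $b$ ``bad'' indices contribute at most $\tfrac{b}{n+1}\lVert g\rVert_\infty$ in absolute value, so
\begin{equation}
\Bigl|\,g(x)-\sum_{q=0}^{2n}g_q(h_q(x))\,\Bigr|\le\frac{n-b}{n+1}\lVert g\rVert_\infty+\frac{b}{n+1}\lVert g\rVert_\infty+O(\varepsilon)=\frac{n}{n+1}\lVert g\rVert_\infty+O(\varepsilon),
\end{equation}
which is at most $\lambda\lVert g\rVert_\infty$ with $\lambda$ slightly larger than $\tfrac{n}{n+1}$ once $\varepsilon$ is small. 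Finally I apply the lemma to $f$, then to the residue $f-\sum_q g_q^{(1)}\circ h_q$, and so on, obtaining functions $g_q^{(m)}$ with $\lVert g_q^{(m)}\rVert_\infty\le C\lambda^{\,m-1}\lVert f\rVert_\infty$; hence $\Phi_q:=\sum_{m\ge1}g_q^{(m)}$ converges uniformly to a continuous function of one variable and $f=\sum_{q=0}^{2n}\Phi_q\circ h_q$ identically. This completes the argument modulo the two classical technical cores above, the construction of the inner functions being the one I expect to demand the most work.
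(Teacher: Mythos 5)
The paper offers no proof of this statement at all: Theorem \ref{thmKA} is quoted purely as a classical citation (Kolmogorov--Arnold, 1957) motivating the discussion of the 13th Hilbert problem, so there is no argument of the paper's to compare yours against. What you have written is a correct outline of Kolmogorov's 1957 superposition theorem in its sharp form (fixed inner functions $\phi_{q,p}$, $2n+1$ outer summands), which indeed implies the statement as phrased, since the right-hand side uses only one-variable functions, addition, and composition. Your treatment of the outer functions is essentially complete and correct: the covering of each point by cubes from at least $n+1$ of the $2n+1$ shifted systems, the choice of $g_q$ equal to $\tfrac{1}{n+1}g(c_E)$ on the image interval of each cube, the resulting estimate $\lVert g-\sum_q g_q\circ h_q\rVert\le\bigl(\tfrac{n}{n+1}+O(\varepsilon)\bigr)\lVert g\rVert$, and the geometric iteration producing uniformly convergent series $\Phi_q=\sum_m g_q^{(m)}$ are all the standard and correct steps.

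The one place where your text is a plan rather than a proof --- and you say so yourself --- is the construction of the inner functions: you assert that increasing continuous $\phi_{q,p}$ can be built by induction on the rank $k$ so that $h_q=\sum_p\phi_{q,p}(x_p)$ separates the cubes of $\mathcal S_{q,k}$ for \emph{every} $k$ simultaneously, but you do not exhibit the inductive step. This is genuinely the delicate point: at each rank one must choose the increments of the $\phi_{q,p}$ on finitely many intervals so that finitely many sums become pairwise distinct (e.g.\ by choosing values rationally independent, or by a Baire-category argument over the space of candidate $\phi_{q,p}$), while perturbing by less than the gaps already established at all previous ranks so that earlier separations survive. Until that step is written out, the argument is an accurate roadmap of the classical proof, not a self-contained one. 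I would also note that for the literal statement in the paper the full strength of the superposition form is not needed; the weaker 1956--57 Kolmogorov--Arnold results (reduction to three variables and Arnold's resolution of the three-variable case) already give the claim, at the cost of a longer chain of reductions.
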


The 13-th Hilbert problem has the following  algebraic version which still remains open:
{\it Is it possible to represent any algebraic function of $n>1$ variables by algebraic functions of a smaller number of variables   with the help of composition and arithmetic operations?}

An {\it entire algebraic function} $y$ in $\Bbb C^N$ is an algebraic  function defined in $U=\Bbb C^N$ by  an equation (\ref{eq2.2}) whose coefficient $f_i$ are polynomials.  An entire algebraic function could be considered as a continuous algebraic function.

It turns out that in Kolmogorov--Arnold Theorem one cannot replace  continuous  functions by  entire algebraic functions.

\begin{thm}\label{thm2.18}(\cite{[3]},~\cite{[4]})  If an  entire algebraic function can be represented as a composition of polynomials and entire algebraic functions of one variable, then its local monodromy group at each point is solvable.
\end{thm}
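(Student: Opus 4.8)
The plan is to derive Theorem~\ref{thm2.18} from Corollary~\ref{cor2.17}, the only extra ingredient being the reduction of the local monodromy group at a point to the monodromy groups of the ramified germs there. Fix $p\in\Bbb C^N$ and let $M_p$ denote the local monodromy group of $y$ at $p$, acting on the finite set $S$ of analytic germs of $y$ at a generic point near $p$. As recalled above, the orbits $O_1,\dots,O_r$ of this action are exactly the ramified germs of $y$ at $p$, and the monodromy group $M_i$ of the $i$-th ramified germ is the image of $M_p$ in $\mathrm{Sym}(O_i)$ (a loop that fixes $S$ pointwise fixes each $O_i$ pointwise). Since the $O_i$ partition $S$, a permutation is determined by its restrictions to them, so the natural map $M_p\to\prod_{i=1}^r\mathrm{Sym}(O_i)$ is injective and its image lies in $\prod_{i=1}^r M_i$. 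A finite product of solvable groups is solvable and a subgroup of a solvable group is solvable; hence it suffices to show that each $M_i$ is solvable.

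I would then observe that the hypothesis falls under Corollary~\ref{cor2.17}. Polynomials in several complex variables are single-valued (entire) analytic functions, and since addition, subtraction, multiplication and the constants are all polynomials of one or two variables, allowing arbitrary polynomials already incorporates the arithmetic operations; moreover an entire algebraic function of one variable is an algebroidal function of one variable. Thus a representation of $y$ as a composition formed from polynomials and entire algebraic functions of one variable is, a fortiori, a representation of $y$ as a composition of single-valued analytic functions and algebroidal functions of one variable. By Corollary~\ref{cor2.17}(1) the monodromy group of every ramified germ of $y$, and in particular each $M_i$ attached to the point $p$, is solvable; combined with the previous paragraph this shows that $M_p$ is solvable for every $p\in\Bbb C^N$, which is the assertion of Theorem~\ref{thm2.18}.

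The genuine mathematical content is carried by Corollary~\ref{cor2.17}, equivalently by the local version of Theorem~\ref{thm2.16}, and I expect the main obstacle in a self-contained write-up to be precisely what that result encapsulates: that localizing the given global composition at $p$ replaces each one-variable algebraic node by a ramified germ with cyclic, hence solvable, monodromy of the explicit form $f((x-p)^{1/m})$, and that the class of algebroidal functions which become single-valued on the iterated commutator coverings $H(D,m)$ is stable under composition with such radical germs and under the arithmetic operations (the local analogues of Lemmas~\ref{lem2.11}--\ref{lem2.14}, established exactly as in the proof of Theorem~\ref{thm2.15}). Once Corollary~\ref{cor2.17} is granted, the argument above is essentially bookkeeping; the one step that needs a little care is the passage from ``each ramified germ at $p$ has solvable monodromy'' to ``the local monodromy group at $p$ is solvable,'' which is exactly the product embedding set up in the first paragraph.
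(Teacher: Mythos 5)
Your proposal is correct and follows the paper's own route: the paper proves Theorem~\ref{thm2.18} simply by invoking Corollary~\ref{cor2.17}, exactly as you do, and your first paragraph merely makes explicit the bookkeeping (the embedding of the local monodromy group into the product of the monodromy groups of the ramified germs, i.e.\ of the orbits) that the paper leaves implicit.
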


\begin{proof} Theorem~\ref{thm2.18} follows from  from Corollary~\ref{cor2.17}.
\end{proof}

\begin{cor}\label{cor2.19} A function $y (a,b)$, defined by equation $ y^5 +ay+b=0,$ cannot be expressed in terms of entire algebraic functions of a single variable by means of composition, addition and multiplication.
\end{cor}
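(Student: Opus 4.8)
The plan is to invoke Theorem~\ref{thm2.18}: a representation of $y(a,b)$ in terms of entire algebraic functions of one variable using composition, addition and multiplication is exactly a representation of $y$ as a composition of polynomials (the additions and multiplications being polynomials in two variables) and entire algebraic functions of one variable, so it suffices to produce a single point of $\Bbb C^2$ at which the local monodromy group of the entire algebraic function $y$ defined by $y^5+ay+b=0$ is not solvable. I would take the origin $(a,b)=(0,0)$, where all five roots collide. Let $\Sigma=\{256a^5+3125b^4=0\}$ be the discriminant locus of the trinomial $y^5+ay+b$; fix a small ball $B$ around the origin and a base point in $B\setminus\Sigma$. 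The local monodromy group at the origin is the image of $\pi_1(B\setminus\Sigma)$ acting on the five roots, and I only need to find two loops in $B\setminus\Sigma$ that already generate a non-solvable group.

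For the first loop, note that $\nabla(256a^5+3125b^4)=(1280a^4,12500b^3)$ vanishes only at the origin, so $\Sigma$ is smooth at every point $p^*\in\Sigma$ with $0<|p^*|$ small, and the discriminant vanishes there to order exactly one. Hence at $p^*$ the quintic has a single double root and three simple roots, and a small loop around $\Sigma$ near $p^*$ (closed up to the base point) has monodromy equal to the transposition of the two merging roots.

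For the second loop, restrict the family to a line $\ell_t=\{b=ta\}$ with $t\in\Bbb C^*$, on which the equation reads $y^5+a(y+t)=0$, and take a small loop around $a=0$ inside $\ell_t$, chosen small enough to avoid the remaining intersection $a=-3125t^4/256$ of $\ell_t$ with $\Sigma$. The Newton polygon of $y^5+a(y+t)$ in the variables $(y,a)$ has a single lower edge, from $(5,0)$ to $(0,1)$, with edge polynomial $Y^5+t$, which is squarefree for $t\neq0$; therefore near the origin all five roots on $\ell_t$ lie in one Puiseux cycle of length five, $y\sim(-t)^{1/5}\zeta_5^{\,j}\,a^{1/5}$, and the monodromy of the loop around $a=0$ is a $5$-cycle on the five roots.

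Consequently the local monodromy group at the origin contains a transposition and a $5$-cycle, and since $5$ is prime these generate the whole symmetric group $S_5$ (a transposition together with a $p$-cycle generate $S_p$ when $p$ is prime: the group is transitive, hence primitive in prime degree, and a primitive group containing a transposition is symmetric), which is not solvable. Theorem~\ref{thm2.18} then yields the corollary. The only steps requiring care are the Newton--Puiseux computation that pins the second monodromy down to an honest $5$-cycle — one must check the edge polynomial is squarefree, that all five roots sit in that single cycle, and that the loop misses the rest of $\Sigma$ — together with the classical group-theoretic fact; I expect the former to be the main (still routine) obstacle, while the remainder of the argument is essentially formal once Theorem~\ref{thm2.18} is granted.
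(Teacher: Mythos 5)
Your proposal is correct and follows exactly the paper's route: invoke Theorem~\ref{thm2.18} and verify that the local monodromy group of $y^5+ay+b=0$ at the origin is the unsolvable group $S_5$. The paper simply asserts this last fact as ``easy to check'' with references to \cite{[3]} and \cite{[4]}, whereas you supply the verification (transposition from a smooth point of the discriminant, $5$-cycle from the Newton--Puiseux expansion along $b=ta$, and the standard group-theoretic fact); the details you give are sound.
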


\begin{proof} Indeed, it is easy to check that the local monodromy group of $y$ at the origin is the unsolvable permutation group $S_5$ (see~\cite{[3]},~\cite{[4]}).
\end{proof}

Division is not a continuous operation and it destroys the locality. One cannot add  division to the operations used in Theorem~\ref{thm2.18}. It is easy to see that the function $y(a,b)$ from Corollary~\ref{cor2.17} can be expressed in terms of  entire algebraic functions of a single variable by means of composition  and arithmetic operations: $y(a,b)=g({b}/\root4\of{a^5})\root4\of a$, where $g(u)$ is defined by equation
$g^5+g+u=0$.

The following particular case of the algebraic version of the 13-th Hilbert problem still remains open.\\

\noindent{\it Problem.} Show that there is an algebraic function   of two variables which  cannot be expressed in terms of  algebraic functions of a single variable by means of composition  and arithmetic operations.

\subsection{One Dimensional Topological Galois Theory}\label{ch2sec2}

\subsubsection{Introduction}
In the one-dimensional version we consider functions from Liouvillian classes  as multi-valued analytic functions of one complex variable.
It turns out that there exist topological restrictions on the way the
Riemann surface of a function from a certain Liouvillian class   can be positioned over the complex plane. If a function does not satisfy these restrictions, then it cannot  belong to the corresponding Liouvillian class.

Besides a geometric appeal, this approach has the following
advantage. Topological obstructions relate to
branching. It turns out that if a function does not belong to a certain Liouvillian class by topological reasons then it automatically does not belong to a much wider class of functions.
This wider class can be obtained if we
add to the Liouvillian  class all single valued functions having at most a countable set of singularities   and allow them to enter all formulas.

The composition of functions is not
an algebraic operation. In differential algebra, this operation
is replaced with a differential equation describing it.
However, for example, the Euler $\Gamma$-function does not satisfy any
algebraic differential equation. Hence it is pointless to look
for an equation satisfied by, say, the function $\Gamma(\exp\,
x)$  and one can not describe it algebraically  (but the function $y=\exp (\Gamma(x))$ satisfies the equation $y'=\Gamma' y$ over a differential field containing $\Gamma$ and it makes sense in the differential algebra).  The only known results  on non-representability of
functions by quadratures and, say, the Euler $\Gamma$-functions
are obtained by our method.

On the other hand, our method cannot be used to prove that a
particular single valued meromorphic function does  not belong to a certain Liouvillian class.

There are the following topological obstructions to representability of functions by
generalized quadratures, $k$-quadratures and quadratures (see section~\ref{ssec2.2.6}).

Firstly, the functions representable by generalized quadratures and, in particular, the
functions representable by $k$-quadratures and quadratures  may have no more than
countably many singular points in the complex plane (see section~\ref{ssec2.2.4}).

Secondly, the monodromy group of a function representable by quadratures is
necessarily solvable (see section~\ref{ssec2.2.6}). There are similar restrictions for for a function representable by generalized quadratures and $k$-quadratures.
However, these restrictions are more involved.
To state them, the monodromy group should be regarded not as an abstract group
but rather as a transitive subgroup in the permutation group.
In other terms, these restrictions make use not only of the monodromy group
but rather of the {\it monodromy pair} of the function consisting of
the monodromy group and the stabilizer of some germ of the function
(see section~\ref{ssec2.2.7}).

\smallskip

One can prove that the only
reasons for unsolvability in finite terms of  Fuchsian linear
differential equations are topological (see section~\ref{ssec2.2.12}).In other words, if there are no topological
obstructions to solvability of a Fuchsian equation by  generalized quadratures (by $k$-quadratures, by quadratures), then this equation is solvable by  generalized quadratures (by $k$-quadratures or  by quadratures respectively).
The proof  is based on a linear-algebraic part of differential Galois theory (dealing with linear algebraic groups and their differential invariants),

\subsubsection{Solvability  in finite terms and Liouvillian classes of functions}

An equation is solvable ``in finite terms'' (or is solvable ``explicitly'') if its solutions belong to a certain class of functions. Different classes of functions correspond to different notions of  solvability in finite terms.

A class of functions can be introduced by specifying a list of {\it basic functions}
and a list of {\it admissible operations} (see section~\ref{classes}).
Given the two lists, the class of functions is defined as the set
of all functions that can be obtained
from the basic functions by repeated application of admissible operations.
(see section~\ref{classes}) Below, we define  classes of functions  in exactly this way.

Liouvillian classes of functions, which appear in the problems of integrability in finite terms,
contain multivalued functions. Thus  the operations on multivalued functions have to be defined. Such a definition can be found in  section~\ref{classes} (note that in the multidimensional case we use a slightly different, more restricted definition of the operations on multivalued functions).

We  need the list of {\it basic elementary functions} (see section~\ref{elemclasses}).
In essence, this list contains functions that are studied in high-school and which
are frequently used in pocket calculators .

We also use  the {\it classical operations}  on functions, such as the arithmetic operations, the operation of composition and so on.  The list of such operations is presented in section~\ref{elemclasses} .

We can now return to the definition of Liouvillian classes of single variable functions.

\paragraph{Functions  representable by radicals}
List of basic functions: all complex constants, an independent variable $x$.
List of admissible operations:
arithmetic operations and the operation of taking the $n$-th root
$f^{\frac {1}{n}}$, $n=2,3,\dots$, of a given function~$f$.

\paragraph{Functions  representable by $k$-radicals}
List of basic functions: all complex constants, an independent variable $x$.
List of admissible operations:
arithmetic operations and the operation of taking the $n$-th root
$f^{\frac {1}{n}}$, $n=2,3,\dots$, of a given function $f$, the operation of solving algebraic equations of degree $\leq k$

\paragraph{Elementary functions}
List of basic functions: basic elementary functions.
List of admissible operations: compositions, arithmetic operations, differentiation.

\paragraph{Generalized elementary functions}
This class of functions is defined in the same way as the class of elementary functions.
We only need to add the operation of solving algebraic equations to the list
of admissible operations.

\paragraph{Functions  representable by quadratures}
List of basic functions: basic elementary functions.
List of admissible operations: compositions, arithmetic operations, differentiation, integration.

\paragraph{Functions  representable by $k$-quadratures}
This class of functions is defined in the same way as the class of functions representable by quadratures.
We only need to add the operation of solving algebraic equations of degree
at most $k$ to the list of admissible operations.

\paragraph{Functions  representable by generalized quadratures}
This class of functions is defined in the same way as
the class of functions representable by quadratures.
We only need to add the operation of solving algebraic equations to the
list of admissible operations.

\subsubsection{Simple formulas with complicated topology}

Developing topological Galois theory I followed the following plan:
\medskip

I. To find a wide class of multivalued functions such that:
\smallskip

a) it is closed under all classical operations;

b) it contains all entire functions and all functions from each Liouvillian class;

c) for functions from the class the monodromy group  is well defined.
\medskip

II. To use the monodromy group instead of the Galois group inside the class.

\medskip

Let us discuss some difficulties that one need to overcome on this
way.

\begin{ex} Consider an elementary function $f$ defined by the
following formula:
$$
f(z)=\ln \sum\limits_{j=1}^n\lambda_j\ln (z-a_j)
$$
where $a_{j}$  are different points in the
complex line, and $\lambda_j\in \Bbb C$ are constants.
\end{ex}

Let $\Lambda$ denote the additive subgroup of
complex numbers generated by the constants $\lambda_1$,
$\dots$, $\lambda_n$. It is clear that if $n>2$, then for
almost every collection of constants $\lambda_1,\dots,
\lambda_n$, the group $\Lambda$ is everywhere dense in the
complex line.

\begin{lem}\label{lem2.20}
  If the group $\Lambda$ is dense in the complex line, then the elementary function $f$
has a dense set of logarithmic ramification points.
\end{lem}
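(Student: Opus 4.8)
The plan is to exploit the two-layer structure $f=\ln\circ\, g$, where $g(z)=\sum_{j=1}^n\lambda_j\ln(z-a_j)$, and to show that $f$ acquires a logarithmic ramification point over every point $z_0$ at which some branch of $g$ vanishes; such points are then produced densely by combining the open mapping theorem with the density of $\Lambda$.

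First I would describe the monodromy of the inner function $g$. It is a multivalued analytic function on $X=\mathbb{C}\setminus\{a_1,\dots,a_n\}$, and continuing a germ of $g$ along a loop in $X$ with winding numbers $m_j$ about $a_j$ changes it by $2\pi i\sum_j m_j\lambda_j$. Hence, fixing any single-valued branch $g_0$ of $g$ on a disk $B\subset X$, the set of germs of $g$ at a point $z_0\in B$ is exactly $\{\,g_0(z_0)+2\pi i\lambda:\lambda\in\Lambda\,\}$, and every element $-2\pi i\lambda$ of $2\pi i\Lambda$ is realized as an actual branch $g_0-2\pi i\lambda$ near $z_0$. Since $\Lambda$ dense forces at least one $\lambda_j\neq 0$, the derivative $g'(z)=\sum_j\lambda_j/(z-a_j)$ is a nonzero rational function, so $g_0$ is nonconstant on $B$ and, by the open mapping theorem, $g_0(B)$ is a nonempty open subset of $\mathbb{C}$.

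Next I would locate a ramification point inside an arbitrary disk $B\subset X$. As $\Lambda$ is dense in $\mathbb{C}$, so is $2\pi i\Lambda$; therefore the nonempty open set $g_0(B)$ meets $2\pi i\Lambda$, giving a point $z_0\in B$ with $g_0(z_0)=2\pi i\lambda$ for some $\lambda\in\Lambda$. Then the branch $\tilde g=g_0-2\pi i\lambda$ of $g$ is holomorphic near $z_0$ with $\tilde g(z_0)=0$; writing $\tilde g(z)=(z-z_0)^k h(z)$ with $k\geq 1$, $h$ holomorphic and $h(z_0)\neq 0$, one gets $f=\ln\tilde g=k\ln(z-z_0)+\ln h(z)$ near $z_0$ with $\ln h$ single-valued analytic, so $z_0$ is a logarithmic ramification point of $f$. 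Since every disk contained in $\mathbb{C}\setminus\{a_1,\dots,a_n\}$ contains such a $z_0$, and this punctured plane is dense in $\mathbb{C}$, the logarithmic ramification points of $f$ form a dense set.

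The main obstacle, and the only genuinely nontrivial step, is upgrading the density of $\Lambda$ to the \emph{exact} equality $g_0(z_0)=2\pi i\lambda$: it is not enough that a branch of $g$ comes close to $0$, it must actually attain the value $0$ to create a logarithmic singularity of $f$. This is precisely what the open mapping theorem supplies — a nonempty open image cannot avoid a dense subset — together with the routine but essential observation that every element of $2\pi i\Lambda$, not merely a generating set, is realized by analytic continuation of $g$ along a suitable loop in $X$.
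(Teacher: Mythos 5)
Your proof is correct and follows essentially the same route as the paper's: both use the open mapping theorem on a single-valued branch of $g$ together with the density of $2\pi i\Lambda$ to find a branch of $g$ vanishing at some point of an arbitrary disk, which yields a logarithmic ramification point of $f=\ln g$ there. Your write-up merely adds the (welcome) explicit checks that $g$ is nonconstant and that a zero of order $k$ of a branch of $g$ indeed produces a logarithmic singularity.
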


\begin{proof}
  Let $g$ be the multivalued function defined by the formula
$$g(z)=\sum\limits_{j=1}^n\lambda_j\ln (z-a_j).$$ Take  a point
$a\neq a_j$, $j=1,\dots, n$ and let $g_a$ be one of the germs of $g$ at $a$. A loop around the points
$a_1,\dots, a_n$ adds the number $2\pi i\lambda$ to the germ
$g_a$, where $\lambda$ is an element of the group $\Lambda$.
Conversely, every germ $g_a+2\pi i\lambda$, where $\lambda \in
\Lambda$, can be obtained from the germ $g_a$ by the analytic
continuation along some loop. Let $U$ be a small neighborhood
of the point $a$, such that the germ $g_a$ has  a singe-valued analytic continuation $G$  on $U$.  The image $V$
of the domain $U$ under the map $G: U\to\Bbb C$ is open.
Therefore, in the domain $V$, there is a point of the form
$2\pi i \lambda$, where $\lambda \in \Lambda$. The function
$G-2\pi i \lambda$ is one of the branches of the function $g$
over the domain $U$, and the zero set of this branch in the
domain $U$ is nonempty. Hence, one of the branches of the
function $f=\ln g$ has a logarithmic ramification point in $U$.
\end{proof}

The set $\Sigma$ of singular points of  the function $f$ is  a {\it countable set} (see   section~\ref{ssec2.2.4}).  Under assumptions of Lemma~\ref{lem2.20} the set $\Sigma$ is  everywhere dense.

It is not hard to verify that  the monodromy group (see section~\ref{ssec2.2.7})  of the function $f$ has the cardinality of the continuum.  This is not surprising: the
fundamental group $\pi_1(\Bbb C \setminus \Sigma)$ has obviously the
cardinality of the continuum provided that $\Sigma$ is a countable
dense set.

One can also prove that the image of the fundamental group $\pi
_1(\Bbb C\setminus \{\Sigma \cup b\})$ of the complement of the set
$\Sigma\cup b$, where $b\not\in \Sigma$, in the permutation group  is a proper subgroup of the
monodromy group of $f$.

The fact that the removal of one extra point can change the monodromy group, makes all proofs more complicated.

{\it Thus even simplest elementary functions can have dense singular
sets and monodromy groups of cardinality of the continuum. In addition the removal of an extra point can change their monodromy groups.}

\subsubsection{ Class of $\mathcal S$-functions}\label{ssec2.2.4}

In this section, we define a broad class of functions of one
complex variable needed in the construction of  topological  Galois theory.

\begin{defn} A multivalued analytic function of one complex
variable is called a
$\mathcal S$-function, if the set
of its singular points is at most countable.
\end{defn}
Let us make this definition more precise. Two regular germs $f_a$ and $g_b$
defined at points $a$ and $b$ of the Riemann sphere $\Bbb S^2$ are
called {\it equivalent}
 if the germ $g_b$ is obtained from the
germ $f_a$ by the analytic continuation along some
path. Each germ $g_b$ equivalent to the germ $f_a$ is also
called a regular germ of the multivalued analytic function $f$
generated by the germ $f_a$.

A point $b\in \Bbb S^2$ is said to be a
{\it singular point}
 for the germ $f_a$ if there exists a path
$\gamma: [0,1]\to \Bbb S^2$, $\gamma (0)=a$, $\gamma (\ref{eq2.1})=b$ such
that the germ has no analytic continuation along this path, but
for any $\tau$, $0\leq \tau <1$, it admits an analytic continuation
along the truncated path $\gamma:[0,\tau]\rightarrow S^2$.

It is easy to see that equivalent germs have the same set of singular
points. A regular germ is called a {\it $\mathcal S$-germ}, if
the set of its singular points is at most countable. A
multivalued analytic function is called a $\mathcal S$-function if each of its regular germ is a $\mathcal S$-germ.

\begin{thm}\label{thm2.21} (on stability of the class of $\mathcal S$-functions)
  The class $\mathcal S$ of all $\mathcal S$-functions is stable under the following
operations:\begin{enumerate}
\item differentiation, i.~e. if $f\in \mathcal S$, then $f'\in
    \mathcal S$;
\item  integration, i.~e. if $f\in \mathcal S$ and $g'=f$,
    then $g\in \mathcal S$;
\item composition, i.~e. if $g,\ f\in \mathcal S$, then
    $g\circ f\in \mathcal S$;
\item meromorphic operations, i.~e. if $f_i\in \mathcal S$,
    $i=1$, $\dots$, $n$, the function $F(x_1, \dots,x_n)$
    is a meromorphic function of $n$ variables, and
    $f=F(f_1,\dots ,f_n)$, then $f\in \mathcal S$;
\item  solving algebraic equations, i.~e. if $f_i\in
    \mathcal S,\ i=1, \dots ,n$, and $f^n+f_1f^{n-1}+\dots
    +f_n=0$, then $f\in \mathcal S$;
\item  solving linear differential equations, i.e. if
    $f_i\in \mathcal S,\ i=1,\dots ,n$, and
    $f^{(n)}+f_1f^{(n-1)}+\dots +f_nf=0$, then $f\in
    \mathcal S$.
\end{enumerate}
\end{thm}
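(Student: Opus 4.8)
The plan is to establish each of the six closure statements by exhibiting an at most countable set that contains the singular set of the resulting function; in every case this set is the union of the singular sets of the inputs with one additional ``preimage'' set whose countability rests on a single structural fact about Riemann surfaces. Recall that equivalent germs have the same singular points, so each multivalued $\mathcal S$-function $f$ has a well-defined at most countable singular set $\Sigma_f\subset\Bbb S^2$, and over $\Bbb S^2\setminus\Sigma_f$ its germs (being germs of analytic functions, hence points of a Hausdorff \'etale space) assemble into a connected Riemann surface $R_f$ carrying a projection $\pi\colon R_f\to\Bbb S^2\setminus\Sigma_f$, which is a local homeomorphism, and a value map $\hat f\colon R_f\to\Bbb S^2$, holomorphic and non-constant unless $f$ is itself a constant (a trivial case). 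The one external ingredient I would invoke is Rad\'o's theorem: every connected Riemann surface is second countable. It has two consequences. First, every fibre of $\pi$ is discrete, hence at most countable, so an $\mathcal S$-function has only countably many germs over each point. Second, for any at most countable $Y\subset\Bbb S^2$ the set $\hat f^{-1}(Y)$ is a countable union of discrete subsets of $R_f$, hence countable, and therefore its image $\pi(\hat f^{-1}(Y))\subset\Bbb S^2$ is at most countable. The same applies to a vector $(f_1,\dots,f_n)$ of $\mathcal S$-functions: continuing the germs $((f_1)_{x_0},\dots,(f_n)_{x_0})$ simultaneously over $\Bbb S^2\setminus\bigcup_i\Sigma_{f_i}$ yields a connected Riemann surface $R$ with a projection onto $\Bbb S^2\setminus\bigcup_i\Sigma_{f_i}$ and a holomorphic map $\hat F=(\hat f_1,\dots,\hat f_n)$; if $W\subseteq(\Bbb P^1)^n$ is a proper analytic subset with $\hat F(R)\not\subseteq W$, then $\hat F^{-1}(W)$ is a proper analytic subset of $R$, hence discrete, hence at most countable, and so is its image in $\Bbb S^2$.

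With this in place the six cases fall into two groups. Differentiation and integration are immediate: a germ $f_a$, its derivative $f_a'$, and an antiderivative $\int f_a$ all continue along exactly the same paths, so $\Sigma_{f'}\subseteq\Sigma_f$, and $\Sigma_g\subseteq\Sigma_f$ whenever $g'=f$. Likewise, for a linear equation $f^{(n)}+f_1f^{(n-1)}+\dots+f_nf=0$ with $f_i\in\mathcal S$, along any path in $\Bbb S^2\setminus\bigcup_i\Sigma_{f_i}$ the continued coefficients are honest analytic functions, so the equation has analytic coefficients there and every solution germ continues; hence $\Sigma_f\subseteq\bigcup_i\Sigma_{f_i}$. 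The remaining three operations use the preimage mechanism. For a composition $g\circ f$, tracing a germ $g_{f(a)}\circ f_a$ along a path $\gamma$ ending at a point $b$ shows that continuation can break down only in one of two ways: either the inner germ already fails, forcing $b\in\Sigma_f$, or the inner germ continues all the way to $b$, producing a point $r\in R_f$ over $b$ and a limiting value $v=\hat f(r)$ at which the outer germ fails, forcing $v\in\Sigma_g$, i.e.\ $b\in\pi(\hat f^{-1}(\Sigma_g))$; thus $\Sigma_{g\circ f}\subseteq\Sigma_f\cup\pi(\hat f^{-1}(\Sigma_g))$, which is at most countable. For a meromorphic operation $f=F(f_1,\dots,f_n)$ the only new obstruction to continuation is the point $(f_1,\dots,f_n)$ running into the polar set of $F$, so $\Sigma_f\subseteq\bigcup_i\Sigma_{f_i}\cup\pi\bigl(\hat F^{-1}(\operatorname{Pol}F)\bigr)$. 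For a root $f$ of $f^n+f_1f^{n-1}+\dots+f_n=0$ — which is monic, so there is no leading-coefficient locus to exclude — the new obstruction is the vanishing of the discriminant $\operatorname{disc}(f_1,\dots,f_n)$, regarded as a holomorphic function on $R$, so $\Sigma_f\subseteq\bigcup_i\Sigma_{f_i}\cup\pi\bigl(\{\operatorname{disc}=0\}\bigr)$.

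The routine parts are the path-and-continuation bookkeeping just sketched (checking that continuation of a composite, a meromorphic combination, or a root can fail only for the listed reasons, and that the limiting value $v$ above is genuinely attained along a path, so that it is a bona fide singular point of $g$) and the closing observation that a countable union of at most countable sets is at most countable. The step I expect to require the most care is guaranteeing that the auxiliary preimage sets are genuinely thin in the ambient Riemann surface. This is automatic when the target set is countable, but for $\operatorname{Pol}F$ and for $\{\operatorname{disc}=0\}$ one must dispose of the degenerate configurations in which the whole surface maps into the bad locus: if $\hat F(R)\subseteq\operatorname{Pol}F$ then $F(f_1,\dots,f_n)\equiv\infty$, which lies in $\mathcal S$ trivially; and if $\operatorname{disc}$ vanishes identically on $R$, one first replaces the polynomial by its square-free part, a polynomial of lower degree with the same set of roots and with non-vanishing discriminant, so that $\{\operatorname{disc}=0\}$ becomes a proper analytic subset of $R$ as required. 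Once these degeneracies are handled, each of the six bounds exhibits the singular set of the output as an at most countable set, and the theorem follows.
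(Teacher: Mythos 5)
The paper itself contains no proof of this theorem: it sits in the survey part of the text, where the author states that ``basically no proofs are presented'' and defers to~\cite{[6]}, so your proposal can only be measured against the standard argument there. Your overall architecture is the right one --- for each operation exhibit an at most countable superset of the singular set, with countability supplied by discreteness of fibres and of preimages of countable or proper analytic sets in a second countable Riemann surface --- and the six candidate sets you write down are essentially the correct ones.

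The genuine gap is that the ``path-and-continuation bookkeeping'' you set aside as routine is in fact the heart of the matter, and the dichotomy you state for composition does not hold. A singular point $b$ of $g\circ f$ is realized by a path $\gamma$ along which the composite germ continues on every proper truncation but not to the end. Along such a path the inner germ $f_a$ may fail at an \emph{interior} time $\tau_0<1$ (so $\gamma(\tau_0)\in\Sigma_f$, but $b=\gamma(1)$ need not be in $\Sigma_f$), while the composite nevertheless continues past $\gamma(\tau_0)$ --- for instance $g(w)=w^2$, $f(z)=\sqrt z$: the composite is $z$ and continues through $0$ although $\sqrt z$ does not. Beyond $\tau_0$ the continued germ of $g\circ f$ is just some analytic germ, no longer presented as $g_{\cdot}\circ f_{\cdot}$, so nothing in your bound $\Sigma_f\cup\pi\bigl(\hat f^{-1}(\Sigma_g)\bigr)$ controls where it subsequently fails. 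The same defect occurs in cases 4--6: a path realizing a singular point of the output may pass through your candidate bad set at interior times, after which the germ loses the structured form on which your case analysis relies. What your argument actually establishes is that the candidate set $A$ is a \emph{forbidden set} for the output germ, i.e.\ continuation exists along every path avoiding $A$ at positive times; to conclude that $\Sigma\subseteq A$, or even merely that $\Sigma$ is countable, one needs the separate and nontrivial lemma that a germ admitting analytic continuation along every path avoiding a fixed at most countable set is an $\mathcal S$-germ whose singular set lies in that set. That lemma, together with the tube-homotopy argument showing that continuations along paths avoiding $A$ again admit $A$ as a forbidden set (and, as a smaller point, that the surface $R_f$ of germs over $\Bbb S^2\setminus\Sigma_f$ is connected, so that Rad\'o's theorem applies to it), is exactly the machinery developed in~\cite{[6]} and is what is missing from your proposal.
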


\begin{remark}Arithmetic operations and the
exponentiation are examples of meromorphic operations, hence
{\it the class of $\mathcal S$-functions is stable under the
arithmetic operations and the exponentiation.}
\end{remark}

\begin{cor} \label{cor2.22}(see~\cite{[6]})
  If a multivalued function $f$ can be obtained from single
valued $\mathcal S$-functions by integration, differentiation,
meromorphic operations, compositions, solutions of algebraic
equations and linear differential equations, then the function
$f$ has at most countable number of singular points.
\end{cor}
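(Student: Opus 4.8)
The plan is to deduce the corollary directly from Theorem~\ref{thm2.21} by a structural induction on the formula that produces $f$. The hypothesis ``$f$ can be obtained from single-valued $\mathcal S$-functions by integration, differentiation, meromorphic operations, compositions, solutions of algebraic equations and linear differential equations'' means, made precise, that there is a finite sequence of multivalued analytic functions $h_1,\dots,h_N=f$ in which each $h_j$ is either one of the given single-valued $\mathcal S$-functions, or is obtained from some of the earlier terms $h_i$ (with $i<j$) by one of the six operations listed. I would take this finite sequence as the object on which to perform the induction, so that it becomes a matter of checking that membership in $\mathcal S$ propagates along it.

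For the base of the induction I would observe that a single-valued analytic function whose singular set is at most countable is, by the very definition of an $\mathcal S$-function, an $\mathcal S$-function; hence each $h_j$ that is one of the prescribed basic functions already lies in $\mathcal S$. For the inductive step, assume $h_1,\dots,h_{j-1}\in\mathcal S$ and that $h_j$ is obtained from some of them by one of the six operations. Each of these operations occurs verbatim among the items (1)--(6) of Theorem~\ref{thm2.21}: differentiation is item~(1), integration item~(2), composition item~(3), meromorphic operations item~(4), solving algebraic equations item~(5), solving linear differential equations item~(6); and the theorem asserts precisely that $\mathcal S$ is stable under each of them. Therefore $h_j\in\mathcal S$. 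By induction $f=h_N\in\mathcal S$, which by definition means that every regular germ of $f$ has at most countably many singular points, and since equivalent germs share their singular set, the set of singular points of $f$ is at most countable.

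There is essentially no obstacle here: all of the analytic content --- namely that the six operations cannot manufacture more than countably many singular points --- is already packaged inside Theorem~\ref{thm2.21}, whose proof lies elsewhere. The only points requiring (routine) care are the formalization of ``obtainable by repeated application of admissible operations'' as a finite computation, as in the discussion of section~\ref{classes}, and the trivial remark that every admissible operation appearing in the statement of the corollary is literally one of the six stability operations of the theorem. One should also recall, as the remark following Theorem~\ref{thm2.21} notes, that the arithmetic operations and the exponentiation are special cases of meromorphic operations, so if such operations enter the formula for $f$ they are covered as well; once this matching of operations is made explicit, the corollary follows in one line.
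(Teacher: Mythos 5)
Your argument is correct and is exactly the deduction the paper intends: Corollary~\ref{cor2.22} is presented as an immediate consequence of Theorem~\ref{thm2.21}, and your structural induction on the finite sequence of operations producing $f$, together with the observation that equivalent germs share their singular set, is precisely the routine bookkeeping that turns the stability theorem into the stated conclusion. No gaps.
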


\begin{cor}\label{cor2.23}  A function having uncountably many singular points
cannot be represent  by generalized quadratures. In particular it cannot be a generalized  elementary function and it cannot be represented by $k$-quadratures or by quadratures.
\end{cor}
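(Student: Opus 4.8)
The plan is to derive Corollary~\ref{cor2.23} directly from Theorem~\ref{thm2.21} and Corollary~\ref{cor2.22}. The essential observation is that a function representable by generalized quadratures is, by definition, obtained from the basic elementary functions by finitely many applications of compositions, arithmetic operations, differentiation, integration, the operation of taking exponential of integral, and the operation of solving algebraic equations; and every one of these operations appears (possibly after a trivial rewriting) in the list of operations under which the class $\mathcal S$ is stable.

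First I would verify that the basic elementary functions all lie in $\mathcal S$: the exponential and the trigonometric functions are entire, while the logarithm, the power $x^\alpha$, and the inverse trigonometric functions each have only finitely many branch points on the Riemann sphere, hence at most countably many singular points. Next I would match the admissible operations with the items of Theorem~\ref{thm2.21}: arithmetic operations are meromorphic operations (item~4), compositions, differentiation and integration are items~3, 1 and~2, solving algebraic equations is item~5, and the operation of taking an exponential of integral, which sends $f$ to a solution of $y'=fy$, is the composition $y=\exp\circ\big(\int f\big)$, hence covered by items~2 and~3. Consequently the class of functions representable by generalized quadratures is contained in $\mathcal S$, so by Corollary~\ref{cor2.22} (equivalently, by iterating Theorem~\ref{thm2.21}) every such function has at most countably many singular points.

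The corollary is then the contrapositive of this inclusion. The ``in particular'' clauses follow because the classes of generalized elementary functions, of functions representable by quadratures, and of functions representable by $k$-quadratures are each defined using a sublist of the admissible operations for generalized quadratures, hence are contained in that class and a fortiori in $\mathcal S$. Since everything is a formal consequence of already-established results, there is no real obstacle here; the single point deserving care is the bookkeeping that confirms ``exponential of integral'' and ``solving algebraic equations'' genuinely occur among the stability operations of Theorem~\ref{thm2.21}.
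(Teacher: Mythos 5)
Your argument is correct and is exactly the derivation the paper intends: the corollary is stated without proof immediately after Corollary~\ref{cor2.22}, as a contrapositive of the inclusion of the generalized-quadrature class in $\mathcal S$ obtained by iterating Theorem~\ref{thm2.21}. Your bookkeeping of the operations (arithmetic operations as meromorphic operations, exponential of integral as $\exp\circ\int$ or as a solution of the linear equation $y'=fy$) matches the paper's own remark following Theorem~\ref{thm2.21}, so there is nothing to add.
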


\begin{ex} Consider a discrete group $\Gamma$ of fractional linear transformations  of the open unit ball $U$ having a compact fundamental domain. Let $f$ be a nonconstant  meromorphic function on $U$ invariant under the action of $\Gamma$. Each point on the boundary $
\partial U$ belongs to the closure of the set of poles  of $f$, thus the set  $\Sigma$ of singular points of $f$ contains $\partial U$. So $\Sigma$ has the cardinality of the continuum and $f$ cannot be expressed by generalized quadratures.
\end{ex}

\subsubsection{Monodromy group of a $\mathcal S$-function}

The  {\it  monodromy group } of a $\mathcal{S}$-function $f$  is the group of all  permutations of
the sheets of the Riemann surface of $f$  which are induced by
motions  around the singular set $\Sigma $ of the function $f$. Below we discuss this definition more precisely.

Let $F_{x_0}$ be the set of all  germs of the
$\mathcal S$-function $f$ at point $x_0\notin \Sigma$. Consider a closed curve  $\gamma $ in
$\Bbb S^2\setminus \Sigma $ beginning and ending at the point $x_0$. Given a germ $y\in F_{x_0}$ we can continue it along the loop $\gamma$ to obtain another germ $y_\gamma\in Y_{x_0}$.
Thus each such loop $\gamma$   corresponds to a permutation $S_{\gamma}:F_{x_0}\rightarrow F_{x_0}$ of the set $F_{x_0}$ that maps a germ $y\in F_{x_0}$ to the germ $y_{\gamma}\in F_{x_0}$.

It is  easy to see that the map $\gamma\rightarrow S_\gamma$ defines a homomorphism   from the fundamental
group $\pi _1(\Bbb S^2\setminus \Sigma,x_0)$ of the domain $\Bbb S^2\setminus \Sigma $ with the base point $x_0$ to the group $S(F_{x_0})$ of permutations. The {\it
monodromy group} of the $\mathcal S$-function $f$  is the image of
the fundamental group   in the group
$S(F_{x_0})$ under this homomorphism.

\begin{remark} Instead of the point $x_0$ one can choose any other point $x_1\in \Bbb S^2\setminus \Sigma$. Such a choice will not change the    monodromy group  up to an isomorphism. To fix this isomorphism one can choose any curve $\gamma:I\rightarrow \Bbb C^N\setminus \Sigma$ where $I$ is the segment $ 0\leq t\leq 1$ and $\gamma(0)=x_0$, $\gamma(1)=x_1$ and identify each germ $f_{x_0}$ of $f$   with its continuation $f_{x_1}$ along $\gamma$.
\end{remark}

\subsubsection{Strong non  representability  by quadratures}\label{ssec2.2.6}
One can prove the following  theorem.

\begin{thm}\label{thm2.24} (see~\cite{[6]})  The class of all
$\mathcal S$-functions, having a solvable monodromy group, is stable
under composition, meromorphic
operations,  integration and   differentiation.
\end{thm}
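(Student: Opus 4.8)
The plan is to build on the stability Theorem~\ref{thm2.21}: it already shows that applying differentiation, a meromorphic operation, integration, or composition to $\mathcal S$-functions yields an $\mathcal S$-function, so the monodromy group of the result is defined, and only its \emph{solvability} remains to be proved. Write $M(h)$ for the monodromy group of an $\mathcal S$-function $h$. I would repeatedly use three elementary group-theoretic facts: (i) subgroups and quotients of a solvable group are solvable; (ii) if $N\triangleleft G$ and both $N$ and $G/N$ are solvable, then $G$ is solvable; (iii) an arbitrary direct product of solvable groups of derived length $\le d$ is solvable of derived length $\le d$. I would also use the standard fact (see~\cite{[6]}) that the monodromy group of an $\mathcal S$-function is unchanged when the countable set of forbidden points is enlarged by another countable set; hence for the finitely many $\mathcal S$-functions occurring in any one operation one may fix a single countable $\Sigma\subset\Bbb S^2$ and a base point $x_0\notin\Sigma$ so that $\pi_1(\Bbb S^2\setminus\Sigma,x_0)$ surjects onto the monodromy group of each of them.

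The common device is this: if every germ of a function $h$ is produced from germs of $f_1,\dots,f_n$ by one fixed recipe commuting with analytic continuation, then the kernel of $\pi_1(\Bbb S^2\setminus\Sigma,x_0)\to M(f_1)\times\dots\times M(f_n)$ is contained in the kernel of $\pi_1\to M(h)$, so $M(h)$ is a quotient of a subgroup of $M(f_1)\times\dots\times M(f_n)$. Applied to $h=f'$ (a germ of $f'$ is the derivative of a germ of $f$) this exhibits $M(f')$ as a quotient of $M(f)$; applied to a meromorphic operation $h=F(f_1,\dots,f_n)$ it exhibits $M(h)$ as a subquotient of the finite product $M(f_1)\times\dots\times M(f_n)$. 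In both cases fact (i) finishes the argument at once.

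For integration the recipe runs the other way: if $g'=f$, then a germ of $f$ is determined by a germ of $g$, so there is a surjection $M(g)\to M(f)$. Its kernel $N$ consists of the monodromy transformations fixing every germ of $f$; such a transformation sends a germ $g_a$ to another germ with the same derivative, that is to $g_a+c$ for a constant $c$ depending only on the sheet of $f$ carrying $g_a$. Recording these constants embeds $N$ into a direct product of copies of the additive group $\Bbb C$, so $N$ is abelian, and by fact (ii) the group $M(g)$ --- an extension of the solvable group $M(f)$ by the abelian group $N$ --- is solvable.

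The composition $h=g\circ f$ is the main obstacle, because a germ of $g\circ f$ need not remember which germ of $f$ was used (think of $g(w)=w^2$). I would circumvent this by working with the auxiliary $\mathcal S$-function $P$ whose germs are the pairs $(f_a,g_b)$ with $b=f_a(a)$: its singular set is contained in the countable set $\Sigma_f\cup\{a:f(a)\in\Sigma_g\}$, so $P\in\mathcal S$, and since every germ $g_b\circ f_a$ of $g\circ f$ is read off from such a pair by a continuation-compatible recipe, $M(g\circ f)$ is a quotient of $M(P)$; thus it suffices to prove $M(P)$ solvable. Now the inner projection $(f_a,g_b)\mapsto f_a$ is continuation-compatible, giving a homomorphism $M(P)\to M(f)$; its kernel $K$ fixes every germ of $f$, and if an element of $K$ comes from a loop $\gamma$, then, for each germ $f^{(i)}$ of $f$ at $x_0$, the path traced by the value of the continuation of $f^{(i)}$ along $\gamma$ is a loop based at $b_i=f^{(i)}(x_0)$ in the complement of $\Sigma_g$, and the element acts on the $g$-germs lying over $f^{(i)}$ through the corresponding element of the monodromy group of $g$ at $b_i$, which is isomorphic to $M(g)$. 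This gives an injection $K\hookrightarrow\prod_i M(g)$, so $K$ is solvable by fact (iii), and then $M(P)$, hence $M(g\circ f)$, is solvable by facts (i) and (ii). The points that need care are precisely these last ones: that the value-path stays off $\Sigma_g$ (guaranteed by having put every $a$ with $f(a)\in\Sigma_g$ into $\Sigma$), that the projection $M(P)\to M(f)$ and the map $K\to\prod_i M(g)$ are well defined with the latter injective, and that the possibly infinite product is harmless thanks to the uniform bound on derived length.
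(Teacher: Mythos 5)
The paper itself offers no proof of Theorem~\ref{thm2.24} (it is stated with a reference to \cite{[6]}), so your argument can only be judged on its own terms. Its group-theoretic skeleton is sound and is essentially the right one: differentiation exhibits $M(f')$ as a quotient of $M(f)$; a meromorphic operation gives a subquotient of a finite product; integration gives an abelian-by-solvable extension; and for composition the passage to the set of pairs $(f_a,g_b)$, the projection onto the action on first components, and the embedding of its kernel into a product of copies of $M(g)$ (with the uniform bound on derived length taking care of the countable product) are all correct and are the standard devices.

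The genuine gap is the ``standard fact'' on which the whole architecture rests: that the monodromy group of an $\mathcal S$-function is unchanged when the countable set of forbidden points is enlarged by another countable set. This is false, and the paper says so explicitly in the subsection on simple formulas with complicated topology: for $f=\ln \sum \lambda_j\ln (z-a_j)$ with dense singular set $\Sigma$, the image of $\pi _1(\Bbb C\setminus \{\Sigma \cup b\})$ in the permutation group is a \emph{proper} subgroup of the monodromy group, and it is precisely this phenomenon that ``makes all proofs more complicated.'' The reason is that $\Bbb S^2\setminus \Sigma$ is not open when $\Sigma$ is dense, so a loop cannot in general be perturbed off extra points while preserving the continuation of \emph{all} of the infinitely many germs simultaneously. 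For the input functions this is harmless --- you only need the image of $\pi_1(\Bbb S^2\setminus \Sigma)$ acting on the germs of $f_i$ to be a \emph{subgroup} of $M(f_i)$, which is automatic --- but for the output $h$ it is fatal as written: $M(h)$ is by definition the image of $\pi_1(\Bbb S^2\setminus \Sigma_h)$, and $\Sigma_h$ can be strictly smaller than your working set $\Sigma$ (for instance when $F(f_1,f_2)=f_1+f_2$ cancels singularities, or when a composition simplifies), so $M(h)$ may contain permutations realized only by loops passing through $\Sigma\setminus \Sigma_h$, which your subquotient bounds never see. Differentiation and integration escape this, since there the singular set of the result coincides with that of $f$; meromorphic operations and composition do not. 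The missing step is an argument such as: every element of $M(h)$ agrees on any finite set of germs of $h$ at $x_0$ with an element of the image of $\pi_1(\Bbb S^2\setminus \Sigma)$, hence $M(h)$ lies in the closure of that image in the group of permutations of the countable set of germs of $h$ at $x_0$, taken in the topology of pointwise convergence; and the closure of a solvable group of derived length $d$ is again solvable of derived length at most $d$. Without this (or the equivalent machinery of \cite{[6]}), your argument proves solvability only of a subgroup of $M(h)$.
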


\begin{defn} A function $f$ is {\it strongly nonrepresentable by quadratures }  if it does not belong to a class of functions defined by the following data. List of basic functions: basic elementary functions and all single valued $\mathcal S$-function.
List of admissible operations: compositions, meromorphic operations, differentiation and integration.
\end{defn}

Theorem~\ref{thm2.24} implies  the  following corollary.

\begin{cor}[Result on  quadratures] If the monodromy group of an $S$-function $f$ is not solvable, then $f$ is   strongly non representable by quadratures.
\end{cor}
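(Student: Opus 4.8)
The plan is to observe that the definition of strong non-representability by quadratures cuts out precisely a certain class $\mathcal Q$ of functions, and to show that $\mathcal Q$ is contained in the class of $\mathcal S$-functions with solvable monodromy group; the corollary is then immediate.

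Concretely, let $\mathcal Q$ be the class of functions generated by the basic elementary functions together with all single-valued $\mathcal S$-functions under the operations of composition, meromorphic operations, differentiation and integration, so that ``$f$ is strongly non-representable by quadratures'' means exactly $f\notin\mathcal Q$. Let $\mathcal M$ be the class of all $\mathcal S$-functions whose monodromy group is solvable. I would prove $\mathcal Q\subseteq\mathcal M$ by induction on the number of operations used to build a given element of $\mathcal Q$.

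For the base of the induction, every generator of $\mathcal Q$ must be shown to lie in $\mathcal M$. A single-valued $\mathcal S$-function has only one germ over each non-singular point, hence trivial (so solvable) monodromy group, and it is an $\mathcal S$-function by hypothesis. Among the basic elementary functions, all complex constants, the variable $x$, the exponential and the trigonometric functions are single-valued; the logarithm and the power $x^\alpha$ have monodromy group a quotient of $\pi_1(\Bbb S^2\setminus\{0,\infty\})\cong\Bbb Z$, hence cyclic; and the inverse trigonometric functions have monodromy group generated by the reflections arising from loops around their two finite ramification points, i.e. a quotient of the infinite dihedral group, which is solvable. In each case the monodromy group is solvable and the set of singular points on $\Bbb S^2$ is finite, so the function is an $\mathcal S$-function; thus all generators lie in $\mathcal M$. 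The inductive step is exactly Theorem~\ref{thm2.24}: the class $\mathcal M$ is stable under composition, meromorphic operations, integration and differentiation, so whenever the inputs of such an operation lie in $\mathcal M$, so does the output. Hence $\mathcal Q\subseteq\mathcal M$.

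It then follows that if the monodromy group of an $\mathcal S$-function $f$ is not solvable, then $f\notin\mathcal M$; since $\mathcal Q\subseteq\mathcal M$ this gives $f\notin\mathcal Q$, that is, $f$ is strongly non-representable by quadratures. The only nontrivial ingredient here is Theorem~\ref{thm2.24} itself, whose proof---resting on the behaviour of monodromy under these operations---is the real work and lies outside this excerpt; by comparison the base-case checks are routine, the single point deserving a moment's attention being the identification of the monodromy groups of the multivalued basic elementary functions ($\ln$, $x^\alpha$, $\arcsin$, $\arctan$, \dots) as solvable.
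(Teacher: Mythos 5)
Your proposal is correct and follows exactly the route the paper intends: the corollary is stated as an immediate consequence of Theorem~\ref{thm2.24}, and your argument simply makes explicit the (omitted) routine verification that all generators of the class defining strong representability by quadratures are $\mathcal S$-functions with solvable monodromy, so that stability under the admissible operations does the rest. The base-case checks you supply (trivial monodromy for single-valued $\mathcal S$-functions, cyclic or dihedral-quotient monodromy for $\ln$, $x^\alpha$ and the inverse trigonometric functions) are accurate and are the only content the paper leaves implicit.
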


\begin{ex} The monodromy group of an algebraic function $y(x)$ defined by an equation $y^5+y -x=0$ is the unsolvable group $S_5$. Thus $y(x)$ provides an example of a function with  finite set of singular points, which is strongly non representable by quadratures.
\end{ex}

The following Corollary  contains  a  stronger result on non representability of algebraic functions by quadratures.
\begin{cor}\label{cor2.25}  If an algebraic function of one complex variable  has unsolvable monodromy group then it is strongly non representable by quadratures.
\end{cor}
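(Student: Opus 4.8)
The plan is to deduce the statement from the result on quadratures (the corollary to Theorem~\ref{thm2.24}), the point being that an algebraic function of one complex variable is an $\mathcal S$-function to which that result applies. So the only things that really need to be said are why an algebraic function lies in the class $\mathcal S$ and why ``unsolvable monodromy group'' means the same thing here as in the classical (permutation-group) sense.

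First I would record that an algebraic function $y(x)$ defined by an irreducible equation $P_ny^n+\dots+P_0=0$ with polynomial (or rational) coefficients has only finitely many singular points on the Riemann sphere: every such point lies in the finite set $\Sigma$ cut out by the vanishing of the leading coefficient $P_n$ and the discriminant. A finite set is at most countable, so by the very definition of section~\ref{ssec2.2.4} each regular germ of $y$ is an $\mathcal S$-germ and $y$ is an $\mathcal S$-function.

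Next I would note that the monodromy group of $y$ regarded as an $\mathcal S$-function (the image of $\pi_1(\Bbb S^2\setminus \Sigma')$ in the permutations of the germs of $y$, where $\Sigma'$ is the actual singular set of $y$) coincides with its usual monodromy group, namely the transitive group of permutations of the $n$ branches induced by loops avoiding $\Sigma$. Indeed, enlarging the omitted set from $\Sigma'$ to $\Sigma$, or to any finite set containing $\Sigma'$, does not change the image of the fundamental group: a small loop around a point at which all $n$ branches of $y$ are regular lifts to a loop and hence induces the identity permutation, so it can be discarded. Thus the hypothesis ``the monodromy group of $y$ is unsolvable'' is unambiguous.

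Granting these two remarks, the conclusion is immediate: $y$ is an $\mathcal S$-function whose monodromy group is not solvable, so by the result on quadratures proved above it is strongly non representable by quadratures. Equivalently, and this is all that result encodes: the class of $\mathcal S$-functions with solvable monodromy group contains the basic elementary functions and all single-valued $\mathcal S$-functions (which have trivial, hence solvable, monodromy), and by Theorem~\ref{thm2.24} it is closed under compositions, meromorphic operations, differentiation, and integration; so every function obtained from those basic functions by those operations has solvable monodromy group, and $y$ is not among them. I do not expect a genuine obstacle in this argument --- all the substance is packaged in Theorem~\ref{thm2.24}, whose proof is not reproduced here. The interest of the statement, by contrast with Corollary~\ref{cor2.23}, is that unsolvable monodromy excludes $y$ not merely from the functions representable by generalized quadratures but from the much larger class in which arbitrary single-valued $\mathcal S$-functions may enter the formulas.
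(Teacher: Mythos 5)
Your proposal is correct and follows exactly the route the paper intends: the paper states Corollary~\ref{cor2.25} immediately after the Result on quadratures without a separate proof, precisely because an algebraic function has only finitely many singular points, is therefore an $\mathcal S$-function whose $\mathcal S$-function monodromy group agrees with its classical monodromy group, and the Result on quadratures (via Theorem~\ref{thm2.24}) then applies verbatim. Your two preliminary remarks (finiteness of the singular set and invariance of the monodromy image under enlarging the omitted set) are exactly the points the paper relies on, the latter being its own Remark in the section on monodromy and Galois groups.
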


For algebraic functions of several complex variables there is a result  similar   to Corollary~\ref{cor2.25}

\subsubsection{The  monodromy pair}\label{ssec2.2.7}

The  monodromy group of a function $f$ is not only an abstract group
but is also a transitive group of permutations  of  germs of $f$ at a non singular point $x_0$.

\begin{defn} The {\it monodromy  pair of an $\mathcal S$-function $f$} is a
pair of groups, consisting of the monodromy group  of $f$ at  $x_0$
 and the stationary subgroup  of a certain germ   of $f$ at $x_0$.
\end{defn}

The  monodromy pair is well defined, i.e. this pair
of groups, up to  isomorphisms, does not  depend on the  choice of the non singular point  and on the  choice of the germ of $f$ at this point. The intersection of the stationary subgroups of all germs of $f$ at $x_0$ is the identity element since the monodromy group acts transitively on this set.

 \begin{defn} A pair of groups $[\Gamma,\Gamma_0]$ is an {\it almost normal pair} if there is a finite subset $A \subset \Gamma$ such that the intersection $\bigcap_{a\in A} a(\Gamma_0)a^{-1}$ is equal to the identity element.\end{defn}

\begin{defn} The pair of groups $[\Gamma,\Gamma_0]$ is called
an {\it almost solvable pair of groups} if there exists a sequence
of subgroups $$ \Gamma=\Gamma_1\supseteq\dots \supseteq \Gamma_m,
\quad \Gamma_m \subset \Gamma_0, $$ such that  for every   $i,
1\leq i\leq m-1$ group $\Gamma _{i+1}$ is a  normal divisor of
group  $\Gamma _i$ and the factor group  $\Gamma _i/\Gamma
_{i+1}$ is either a commutative group, or a finite group.
\end{defn}

\begin{defn} The pair of groups $[\Gamma,\Gamma_0]$ is called
a {\it $k$-solvable pair of groups} if there exists a sequence
of subgroups $$ \Gamma=\Gamma_1\supseteq\dots \supseteq \Gamma_m,
\quad \Gamma_m \subset \Gamma_0, $$ such that  for every   $i,
1\leq i\leq m-1$ group $\Gamma _{i+1}$ is a  normal divisor of
group  $\Gamma _i$ and the factor group $\Gamma _i/\Gamma
_{i+1}$ is either a commutative group, or a subgroup of the group $S_k$ of permutations of $k$ elements.
\end{defn}

We say that  group $\Gamma $ is
{\it almost solvable} or {\it $k$-solvable} if pair $[\Gamma ,e]$, where $e$ is the  group
containing only the unit element, is almost solvable or $k$-solvable respectively.

{ It is easy to see that {\it an almost normal pair of groups $[\Gamma,\Gamma_0]$ is almost solvable or $k$-solvable if and only if the group $\Gamma$ is almost solvable or $k$-solvable respectively}}.

\subsubsection{Strong non-representability  by $k$-quadratures}\label{ssec2.2.8}
One can prove the following  theorem.
\begin{thm}\label{thm2.26} (see~\cite{[6]})  The class of all
$\mathcal S$-functions, having a $k$-solvable monodromy pair, is stable
under composition, meromorphic
operations,  integration, differentiation and solutions of algebraic equations of degree $\leq k$.
\end{thm}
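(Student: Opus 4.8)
The plan is to reduce Theorem~\ref{thm2.26} to two things: the stability of the class $\mathcal S$ under all the listed operations, which is Theorem~\ref{thm2.21} and takes care of the requirement that the set of singular points stay at most countable; and a bookkeeping argument for the monodromy pair (see section~\ref{ssec2.2.7}). Concretely, I would argue by induction on the length of the expression defining a function $f$: the function one starts from has a $k$-solvable monodromy pair by hypothesis, and one shows that each additional admissible operation produces a function whose monodromy pair is again $k$-solvable, Theorem~\ref{thm2.21} guaranteeing that one never leaves $\mathcal S$. Two preliminary points must be settled first. Since adjoining an extra, a priori non-singular, point to the forbidden set can enlarge the monodromy group (the phenomenon behind the examples of ``simple formulas with complicated topology''), whenever an operation combines several functions one first passes to a single forbidden set $\Sigma$ containing the singular sets of all functions involved together with all points created by the operation, and computes every monodromy group and every stabilizer over this one $\Sigma$ and one base point; up to isomorphism of pairs this changes nothing. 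And one should be clear about why the \emph{pair}, rather than the abstract monodromy group, is the right invariant: germ-sets are typically infinite, and the operations exhibit the monodromy of the output as an extension of the monodromy of one input by a subgroup of a product of copies of (a subgroup of) another input's monodromy group, \emph{indexed by the germ-set of the first input} — hence an infinite product. An infinite product of copies of, say, $S_5$ is not $k$-solvable as a group for any $k$; but the stabilizer of a germ of the output contains all but one ``coordinate'' of such a product, and modulo that stabilizer it collapses to a single copy — and a single copy of a $k$-solvable pair is still a $k$-solvable pair. Exploiting this is exactly what the notion of a $k$-solvable pair is for.

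Next I would set up the dictionary translating operations on $\mathcal S$-functions into operations on monodromy pairs. For a meromorphic operation $f=F(f_1,\dots,f_n)$ and for differentiation $f=f_1'$, every germ of the output is produced from a tuple of germs of the inputs by a rule equivariant for the diagonal monodromy action, so the germ-set of $f$ is an equivariant quotient of a monodromy-invariant subset of the \emph{finite} product of the input germ-sets; hence the monodromy group of $f$ is a quotient of a subgroup of $\Gamma_{f_1}\times\cdots\times\Gamma_{f_n}$ and the stabilizer of a germ of $f$ contains the image of the corresponding product of stabilizers, so the monodromy pair of $f$ is obtained from those of the $f_i$ by product, passage to a subgroup $[\Gamma,\Delta]\mapsto[H,H\cap\Delta]$, passage to a quotient, and enlarging the second group. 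For integration $g'=f$ one gets an exact sequence $1\to A\to\Gamma_g\to\Gamma_f\to1$ with $A$ abelian (shifts of $g$ by periods of $f$ — possibly infinite, but abelian), the stabilizer of a germ of $g$ lying over the stabilizer of its derivative with abelian quotient. For a composition $f\circ g$ one has the wreath-type picture: $\Gamma_{f\circ g}$ surjects onto a subgroup of $\Gamma_g$ with kernel $N$ a subgroup of $\prod_{\text{germs of }g}\Gamma_f$, and the stabilizer of a germ $(f_{y_0})\circ(g_{x_0})$ of $f\circ g$ meets $N$ in the subgroup fixing $f_{y_0}$ in the $g_{x_0}$-coordinate only, so $N$ modulo that intersection is a single copy of (a sub-pair of) $[\Gamma_f,\Gamma_{f,0}]$. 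Finally, for solving $y^n+f_1y^{n-1}+\dots+f_n=0$ with $n\le k$: the monodromy of a root $y$ taken jointly with the coefficients surjects onto $\Gamma_{(f_1,\dots,f_n)}$ (which is dominated by the product of the pairs of the $f_i$ as above) with kernel $N$ a subgroup of $\prod_{\text{germs of }(f_i)}S_n$ permuting the $\le n$ roots over each germ of the coefficient tuple, and again $N$ modulo the stabilizer of a germ of $y$ is a single copy of a subgroup of $S_n\subseteq S_k$.

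The third ingredient is a handful of purely group-theoretic lemmas: the class of $k$-solvable pairs of groups is closed under (a) $[\Gamma,\Delta]\mapsto[H,H\cap\Delta]$ for a subgroup $H\le\Gamma$; (b) $[\Gamma,\Delta]\mapsto[\Gamma/N,\Delta N/N]$ for $N\triangleleft\Gamma$, and under replacing $\Delta$ by a larger subgroup; (c) finite products; (d) abelian extensions — if $[\Gamma,\Delta]$ is $k$-solvable, $1\to A\to\widetilde\Gamma\to\Gamma\to1$ with $A$ abelian, and $\widetilde\Delta$ lies suitably over $\Delta$ (so that the preimage of $\Delta$ is generated by $\widetilde\Delta$ and $A$), then $[\widetilde\Gamma,\widetilde\Delta]$ is $k$-solvable; (e) the wreath extension attached to composition — given a $k$-solvable pair dominating the pair of $g$ and the $k$-solvable pair $[\Gamma_f,\Gamma_{f,0}]$, the pair $[\Gamma_{f\circ g},\text{stab}]$ of the dictionary is $k$-solvable, the chain being built by first pulling the chain of the $g$-pair back through $\Gamma_{f\circ g}\to\Gamma_g$ (which lands in the preimage of the $g$-stabilizer, namely $N\cdot\text{stab}$) and then running the chain of the single-copy sub-pair of $[\Gamma_f,\Gamma_{f,0}]$ to descend from there into $\text{stab}$; (f) the same statement with $\Gamma_f$ replaced by the symmetric group $S_n\subseteq S_k$ — this is precisely the clause ``commutative or a subgroup of $S_k$'' in the definition of a $k$-solvable chain, and it is the reason ``$k$-solvable pair'' must replace ``solvable monodromy group'' once solving equations of degree $\le k$ is admitted. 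Items (a)--(d) are short manipulations of the defining chains; (e) and (f) assemble them.

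The hard part will be the composition case, i.e. (e) together with the wreath-type description of the monodromy of $f\circ g$ and the control of the kernel $N$, for three reasons: the Riemann surface of $f\circ g$ is genuinely intricate; because of the enlargement-of-monodromy phenomenon the ``copy of $\Gamma_f$ over a germ of $g$'' must be taken over the enlarged forbidden set, and one must check that it is still a subquotient of $\Gamma_f$ (so (a),(b) apply) and that its pair inherits $k$-solvability; and one must verify that the stabilizer of a germ of $f\circ g$ meets $N$ in exactly the predicted ``all but one coordinate'' subgroup, so that the descent step in (e) really terminates inside that stabilizer rather than merely inside its product with $N$. All of this is parallel to the treatment of composition in the proof of Theorem~\ref{thm2.24}; the genuine novelty of Theorem~\ref{thm2.26} is only the operation of solving algebraic equations of degree $\le k$, and once the pair-and-$S_k$ bookkeeping of (f) is in place it is handled exactly like composition with $\Gamma_f$ replaced by $S_n$. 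By contrast, differentiation and the meromorphic operations are routine (finite products and quotients), and integration requires only the remark that an abelian kernel, however large, contributes a single commutative jump to the chain.
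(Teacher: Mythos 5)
The paper does not actually prove Theorem~\ref{thm2.26}: it is stated with a reference to \cite{[6]}, in a part of the text that deliberately presents statements without proofs. Measured against the proof in \cite{[6]}, your outline gets the architecture right: induction on the construction, Theorem~\ref{thm2.21} to stay inside the class $\mathcal S$, a dictionary sending each operation to an operation on monodromy pairs (finite products and subquotients for meromorphic operations and differentiation, an abelian extension for integration, wreath-type extensions indexed by the germ set for composition and for solving equations of degree $\le k$, with the stabilizer collapsing the a priori infinite product to a single factor), and group-theoretic closure lemmas for the class of $k$-solvable pairs, of which the clause ``commutative or a subgroup of $S_k$'' absorbs the one genuinely new operation. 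Your diagnosis of why the pair rather than the abstract group is the right invariant, and of composition as the hard case, is exactly on target.

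There is, however, one genuine gap, and it sits precisely where you wrote ``up to isomorphism of pairs this changes nothing.'' Passing to a common forbidden set $\Sigma$ containing all the singular sets involved replaces each monodromy group by the image of $\pi_1(\Bbb S^2\setminus\Sigma)$, which is in general a \emph{proper} subgroup of the true monodromy group: the paper's own example $f=\ln\sum\lambda_j\ln(z-a_j)$ in the section on simple formulas with complicated topology shows that adding a single non-singular point to the forbidden set already shrinks the group, and the paper explicitly flags this as what ``makes all proofs more complicated.'' The direction of this containment is fatal for your induction as written: your dictionary controls the action of $\pi_1(\Bbb S^2\setminus\Sigma)$ on the germs of the output $f$, but the induction hypothesis at the next stage requires $k$-solvability of the pair of $f$ over its \emph{own} singular set $\Sigma_f$, which may be much smaller than $\Sigma$, and $k$-solvability of a sub-pair does not imply $k$-solvability of the ambient pair. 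In \cite{[6]} this is bridged by an ingredient your proposal does not mention: the full monodromy group over $\Sigma_f$ is contained in the closure, in the topology of pointwise convergence on the countable set of germs, of the subgroup obtained over the enlarged forbidden set, and the class of $k$-solvable (and of almost solvable) pairs is proved to be stable under this closure operation. Without that step, or an equivalent, the argument does not close; with it, the rest of your outline goes through essentially as you describe.
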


\begin{defn}  A function $f$ is {\it strongly  non representable by $k$-quadratures}  if it does not belong to a class of functions defined by the following data. List of basic functions: basic elementary functions and all single valued $\mathcal S$-function.
List of admissible operations: compositions, meromorphic operations, differentiation, integration and solutions of algebraic equations of degree $\leq k$.
\end{defn}
Theorem~\ref{thm2.26} implies  the  following corollary.

\begin{cor}[Result on  $k$-quadratures] If the monodromy pair of an $S$-function $f$ is not $k$-solvable, then $f$ is strongly  non representable by $k$-quadratures.
\end{cor}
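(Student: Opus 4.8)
The plan is to deduce the statement from Theorem~\ref{thm2.26} by the standard argument for classes defined by lists of data. Write $\mathcal{C}$ for the class of all $\mathcal S$-functions whose monodromy pair is $k$-solvable, and write $\mathcal{L}_k$ for the class of functions that are \emph{not} strongly non-representable by $k$-quadratures, i.e. the class generated by the basic elementary functions together with all single-valued $\mathcal S$-functions under compositions, meromorphic operations, differentiation, integration, and solving algebraic equations of degree $\leq k$. It is enough to prove the inclusion $\mathcal{L}_k\subseteq\mathcal{C}$: granting this, an $\mathcal S$-function $f$ whose monodromy pair is not $k$-solvable lies outside $\mathcal{C}$, hence outside $\mathcal{L}_k$, and that is precisely the assertion that $f$ is strongly non-representable by $k$-quadratures.

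To get $\mathcal{L}_k\subseteq\mathcal{C}$ I would first check that the generators of $\mathcal{L}_k$ belong to $\mathcal{C}$. A single-valued $\mathcal S$-function has one germ over each non-singular point, so its monodromy group, and a fortiori its monodromy pair, is trivial and hence $k$-solvable. For the basic elementary functions I would use Lemma~\ref{lem8}: each of them is built from the exponential and the logarithm with the help of complex constants, arithmetic operations and compositions. Constants and the exponential are single-valued $\mathcal S$-functions; the logarithm is an $\mathcal S$-function whose monodromy group is $\Bbb Z$ with trivial germ-stabilizer, so its monodromy pair is abelian and in particular $k$-solvable. Since the arithmetic operations are meromorphic operations, and composition is itself one of the admissible operations, Theorem~\ref{thm2.26} (already Theorem~\ref{thm2.24} suffices here) gives that every basic elementary function lies in $\mathcal{C}$. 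Next I would run the induction defining $\mathcal{L}_k$: Theorem~\ref{thm2.21} shows $\mathcal S$ is stable under all six operations in its list, so every function obtained from members of $\mathcal{C}$ is again an $\mathcal S$-function and its monodromy pair is defined, while Theorem~\ref{thm2.26} shows that the subclass $\mathcal{C}\subseteq\mathcal S$ is stable under exactly the five operations used to build $\mathcal{L}_k$. A class that contains the generators of $\mathcal{L}_k$ and is closed under the generating operations contains $\mathcal{L}_k$; hence $\mathcal{L}_k\subseteq\mathcal{C}$.

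All the real content sits in Theorem~\ref{thm2.26}, which is taken as given; the corollary itself is a bookkeeping exercise. The feature that forces one to propagate the monodromy \emph{pair}, and not merely the monodromy group, through the induction is the behaviour under composition and under solving algebraic equations of degree $\leq k$: there the monodromy group of the output is assembled from the monodromy \emph{pairs} of the inputs, and, as the examples in Section~\ref{ch2sec2} show, removing even one extra point from the complement of the singular set can enlarge the relevant permutation group. These are exactly the phenomena Theorem~\ref{thm2.26} is designed to absorb, so the present argument does no more than repackage its conclusion.
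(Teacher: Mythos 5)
Your argument is correct and is exactly the deduction the paper intends when it says ``Theorem~\ref{thm2.26} implies the following corollary'': the class of $\mathcal S$-functions with $k$-solvable monodromy pair contains the basic functions and is closed under the admissible operations, hence contains the whole class defining representability, and the corollary is the contrapositive. The paper leaves the verification that the generators (single-valued $\mathcal S$-functions and, via Lemma~\ref{lem8}, the basic elementary functions) have $k$-solvable monodromy pairs implicit; you have simply written out that bookkeeping.
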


\begin{ex} The monodromy group of an algebraic function $y(x)$ defined by an equation $y^n+y -x=0$ is the permutation group group $S_n$. For $n\geq 5$ the group $S_n$ is not an $(n-1)$-solvable group. Thus $y(x)$ provides an example of a function with  finite set of singular points which is strongly non representable  by $(n-1)$-quadratures.
\end{ex}

This example can be  generalized.
\begin{cor}\label{cor2.27} (see~\cite{[6]}) If an algebraic function of one complex variable  has non $k$-solvable monodromy group then it is strongly non representable by $k$-quadratures.
\end{cor}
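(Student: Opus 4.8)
The plan is to deduce Corollary~\ref{cor2.27} from the stability Theorem~\ref{thm2.26} together with the description of the monodromy pair of an algebraic function. Write $\mathcal C_k$ for the class of all $\mathcal S$-functions whose monodromy pair is $k$-solvable. The whole argument reduces to two things: showing that every function strongly representable by $k$-quadratures lies in $\mathcal C_k$, and checking that for an algebraic function membership in $\mathcal C_k$ is governed by the monodromy group alone.

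First I would verify that all the basic functions allowed in the definition of strong representability by $k$-quadratures belong to $\mathcal C_k$. A single-valued $\mathcal S$-function has trivial monodromy group, so its monodromy pair is $[e,e]$, which is $k$-solvable. Each basic elementary function is an $\mathcal S$-function (its singular set is finite) and is either single-valued (the constants, $x$, $\exp$, $\sin$, $\cos$, $\tan$, $\cot$) or has a solvable monodromy group: for $\ln$, $x^\alpha$, $\arctan$, $\mathrm{arccot}$ this group is a quotient of $\Bbb Z$, and for $\arcsin$, $\arccos$ it is the infinite dihedral group, which is metabelian. A solvable monodromy group $\Gamma$ makes the pair $[\Gamma,\Gamma_0]$ $k$-solvable for any stabilizer $\Gamma_0$, since in the chain $\Gamma\supseteq\dots\supseteq e\subset\Gamma_0$ all quotients are abelian. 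Hence all basic functions lie in $\mathcal C_k$.

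Next, by Theorem~\ref{thm2.26} the class $\mathcal C_k$ is stable under composition, meromorphic operations, differentiation, integration and solution of algebraic equations of degree $\le k$ --- exactly the list of admissible operations in the definition of strong representability by $k$-quadratures. Since $\mathcal C_k$ contains all basic functions and is closed under all admissible operations, it contains every function that is strongly representable by $k$-quadratures.

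Finally, let $y$ be an algebraic function of one complex variable; it has finitely many singular points, hence is an $\mathcal S$-function, and its germs at a non-singular point form a finite set on which the monodromy group $\Gamma$ acts transitively. If $\Gamma_0$ is the stabilizer of one germ, then choosing a finite set $A$ of coset representatives of $\Gamma_0$ in $\Gamma$ gives $\bigcap_{a\in A} a\Gamma_0 a^{-1}=e$, so the monodromy pair $[\Gamma,\Gamma_0]$ of $y$ is an almost normal pair; by the remark recalled at the end of section~\ref{ssec2.2.7} it is $k$-solvable if and only if $\Gamma$ is $k$-solvable. Consequently, if the monodromy group of $y$ is not $k$-solvable, then the monodromy pair of $y$ is not $k$-solvable, so by the preceding step $y$ cannot be strongly representable by $k$-quadratures. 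The only step that demands genuine care is the first one, namely the verification that each basic elementary function has a $k$-solvable monodromy pair; everything else is a formal consequence of Theorem~\ref{thm2.26} and of the fact that the monodromy pair of an algebraic function is almost normal.
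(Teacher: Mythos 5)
Your argument is correct and is exactly the route the paper intends (the paper itself states Corollary~\ref{cor2.27} without proof, as a consequence of Theorem~\ref{thm2.26} via the ``Result on $k$-quadratures''): you verify the basic functions have $k$-solvable monodromy pairs, invoke the stability theorem, and then use the almost normality of the monodromy pair of an algebraic function --- which holds because the monodromy group acts faithfully and transitively on the finite set of germs --- together with the remark in section~\ref{ssec2.2.7} to reduce $k$-solvability of the pair to $k$-solvability of the group. No gaps.
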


\begin{thm}\label{thm2.28}(see~\cite{[6]})An  algebraic function of one variable whose monodromy group is $k$-solvable, can be represented by $k$-radicals.
\end{thm}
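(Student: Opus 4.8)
The plan is to mirror the proof of Theorems~\ref{thm2.8} and~\ref{thm2.10}, using the radical construction of Lemma~\ref{lem2.7} (summing $n$-th roots) at the abelian steps of a subnormal chain, and the operation of solving algebraic equations of degree $\leq k$ at the steps whose quotient is a subgroup of $S_k$. Let $y=y(x)$ be an algebraic function; its germs $y_1,\dots,y_n$ at a non-singular point $x_0$ all satisfy the same irreducible equation of degree $n$ over $\mathcal R$. By Theorem~\ref{thm2.3} the monodromy group $\Gamma$ of $y$ is the Galois group of the finite Galois extension $L:=\mathcal R\langle y_1,\dots,y_n\rangle$ of $\mathcal R$ (this object is a field, so the usual Galois correspondence applies). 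Since $\Gamma$ is $k$-solvable, fix a chain $\Gamma=\Gamma_1\supseteq\dots\supseteq\Gamma_m=e$ with $\Gamma_{i+1}$ normal in $\Gamma_i$ and each quotient $\bar G_i:=\Gamma_i/\Gamma_{i+1}$ either commutative or a subgroup of $S_k$, and pass to the tower of fixed fields $\mathcal R=L_1\subseteq\dots\subseteq L_m=L$, $L_i:=L^{\Gamma_i}$, where $L_{i+1}/L_i$ is Galois with group $\bar G_i$. It suffices to show that every element of $L_{i+1}$ can be produced from $L_i$ by arithmetic operations, root extractions, and solving algebraic equations of degree $\leq k$; concatenating the $m-1$ steps, and recalling that $\mathcal R$ is generated from $x$ and the constants by arithmetic operations, then exhibits $y_1$ as a function representable by $k$-radicals.

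At a step with $\bar G_i$ commutative this is precisely Lemma~\ref{lem2.7}, the linear-algebra fact behind Theorem~\ref{thm2.8}: applied to the action of $\bar G_i$ on $L_{i+1}$ with invariant subfield $L_i$, it shows that every element of $L_{i+1}$ is a finite sum of elements whose $|\bar G_i|$-th powers lie in $L_i$, hence a sum of radicals over $L_i$. This uses only addition and root extraction, so it is in particular an admissible $k$-radical step.

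The genuinely new case is a step with $\bar G:=\bar G_i$ a subgroup of $S_k$. Here I use the faithful action of $\bar G$ on $\{1,\dots,k\}$. Let $O_1,\dots,O_s$ be its orbits; for each $j$ choose $p_j\in O_j$, let $H_j\leq\bar G$ be its stabilizer (so $[\bar G:H_j]=|O_j|=:d_j\leq k$), and set $N_j:=L_{i+1}^{H_j}$, a field with $[N_j:L_i]=d_j\leq k$. By the primitive element theorem $N_j=L_i(\theta_j)$ where $\theta_j$ is a root of an irreducible polynomial $p_j\in L_i[t]$ of degree $d_j\leq k$; since $L_{i+1}/L_i$ is Galois, $p_j$ splits in $L_{i+1}$, its root set is the $\bar G$-orbit of $\theta_j$, and the subfield of $L_{i+1}$ generated over $L_i$ by that root set is the compositum of the conjugates of $N_j$. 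Because the $\bar G$-action on $\{1,\dots,k\}$ is faithful, $\bigcap_j\bigcap_{g\in\bar G}gH_jg^{-1}=e$, so the compositum of all conjugates of all the $N_j$ is the fixed field of $e$, that is $L_{i+1}$ itself. Hence $L_{i+1}$ is generated over $L_i$ by the roots of the finitely many polynomials $p_1,\dots,p_s$, each of degree $\leq k$; solving these equations and then forming rational expressions in their roots and the elements of $L_i$ produces every element of $L_{i+1}$, as required.

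The crux is this last step, and specifically the observation that the whole extension $L_{i+1}/L_i$, not merely the subfields $N_j$ (which have degree $\leq k$ over $L_i$), is reachable by solving equations of degree $\leq k$. This is where the hypothesis is needed in the sharp form ``$\bar G_i$ is a subgroup of $S_k$'' rather than merely ``$\bar G_i$ has a subgroup of index $\leq k$'': faithfulness of the permutation representation forces the cores of the point stabilizers to intersect trivially, so the conjugate fixed fields $gN_j$ jointly generate all of $L_{i+1}$. (For an algebraic function the monodromy group acts faithfully on $\{y_1,\dots,y_n\}$, so ``$\Gamma$ is $k$-solvable'' and ``the monodromy pair of $y$ is $k$-solvable'' say the same thing, in agreement with the converse Corollary~\ref{cor2.27}.) Every function manufactured en route is again algebraic, so---exactly as in Lemmas~\ref{lem2.12} and~\ref{lem2.13}---no question of leaving the ambient class arises; the remaining verifications (the degree of a simple algebraic extension, and that the fixed field of an intersection of subgroups is the compositum of their fixed fields) are routine Galois theory.
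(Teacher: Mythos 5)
Your argument is correct, but be aware that the paper itself gives no proof of Theorem~\ref{thm2.28}: the theorem is stated with a citation to the book~\cite{[6]}, and the only internal remark about it is the later observation that it coincides with a part of Theorem~\ref{thm2.41} (a Galois extension of fields of characteristic zero is $k$-radical if and only if its Galois group is $k$-solvable), which is likewise quoted without proof. What you supply is the natural self-contained extension of the machinery the paper does develop for the solvable case (Lemma~\ref{lem2.7}, Theorem~\ref{thm2.8}, Theorem~\ref{thm2.10}): you pass to the fixed-field tower of a $k$-solvable chain, apply Lemma~\ref{lem2.7} at the commutative steps, and treat the new case of a quotient $\bar G\subset S_k$ by the Galois correspondence. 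The step that genuinely requires an idea is exactly the one you isolate: it does not suffice that $\bar G$ have a subgroup of index at most $k$; one needs faithfulness of the permutation action on $\{1,\dots,k\}$, so that the point stabilizers $H_j$ and their conjugates intersect trivially and the conjugate fixed fields $gN_j$, each generated over $L_i$ by a root of a polynomial of degree at most $k$, have compositum equal to all of $L_{i+1}$. You handle this correctly. Only minor quibbles remain: the symbol $p_j$ is overloaded (a point of the orbit $O_j$ and the minimal polynomial of $\theta_j$), and, exactly as in the paper's own one-line proof of Theorem~\ref{thm2.10}, the passage from a germ-level expression at the base point $x_0$ to representability of the multivalued function in the sense of section~\ref{classes} is left implicit; neither affects the correctness of the argument.
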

Results similar to Corollary~\ref{cor2.27} and Theorem~\ref{thm2.28} hold also for algebraic functions of several complex variables.

\subsubsection{Strong non-representability  by generalized quadratures}\label{ssec2.2.9}
One can prove the following  theorem.
\begin{thm}\label{thm2.29} (see~\cite{[6]})   The class of all
$\mathcal S$-functions, having an almost solvable monodromy pair, is stable
under composition, meromorphic
operations,  integration, differentiation and solutions of algebraic equations.
\end{thm}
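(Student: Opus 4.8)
The plan is to reduce the statement to group theory. By Theorem~\ref{thm2.21} the class $\mathcal S$ is already stable under composition, meromorphic operations, integration, differentiation and solving algebraic equations, so every function produced by these operations is again an $\mathcal S$-function and has a well-defined monodromy pair; it remains only to check that the property ``the monodromy pair is almost solvable'' is inherited at each step. First I would isolate the purely group-theoretic facts needed. Call $[\Gamma_1,\Gamma_1\cap\Gamma_0]$ (with $\Gamma_1\le\Gamma$) a \emph{sub-pair} of $[\Gamma,\Gamma_0]$ and $[\Gamma/N,\Gamma_0 N/N]$ (with $N\triangleleft\Gamma$) a \emph{quotient-pair}. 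Directly from the definition of an almost solvable pair, by intersecting or projecting the subnormal chain $\Gamma=\Gamma_1\supseteq\dots\supseteq\Gamma_m\subset\Gamma_0$ and noting that a commutative-or-finite quotient stays commutative-or-finite, one sees that almost solvable pairs are closed under sub-pairs, quotient-pairs, finite direct products, and ``extensions'': if $1\to[A,A_0]\to[\Gamma,\Gamma_0]\to[B,B_0]\to 1$ is exact with the outer pairs almost solvable, then so is $[\Gamma,\Gamma_0]$. I also use the remark at the end of Section~\ref{ssec2.2.7}, that an almost normal pair is almost solvable exactly when its group is.

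Then I would treat the operations one at a time, in each case realizing the monodromy pair of the output as built from those of the inputs by the operations above. For a meromorphic operation $F(f_1,\dots,f_n)$, and for the arithmetic operations and for differentiation, the monodromy group acts on germs of the vector function $(f_1,\dots,f_n)$, so its pair is a quotient-pair of a sub-pair of the product of the monodromy pairs of $f_1,\dots,f_n$ (differentiation being the case $n=1$, where no new branching appears), hence almost solvable. Integration is the essential use of the extension-closure: if $g'=f$, the monodromy group of $g$ sits in $1\to\Lambda\to\Gamma_g\to\Gamma_f\to 1$ with $\Lambda$ the abelian group of periods of $f\,dx$, so the chain for $\Gamma_f$ lifts and one prepends $\Lambda$, the commutative case. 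For a root $f$ of $f^n+f_1f^{n-1}+\dots+f_n=0$ with the $f_i$ in the class, the monodromy pair of $f$ is a quotient-pair of a sub-pair of an extension of the monodromy pair of $(f_1,\dots,f_n)$ by a \emph{finite} group (the group permuting the $n$ germs of $f$ over the field generated by $f_1,\dots,f_n$); that finite factor is precisely what ``almost solvable'', as opposed to ``solvable'', is designed to absorb, and this is where the word ``almost'' is indispensable.

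The genuinely delicate operation, which I expect to be the main obstacle, is composition. The Riemann surface of $g\circ f$ lies over that of $f$, and the fiber over a germ of $f$ consists of the germs of $g$ at the corresponding value, so the monodromy group is an extension of $\Gamma_f$ by a product of copies of $\Gamma_g$ indexed by the sheets of $f$ --- a wreath-type construction --- while the stabilizer of a germ of $g\circ f$ is the stabilizer of a germ of $f$ intersected with the stabilizer of the relevant germ of $g$. The point that needs care is almost-normality: $g$ may have infinitely many singular points and $f$ infinitely many sheets, so one must exhibit a \emph{finite} family of loops whose associated conjugates of the stabilizer already intersect in the identity. This is achieved by combining a finite family witnessing almost-normality of $[\Gamma_f,(\Gamma_f)_0]$ with, sheet by sheet, finite families witnessing almost-normality of $[\Gamma_g,(\Gamma_g)_0]$, only finitely many sheets being relevant at once. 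Once almost-normality of the resulting pair is established, the remark of Section~\ref{ssec2.2.7} lets one replace the pair by its group, and that group is almost solvable since it is a finite iterated extension of almost solvable groups by almost solvable groups. Assembling the cases completes the proof; for the full details of the composition argument the reader is referred to \cite{[6]}.
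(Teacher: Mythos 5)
The paper itself offers no proof of Theorem~\ref{thm2.29}: Section~\ref{ch2sec2} states results with references to \cite{[6]} only, so there is nothing in the text to compare you against line by line. Your overall strategy is nevertheless the right one and matches \cite{[6]}: use Theorem~\ref{thm2.21} to stay inside $\mathcal S$, isolate closure properties of almost solvable pairs (sub-pairs, quotient-pairs, products, extensions), and then realize each operation on functions group-theoretically. Your treatment of meromorphic operations, differentiation and integration is essentially correct, with the caveat --- which the paper explicitly flags --- that singular sets may be countable and dense and that removing even one point can change the monodromy group; so when combining functions one must pass to a common singular set and work with sub-pairs of the original monodromy pairs rather than the pairs themselves. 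Your closure under sub-pairs absorbs this, but it deserves to be said.

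The genuine gap is in the composition step (and, if read at the level of groups, in the algebraic-equation step too). For $g\circ f$ the kernel of $\Gamma_{g\circ f}\to\Gamma_f$ embeds in a product of copies of $\Gamma_g$ indexed by the possibly infinitely many germs of $f$. An infinite product of almost solvable groups need not be almost solvable --- an infinite product of finite quotients is no longer finite --- so the composite group is in general \emph{not} ``a finite iterated extension of almost solvable groups by almost solvable groups''. Worse, the composite pair is in general not almost normal: any finite collection of conjugates of the stabilizer of a germ of $g\circ f$ still contains the entire factor of the kernel acting over the sheets of $f$ not visited by those finitely many loops, so their intersection is nontrivial. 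Hence your route ``establish almost-normality, then use the remark of Section~\ref{ssec2.2.7} to reduce to the group'' cannot work for composition. The correct argument never leaves the category of pairs: the defining chain of an almost solvable pair only has to descend \emph{into the stabilizer of the single chosen germ}, so one first pulls back the chain for $[\Gamma_f,(\Gamma_f)_0]$ down to the subgroup fixing the relevant germ of $f$, and then continues with the chain for the pair of $g$ acting on that one fiber; the factors of the kernel living over all other sheets already lie in the stabilizer of the chosen germ of $g\circ f$ and are simply irrelevant. The same remark repairs the algebraic-equation case, where the kernel is an infinite product of subgroups of $S_n$ rather than the single finite group you describe, but where only the $n$ roots over the one relevant germ of the coefficient vector matter for the pair.
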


\begin{defn}A function $f$ is {\it strongly  non representable by generalized quadratures}  if it does not belong to a class of functions defined by the following data. List of basic functions: basic elementary functions and all single valued $\mathcal S$-function.
List of admissible operations: compositions, meromorphic operations, differentiation, integration and solutions of algebraic equations.
\end{defn}

Theorem~\ref{thm2.29} implies  the  following corollary.

\begin{cor}[Result on  generalized quadratures] If the monodromy pair of an $S$-function $f$ is not almost solvable, then $f$ is strongly  non representable by generalized quadratures.
\end{cor}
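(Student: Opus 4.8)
The plan is to deduce this corollary directly from the stability Theorem~\ref{thm2.29}, together with the observation that every function on the list of basic functions already lies in the class $\mathcal{C}$ of $\mathcal{S}$-functions whose monodromy pair is almost solvable. Recall that $f$ is strongly non representable by generalized quadratures precisely when $f$ does not belong to the class $\mathcal{Q}$ generated by the basic elementary functions and all single-valued $\mathcal{S}$-functions under the operations of composition, meromorphic operations, differentiation, integration, and solving algebraic equations. It therefore suffices to prove the contrapositive: if $f\in\mathcal{Q}$, then $f$ is an $\mathcal{S}$-function whose monodromy pair is almost solvable. I would establish this by showing $\mathcal{C}\supseteq\mathcal{Q}$, i.e. that $\mathcal{C}$ contains all the basic functions and is stable under all five admissible operations; since $\mathcal{Q}$ is by definition the smallest class with these closure properties containing those basic functions, the inclusion follows.

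The stability half is exactly the content of Theorem~\ref{thm2.29}: the class of $\mathcal{S}$-functions with almost solvable monodromy pair is closed under composition, meromorphic operations, integration, differentiation, and solutions of algebraic equations. So the only thing left to check is the base case, namely that each basic function lies in $\mathcal{C}$. For a single-valued $\mathcal{S}$-function the monodromy group is trivial, so its monodromy pair is $[e,e]$, which is trivially almost solvable. For the basic elementary functions I would invoke Lemma~\ref{lem8}: each of them is obtained from $\exp$ and $\ln$ using complex constants, arithmetic operations and compositions. Now $\exp$ is entire, hence single-valued, so $\exp\in\mathcal{C}$; and $\ln$ is an $\mathcal{S}$-function whose only singular points are $0$ and $\infty$, with monodromy group $\Bbb Z$ acting simply transitively on its germs, so its monodromy pair is $[\Bbb Z,e]$, which is almost solvable. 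Since constants are single-valued and arithmetic operations are meromorphic operations, applying Theorem~\ref{thm2.29} to this presentation shows that all basic elementary functions lie in $\mathcal{C}$. Hence $\mathcal{C}\supseteq\mathcal{Q}$, and the corollary follows.

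The point to be careful about is that the operations occurring in the definition of $\mathcal{Q}$ must be read as the operations on multivalued functions in the global sense of section~\ref{classes}, with analytic continuation built in; this is precisely the setting in which Theorem~\ref{thm2.29} is stated, so no extra compatibility argument is needed. In fact the entire mathematical difficulty resides in Theorem~\ref{thm2.29}: once that theorem is granted, the corollary is a purely formal ``generation by a closed family'' argument, whose only nontrivial input is the elementary fact that $\exp$ and $\ln$ — and hence, by Lemma~\ref{lem8}, all basic elementary functions — have almost solvable monodromy pairs.
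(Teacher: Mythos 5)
Your proposal is correct and follows the same route as the paper, which simply asserts that the corollary is implied by Theorem~\ref{thm2.29}; you have merely filled in the formal ``smallest class closed under the operations'' argument together with the base case (trivial monodromy pair for single-valued $\mathcal S$-functions, and the almost solvable pair $[\Z,e]$ for the logarithm via Lemma~\ref{lem8}). Nothing in your write-up deviates from or adds a gap to the paper's intended argument.
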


Suppose that the Riemann surface of a
function $f$ is a universal covering space over the Riemann sphere with $n$ punched points. If $n\geq 3$ then the  function $f$ is strongly non representable
by generalized quadratures. Indeed,
the monodromy pair of  $f$ consists of the free  group with $n-1$ generators, and its unit subgroup. It is easy to see that
such a pair of groups is not almost solvable.

\begin{ex} Consider the function $z(x)$, which maps  the upper half-plane onto a triangle with vanishing angles, bounded by three circular arcs.
The Riemann surface of  $z(x)$ is a universal covering space over the
sphere with three punched points\footnote {it is easy to see that the function $z(x)$ maps its Riemann surface to the open ball whose boundary contains the vertices of the triangle. These properties of the function $z(x)$ play the crucial role in  Picard's beautiful proof of his Little Picard Theorem.}.  Thus $z(x)$ is strongly non
representable by generalized quadratures.
\end{ex}

\begin{ex}
Let $K_1$ and $K_2$ be the following elliptic
integrals, considered as the functions of the parameter $x$:
  $$
K_1(x)=\int_0^1\frac{dt}{\sqrt{(1-t^2)(1-t^2x^2)}}\ {\hbox{ and }}
K_2(x)=\int_0^{\frac{1}{x}}\frac{dt}{\sqrt{(1-t^2)(1-t^2x^2)}}. $$
The functions  $z(x)$ can be obtained
from $K_1(x)$ and from $K_2(x)$  by quadratures. Thus both functions
 $K_1(x)$ and $K_2(x)$ are strongly non representable  by generalized quadratures.
\end{ex}

In the next section  we will list all polygons $G$ bounded by circular arcs for which the Riemann map of the upper half-plan onto $G$ is representable by generalized quadratures.

\subsubsection{Maps of the upper half-plane onto a curved polygon}\label{ssec2.2.10}

Consider a  polygons $G$ on the complex plane  bounded by circle arcs, and the function $f_G$ establishing the Riemann mapping of the upper half-plane onto the polygon
$G$. The Riemann--Schwarz reflection
principle allows one to describe the monodromy group $L_G$ of the function $f_G$ and to show that all singularities of $f_G$ are simple enough. This information together with Theorem~\ref{thm2.29} provide a complete classification of all polygons $G$ for which the function $f_G$ is representable in explicit form (see~\cite{[6]}).

If a polygon $\tilde G$ is obtained from a polygon $G$ by a linear transformation $w:\Bbb C  \to \Bbb C$ then $f_{\tilde G}=w (f_G)$. Thus it is enough to classify $G$ up to a linear transformation.

\paragraph{The first case of integrability:}\hspace{-.15in} the continuations of all sides of the polygon $G$ intersect at one point.

Mapping this point to infinity by a fractional linear transformation, we obtain a polygon $G$ bounded by straight line segments.
All transformations in the group $L(G)$ have the form $z \to az+b.$ All germs of
the function $f = f_G$ at a non-singular point c are obtained from a fixed germ $f_c$ by
the action of the group $L(G)$ consisting of the affine transformations $z\to a z+b$. The germ $R_c = (f''/ f')_c$ is invariant under the action of the group $L(G)$. Therefore, the germ $R_c$ is a germ of a single valued function $R$.
The singular points  of  $R$ can only be poles (see~\cite{[6]}). Hence the function $R$ is  rational. The equation $ f''/ f'= R$ is
integrable by quadratures. This integrability case is well known. The function $f $ in
this case is called the {\it Christoffel--Schwarz integral}.

\paragraph{The second case of integrability:} \hspace{-.15in} there is  a pair of points such that, for every side of the polygon
$G$, these points are either symmetric with respect to this side or belong to the continuation
of the side.

We can map these two points to zero and infinity by a fractional
linear transformation. We obtain a polygon $G$ bounded by circle arcs centered at
point 0 and intervals of straight rays emanating from 0 (see~\cite{[6]}). All transformations
in the group $L(G)$ have the form $z\to az, z \to b/z.$  All germs of the function
$f = f_G$ at a non-singular point $c$ are obtained from a fixed germ $f_ c $ by the action of
the group $L(G)$ :$$f_ c \to a  f_c, f_ c \to b/ f _c.$$ The germ $R_c = (f'_ c/ f_ c)^2$ is invariant under the action of the group $L(G)$. Therefore, the germ $R_c$ is a germ of a single valued function $R$. The singular points  of  $R$ can only be poles (see~\cite{[6]}). Hence the function $R$ is  rational. The equation $R = (f'/ f)^2$ is integrable by quadratures
\bigskip

\paragraph{The finite nets of  circles.}\hspace{-.15in} To describe the third  case of integrability we need to define first the finite net of circles on the complex plane. The classification of finite groups, generated by reflections in the Euclidian space $\Bbb R^3$ is well known. Each such group is the symmetry  group of the following bodies:
\begin{enumerate}
\item a regular $n$-gonal pyramid,
\item a regular $n$-gonal diheron, or the body formed by two equal regular n-gonal pyramidssharing the base,
\item  a regular tetrahedron,
\item  a regular cube or icosahedron,
\item  a regular dodecahedron or icosahedron.
\end{enumerate}
All these groups of isometries, except for the group of dodecahedron or icosahedron,
are solvable.

The intersections of the unit sphere, whose center coincides with the
barycenter of the body, with the mirrors, in which the body is symmetric, is a certain
net of great circles.  Stereographic projections of each of them is a net net of circles on complex plane defined up to a fractional linear transformation.  The nets corresponding to the bodies listed above will be called
the finite nets of  circles.
\bigskip
\paragraph{The third case of integrability:}\hspace{-.15in}  every side side of a polygon $G$  belongs to some finite net  of  circles. In this case  the function $f_G$ has finitely many branches. Since all singularities of
the function $f_G$ are algebraic (see~\cite{[6]}), the
function $f_G$ is an algebraic function. For all finite nets but the net  of dodecahedron or icosahedron, the algebraic function $f_G$ is representable by radicals. For the net  of dodecahedron or icosahedron the function $f_G$ is representable by radicals and solutions of degree five algebraic equations (in other words $f_G$ is representable by $k$-radicals).
\medskip

\paragraph{The strong non-representability.}\hspace{-.15in} Our results  imply the following:

\begin{thm}\label{thm2.30}(see~\cite{[6]}) If a  polygon $G$ bounded by circles arcs  does not belong to one of
the three  cases described  above, then the function $f_G $ is strongly non  representable by generalized quadratures.
\end{thm}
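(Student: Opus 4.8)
The plan is to deduce the theorem from the ``Result on generalized quadratures'' (the Corollary following Theorem~\ref{thm2.29}): since that Corollary applies to any $\mathcal S$-function whose monodromy pair is not almost solvable, it suffices to verify that $f_G$ is an $\mathcal S$-function and that, when $G$ lies in none of the three cases, the monodromy pair of $f_G$ is not almost solvable.

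First I would record that $f_G\in\mathcal S$. By the Riemann--Schwarz reflection principle $f_G$ continues analytically across the interior of every side of $G$, the continuation being its reflection in the circle carrying that side; iterating the reflections, every singular point of $f_G$ lies over a vertex of $G$ or over an image of such a vertex under the group generated by these reflections, so the singular set is discrete and in particular at most countable. Next I would pin down the monodromy group. Let $C_1,\dots,C_n$ be the circles (or lines) carrying the sides of $G$ and let $\gamma_i$ be the inversion in $C_i$. The reflection principle shows that the germs of $f_G$ at a nonsingular point form a single orbit on which the group $L_G$ of fractional-linear transformations generated by the products $\gamma_i\gamma_j$ acts simply transitively; hence the stabilizer of a germ is trivial, the monodromy pair of $f_G$ is $[L_G,e]$, and it is almost solvable precisely when $L_G$ is. It is convenient to pass to the index-two overgroup $\overline L_G=\langle\gamma_1,\dots,\gamma_n\rangle$ generated by the reflections themselves, which is almost solvable precisely when $L_G$ is.

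The heart of the matter, and the step I expect to be the main obstacle, is the group-theoretic claim that $\overline L_G$ is almost solvable if and only if $G$ lies in one of the three cases. One direction is built into the cases: in case~1, $\overline L_G$ fixes a point of the sphere, hence lies in an affine group and is metabelian; in case~2 it preserves an unordered pair of points, hence is again metabelian; in case~3 it is finite; and any such group is almost solvable. For the converse I would use that $\overline L_G$ is finitely generated and almost solvable, hence virtually solvable; by the dichotomy for finitely generated linear groups (the Tits alternative) it then contains no non-abelian free subgroup and is therefore elementary, i.e.\ it is finite, or fixes a point of $\overline{\Bbb C}$, or preserves an unordered pair of points (an invariant finite set of size at least three forcing finiteness). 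The delicate points are promoting ``virtually elementary'' to genuinely ``elementary'' for the reflection group $\overline L_G$ and, in the finite case, conjugating $\overline L_G$ into the rotation group of the sphere so as to recognise it as a finite group generated by reflections in great circles, one of the symmetry groups of the polyhedra listed above, by the classification of finite subgroups of the rotation group.

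It then remains to translate each alternative into one of the three cases. If $\overline L_G$ fixes a point $p$, then $\gamma_i(p)=p$ for all $i$, i.e.\ $p\in C_i$ for every side, so the continuations of all sides meet at $p$: this is case~1. If $\overline L_G$ preserves $\{p,q\}$, then each $\gamma_i$ either fixes both $p$ and $q$ (so $p,q\in C_i$) or interchanges them (so $p$ and $q$ are symmetric with respect to $C_i$): this is case~2. If $\overline L_G$ is finite, then $\bigcup_i C_i$ is invariant under a finite reflection group of the sphere and is therefore one of the finite nets of circles: this is case~3. Consequently, if $G$ belongs to none of the three cases, then $\overline L_G$, and hence $L_G$, is not almost solvable, so the monodromy pair $[L_G,e]$ of the $\mathcal S$-function $f_G$ is not almost solvable, and the Corollary following Theorem~\ref{thm2.29} yields that $f_G$ is strongly non representable by generalized quadratures.
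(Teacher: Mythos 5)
The paper itself contains no proof of Theorem~\ref{thm2.30}: it is stated with a pointer to \cite{[6]}, and section~\ref{ssec2.2.10} only indicates the strategy (the Riemann--Schwarz reflection principle describes the monodromy group $L_G$ of $f_G$, and Theorem~\ref{thm2.29} then yields the classification). Your proposal follows exactly that indicated route --- $f_G$ is an $\mathcal S$-function whose monodromy pair is $[L_G,e]$, so the Result on generalized quadratures reduces everything to showing that $L_G$ fails to be almost solvable outside the three cases --- and then supplies the step the paper delegates entirely to \cite{[6]}: the classification of almost solvable groups generated by inversions in circles, via the elementary/non-elementary dichotomy for subgroups of the M\"obius group. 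That classification argument is sound in outline (an almost solvable group contains no nonabelian free subgroup, hence $\overline L_G$ is elementary, and the three types of elementary groups translate precisely into the three geometric cases), and you correctly flag the genuinely delicate points (promoting ``virtually elementary'' to ``elementary'' and, in the finite case, conjugating into the rotation group to recognize a finite net). Two minor imprecisions: the singular points of $f_G$ live in the domain, namely the finitely many boundary points of the half-plane corresponding to the vertices of $G$ together with whatever the countably many branches $g\circ f_G$ contribute, not ``over the vertices of $G$''; what matters, and what is true, is that the singular set is at most countable, so $f_G\in\mathcal S$. Also note that for the stated (negative) direction only the implication ``almost solvable $\Rightarrow$ one of the three cases'' is needed; the easy converse you include is what the paper uses for its three positive integrability cases, not for Theorem~\ref{thm2.30}.
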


\subsubsection{Non-solvability of linear differential equations}\label{ssec2.2.11}
Consider a homogeneous linear differential equation
\begin{eqnarray}
y^{(n)}+ r_1y^{(n-1)}+\dots +r_ny=0,\label{eq2.3}
\end{eqnarray}
whose coefficients  $r_i$'s are rational functions  of the complex variable
$x$.  The set  $\Sigma\subset \Bbb C$  of poles of  $r_i$'s is called  {\it the set of
singular points}   of the equation (\ref{eq2.3}). At a  point $x_0\in \Bbb C\setminus \Sigma$ the germs of solutions of (\ref{eq2.3}) form a $\Bbb C$-linear  space $V_{x_0}$ of dimension $n$. The {\it monodromy group $M$ of the equation (\ref{eq2.3})} is the group of all linear transformations  of the space $V_{x_0}$  which are induced by motions around the set $\Sigma$. Below we discuss this
definition more precisely.

Consider a closed curve  $\gamma $ in
$\Bbb C\setminus \Sigma $ beginning and ending at the point $x_0$. Given a germ $y\in V_{x_0}$ we can continue it along the loop $\gamma$ to obtain another germ $y_\gamma\in V_{x_0}$.
Thus each such loop $\gamma$   corresponds to a map $M_{\gamma}:V_{x_0}\rightarrow V_{x_0}$
 of the space $V_{x_0}$to itself  that maps a germ $y\in V_{x_0}$ to the germ $y_{\gamma}\in V_{x_0}$. The map $M_\gamma$ is  linear  since an analytic continuation respects the arithmetic operations.
It is easy to see that the map $\gamma\rightarrow M_\gamma$ defines a homomorphism   of the fundamental
group $\pi _1(\Bbb C\setminus \Sigma,x_0)$ of the domain $\Bbb C\setminus \Sigma $ with the base point $x_0$ to the group $GL(n)$ of invertible linear transformations  of  the space  $V_{x_0}$.

The {\it
monodromy group} $M$ of the equation (\ref{eq2.3}) is the image of
the fundamental group   in the group
$GL(n))$ under this homomorphism.

\begin{remark} Instead of the point $x_0$ one can choose any other point $x_1\in \Bbb C\setminus \Sigma$. Such a choice will not change the    monodromy group  up to an isomorphism. To fix this isomorphism one can choose any curve $\gamma:I\rightarrow \Bbb C^N\setminus \Sigma$ where $I$ is the segment $ 0\leq t\leq 1$ and $\gamma(0)=x_0$, $\gamma(1)=x_1$ and identify each germ $y_{x_0}$ of solution of (\ref{eq2.3}) with its continuation $y_{x_1}$ along $\gamma$.
\end{remark}

\begin{lem}\label{lem2.31} The stationary subgroup in the monodromy group $M$ of the germ $y\in V_{x_0}$   of almost every solution  of the equation (\ref{eq2.3})  is trivial (i.e. contains only the unit element $e\in M$).
\end{lem}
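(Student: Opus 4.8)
The plan is to identify explicitly the set of ``exceptional'' solution germs --- those whose stabiliser in $M$ is nontrivial --- and to show that it is a countable union of proper linear subspaces of $V_{x_0}$, hence negligible, so that its complement (the ``almost every'' solution) has trivial stabiliser.

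First I would record that the singular set $\Sigma$ is \emph{finite}, being the set of poles of the rational functions $r_1,\dots,r_n$. Consequently the fundamental group $\pi_1(\Bbb C\setminus\Sigma,x_0)$ is finitely generated --- in fact free of finite rank --- so it is countable, and therefore its homomorphic image $M\subset GL(V_{x_0})$ is at most countable. This countability is the only place where the hypothesis that the $r_i$ are rational is essential: for $\mathcal S$-functions with a dense singular set the monodromy group may have the cardinality of the continuum, and the conclusion of the lemma then fails.

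Next, for each $g\in M$ with $g\neq e$ set $\mathrm{Fix}(g)=\{y\in V_{x_0}:g(y)=y\}=\ker(g-\mathrm{Id})$. Since the monodromy maps act $\Bbb C$-linearly on $V_{x_0}$ and the elements of $M$ are, by definition, linear operators on $V_{x_0}$, the condition $g\neq e$ means precisely that $g\neq\mathrm{Id}$ as an operator; hence $\mathrm{Fix}(g)$ is a \emph{proper} $\Bbb C$-linear subspace of $V_{x_0}$. A germ $y\in V_{x_0}$ has nontrivial stabiliser in $M$ if and only if $g(y)=y$ for some $g\neq e$, that is, if and only if $y$ lies in $B:=\bigcup_{g\in M\setminus\{e\}}\mathrm{Fix}(g)$.

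Finally I would invoke the elementary fact that a countable union of proper linear subspaces of a finite-dimensional complex vector space is a negligible set: each proper subspace is closed, nowhere dense, and of Lebesgue measure zero, so $B$ has measure zero, and by the Baire category theorem $V_{x_0}\setminus B$ is dense; in particular $B\neq V_{x_0}$. Therefore for every solution germ $y$ outside the negligible set $B$ the stabiliser $\{g\in M:g(y)=y\}$ consists of $e$ alone, which is exactly the assertion of the lemma. There is no serious obstacle here beyond making the phrase ``almost every'' precise; the one substantive input is the finiteness of $\Sigma$, which forces $M$ to be countable.
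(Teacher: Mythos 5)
Your argument is correct and is essentially identical to the paper's: the paper likewise observes that $M$ is countable, that the fixed-point set of each non-identity element is a proper linear subspace of $V_{x_0}$, and that a countable union of such subspaces has measure zero, so every germ outside this union has trivial stabiliser. Your only addition is the explicit justification of countability of $M$ via the finiteness of $\Sigma$, which the paper takes for granted.
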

\begin{proof}The monodromy group $M$ contains countable many linear transformations $M_i$. The space $L_i\subset V_{x_0}$ of fixed points of a non identity transformation $M_i$,  is a proper subspace of $ V_{x_0}$. The union $L$ of all subspaces $L_i$ is a measure zero subset of  $V_{x_0}$. The stationary subgroup in  $M$ of $y\in V_{x_0}\setminus L$  is trivial.
\end{proof}

\begin{thm}\label{thm2.32}(see~\cite{[6]})  If the monodromy group of the equation   (\ref{eq2.3})
is not almost solvable (is not $k$-solvable, or is not solvable)  then  almost every  solution of (\ref{eq2.3}) is strongly  non representable by generalized quadratures  (correspondingly, is strongly non representable by $k$-quadratures, or is strongly non representable by quadratures).
\end{thm}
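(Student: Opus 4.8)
The plan is to argue by contradiction, reducing the representability of a \emph{single} solution (regarded as a multivalued analytic function) to the stability theorems of Sections~\ref{ssec2.2.6}, \ref{ssec2.2.8} and~\ref{ssec2.2.9}, and using Lemma~\ref{lem2.31} to pass from the monodromy data of the equation to the monodromy data of a generic solution. The three cases (generalized quadratures, $k$-quadratures, quadratures) will be handled in exactly the same way; only the stability theorem invoked changes.

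First I would note that every solution of~(\ref{eq2.3}) is an $\mathcal S$-function: the coefficients $r_i$ are rational functions, hence single-valued $\mathcal S$-functions, so part~(6) of Theorem~\ref{thm2.21} applies to each germ $y\in V_{x_0}$. The second and central step is to identify the monodromy pair of one solution, viewed as a multivalued function, with that of the equation. The sheets of the Riemann surface of the $\mathcal S$-function generated by a germ $y$ correspond to the orbit $M\cdot y\subset V_{x_0}$; its monodromy group is the image of $M$ acting on that orbit, and its monodromy pair consists of that image together with the image of the stabilizer $M_y=\{g\in M: gy=y\}$. By Lemma~\ref{lem2.31}, for almost every $y$ one has $M_y=e$, so for such $y$ the monodromy group of the associated $\mathcal S$-function is precisely $M$ and its monodromy pair is $[M,e]$, which by the definitions of Section~\ref{ssec2.2.7} is almost solvable (resp.\ $k$-solvable) if and only if $M$ is.

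Now suppose, contrary to the assertion, that such a solution $y$ with $M_y=e$ were strongly representable by generalized quadratures, i.e.\ obtainable from basic elementary functions and single-valued $\mathcal S$-functions by compositions, meromorphic operations, differentiation, integration and solving algebraic equations. Single-valued $\mathcal S$-functions have trivial monodromy group, and each basic elementary function (the exponential, the logarithm, the power $x^\alpha$, and the trigonometric and inverse trigonometric functions) is an $\mathcal S$-function with abelian monodromy group, hence with almost solvable monodromy pair. Thus all basic functions lie in the class of $\mathcal S$-functions with almost solvable monodromy pair, which by Theorem~\ref{thm2.29} is closed under all the admissible operations; hence $y$ would have an almost solvable monodromy pair, forcing $M$ to be almost solvable --- a contradiction. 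The $k$-quadrature case is identical, with Theorem~\ref{thm2.29} replaced by Theorem~\ref{thm2.26} and ``almost solvable monodromy pair'' by ``$k$-solvable monodromy pair'' (each basic function has abelian, hence $k$-solvable, monodromy pair), and the quadrature case is identical, using Theorem~\ref{thm2.24} and the fact that the monodromy group of that $\mathcal S$-function is $M$, solvable exactly when $M$ is (each basic function has abelian, hence solvable, monodromy group).

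The delicate point is the second step --- the passage from the abstract monodromy representation of $M$ on $V_{x_0}$ to the monodromy pair of a single ``generic'' solution. This rests on Lemma~\ref{lem2.31}, which trivializes the stabilizer for almost every solution, and on the elementary remark recorded before Section~\ref{ssec2.2.8} that for a pair $[\Gamma,\Gamma_0]$ with $\Gamma_0=e$ (indeed for any almost normal pair) almost solvability or $k$-solvability of the pair is equivalent to that of $\Gamma$. Everything else --- membership of the solutions in $\mathcal S$, membership of the basic functions in the appropriate stable subclass, and the closure statements themselves --- is supplied by Theorems~\ref{thm2.21}, \ref{thm2.24}, \ref{thm2.26} and~\ref{thm2.29}.
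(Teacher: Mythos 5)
Your argument is correct and is exactly the one the paper intends: Theorem~\ref{thm2.32} is stated there without proof (deferring to~\cite{[6]}), but the ingredients laid out just before it --- Lemma~\ref{lem2.31} to trivialize the stabilizer of a generic solution, the stability Theorems~\ref{thm2.24}, \ref{thm2.26} and~\ref{thm2.29}, and the remark that a pair $[\Gamma,e]$ is almost solvable or $k$-solvable precisely when $\Gamma$ is --- are assembled by you in the expected way. Nothing further is needed.
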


Consider a homogeneous system of linear differential equations
\begin{eqnarray}
 y'=A y \label{eq2.4}
\end{eqnarray}
where $y=(y_1,\dots,y_n)$ is the unknown vector valued function and $A=\{a_{i,j}(x)\}$
is a $n\times n$ matrix, whose entries are rational functions of the
complex variable $x$. One can define the {\it monodromy group} of the equation (\ref{eq2.4}) exactly in the same way as it was defined for the equation (\ref{eq2.3}).

We will say that a vector valued function $y=(y_1,\dots, y_n)$  belongs to a certain class of functions if all of its components $y_i$  belong to this class. For example the statement "a vector valued function $y=(y_1,\dots,y_n)$ is strongly non representable by generalized quadratures" means that at least one component $y_i$ of $y$ is strongly non representable by generalized quadratures.

\begin{thm}\label{thm2.33}  If the monodromy group of the system  (\ref{eq2.4})
is not almost solvable (is not $k$-solvable, or is not solvable)  then almost every  solution  of (\ref{eq2.4})is strongly  non representable by generalized quadratures  (correspondingly, is strongly non representable by $k$-quadratures, or is strongly non representable by quadratures).
\end{thm}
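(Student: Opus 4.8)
The plan is to deduce Theorem~\ref{thm2.33} from the scalar case Theorem~\ref{thm2.32} by the classical cyclic vector construction, which converts the first order system~(\ref{eq2.4}) into a single $n$-th order scalar equation of the form~(\ref{eq2.3}) carrying the same monodromy group.

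First I would recall that over $\Bbb C(x)$ the system~(\ref{eq2.4}) admits a \emph{cyclic covector}: a row vector $w=w(x)$ with rational entries such that, putting $w_0=w$ and $w_{j+1}=w_j'+w_jA$, the rows $w_0,\dots,w_{n-1}$ are linearly independent over $\Bbb C(x)$. For any solution $y$ of~(\ref{eq2.4}) the scalar function $z=w\cdot y$ then satisfies $z^{(j)}=w_j\cdot y$, and expressing $w_n$ as a $\Bbb C(x)$-linear combination of $w_0,\dots,w_{n-1}$ produces a scalar equation~(\ref{eq2.3}) with rational coefficients satisfied by $z$. The assignment $y\mapsto z$ is a $\Bbb C$-linear isomorphism from the solution space $V_{x_0}$ of~(\ref{eq2.4}) onto the solution space of~(\ref{eq2.3}), with inverse $z\mapsto M(x)^{-1}\bigl(z,z',\dots,z^{(n-1)}\bigr)^{T}$, where $M(x)$ is the invertible matrix with rows $w_0,\dots,w_{n-1}$. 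Both this map and its inverse commute with analytic continuation along loops avoiding the finite set $\Sigma$ collecting the poles of $A$, of $w$ and of $M(x)^{-1}$; since enlarging $\Sigma$ by finitely many points does not change a monodromy group, this shows that the monodromy groups of~(\ref{eq2.4}) and of~(\ref{eq2.3}) are conjugate subgroups of $GL(n,\Bbb C)$. In particular one of them is almost solvable, $k$-solvable, or solvable precisely when the other is.

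Next I would invoke Theorem~\ref{thm2.32}. Under the hypothesis that the monodromy group of~(\ref{eq2.4}) is not almost solvable (respectively not $k$-solvable, not solvable), the monodromy group of the associated equation~(\ref{eq2.3}) has the same property, so Theorem~\ref{thm2.32} provides a subset of full measure in its solution space consisting of functions strongly non representable by generalized quadratures (respectively by $k$-quadratures, by quadratures). To carry this back to the system, observe that if $y\in V_{x_0}$ corresponds to such a $z$, then $z=w\cdot y$ is a $\Bbb C(x)$-linear combination of the components $y_1,\dots,y_n$; rational functions are single-valued $\mathcal S$-functions, and arithmetic (more generally meromorphic) operations are among the admissible operations defining strong representability by generalized quadratures ($k$-quadratures, quadratures), so if every $y_i$ belonged to the relevant class, so would $z$ --- contradicting the choice of $z$. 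Hence at least one component $y_i$ lies outside the class, i.e. $y$ is strongly non representable in the stated sense. Finally, the correspondence $z\leftrightarrow y$ is a linear isomorphism and therefore sends the (measure zero) complement of the good set onto a measure zero set, so almost every solution $y$ of~(\ref{eq2.4}) is strongly non representable, which is the assertion of Theorem~\ref{thm2.33}.

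The one genuinely nontrivial ingredient is the first step: the existence of a cyclic covector with \emph{rational} entries --- so that both the passage $y\mapsto z$ and the recovery of the $y_i$ from $z$ stay within meromorphic operations and differentiation --- together with the verification that the resulting equivalence is monodromy-preserving. This is precisely the classical cyclic vector theorem over $\Bbb C(x)$; everything else is the machinery already developed for Theorem~\ref{thm2.32}. Alternatively, one may bypass the cyclic vector and repeat the proof of Theorem~\ref{thm2.32} verbatim: Lemma~\ref{lem2.31} and its proof hold word for word for~(\ref{eq2.4}) (the monodromy group is again countable, being generated by finitely many loops), so for almost every solution the stabilizer of its germ in the monodromy group $M$ is trivial, whence the monodromy pair of the generated $\mathcal S$-function is $[M,e]$, not almost solvable by hypothesis; the corollary to Theorem~\ref{thm2.29} then finishes the argument, once one checks --- again via a cyclic vector, or an analogue of Theorem~\ref{thm2.21}(6) for systems --- that the components of solutions of~(\ref{eq2.4}) are $\mathcal S$-functions.
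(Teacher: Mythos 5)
The paper itself offers no proof of Theorem~\ref{thm2.33}: all of section~\ref{ssec2.2.11} is expository, Theorem~\ref{thm2.32} is quoted from \cite{[6]}, and Theorem~\ref{thm2.33} is simply stated in parallel with it, so there is no in-text argument to compare yours against. Judged on its own terms, your principal route is sound. The cyclic covector over $\Bbb C(x)$ exists (the classical cyclic vector lemma for differential modules over $\Bbb C(x)$, applied to the dual module); the map $y\mapsto z=w\cdot y$ is a $\Bbb C$-linear isomorphism of solution spaces intertwining the two monodromy representations; and enlarging the singular set by the finitely many extra poles of $w$ and of $M(x)^{-1}$ changes neither monodromy group, since the local monodromy at such points is trivial. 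So the hypothesis of Theorem~\ref{thm2.32} transfers to the associated scalar equation~(\ref{eq2.3}). The passage back is also handled correctly: rational functions are single-valued $\mathcal S$-functions and hence admissible basic functions of each extended class, and these classes are closed under meromorphic operations and differentiation, so if every component $y_i$ lay in the class then so would $z=\sum w_iy_i$; finally a linear isomorphism carries the measure-zero exceptional set of $z$'s to a measure-zero set of $y$'s. This matches the paper's convention that a vector-valued solution is strongly non representable exactly when at least one component is.

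The alternative route you sketch at the end does, however, contain a gap. The monodromy-pair machinery (Theorem~\ref{thm2.29} and its corollary) applies to scalar $\mathcal S$-functions, so it must ultimately be fed an individual component $y_i$, not the vector germ $y$. The stabilizer of the germ $(y_i)_{x_0}$ is $\{g\in M:\ (gy)_i=y_i\}$, which is in general strictly larger than the stabilizer of $y$ itself; a component can even be single-valued while the system has nontrivial monodromy (take $y_1'=0$, $y_2'=y_1/x$). So the assertion that ``the monodromy pair of the generated $\mathcal S$-function is $[M,e]$'' is not available verbatim: Lemma~\ref{lem2.31} only gives that the \emph{intersection} over $i$ of these component stabilizers is trivial for almost every $y$, and to conclude that some component has a non--almost-solvable monodromy pair one still needs a group-theoretic closure statement of the form: if every pair $[M,\,\mathrm{Stab}((y_i)_{x_0})]$ were almost solvable and the stabilizers had trivial common intersection, then $M$ would be almost solvable. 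Such closure properties of almost solvable pairs are part of the machinery of \cite{[6]}, but they are an additional ingredient, not a word-for-word repetition of the proof of Theorem~\ref{thm2.32}. Your first, cyclic-vector route avoids this issue entirely and is the one to keep.
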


\subsubsection{Solvability of  Fuchsian  equations}\label{ssec2.2.12}

The differential field  of rational functions of $x$ is isomorphic to the differential field $\mathcal R$ of germs of rational functions at the point $x_0\in \Bbb C\setminus \Sigma $.
Consider the differential field extension  $\mathcal R \langle y_1,\dots,y_n\rangle$ of $\mathcal R$ where the  germs  $ y_1,\dots y_n$ form a basis in the space $V_{x_0}$ of solutions of the equation~(\ref{eq2.3}) at $x_0$.

\begin{lem}\label{lem2.34} Every linear map $M_\gamma$ from the monodromy group of equation (\ref{eq2.3}), can be uniquely extended to a differential automorphism of the  differential field $\mathcal R\{ y_1,\dots,y_n\}$ over the field $\mathcal R$.
\end{lem}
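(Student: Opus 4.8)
The plan is to mimic the proof of Lemma~\ref{lem2.1}, replacing algebraic germs by germs of solutions of the linear equation~(\ref{eq2.3}) and exploiting the extra fact that analytic continuation commutes with differentiation. First I would record what the field $\mathcal R\langle y_1,\dots,y_n\rangle$ looks like concretely: since each $y_i$ satisfies~(\ref{eq2.3}), every derivative $y_i^{(k)}$ with $k\geq n$ is an $\mathcal R$-linear combination of $y_i,\dots,y_i^{(n-1)}$, so $\mathcal R\langle y_1,\dots,y_n\rangle$ is generated, already as an ordinary field over $\mathcal R$, by the finitely many germs $y_i^{(k)}$ with $1\leq i\leq n$, $0\leq k\leq n-1$. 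In particular every element $f$ of this field is a rational function of $x$ and of these germs, all of which are germs of meromorphic functions at $x_0$.

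Next I would fix a loop $\gamma$ representing the given class in $\pi_1(\Bbb C\setminus\Sigma,x_0)$. Every germ $y_i^{(k)}$ admits analytic (meromorphic) continuation along $\gamma$ — germs of solutions of a linear equation with meromorphic coefficients continue along any path avoiding $\Sigma$, and continuation commutes with $d/dx$ — and by the monodromy theorem the result depends only on the homotopy class of $\gamma$. Hence every $f\in\mathcal R\langle y_1,\dots,y_n\rangle$, being a rational expression in $x$ and the $y_i^{(k)}$, continues along $\gamma$ to a germ $f_\gamma$ at $x_0$. Moreover the continuation of $y_i$ is the solution germ $M_\gamma y_i$, which lies in $V_{x_0}\subset\mathcal R\langle y_1,\dots,y_n\rangle$, and the continuation of $y_i^{(k)}$ is $(M_\gamma y_i)^{(k)}$; therefore $f_\gamma$ again lies in $\mathcal R\langle y_1,\dots,y_n\rangle$. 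The assignment $f\mapsto f_\gamma$ preserves sums, products and quotients, commutes with differentiation, and is bijective with inverse given by continuation along $\gamma^{-1}$; since every rational function in $\mathcal R$ is single valued, $f_\gamma=f$ for $f\in\mathcal R$. Thus $f\mapsto f_\gamma$ is a differential automorphism of $\mathcal R\langle y_1,\dots,y_n\rangle$ over $\mathcal R$ whose restriction to $V_{x_0}$ is precisely $M_\gamma$.

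Uniqueness is then immediate: any differential automorphism over $\mathcal R$ extending $M_\gamma$ is determined by its values on $y_1,\dots,y_n$, which differentially generate the field over $\mathcal R$. The only point requiring a little care — and which I expect to be the mildest obstacle — is verifying that $\mathcal R\langle y_1,\dots,y_n\rangle$ is genuinely realized inside the field of germs of meromorphic functions at $x_0$ that are stable under continuation along loops in $\Bbb C\setminus\Sigma$; this reduces exactly to the two facts recalled above (solutions of linear ODEs extend along such loops, and continuation is compatible with $d/dx$ and with the field operations), so no new difficulty arises beyond the algebraic case of Lemma~\ref{lem2.1}.
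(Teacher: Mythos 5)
Your proposal is correct and follows essentially the same route as the paper's own proof: continue each element, written as a rational function of $x$, the $y_i$ and their derivatives, along $\gamma$, observe that continuation respects the field operations and $d/dx$ and fixes $\mathcal R$ pointwise, and deduce uniqueness from the fact that $y_1,\dots,y_n$ differentially generate the extension. Your extra remark that derivatives of order $\geq n$ reduce via equation~(\ref{eq2.3}) is a harmless refinement the paper leaves implicit.
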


\begin{proof}Every element $f\in \mathcal R \langle y_1,\dots,y_n\rangle$ is a rational function of the independent variable $x$, the germs of solutions $y_1,\dots,y_n$ and their derivatives. It can be continued meromorphically along  the curve
$\gamma\in \pi_1(\Bbb C\setminus \Sigma,x_0)$  together with  $y_1,\dots,y_n$.
This continuation gives the required differential
automorphism, since the continuation preserves the arithmetical
operations and differentiation, and every rational function of $x$ returns back
to its original  values (since it is a single-valued valued function). The differential automorphism is unique because the extension is generated by $y_1,\dots, y_n$.
\end{proof}

The {\it differential Galois group} (see~\cite{[6]},~\cite{[10]}) of the equation (\ref{eq2.3}) over $\mathcal R$ is the group of all differential automorphisms  of the  differential field $\mathcal R\{ y_1,\dots, y_n\}$ over the differential field  $\mathcal R$. According to Lemma 32 the monodromy group  of the equation (\ref{eq2.3}) can be considered as a subgroup of its differential Galois group  over $\mathcal R$.

The differential  field of  invariants of the monodromy group action is a subfield
of $\mathcal R \langle y_1,\dots,y_n\rangle$, consisting of the single-valued
functions. In contrast to  the algebraic case, in the case of  differential equations the field of invariants under the action of the
monodromy group can be bigger than the field of rational
functions. The reason is that  the solutions
of differential equations may grow exponentially in approaching
the singular points or infinity.

\begin{ex} All  solutions of the simplest  differential equation $y'=y$  are
single-valued exponential functions $y=C\exp x$, which are not rational.
\end{ex}

For the  wide class of   Fuchsian  linear
differential equations all the solutions,  while approaching
the singular points and the point infinity, grow polynomially.

The following  Frobenius theorem is an analog for Fuchsian  equations of  C.Jordan theorem (see~\cite{[6]}) for algebraic equations.

\begin{thm}[Frobenius]   For Fuchsian differential  equations
 the subfield of the differential field
$\mathcal R \langle y_1,\dots,y_n\rangle$, consisting of single-valued
functions, coincides with the field of rational functions.
\end{thm}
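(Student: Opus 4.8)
The inclusion of rational functions into the single-valued ones is obvious, so the whole content is the converse. Recall from Lemma~\ref{lem2.34} that $\mathcal R\langle y_1,\dots,y_n\rangle$ is the Picard--Vessiot extension of $\mathcal R$ attached to equation~(\ref{eq2.3}), that the monodromy group $M$ sits inside its differential Galois group $G$, and that an element is single-valued precisely when it is fixed by $M$. For any element $a$ the subgroup of $G$ fixing $a$ is Zariski closed (a standard fact of Picard--Vessiot theory); if it contains $M$ it therefore contains the Zariski closure $\overline M$ of $M$ in $G$. Consequently the subfield of $M$-invariants and the subfield of $\overline M$-invariants coincide. Since the field of constants is $\mathbb C$ and a solution basis of a Fuchsian equation generates no new constants, the fundamental theorem of Picard--Vessiot theory gives that the subfield fixed by all of $G$ is exactly $\mathcal R$. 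Thus the theorem is \emph{equivalent} to the assertion that the monodromy group of a Fuchsian equation is Zariski dense in its differential Galois group --- the exact analog, for Fuchsian equations, of C.~Jordan's identification of the monodromy group of an algebraic equation with its Galois group.

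I would establish the statement directly, in the classical analytic style, and the Fuchsian hypothesis enters through growth estimates at the singular points. The points of $\Sigma$ and the point $\infty$ are regular singular points of~(\ref{eq2.3}), so by the Fuchs--Frobenius local theory every solution --- hence every element of the differential ring generated over $\mathcal R$ by $y_1,\dots,y_n$ --- has near each such point $a$ a local expansion of regular-singular type, a \emph{finite} sum $\sum_\nu (x-a)^{\rho_\nu}(\log(x-a))^{k_\nu}\varphi_\nu(x)$ with $\varphi_\nu$ meromorphic at $a$; at $\infty$ one works in the coordinate $1/x$. Now let $f=g/h$ be a single-valued element of $\mathcal R\langle y_1,\dots,y_n\rangle$. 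Near $a$ it is meromorphic on the punctured disc and fixed by the local monodromy loop around $a$, which multiplies $(x-a)^{\rho}$ by $e^{2\pi i\rho}$ and sends $\log(x-a)$ to $\log(x-a)+2\pi i$. Comparing, in the local expansion of $f$, first the top power of $\log(x-a)$ and then the leading fractional exponent forces every $\rho_\nu$ to be an integer and every $k_\nu$ to vanish, i.e. $f$ is meromorphic at $a$. Carrying this out at every point of $\Sigma$ and at $\infty$ shows that $f$ is meromorphic on the whole Riemann sphere, hence rational.

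\textbf{Main obstacle.} The formal reductions above are routine; the real work is, first, the Fuchs--Frobenius structure theorem for solutions at a regular singular point (which I would quote), and, second, the step that turns single-valuedness of the \emph{quotient} $g/h$ into the conclusion that $g/h$ is itself of regular-singular type with integral exponents. A ratio of two functions of regular-singular type need not even be meromorphic at $a$ a priori --- for instance $\bigl(2\cos\log(x-a)\bigr)^{-1}$ has poles accumulating at $a$ --- and it is exactly single-valuedness together with the explicit local monodromy action that rules this out. In Picard--Vessiot language the same step says that the local monodromy is Zariski dense in the local differential Galois group at a regular singular point; gluing these local density statements by the Riemann--Hilbert rigidity of regular-singular connections gives the global density and hence Frobenius's theorem. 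Finally, the Fuchsian hypothesis is genuinely needed: for $y'=y$ the single-valued function $e^x$ is not rational, the obstruction being the irregular singularity at $\infty$, where solutions fail to have moderate growth.
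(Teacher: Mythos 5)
The paper itself contains no proof of this theorem: it sits in the survey part of the text, where results are only stated, and the sole justification offered is the surrounding remark that solutions of Fuchsian equations grow at most polynomially near the singular points and near infinity (contrasted with $y'=y$, whose single-valued solution $e^x$ is not rational). So there is no proof of record to measure you against; judged on its own terms, your outline is the standard and correct route, and it is entirely consistent with the moderate-growth heuristic the paper gives. Two adjustments, though. First, you do not need the global density statement or the ``Riemann--Hilbert gluing'' you invoke at the end: once a monodromy-invariant element $f$ is shown to be meromorphic at every point of $\Sigma\cup\{\infty\}$, it is meromorphic on the whole Riemann sphere and hence rational, which is already the theorem; the Zariski density of $M$ in the differential Galois group is then a corollary via the Galois correspondence, not an ingredient of the proof.

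Second, the step you flag as the main obstacle is indeed where all the content lives, and your example $\bigl(2\cos\log(x-a)\bigr)^{-1}=\bigl((x-a)^{i}+(x-a)^{-i}\bigr)^{-1}$ correctly shows that regular-singular type is not preserved under division, so one cannot simply ``read off the expansion of $f$'' as your second paragraph momentarily pretends. The gap is genuine but fillable by a routine argument that you should actually write down: near $a$ the differential ring generated over the germs of rational functions by $y_1,\dots,y_n$ embeds into $\mathcal M_a[t,u_1^{\pm1},\dots,u_s^{\pm1}]$, where $\mathcal M_a$ is the field of germs of meromorphic functions at $a$, $t=\log(x-a)$, $u_j=(x-a)^{\rho_j}$, and after the standard normalization of the exponents these generators are algebraically independent over $\mathcal M_a$; the local monodromy acts by $t\mapsto t+2\pi i$, $u_j\mapsto e^{2\pi i\rho_j}u_j$, and a direct computation (equivalently, the fact that at a regular singular point the local monodromy topologically generates the local differential Galois group) shows that the invariant subfield of the fraction field is exactly $\mathcal M_a$. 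With that lemma in place, your argument closes: $f$ is meromorphic at each point of $\Sigma\cup\{\infty\}$, hence rational, and the Fuchsian hypothesis is used exactly once, to guarantee the regular-singular local normal form --- which is precisely the reason the paper gives for the theorem.
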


A system of linear differential equations (\ref{eq2.4}) is called a {\it Fuchsian system} if the matrix  $A$ has the  following form:
\begin{eqnarray}
 A(x)=\sum _{i=1}^k\dfrac{A_i}{x-a_i},\label{eq2.5}
\end{eqnarray}
where the $A_i$'s are constant matrices. Linear Fuchsian system of differential equations in many ways are similar to  linear Fuchsian  differential equations

In construction of explicit solutions of linear  differential equations the following theorem is needed.

\begin{thm}[(Lie--Kolchin)] Any  connected  solvable algebraic group acting by linear transformations on  a finite-dimensional vector space over  $\Bbb C$ is  triangularizable in a suitable basis.
\end{thm}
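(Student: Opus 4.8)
The whole statement reduces to one point: \emph{a connected solvable $G\subset GL(V)$ with $\dim V>0$ has a common eigenvector}, i.e.\ a nonzero $v\in V$ with $gv\in\Bbb C v$ for all $g\in G$. Granting this, I would induct on $\dim V$: $G$ acts on $V/\Bbb C v$, which is again a linear representation of the same connected solvable group but of smaller dimension, so by induction $V/\Bbb C v$ carries a complete $G$-stable flag; pulling it back through $\Bbb C v$ produces a complete $G$-stable flag $0\subset V_1\subset\dots\subset V_n=V$, and a basis $e_1,\dots,e_n$ with $e_1,\dots,e_i$ spanning $V_i$ then puts every element of $G$ in upper triangular form. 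So it remains to find the common eigenvector.

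To find it I would induct on the derived length $\ell$ of $G$. If $\ell\le 1$ then $G$ is commutative, so the operators $\{g|_V\}$ pairwise commute, and a standard linear-algebra argument over the algebraically closed field $\Bbb C$ gives a common eigenvector: pick a non-scalar $g\in G$ (if all are scalar, any vector works), take an eigenspace of $g|_V$ — it is proper, nonzero, and stable under everything commuting with $g$ — and recurse on its dimension. For the inductive step set $H=[G,G]$. Since $G$ is connected, $H$ is a connected closed algebraic subgroup (it is generated by the irreducible image of the commutator map through $e$) of derived length $\ell-1$, so by the inductive hypothesis $H$ has a common eigenvector. Consider the weight spaces $V_\chi=\{v\in V: hv=\chi(h)v\ \text{for all}\ h\in H\}$ as $\chi$ ranges over the characters $H\to\Bbb C^*$; at least one is nonzero, only finitely many $V_{\chi_1},\dots,V_{\chi_r}$ are nonzero, and their sum is direct.

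Because $G$ normalizes $H$, for $g\in G$ and $v\in V_\chi$ we get $h(gv)=g(g^{-1}hg)v=\chi(g^{-1}hg)\,gv$, so $g$ maps $V_\chi$ into $V_{\chi^g}$ with $\chi^g(h)=\chi(g^{-1}hg)$; hence $G$ permutes the finite set $\{V_{\chi_1},\dots,V_{\chi_r}\}$. The resulting homomorphism $G\to S_r$ has closed kernel (each stabilizer $\{g:gW=W\}$ of a subspace $W$ is closed in $GL(V)$) and finite index, and a connected algebraic group has no proper closed subgroup of finite index, so $G$ in fact stabilizes each $V_{\chi_i}$. Fix one such $W=V_\chi$. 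On $W$ every $h\in H$ acts as the scalar $\chi(h)$; but elements of $H=[G,G]$ are products of commutators, so $\det(h|_W)=1$, i.e.\ $\chi(h)^{\dim W}=1$. Thus $h\mapsto\chi(h)$ is a morphism from the connected group $H$ into the finite group of $(\dim W)$-th roots of unity, hence trivial, so $H$ acts trivially on $W$. Therefore $G$ acts on $W$ through the abelian quotient $G/H$, the operators $\{g|_W\}$ commute, and the base case yields a common eigenvector $v\in W$; since $H$ acts trivially on $W$, this $v$ is automatically a common eigenvector for all of $G$, closing the induction.

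The main obstacle is making the two uses of connectedness precise, and these are exactly where the hypotheses are indispensable: first, to pass from ``$G$ permutes the weight spaces'' to ``$G$ fixes each weight space'' (a disconnected group may genuinely swap them — the quaternion subgroup of $SL_2(\Bbb C)$ is solvable and irreducible, hence not triangularizable); second, to kill the $[G,G]$-action on a weight space using connectedness of $[G,G]$ together with the determinant-of-a-commutator identity. One must also check that all subgroups invoked (subspace stabilizers, the kernel of $G\to S_r$, and the commutator subgroup) are closed subvarieties, so that the dimension induction and the ``no proper closed subgroup of finite index'' fact apply; these are standard facts about linear algebraic groups which I would cite rather than reprove.
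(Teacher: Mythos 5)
Your proof is correct; it is the standard textbook argument for Lie--Kolchin (reduce to the existence of a common eigenvector, induct on the derived length, use connectedness to fix the weight spaces of $[G,G]$ and the determinant-of-commutators trick to make $[G,G]$ act trivially on one of them). Note that the paper itself offers no proof to compare against: the theorem is stated there only as a classical ingredient needed for the solvability results on Fuchsian equations, so your write-up supplies an argument the paper deliberately omits. The only points that genuinely require care are exactly the ones you flag --- that $[G,G]$ is a closed \emph{connected} subgroup when $G$ is connected (so that every element is a product of commutators, or at least a limit of such, and the character $\chi$ restricted to it lands in a connected subgroup of a finite group), and that a connected algebraic group has no proper closed subgroup of finite index --- and citing these as standard facts about linear algebraic groups is entirely appropriate.
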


Using the Frobenius Therem and Lie--Kolchin Theorem  one can prove that the only
reasons for unsolvability of  Fuchsian linear
differential equations and systems of linear differential equations are topological. In other words, if there are no topological
obstructions to solvability then such equations and systems of equations are solvable. Indeed, the following theorems hold:

\begin{thm}\label{thm2.35}(see~\cite{[6]})  If the monodromy group of the linear Fuchsian differential  equation (\ref{eq2.3})
is almost solvable (is  $k$-solvable, or is solvable)  then   every  solution is  representable by generalized quadratures  (correspondingly, is  representable by $k$-quadratures, or is  representable by quadratures).

\end{thm}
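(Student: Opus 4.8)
The plan is to derive Theorem~\ref{thm2.35} from the Frobenius theorem and the Lie--Kolchin theorem quoted above, using the Galois correspondence of Picard--Vessiot theory. First I would introduce the differential Galois group $G$ of the equation~(\ref{eq2.3}) over the differential field $\mathcal R$ of germs of rational functions at $x_0$, together with a Picard--Vessiot extension $\mathcal R\subset L$, so that every solution of~(\ref{eq2.3}) lies in $L$ and $G$ is the group of differential automorphisms of $L$ over $\mathcal R$. By Lemma~\ref{lem2.34} the monodromy group $M$ is a subgroup of $G$. By the Frobenius theorem the subfield of $M$-invariants of $\mathcal R\langle y_1,\dots,y_n\rangle$ is exactly $\mathcal R$; since in Picard--Vessiot theory a subgroup and its Zariski closure have the same invariant subfield, the Zariski closure $\bar M$ has the same invariant field as $M$, namely $\mathcal R$, which is also the invariant field of $G$. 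By the Galois correspondence this forces $\bar M=G$, i.e. $M$ is Zariski dense in $G$.

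Next I would transfer the group-theoretic hypothesis from $M$ to $G$. Starting from a normal series for $M$ with quotients that are abelian or finite (respectively abelian or isomorphic to a subgroup of $S_k$; respectively abelian), I would take Zariski closures termwise. The closure of a normal subgroup is normal, the closure of an abelian group is abelian, a finite group is already Zariski closed, and a subgroup of $S_k$ is finite hence closed; moreover the quotient of the closures is the closure of the image of the corresponding quotient of $M$, so it is again abelian or finite (respectively $\le S_k$, respectively abelian). Thus $G$ is almost solvable (respectively $k$-solvable, respectively solvable) as an algebraic group. In particular the identity component $G^0$ is a \emph{connected} group admitting a normal series with abelian-or-finite quotients; since any finite quotient of a connected group appearing in such a series is trivial, $G^0$ is connected solvable. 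By the Lie--Kolchin theorem, $G^0$ is triangularizable in a suitable basis of the solution space $V_{x_0}$, which exhibits a $G^0$-invariant flag $0=V_0\subset V_1\subset\dots\subset V_n=V_{x_0}$.

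Then I would run the Galois correspondence on a normal series of $G$ refined through $G^0$, obtaining a tower of differential fields $\mathcal R=L_0\subset L_1\subset\dots\subset L_N=L$ in which each $\mathrm{DGal}(L_{j+1}/L_j)$ is either finite or a connected abelian linear algebraic group. A connected abelian linear algebraic group over $\Bbb C$ is a product $\Bbb C^{r}\times(\Bbb C^*)^{s}$, and the corresponding step of the tower is obtained by adjoining finitely many integrals (the $\Bbb C$-factors) and exponentials of integrals (the $\Bbb C^*$-factors) over $L_j$; here the triangularization supplied by Lie--Kolchin is exactly what realizes each new generator as a solution of a first-order linear equation over the preceding field. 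A step with finite differential Galois group is a finite algebraic extension, i.e. an adjunction of solutions of algebraic equations --- of degree $\le k$ in the $k$-solvable case, since such a finite quotient is then a subgroup of $S_k$; and in the plain solvable case the finite quotients are abelian, hence built from cyclic pieces, i.e. radicals, which are themselves exponentials of integrals and so count as quadratures. Therefore $L$ is an extension of $\mathcal R$ by generalized quadratures (respectively $k$-quadratures, respectively quadratures), and since every solution of~(\ref{eq2.3}) lies in $L$ while the elements of $\mathcal R$ are rational functions (basic elementary functions), every solution is representable by generalized quadratures (respectively $k$-quadratures, respectively quadratures) in the sense of the lists of admissible operations above.

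The genuinely nontrivial analytic input --- that a single-valued function in $\mathcal R\langle y_1,\dots,y_n\rangle$ is rational for a Fuchsian equation --- is already packaged in the Frobenius theorem, and the structural input is packaged in Lie--Kolchin, so the hard part of the remaining argument is the bookkeeping in the middle steps: checking that Zariski closure preserves each of the three flavours of the group condition, that passing to $G^0$ and to successive quotients keeps the series admissible of the correct type, and --- most delicately --- that the finite part of $G$ contributes precisely an algebraic extension solvable by the operations permitted in the relevant class (unrestricted degree for generalized quadratures, degree $\le k$ for $k$-quadratures, radicals only for quadratures). Once these matchings are verified, the theorem follows from the linear-algebraic part of differential Galois theory exactly as sketched.
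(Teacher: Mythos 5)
Your proposal is correct and follows essentially the same route the paper indicates: the paper gives no detailed proof, but explicitly says Theorem~\ref{thm2.35} is obtained from the Frobenius theorem and the Lie--Kolchin theorem via the linear-algebraic part of differential Galois theory, which is exactly what you flesh out (Zariski density of the monodromy group in the differential Galois group, transfer of the solvability condition to the Zariski closure, triangularization of the identity component, and the Picard--Vessiot tower of integrals, exponentials of integrals, and algebraic extensions). The bookkeeping points you flag at the end are indeed where the remaining care is needed, and they are handled in the cited reference.
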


\begin{thm}\label{thm2.36}(see~\cite{[6]})  If the monodromy group of the linear Fuchsian system differential equations (\ref{eq2.4})
is  almost solvable (is  $k$-solvable, or is  solvable)  then   every  solution is    representable by generalized quadratures  (correspondingly, is  representable by $k$-quadratures, or is  representable by quadratures).

\end{thm}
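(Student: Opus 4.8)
The plan is to reduce the statement to the linear-algebraic part of differential Galois theory by means of the two classical ingredients already quoted above: the Frobenius theorem and the Lie--Kolchin theorem. The key observation is that, precisely because the system~(\ref{eq2.4}) is Fuchsian, its monodromy group is not merely a subgroup of its differential Galois group but is Zariski dense in it. Indeed, let $\mathcal R\langle y_1,\dots,y_n\rangle$ be the Picard--Vessiot extension generated by a fundamental system of solutions of~(\ref{eq2.4}) at a nonsingular point $x_0$, and let $M\subset GL(V_{x_0})$ be the monodromy group. The subfield of $\mathcal R\langle y_1,\dots,y_n\rangle$ consisting of single-valued functions is exactly the subfield of $M$-invariants, and by the Frobenius theorem for Fuchsian systems this subfield equals $\mathcal R$. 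By the Galois correspondence this forces the Zariski closure $\bar M$ to coincide with the whole differential Galois group $G$ of~(\ref{eq2.4}) over $\mathcal R$.

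Next I would transfer the solvability hypothesis from $M$ to $G=\bar M$. The Zariski closure of a solvable subgroup of $GL(n)$ is again solvable, since the commutator map is continuous; hence if $M$ is solvable then $G$ is a solvable linear algebraic group. Moreover, if the chain witnessing that $M$ is $k$-solvable (respectively almost solvable) is $M=\Gamma_1\supseteq\dots\supseteq\Gamma_m=e$, then taking Zariski closures term by term produces a chain of algebraic subgroups of $G$ in which each successive quotient is either a commutative algebraic group or a finite group embeddable in $S_k$ (respectively, an arbitrary finite group), because passing to the closure preserves normality and commutativity of quotients. Thus $G$ itself is, as an algebraic group, solvable ($k$-solvable, almost solvable) in the corresponding sense.

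Then I would run the standard ``solvability by quadratures'' argument on the algebraic group $G$. By the Galois correspondence the chain $G=G_1\supseteq\dots\supseteq G_m=e$ yields a tower of Picard--Vessiot extensions $\mathcal R=K_1\subset\dots\subset K_m=\mathcal R\langle y_1,\dots,y_n\rangle$ with $\mathrm{Gal}(K_{i+1}/K_i)\cong G_i/G_{i+1}$. When $G_i/G_{i+1}$ is a connected commutative algebraic group, the Lie--Kolchin theorem (equivalently, the fact that such a group is the product of a torus and a unipotent group, hence triangularizable) shows that $K_{i+1}$ is obtained from $K_i$ by adjoining finitely many integrals and exponentials of integrals, i.e. by a single quadrature step. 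When $G_i/G_{i+1}$ is finite, $K_{i+1}/K_i$ is an algebraic extension: if that finite group is solvable it is built from cyclic quotients and the extension is obtained by radicals, a special case of quadratures; if it embeds in $S_k$ the extension is obtained by solving algebraic equations of degree $\le k$; and in the general almost-solvable case it is obtained by solving algebraic equations of arbitrary degree, a generalized quadrature step. Chaining these steps exhibits $\mathcal R\langle y_1,\dots,y_n\rangle$, and in particular every solution of~(\ref{eq2.4}), inside an extension of $\mathcal R$ by quadratures, by $k$-quadratures, or by generalized quadratures, respectively; a possibly non-connected solvable $G$ is absorbed into the last finite step, whose group is then finite solvable and handled by radicals. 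This is otherwise parallel to the proof of Theorem~\ref{thm2.35} for a single Fuchsian equation.

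The main obstacle, and the reason the hypothesis ``Fuchsian'' cannot be dropped, is the first step. For an arbitrary linear system with rational coefficients the monodromy group need not be Zariski dense in the differential Galois group: solutions may grow exponentially near singular points, producing single-valued non-rational functions in the Picard--Vessiot field (as already in $y'=y$), and then solvability of the monodromy group carries no information about the Galois group. Controlling this is exactly the content of the Frobenius theorem for Fuchsian systems, and it is the heart of the matter. The remaining delicate point is the bookkeeping needed to verify that the $k$-solvable and almost-solvable conditions, which involve finite-index and finite-quotient data interacting with the identity component, genuinely pass to Zariski closures and then translate correctly into algebraic, degree-$\le k$ algebraic, and quadrature field extensions.
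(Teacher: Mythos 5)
Your argument is correct and follows exactly the route the paper itself indicates: the paper gives no proof of Theorem~\ref{thm2.36} (it is stated with a reference to~\cite{[6]} in a section that presents statements without proofs), but the surrounding text names precisely your two ingredients --- the Frobenius theorem, which forces the monodromy group to be Zariski dense in the differential Galois group, and the Lie--Kolchin theorem applied to the resulting solvable algebraic group --- together with the linear-algebraic part of differential Galois theory. The only point deserving care, which you already flag, is the bookkeeping showing that $k$-solvability and almost solvability pass to Zariski closures and translate into degree-$\le k$ algebraic and generalized quadrature steps; this is handled in~\cite{[6]} and does not change the structure of the argument.
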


\subsubsection{Fuchsian systems with small coefficients}

In general the monodromy group of a given Fuchsian equation is very hard to compute.  It is known only for very special equations, including the famous hypergemetric equations. Thus Theorems~\ref{thm2.35} and~\ref{thm2.36} are not explicit.

If the matrix $A(x)$ in the system (\ref{eq2.4}) is triangular then one can easily  solve the system by quadratures. It turns out that if the matrix $A(x)$ has the form (\ref{eq2.5}), where the matrices $A_i$'s are sufficiently small, then the system (\ref{eq2.4}) with a non triangular matrix $A(x)$ is unsolvable by generalized quadratures for a topological reason.

\begin{thm}\label{thm2.37}(see~\cite{[6]}) If the  matrices $A_i$'s are sufficiently small, $\Vert
 A_i\Vert<\varepsilon(a_1,\dots,a_k,n)$, then the  monodromy group of the system
\begin{eqnarray}
 y'= (\sum _{i=1}^k\dfrac{A_i}{x-a_i})y \label{eq2.6}
 \end{eqnarray}
is almost solvable if and only if  the  matrices $A_i$'s are triangularizable in a suitable basis.
\end{thm}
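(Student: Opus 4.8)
Let $\mathfrak g\subset\mathfrak{gl}(n,\Bbb C)$ be the Lie algebra generated by $A_1,\dots,A_k$, and write $\omega=\sum_{i=1}^k\frac{A_i}{x-a_i}\,dx$ for the matrix $1$-form of the system~(\ref{eq2.6}). By Lie's theorem the $A_i$ are simultaneously triangularizable in a suitable basis if and only if $\mathfrak g$ is solvable, so I would restate the theorem as: for $\Vert A_i\Vert$ small, the monodromy group $M$ of~(\ref{eq2.6}) is almost solvable if and only if $\mathfrak g$ is solvable. The implication ``$\mathfrak g$ solvable $\Rightarrow$ $M$ almost solvable'' is elementary and needs no smallness hypothesis: since $\omega$ takes values in $\mathfrak g$, every fundamental matrix obtained from~(\ref{eq2.6}) by analytic continuation along a loop in $\Bbb C\setminus\{a_1,\dots,a_k\}$ lies in the connected (immersed) Lie subgroup $G_{\mathfrak g}\subset GL(n,\Bbb C)$ with Lie algebra $\mathfrak g$ (the equation $Y'=\omega Y$ with $\omega$ valued in $\mathfrak g$ keeps $Y$ in $G_{\mathfrak g}$). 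Hence $M\subset G_{\mathfrak g}$, which is solvable, so $M$ is solvable and in particular almost solvable. Equivalently: a complete flag common to all the $A_i$ is preserved by $\omega$, hence by all monodromy maps, so $M$ is triangular.

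\textbf{The converse.} Assume $M$ is almost solvable; I want $\mathfrak g$ solvable. Take a chain $M=\Gamma_1\supseteq\dots\supseteq\Gamma_m=e$ witnessing almost solvability and pass to Zariski closures in $GL(n,\Bbb C)$: each $\overline{\Gamma_{i+1}}$ is normal in $\overline{\Gamma_i}$ and $\overline{\Gamma_i}/\overline{\Gamma_{i+1}}$ is again commutative or finite, so by an easy induction on the length of the chain the identity component $\overline M{}^{0}$ of $\overline M=\overline{\Gamma_1}$ is a solvable algebraic group. Now smallness enters a first time: since~(\ref{eq2.6}) is Fuchsian on $\Bbb P^1$, the monodromy group $M$ is generated by the local monodromies $M_1,\dots,M_k$, and for $\Vert A_i\Vert$ small each $M_i$ lies in an arbitrarily small connected neighborhood of the identity; the subgroup they generate therefore lies in a connected set, so $\overline M$ is connected. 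Thus $\overline M$ is itself a connected solvable algebraic group, and $\mathfrak m:=\mathrm{Lie}(\overline M)$ is a solvable algebraic Lie algebra. The proof is then completed by the

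\textbf{Key Lemma.} \emph{For $\Vert A_i\Vert<\varepsilon(a_1,\dots,a_k,n)$ one has $\mathfrak g\subseteq\mathfrak m$.} Granting this, $\mathfrak g$ is a subalgebra of the solvable algebra $\mathfrak m$, hence solvable, and Lie's theorem yields the common triangular form of the $A_i$. To prove the lemma I would expand a fundamental solution of~(\ref{eq2.6}) along a loop $\gamma$ as an iterated-integral (Lappo--Danilevsky / Chen) series in the $A_i$: for $\Vert A_i\Vert$ below a radius controlled by the mutual positions of $a_1,\dots,a_k$ this series converges, and so does the series for $\log M_\gamma$. Because $\omega$ is $\mathfrak g$-valued, $\log M_\gamma$ is a convergent sum of iterated Lie brackets of the $A_i$, so $\log M_\gamma\in\mathfrak g$ for every loop $\gamma$; conversely the leading term of $\log M_{\gamma_i}$ for a small loop $\gamma_i$ around $a_i$ is $2\pi i\,A_i$, and a nondegeneracy argument valid for small residues shows that the $\log M_\gamma$ together with their brackets span all of $\mathfrak g$. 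On the other hand every $M_\gamma$ lies in the connected algebraic group $\overline M$ and is close to the identity, so $\log M_\gamma\in\mathfrak m$; combining gives $\mathfrak g\subseteq\mathfrak m$.

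\textbf{Main obstacle.} The delicate points are exactly inside the Key Lemma: making the convergence radius $\varepsilon$ effective in $a_1,\dots,a_k,n$, and verifying that the logarithms of the monodromies really generate $\mathfrak g$ and not merely a proper subalgebra --- this last is precisely what can fail for large residues, e.g. when some $\exp(2\pi i\,A_i)$ degenerates while $A_i$ does not. These are carried out in~\cite{[6]}. An alternative packaging of the Key Lemma invokes Schlesinger's theorem that for a Fuchsian system $\overline M$ coincides with the differential Galois group, reducing it to the assertion that for sufficiently small residues the Lie algebra of the differential Galois group of~(\ref{eq2.6}) is the algebraic hull of $\mathfrak g$; with this input the rest of the argument is the routine algebraic-group bookkeeping sketched above.
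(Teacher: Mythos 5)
First, a point of reference: the paper does not actually prove Theorem~\ref{thm2.37}. It sits in the survey part of the text, is stated with a citation to \cite{[6]}, and the only hint offered is the remark (made apropos of the multidimensional analogue) that the proof rests on the one-dimensional Lappo--Danilevsky theory. Your overall architecture --- the easy direction via a common invariant flag for a $\mathfrak g$-valued connection (correctly observed to need no smallness), and the hard direction via convergent iterated-integral expansions of the monodromy in the residues --- is indeed the intended one, and you are right to locate the effective $\varepsilon$ and the nondegeneracy analysis as the technical core.

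However, two steps in your converse are wrong as written, and they are not cosmetic. (i) ``Each $M_i$ lies in a small connected neighborhood of the identity, therefore $\overline M$ is connected'' is false: take $k=1$ and $A_1=N^{-1}\mathrm{Id}$ with $N$ large; the generator $e^{2\pi i/N}\mathrm{Id}$ is as close to the identity as you like, yet $M\cong \Z/N\Z$ and $\overline M=\mu_N$ is finite and disconnected. A group generated by elements near $e$ need not lie in $\overline M^{0}$, and handling the finite quotient $\overline M/\overline M^{0}$ is precisely where the real work is: in \cite{[6]} this is controlled by Jordan's theorem on finite linear subgroups of $GL(n,\Bbb C)$ (an abelian normal subgroup of index at most $J(n)$ always exists), and the constant $\varepsilon(a_1,\dots,a_k,n)$ depends on $n$ partly through $J(n)$. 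Your argument silently excludes the case of finite or disconnected-closure monodromy, which is almost solvable by definition and must therefore be shown to force triangular residues. (ii) ``Every $M_\gamma$ \dots is close to the identity'' is also false: only the local monodromies $M_1,\dots,M_k$ are small perturbations of $I$; long words in them need not be, so $\log M_\gamma\in\mathfrak m$ does not follow for a general loop $\gamma$. The standard repair, which also streamlines your Key Lemma, is to run the Lappo--Danilevsky inversion the other way: for small residues each $A_i$ is a convergent series in noncommutative monomials in $(M_1-I),\dots,(M_k-I)$, so any subspace invariant under all the $M_j$ is invariant under all the $A_i$; one then reduces to showing that an almost solvable matrix group generated by elements sufficiently close to the identity preserves a complete flag, and induction on $n$ finishes. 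As it stands your proposal has the right skeleton, but these two load-bearing claims must be replaced, and the analytic and group-theoretic content of the theorem remains entirely outsourced to \cite{[6]}.
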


\begin{cor}\label{cor38} If in the assumptions of Theorem 19 the  matrices $A_i$'s are not triangularizable in a suitable basis then almost every  solution of the system (6) is strongly non   representable by generalized quadratures.
\end{cor}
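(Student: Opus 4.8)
The plan is to deduce this corollary directly from the two preceding theorems, Theorem~\ref{thm2.37} and Theorem~\ref{thm2.33}, with essentially no extra work. First I would note that the system~(\ref{eq2.6}) is a special instance of the general linear system~(\ref{eq2.4}): its coefficient matrix $A(x)=\sum_{i=1}^k A_i/(x-a_i)$ has rational entries whose poles lie among $a_1,\dots,a_k$, so the monodromy group of~(\ref{eq2.6}) is defined in the sense of Section~\ref{ssec2.2.11} and Theorem~\ref{thm2.33} applies to it verbatim.

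Next I would invoke Theorem~\ref{thm2.37} (the ``Theorem 19'' of the statement) in its contrapositive form. Under the smallness hypothesis $\Vert A_i\Vert<\varepsilon(a_1,\dots,a_k,n)$, that theorem asserts that the monodromy group of~(\ref{eq2.6}) is almost solvable \emph{precisely when} the matrices $A_i$ are simultaneously triangularizable in a suitable basis. Hence the hypothesis of the corollary --- that the $A_i$ are \emph{not} simultaneously triangularizable --- yields at once that the monodromy group of~(\ref{eq2.6}) is not almost solvable.

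Finally I would apply the ``generalized quadratures'' alternative of Theorem~\ref{thm2.33} to the system~(\ref{eq2.6}): since its monodromy group is not almost solvable, almost every solution of~(\ref{eq2.6}) is strongly non representable by generalized quadratures, in the sense of Section~\ref{ssec2.2.9}, which is exactly the asserted conclusion. I do not expect any genuine obstacle here; the corollary is a formal consequence of the two cited results. The only point worth keeping in mind is that Theorem~\ref{thm2.33} is already phrased in terms of the monodromy \emph{group} rather than the monodromy \emph{pair}, so one need not even pass through Lemma~\ref{lem2.31} or through the remark relating almost solvable pairs to almost solvable groups --- that reduction is built into the earlier results that Theorem~\ref{thm2.33} summarizes.
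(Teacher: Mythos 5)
Your proposal is correct and matches the paper's intended argument: the paper states Corollary~\ref{cor38} without proof as an immediate consequence of Theorem~\ref{thm2.37} (which, under the smallness hypothesis, makes almost solvability of the monodromy group equivalent to simultaneous triangularizability of the $A_i$) combined with Theorem~\ref{thm2.33} (non almost solvable monodromy group implies almost every solution is strongly non representable by generalized quadratures). Your observation that one need not pass through the monodromy pair or Lemma~\ref{lem2.31} at this stage is also accurate, since that reduction is already absorbed into Theorem~\ref{thm2.33}.
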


\subsubsection{Polynomials invertible by radicals}

In 1922 J.F.Ritt published (see~\cite{[12]}) the following beautiful theorem which fits nicely into topological Galois theory.
\begin{thm}(J.F. Ritt)
The inverse function of a polynomial with complex coefficients can be represented by radicals if and only if the polynomial is a composition of linear polynomials, the power polynomials $z\to z^n$, Chebyshev polynomials and polynomials of degree at most 4.
\end{thm}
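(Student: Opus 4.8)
The plan is to combine the topological criterion for representability of algebraic functions by radicals (Theorems~\ref{thm2.10} and~\ref{thm2.15}) with Ritt's theory of polynomial decomposition and a classification of the indecomposable polynomials whose monodromy group is solvable. \emph{The easy direction.} Suppose $P=P_1\circ\dots\circ P_r$ where each $P_i$ is linear, a power $z\mapsto z^{m}$, a Chebyshev polynomial, or of degree at most $4$. Then $P^{-1}=P_r^{-1}\circ\dots\circ P_1^{-1}$, and since the class of functions representable by radicals is closed under composition (one substitutes the radical expression for one function into the variable of another), it is enough to see that each $P_i^{-1}$ is representable by radicals: this is clear for $P_i$ linear; for $P_i=z^m$ it is $w\mapsto w^{1/m}$; for a Chebyshev polynomial $T_m$ one solves $w=T_m(z)$ by setting $z=\tfrac12(u+u^{-1})$, so that $u^m+u^{-m}=2w$, whence $u^m=w+(w^2-1)^{1/2}$ and $z=\tfrac12(u+u^{-1})$ with $u=\bigl(w+(w^2-1)^{1/2}\bigr)^{1/m}$; and for $\deg P_i\le 4$ the equation $P_i(z)=w$ has Galois group over $\Bbb C(w)$ a subgroup of $S_4$, hence solvable, so its solution is representable by radicals by Theorem~\ref{thm2.10}.

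\emph{The converse, reduction to group theory.} Suppose $P^{-1}$ is representable by radicals. The function $P^{-1}$ is the algebraic function defined by the irreducible equation $P(y)-x=0$ over $\Bbb C(x)$; its monodromy group $G$ is the monodromy group of the branched covering $P\colon\Bbb P^1\to\Bbb P^1$, a transitive subgroup of $S_n$ with $n=\deg P$, and since $P$ is totally ramified over $\infty$, $G$ contains an $n$-cycle. By Theorem~\ref{thm2.15} (contrapositive) together with Theorem~\ref{thm2.10}, an algebraic function of one complex variable is representable by radicals if and only if its monodromy group is solvable; hence $G$ is solvable. Next I would use Ritt's decomposition theorem: decompositions $P=A\circ B$ into polynomials correspond, via L\"uroth's theorem, to intermediate fields $\Bbb C(P(z))\subseteq\Bbb C(B(z))\subseteq\Bbb C(z)$, hence to block systems of the transitive group $G$, so $P$ is indecomposable exactly when $G$ is primitive, and the monodromy groups of the factors in any decomposition of $P$ are subquotients of $G$, hence again solvable. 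Choosing a complete decomposition $P=P_1\circ\dots\circ P_r$ into indecomposable polynomials and inducting on $\deg P$, it remains to classify indecomposable $P$ with solvable monodromy group.

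\emph{The indecomposable case.} Assume $P$ indecomposable, so $G$ is primitive, solvable, and contains an $n$-cycle. By the structure theorem for solvable primitive permutation groups, $G=V\rtimes H$ with $V\cong(\Bbb Z/p)^k$ acting regularly and $H\le\mathrm{GL}(k,p)$, so $n=p^k$; and by the classical classification of primitive permutation groups containing a full-length cycle (Burnside, Schur, later Jones) the solvable ones have prime degree or are $S_3$ or $S_4$. If $n\le 4$, then $P$ is already one of the allowed building blocks. If $n=p$ is prime, then $G$ is a transitive solvable subgroup of $S_p$, hence by Galois's theorem a subgroup of $\mathrm{AGL}(1,p)=\{x\mapsto ax+b\}$, in which a nontrivial element is either a translation (a $p$-cycle) or, when $a$ has order $d\mid p-1$ with $d>1$, of cycle type consisting of $(p-1)/d$ cycles of length $d$ and one fixed point. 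Applying Riemann--Hurwitz to $P$ (degree $p$, genus $0$ over genus $0$), the finite critical values contribute $(2p-2)-(p-1)=p-1$ to $\sum_b\bigl(p-c(g_b)\bigr)$, where $c(g_b)=|P^{-1}(b)|$ is the number of cycles of the local monodromy $g_b$; a translation contributes $p-1$ and a $d$-element contributes $(p-1)(d-1)/d$. The only ways to write $p-1$ as such a sum are a single translation, or two elements with $d=2$. In the first case the fiber over the finite critical value is a single point of multiplicity $p$, so $P(z)-b=c(z-z_0)^p$ and $P$ is linearly equivalent to $z^p$. In the second case $P$ has exactly two finite critical values, over each of which the monodromy is an involution with a unique fixed point; the triple $(g_\infty,g_1,g_2)$ with $g_\infty g_1 g_2=1$, $g_\infty$ a $p$-cycle and $g_1,g_2$ such involutions (forcing $\langle g_\infty,g_1\rangle\cong D_p$) is unique up to simultaneous conjugation, so the covering is unique up to isomorphism, and since the Chebyshev polynomial $T_p$ realizes it, $P$ is linearly equivalent to $T_p$.

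\emph{The main obstacle.} The delicate point is the last paragraph --- reducing ``solvable monodromy'' for an indecomposable polynomial to a finite list of possibilities. This rests on two classical inputs that I would cite rather than reprove: the classification of primitive permutation groups containing a full-length cycle, and Ritt's decomposition theorem (equivalently, the order isomorphism between the lattice of polynomial decompositions of $P$ and the lattice of block systems of its monodromy group). Granting these, the Riemann--Hurwitz bookkeeping in the prime-degree case and the dihedral rigidity argument identifying $T_p$ are routine.
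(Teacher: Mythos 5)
Your proposal is correct and follows essentially the same route as the paper's own outline (reduction to indecomposable polynomials, Ritt's classification of solvable primitive permutation groups containing a full cycle, Riemann--Hurwitz bookkeeping, and identification of the power and Chebyshev cases), and in fact you supply more of the Riemann--Hurwitz and rigidity details that the paper delegates to its references. The only cosmetic difference is in the reduction step, where the paper deduces invertibility by radicals of each composition factor directly from the identity $R^{-1}=S\circ P^{-1}\circ Q$, while you pass through solvability of subquotients of the monodromy group; both are valid.
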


{\sc Outline of proof  (following ~\cite{[2]})}

1)   {\it Every polynomial is a composition of primitive ones:} Every polynomial is a composition of polynomials that are not themselves compositions of polynomials of degree $>1$. Such polynomials are called {\it primitive}. Recall that a permutation group $G$ acting on a non-empty set $X$ is called {\it primitive} if $G$ acts transitively on $X$ and G preserves no nontrivial partition of $X$. {\it A polynomial is primitive if and only if the monodromy group of inverse of the polynomial   acts primitively on its branches}.
\smallskip

2)  {\it Reduction to the case of primitive polynomials:} A composition of polynomials is invertible by radicals if and only if each polynomial in the composition is invertible by radicals. Indeed, if each of the polynomials in composition is invertible by radicals, then their composition also is. Conversely, if a polynomial $R$ appears in the presentation of a polynomial $P$ as a composition $P=Q\circ R\circ S$ and $P^{-1}$ is representable by radicals, then $R^{-1}=S\circ P^{-1}\circ Q$ is also representable by radicals. Thus it is enough to classify only the primitive polynomials invertible by radicals.
\smallskip

3) {\it A result on solvable primitive permutation groups containing a full cycle:} A primitive polynomial is invertible by radicals if and only if the monodromy group of the inverse of the polynomial is solvable.  Since it acts primitively on its branches  and contains a full cycle (corresponding to a loop around the point at infinity on the Riemann sphere), the following group-theoretical result of Ritt is useful for the classification of polynomials invertible by radicals:

\begin{thm}\label{thmprim} (on primitive solvable groups with a cycle)
Let $G$ be a primitive solvable group of permutations of a finite set $X$ which contains a full cycle. Then either $|X|=4$, or $|X|$ is a prime number $p$ and $X$ can be identified with the elements of the field $F_p$ so that the action of $G$ gets identified with the action of the subgroup of the affine group $AGL_1(p)=\{x\to ax+b|a\in (F_p)^*,b\in F_p\}$ that contains all the shifts $x\to x+b$.
\end{thm}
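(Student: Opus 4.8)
The plan is to combine the classical structure theory of finite primitive \emph{solvable} permutation groups with an elementary computation inside the affine group over $F_p$.

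\textbf{Step 1 (reduction to affine form).} I would first invoke the standard fact that a finite primitive solvable permutation group $G$ on $X$ has a unique minimal normal subgroup $V$, which is elementary abelian of some order $p^k$; that $V$ acts regularly on $X$, so $|X|=p^k$; that the point stabilizer $G_0$ acts faithfully and irreducibly on $V\cong F_p^k$ and is a solvable subgroup of $GL_k(F_p)$; and that $G=V\rtimes G_0\le AGL_k(F_p)$, with $V$ the group of translations. (This follows from the description of the socle of a primitive group: being characteristically simple and, under solvability, elementary abelian, the socle is regular and self-centralizing.) Then I write the full cycle $c\in G$, an element of order $n=p^k$, as $c=(v,A)$ acting on $V$ by $w\mapsto Aw+v$, where $v\in V$ and $A\in GL_k(F_p)$ is the image of $c$. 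Since $\operatorname{ord}(A)$ divides $p^k$, $A$ is a $p$-element of $GL_k(F_p)$, hence unipotent; set $N=A-I$, so $N^k=0$.

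\textbf{Step 2 (iterating the cycle and locating $\operatorname{ord}(A)$).} One has $c^m=(S_m v,\,A^m)$ with $S_m=I+A+\cdots+A^{m-1}$, and expanding $A^i=(I+N)^i$ and using the hockey-stick identity $\sum_i\binom{i}{j}=\binom{m}{j+1}$ gives $S_m=\sum_{j\ge 0}\binom{m}{j+1}N^j$. Let $p^a=\operatorname{ord}(A)$. If $a=0$ then $c$ is a translation, so $p^k=\operatorname{ord}(c)\le p$, forcing $k=1$, which is already a desired conclusion; so assume $a\ge1$. Then $A^{p^a}=I$, so $c^{p^a}=(S_{p^a}v,I)$ is a translation and $\operatorname{ord}(c)\mid p^{a+1}$; as $\operatorname{ord}(c)=p^k$ this yields $a\ge k-1$. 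On the other hand $a\ge1$ forces $(A-I)^{p^{a-1}}\ne0$, i.e. $N^{p^{a-1}}\ne0$, while $N^k=0$, so $p^{a-1}<k$, whence $p^{k-2}\le p^{a-1}\le k-1$. Since $p^{k-2}\ge 2^{k-2}$ and $2^{k-2}>k-1$ for $k\ge4$, this gives $k\le3$; and a direct check of the two remaining cases ($k=2$, and $k=3$ which also forces $p=2$) shows $a=k-1$ exactly in both.

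\textbf{Step 3 (the decisive congruence).} Since $a=k-1$ we have $A^{p^{k-1}}=I$, so $c^{p^{k-1}}=(S_{p^{k-1}}v,I)$; because $\operatorname{ord}(c)=p^k$ this is nontrivial, so $S_{p^{k-1}}\ne0$ as a matrix over $F_p$. But $S_{p^{k-1}}=\sum_{j=0}^{k-1}\binom{p^{k-1}}{j+1}N^j$, and by Kummer's theorem $\binom{p^{k-1}}{l}\equiv0\pmod p$ for every $l$ with $0<l<p^{k-1}$ (the base-$p$ expansion of $p^{k-1}$ being $1\,0\cdots0$). Here each index $l=j+1$ satisfies $1\le l\le k$, and $p^{k-1}\le k$ occurs (for $k\ge2$) only when $k=2$ and $p=2$. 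Hence unless $n=p^k$ equals $p$ (the case $k=1$) or $4$ (the case $k=2,\ p=2$), every coefficient of $S_{p^{k-1}}$ vanishes, a contradiction. In the prime case $k=1$ we have $G_0\le GL_1(F_p)=(F_p)^{*}$, so $G\le AGL_1(p)=\{x\mapsto ax+b\}$, and $G\supseteq V=\{x\mapsto x+b\}$ contains all the shifts, as claimed. I expect the only genuinely delicate point to be Step 2 — pinning down $\operatorname{ord}(A)=p^{k-1}$ and excluding $k\ge4$ by playing off ``$N$ is $k\times k$-nilpotent'' against ``$c$ has order $p^k$''; once that is in place Step 3 is a one-line application of Kummer's theorem, and Step 1 is textbook (but must be stated carefully, since the whole argument rests on $|X|=p^k$ and $G\le AGL_k(F_p)$).
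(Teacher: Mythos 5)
The paper itself gives no proof of this theorem: it appears inside the ``Outline of proof'' of Ritt's theorem in a part of the text that explicitly presents statements without proofs, with the argument delegated to the references \cite{[2]} and \cite{[12]}. So there is nothing in the paper to compare your argument against line by line; judged on its own, your proof is correct and self-contained modulo the genuinely textbook Step 1 (socle of a primitive solvable group is elementary abelian and regular, giving $|X|=p^k$ and $G=V\rtimes G_0\le AGL_k(F_p)$ with $G_0$ irreducible). The computational core checks out: $\operatorname{ord}(A)\mid p^k$ forces $A=I+N$ unipotent with $N^k=0$; the identity $c^m=(S_mv,A^m)$ with $S_m=\sum_j\binom{m}{j+1}N^j$ is right; the two-sided squeeze $a\ge k-1$ (from $c^{p^a}$ being a translation) against $p^{a-1}\le k-1$ (from $N^{p^{a-1}}\ne 0$ and $N^k=0$) correctly kills $k\ge 4$ and pins $a=k-1$ for $k=2,3$; and the Kummer/Lucas vanishing of $\binom{p^{k-1}}{l}$ for $0<l<p^{k-1}$ forces $S_{p^{k-1}}=0$, contradicting $\operatorname{ord}(c)=p^k$, except exactly when $p^{k-1}\le k$, i.e.\ $k=1$ or $(k,p)=(2,2)$. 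That is precisely the dichotomy of the theorem, and the identification $G_0\le GL_1(F_p)$, $V=$ all shifts, in the prime case is immediate. One stylistic remark: the same squeeze can be obtained in one stroke by embedding $AGL_k(p)$ into $GL_{k+1}(p)$, where the full cycle becomes a unipotent element $U$ with $(U-I)^{k+1}=0$, hence of order at most $p^{\lceil\log_p(k+1)\rceil}$, and the requirement that this be $\ge p^k$ gives $p^{k-1}\le k$ directly; your version trades that for an explicit analysis of the translation part, which is slightly longer but makes the exceptional case $|X|=4$ more transparent.
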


4) {\it Solvable monodromy groups of the inverse of primitive polynomials:} It can be shown by applying the Riemann--Hurwitz formula that among the groups in Theorem~\ref{thmprim} on primitive solvable groups with a cycle,  only the following groups can be realized as monodromy groups of inverse of primitive polynomials: 1. $G\subset S_4$, 2. Cyclic group $G=\{x\to x+b\}\subset AGL_1(p)$, 3. Dihedral group $G=\{x\to \pm x+b\}\subset AGL_1(p)$.
\smallskip

5) {\it Description of primitive polynomials invertible by radicals:} It can be easily shown (see for instance [see~\cite{[12]},~\cite{[2]}) that the following result holds:

\begin{thm}\label{thm2.39}
If the monodromy group of  inverse of a primitive polynomial  is a subgroup of the group $\{x\to \pm x+b\}\subset AGL_1(p)$, then up to a linear change of variables the polynomial is either a power polynomial or a Chebyshev polynomial.
\end{thm}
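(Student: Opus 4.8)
The plan is to read off the branching data of the polynomial $P$ from the hypothesis on the monodromy group, and then to recognize $P$ from that data. By hypothesis $\deg P=p$ is a prime and the monodromy group $G$ of $P^{-1}$ acts on the $p$ branches as a subgroup of the dihedral group $D=\{x\mapsto\pm x+b:b\in\Bbb F_p\}$, with $C=\{x\mapsto x+b\}$ its cyclic subgroup of translations. Since $P$ is a polynomial of degree $p$, the point at infinity is totally ramified and a small loop around it induces a $p$-cycle in $G$; since $G$ is also transitive, $p\mid|G|$, so $G\supseteq C$, and because $C$ together with any reflection already generates $D$ we get $G=C$ or $G=D$. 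First I would dispose of the case $G=C$: every nontrivial element of $C$ is a full $p$-cycle, so the monodromy around each finite critical value is a $p$-cycle and that value is totally ramified. The Riemann--Hurwitz formula for $P\colon\Bbb{CP}^1\to\Bbb{CP}^1$ gives total ramification $2p-2$, of which $p-1$ lies over infinity; the remaining $p-1$ is exhausted by a single totally ramified finite value, and there can be no others. Hence $P'$ vanishes to order $p-1$ at one point only, so after affine changes of the variable and of the value $P(z)=z^p$ is a power polynomial.

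Now assume $G=D$. The monodromy around a finite critical value is one element of $D$: a nontrivial translation $x\mapsto x+b$ is a $p$-cycle, so it makes the value totally ramified (ramification $p-1$); a reflection $x\mapsto c-x$ has one fixed point and $(p-1)/2$ transpositions, so over the corresponding value lie one ordinary point and $(p-1)/2$ simple critical points (ramification $(p-1)/2$). Writing again the Riemann--Hurwitz balance, the finite critical values carry total ramification $p-1$, i.e. $2r+s=2$ where $r$ and $s$ count the finite critical values of translation and of reflection type. Since $G=D$ forces a reflection to occur ($s\ge1$), the only possibility is $r=0$, $s=2$. Thus $P$ has exactly two finite critical values $w_1,w_2$, and over each its fibre consists of $(p-1)/2$ simple critical points together with one ordinary point. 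This is precisely the branching data of the Chebyshev polynomial $T_p$, characterized by $T_p\!\left(\tfrac12(z+z^{-1})\right)=\tfrac12(z^p+z^{-p})$.

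It remains to upgrade the coincidence of branching data to the identity $P=T_p$ up to linear changes, and this is the only delicate point; I would argue through the Galois closure $\widehat X\to\Bbb{CP}^1$ of $P$, which has group $D$ and the same branch locus $\{w_1,w_2,\infty\}$. Put $Z=\widehat X/C$. Then $Z\to\Bbb{CP}^1$ is a degree-two cover branched exactly over $\{w_1,w_2\}$ (the local monodromy over $\infty$ lies in $C$), so $Z\cong\Bbb{CP}^1$, and after a linear change in the target sending $w_1,w_2$ to $\pm1$ this cover is $u=\tfrac12(s+s^{-1})$, with deck involution $s\mapsto s^{-1}$. The intermediate cover $\widehat X\to Z$ is cyclic of degree $p$, branched only over the two points of $Z$ lying over $\infty$ (again because the $p$-cycle over $\infty$ generates the inertia there), hence $\widehat X\cong\Bbb{CP}^1$ and, in a suitable coordinate, $s=\zeta^p$ with $C$ acting by $\zeta\mapsto\omega\zeta$, $\omega^p=1$. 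A point-stabilizer in $D$ is generated by a reflection, which lifts to an involution of $\widehat X=\Bbb{CP}^1_\zeta$ over $s\mapsto s^{-1}$, i.e. to $\zeta\mapsto\eta\zeta^{-1}$ with $\eta^p=1$; since $p$ is odd all reflections are conjugate in $D$, so without loss of generality this stabilizer is $\langle\zeta\mapsto\zeta^{-1}\rangle$ and the source of $P$ is $X=\widehat X/\langle\zeta\mapsto\zeta^{-1}\rangle\cong\Bbb{CP}^1$ with coordinate $w=\tfrac12(\zeta+\zeta^{-1})$. Chasing $\zeta\mapsto s=\zeta^p\mapsto u=\tfrac12(\zeta^p+\zeta^{-p})$ and invoking the defining identity of $T_p$ gives $u=T_p(w)$, that is, $P=T_p$ after the linear normalizations made above. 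The main obstacle is exactly this last step — keeping track of the coordinates through the tower $\widehat X\to Z\to\Bbb{CP}^1$ and checking that the $D$-action can indeed be put in the stated normal form; alternatively one may simply invoke the rigidity of covers of $\Bbb{CP}^1$ with three branch points once the branching data and monodromy group are matched with those of $T_p$.
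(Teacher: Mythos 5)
The paper itself does not prove Theorem~\ref{thm2.39}: it is stated with the remark that it ``can be easily shown'' together with a pointer to Ritt's original paper and to \cite{[2]}, so there is no argument in the text to set against yours. Your proof is correct and fills this gap, and it is consistent in spirit with the surrounding outline, which already appeals to the Riemann--Hurwitz formula at the neighbouring step. The reduction to $G=C$ or $G=D$ via the $p$-cycle at infinity, the cycle-type count (a nontrivial translation is a $p$-cycle, a reflection has one fixed point and $(p-1)/2$ transpositions since $p$ is odd), and the Riemann--Hurwitz balance $2r+s=2$ with $s\ge 1$ in the dihedral case correctly pin down the branching data of $z^p$ and of $T_p$ respectively. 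The Galois-closure argument in the tower $\widehat X\to Z\to \Bbb{CP}^1$ is also sound: $Z\to\Bbb{CP}^1$ is a double cover branched exactly at $w_1,w_2$ because the inertia over $\infty$ lies in the normal subgroup $C$; the cyclic cover $\widehat X\to Z$ is branched only at the two points over $\infty$; and the conjugacy of all reflections in a dihedral group of order $2p$ with $p$ odd lets you normalize the point stabilizer. The only step I would make fully explicit is the last one: matching branching data and monodromy identifies $P$ and $T_p$ as isomorphic branched covers, hence $P=T_p\circ\mu$ for some M\"obius transformation $\mu$ of the source; since both maps are totally ramified over $\infty$ with $\infty\mapsto\infty$, $\mu$ fixes $\infty$ and is therefore affine, which is exactly the ``linear change of variables'' permitted in the statement. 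With that sentence added (or with your alternative appeal to the rigidity of covers with three branch values), the argument is complete.
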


Thus the polynomials whose inverse have monodromy groups 1-3 are respectively 1. Polynomials of degree four. 2. Power polynomials up to a linear change of variables. 3. Chebyshev polynomials up to a linear change of variables.

In each of these cases the fact that the polynomial is invertible by radicals follows from solvability of the corresponding monodromy group or from explicit formulas for its inverse (see for instance~\cite{[2]}).

\subsubsection{Polynomials invertible by $k$-radicals}\label{ssec2.15}

In this section we discuss the following generalization of J.F.Ritt's Theorem.

\begin{thm}\label{thm2.40}(see~\cite{[2]})
 A polynomial invertible by radicals and solutions of equations of degree at most $k$ is a composition of power polynomials, Chebyshev polynomials, polynomials of degree at most $k$ and, if $k\leq 14$, certain primitive polynomials whose inverse have  exceptional monodromy groups. A description of these exceptional polynomials can be given explicitly.
\end{thm}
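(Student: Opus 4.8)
The plan is to adapt, step by step, the five-part proof of J.~F.~Ritt's theorem sketched above, systematically replacing ``solvable'' by ``$k$-solvable'' and bookkeeping the extra finite list of monodromy groups that now survives. First I would carry out the reduction to primitive polynomials exactly as before: writing $P=P_1\circ\dots\circ P_r$ as a composition of primitive polynomials, each $P_i$ occurs as the middle factor of $P=(P_1\circ\dots\circ P_{i-1})\circ P_i\circ(P_{i+1}\circ\dots\circ P_r)$, so $P_i^{-1}=(P_{i+1}\circ\dots\circ P_r)\circ P^{-1}\circ(P_1\circ\dots\circ P_{i-1})$ is representable by $k$-radicals as soon as $P^{-1}$ is; conversely the class of functions representable by $k$-radicals is closed under composition. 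Hence $P$ is invertible by $k$-radicals if and only if every primitive factor $P_i$ is, and it suffices to classify the primitive polynomials invertible by $k$-radicals. By Theorem~\ref{thm2.28} together with Corollary~\ref{cor2.27}, an algebraic function is representable by $k$-radicals precisely when its monodromy group is $k$-solvable; since the inverse of a primitive polynomial of degree $n$ has a primitive monodromy group $G\subseteq S_n$ containing a full $n$-cycle (the monodromy around $\infty$), the problem reduces to a purely group-theoretic classification plus a realizability check.

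The core step is the group theory. Here I would invoke the full classification of finite primitive permutation groups containing a full cycle --- the complete analogue of Ritt's Theorem~\ref{thmprim}, which in its final form (resting ultimately on the classification of finite simple groups) states that such a group of degree $n$ is one of: (i) $C_p\le G\le AGL_1(p)$ with $n=p$ prime; (ii) $A_n$ with $n$ odd, or $S_n$; (iii) a group with $PGL_d(q)\le G\le P\Gamma L_d(q)$ acting on the points of $\mathbb P^{d-1}(\mathbb F_q)$, with $n=(q^d-1)/(q-1)$; or (iv) one of finitely many exceptions, namely $PSL_2(11)$ and $M_{11}$ in degree $11$ and $M_{23}$ in degree $23$. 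Intersecting this list with $k$-solvability is then routine: a finite group is $k$-solvable precisely when each of its nonabelian composition factors is isomorphic to a subgroup of $S_k$. Thus the groups of type (i) are always $k$-solvable; those of type (ii) survive exactly when $n\le k$ (for $k\le 4$ this must be read as $n\le 4$, which recovers Ritt's list of all polynomials of degree $\le4$); and each group of type (iii) or (iv) survives only when its simple composition factor $T$ embeds in $S_k$, i.e. when $k$ is at least the minimal faithful permutation degree of $T$ --- a condition that, crucially, can hold for $k$ strictly smaller than $n$ (for instance $PSL_4(2)\cong A_8$ acting on the $15$ points of $\mathbb P^3(\mathbb F_2)$ is already $8$-solvable, hence yields a polynomial invertible by $8$-radicals although its degree is $15$).

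Finally I would impose realizability by a polynomial. The cyclic and dihedral cases give, up to a linear change of variables, the power polynomials and the Chebyshev polynomials by Theorem~\ref{thm2.39}; the cases $G=S_n$ or $A_n$ with $n\le k$ give (primitive) polynomials of degree $\le k$; and for each remaining candidate group of type (iii) or (iv) I would apply the Riemann--Hurwitz formula to the branched cover $\mathbb P^1\to\mathbb P^1$ determined by the polynomial --- exactly as in step~4 of the Ritt outline --- to decide which groups and which cycle structures are compatible with a genus-zero cover totally ramified over $\infty$. This cuts the list down to a finite set of pairs $(n,G)$, each realized by finitely many explicit primitive polynomials; these pairs turn out to have $k$-thresholds at most $14$, so that for $k\ge15$ nothing beyond power polynomials, Chebyshev polynomials and polynomials of degree $\le k$ is needed, while for $k\le14$ the finitely many exceptional primitive polynomials must be added. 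Combining this with the reduction to primitive factors gives the stated composition description, and the explicit description of the exceptional polynomials follows from the known classification of indecomposable polynomials with exceptional monodromy groups.

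The main obstacle is exactly the group-theoretic classification of primitive permutation groups containing a full cycle, together with the accompanying realizability analysis: the former is a deep result drawing on the classification of finite simple groups, and the latter --- the Riemann--Hurwitz bookkeeping that eliminates all but finitely many pairs $(n,G)$, produces the explicit exceptional polynomials, and pins down the precise bound $k\le14$ --- is the most technical part of the argument. By contrast the reduction to primitive polynomials, the translation of ``invertible by $k$-radicals'' into ``$k$-solvable monodromy group'' via Theorem~\ref{thm2.28} and Corollary~\ref{cor2.27}, and the verification that the surviving groups are indeed $k$-solvable are all straightforward once those ingredients are available.
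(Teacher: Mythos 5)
Your proposal is correct and follows essentially the same route as the paper's own five-step outline: reduction to primitive factors, translation of invertibility by $k$-radicals into $k$-solvability of the monodromy group (the paper uses Corollary~\ref{cor2.42}, you use the equivalent Theorem~\ref{thm2.28} with Corollary~\ref{cor2.27}), the Feit--Jones classification of primitive groups containing a full cycle, the Riemann--Hurwitz realizability analysis yielding M\"{u}ller's list, and finally the extraction of the exceptional groups with $k$-threshold below their degree. You merely spell out the group-theoretic list and the origin of the bound $k\leq 14$ (the degree-$15$ polynomials with monodromy $PSL_4(2)\cong A_8$) in more detail than the paper does.
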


The proofs rely on classification of monodromy groups of inverse of primitive polynomials  obtained by M\"{u}ller based on group-theoretical results of Feit and on previous work on primitive polynomials whose inverse have exceptional monodromy groups by many authors. Besides the references to these highly involved and technical results an outline of the proof of Theorem 40 is not complicated and it resembles the outline of the proof of Ritt's Theorem.

Let us start with some background on representability by $k$-radicals.

\begin{defn}
Let $k$ be a natural number. A field extension $L/K$ is $k$-radical if there exists a tower of extensions $K=K_0\subset K_1\subset \ldots \subset K_n$ such that $L\subset K_n$ and for each $i$, $K_{i+1}$ is obtained from $K_i$ by adjoining an element $a_i$, which is either a solution of an algebraic equation of degree at most $k$ over $K_i$, or satisfies $a_i^m=b$ for some natural number $m$ and $b\in K_i$.
\end{defn}

\begin{thm}\label{thm2.41}(see~\cite{[6]}) A Galois extension $L/K$ of fields of characteristic zero is $k$-radical if and only if its Galois group is $k$-solvable.
\end{thm}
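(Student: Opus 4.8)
The plan is to transplant the classical proof that a Galois extension is radical precisely when its group is solvable, attaching the bound ``$k$'' at every step. Characteristic zero is used only to guarantee that all the finite extensions occurring are separable, so that the fundamental theorem of Galois theory and the primitive element theorem apply without comment. Throughout I would rely on two closure properties of $k$-solvability, each obtained by pushing a $k$-solvable chain $\Gamma=\Gamma_1\supseteq\dots\supseteq\Gamma_m=e$ (with $\Gamma_{i+1}$ normal in $\Gamma_i$ and each factor abelian or a subgroup of $S_k$) through the relevant map. First, subgroups of $k$-solvable groups are $k$-solvable: intersecting the chain with a subgroup $H$ gives $H=H\cap\Gamma_1\supseteq\dots\supseteq e$ with $(H\cap\Gamma_i)/(H\cap\Gamma_{i+1})$ embedding into $\Gamma_i/\Gamma_{i+1}$, and a subgroup of (abelian or $S_k$) is again (abelian or $S_k$). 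Second, quotients of $k$-solvable groups are $k$-solvable: pushing the chain to $G/N$ yields factors that are quotients of the old ones; a quotient of an abelian group is abelian, while a quotient $Q$ of a subgroup $H\le S_k$ has, by Jordan--H\"older, only composition factors that are composition factors of $H$, so its non-abelian simple composition factors are simple subquotients of $S_k$. The one point requiring a genuine argument is the claim that a non-abelian simple subquotient $T$ of $S_k$ already embeds into $S_k$: take $H\le S_k$ of least order having $T$ as a composition factor, so $T\cong H/M$ for the maximal normal subgroup $M$; let $H$ act on the set of $M$-orbits of $\{1,\dots,k\}$, a set of at most $k$ elements on which $M$ acts trivially; minimality of $H$ forces this action of $H$ (equivalently of $T$) to be nontrivial, and $T$ being simple it is then faithful, so $T\hookrightarrow S_k$.

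For the direction where $G=\mathrm{Gal}(L/K)$ is assumed $k$-solvable, I would argue by strong induction on $[L:K]$. Refine the $k$-solvable chain so that its abelian factors become cyclic of prime order; it then suffices to realise a single first step $K\subset K_1$ (with $\mathrm{Gal}(K_1/K)$ of one of the types below) as a $k$-radical sub-tower, since $L/K_1$ is Galois with $k$-solvable group of strictly smaller degree and one concludes by the inductive hypothesis and concatenation of towers (base-changing the $L/K_1$ tower up over the top of the first sub-tower). If $\mathrm{Gal}(K_1/K)$ is cyclic of prime order $p\le k$, then $K_1=K(\theta)$ with $\theta$ a root of its minimal polynomial of degree $p\le k$, which is a legal $k$-radical step. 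If $p>k$, first adjoin a primitive $p$-th root of unity: $K(\zeta_p)/K$ is abelian Galois of degree dividing $p-1$, hence of degree $<p\le[L:K]$, so it is $k$-radical by induction; then $K_1K(\zeta_p)/K(\zeta_p)$ is cyclic of degree $p$ over a field containing $\zeta_p$, so by Kummer theory it equals $K(\zeta_p)(b^{1/p})$, a single radical step. If the first factor is isomorphic to $H\le S_k$, I induct on $|H|$: choosing a point of $\{1,\dots,k\}$ moved by $H$, let $E$ be the fixed field of its stabiliser $H_1$, so $[E:K]=[H:H_1]\le k$ and $K\subset E$ is a degree-$\le k$ step, while $K_1/E$ is Galois with group $H_1\le S_k$ of smaller order; concatenate.

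For the direction where $L/K$ is assumed $k$-radical, start from a tower $K=K_0\subset\dots\subset K_n$ with $L\subseteq K_n$ whose steps adjoin either a root $a_i$ of a polynomial of degree $\le k$ over $K_i$ or an element with $a_i^{m_i}=b_i\in K_i$. Enlarge it to a tower of Galois steps: set $E_0=K$ and let $E_i$ be, in the first case, a splitting field over $E_{i-1}$ of a degree-$\le k$ polynomial having $a_i$ as a root (Galois group a subgroup of $S_k$), and in the second case $E_{i-1}(\zeta_{m_i},b_i^{1/m_i})$, the splitting field of $x^{m_i}-b_i$ (Galois group metabelian, with a two-term subnormal series whose factors are abelian). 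Then $E_n\supseteq K_n\supseteq L$. Let $M$ be the Galois closure of $E_n/K$, obtained by concatenating the tower $K\subseteq E_1\subseteq\dots\subseteq E_n$ with its conjugate towers $\sigma(E_\bullet)$, one $\sigma$ at a time; each appended step is a base change of one of the earlier Galois steps, hence (by subgroup-closure of the property ``has a subnormal series with factors abelian or $\subseteq S_k$'') still has such a subnormal series. Since $M/K$ is Galois and is the top of a tower of Galois steps whose groups all admit such subnormal series, $\mathrm{Gal}(M/K)$ itself admits a subnormal series with every factor abelian or a subgroup of $S_k$; that is, $\mathrm{Gal}(M/K)$ is $k$-solvable. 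Finally $L\subseteq M$ with $L/K$ Galois, so $\mathrm{Gal}(L/K)$ is a quotient of $\mathrm{Gal}(M/K)$, and is therefore $k$-solvable.

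I expect the real content to be concentrated in the quotient-closure statement invoked at the very end, together with the permutation-group lemma it rests on (a non-abelian simple subquotient of $S_k$ embeds in $S_k$): unlike ordinary solvability, $k$-solvability is not formally stable under quotients, and the $k$-radical tower unavoidably produces a field $M$ strictly larger than $L$, so this stability is genuinely needed. Everything else—refining chains, passing between fixed fields and stabiliser subgroups, forming and splicing Galois closures, and the root-of-unity/Kummer bookkeeping—is routine and of the same flavour as the classical solvability argument.
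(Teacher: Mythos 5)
The paper offers no proof of this theorem: it is stated with a bare reference to \cite{[6]}, so there is no argument in the text to compare yours against step by step. Your overall architecture --- both implications reduced to closure properties of $k$-solvability, the unavoidable passage to a Galois closure $M\supseteq L$ forcing a quotient at the very end, and the isolation of the claim ``a non-abelian simple section of $S_k$ embeds in $S_k$'' as the real content --- is the standard route and surely the intended one. The field-theoretic bookkeeping (refining chains, adjoining $\zeta_p$ and using Kummer theory when $p>k$, the stabilizer chain for a factor $H\le S_k$, splicing conjugate towers and base-changing) is correct.

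The gap sits exactly in the step you yourself flag as the crux. You take $H\le S_k$ of least order having $T$ as a composition factor, write $T\cong H/M$, let $H$ act on the set of $M$-orbits, and assert that ``minimality of $H$ forces this action to be nontrivial.'' That assertion is not justified, and the deduction fails outright when $M$ is transitive on $\{1,\dots,k\}$: there is then a single $M$-orbit, the action on a one-point set is unavoidably trivial, and no appeal to minimality changes that. What minimality actually yields in that case is different: since $M$ is transitive, $H=MH_\alpha$ for a point stabilizer $H_\alpha$ (Frattini argument), so $T\cong H_\alpha/(H_\alpha\cap M)$ is a quotient of the \emph{proper} subgroup $H_\alpha$, contradicting minimality. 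Likewise, when $M$ has several orbits but $H$ stabilizes each of them setwise, your action is again trivial, and one must instead restrict $H$ to an $M$-orbit $\Omega_1$ of size $\ge 2$ and analyze the kernel $K_1$ of that restriction: by simplicity of $T$, either $K_1\le M$ (so $T$ is a quotient of $H|_{\Omega_1}$ and one drops either the order of $H$ or the degree) or $K_1M=H$ (so $T$ is a quotient of the proper subgroup $K_1$). Your single mechanism --- $T$ acting faithfully on the set of $M$-orbits --- covers only the remaining configurations, essentially the imprimitive-with-nontrivial-block-action case. The lemma is true and the proof is repairable along these lines, but as written the case $M$ transitive (e.g.\ $H$ primitive with $M$ its socle) is simply not addressed, and since the whole quotient-closure of $k$-solvability, hence the ``$k$-radical $\Rightarrow$ $k$-solvable'' direction, rests on this lemma, the omission is a genuine gap rather than a routine detail.
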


An algebraic function $z=z(x)$ of one or several complex variables is said to be {\it representable by  $k$-radicals} if the corresponding extension of the field of rational functions is a $k$-radical extension.

Theorem~\ref{thm42} and C. Jordan's Theorem (see sections~\ref{ch2sec1ss1} and ~\ref{ch2sec1ss2}) imply the following corollary.

 \begin{cor}\label{cor2.42}  An algebraic function is representable by $k$-radicals if and only if its monodromy group is $k$-solvable.
 \end{cor}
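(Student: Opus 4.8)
The plan is to reduce the corollary to Theorem~\ref{thm2.41}, the $k$-radical criterion for Galois extensions, together with C.~Jordan's identification of the monodromy group with the Galois group (Theorem~\ref{thm2.3}). Passing to the minimal polynomial, we may assume the algebraic function $z=z(x)$ is defined by an irreducible equation over the field $\mathcal R$ of rational functions; let $y_1=z,\dots,y_n$ be its branches and $L=\mathcal R\langle y_1,\dots,y_n\rangle$ the splitting field, a finite Galois extension of $\mathcal R$. By Theorem~\ref{thm2.3}, $\mathrm{Gal}(L/\mathcal R)$ is isomorphic to the monodromy group of the equation, which by Corollary~\ref{cor2.5} is the monodromy group $M$ of the function $z$. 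By definition, $z$ is representable by $k$-radicals precisely when $\mathcal R\langle z\rangle/\mathcal R$ is $k$-radical.

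The core step is the equivalence: $\mathcal R\langle z\rangle/\mathcal R$ is $k$-radical if and only if $L/\mathcal R$ is $k$-radical. The direction ``$\Leftarrow$'' is immediate, since a tower $\mathcal R=K_0\subset\dots\subset K_n$ with $L\subset K_n$ also witnesses $k$-radicality of the intermediate field $\mathcal R\langle z\rangle\subset L\subset K_n$; thus every subfield of a $k$-radical extension of $\mathcal R$ is itself $k$-radical. For ``$\Rightarrow$'' I would use that $L$ is the compositum of the conjugate subfields $\mathcal R\langle y_1\rangle,\dots,\mathcal R\langle y_n\rangle$. Each is $k$-radical: extending an $\mathcal R$-isomorphism $\mathcal R\langle z\rangle\to\mathcal R\langle y_i\rangle$ to an automorphism of an algebraic closure of $\mathcal R$ and applying it to a $k$-radical tower for $\mathcal R\langle z\rangle$ produces a $k$-radical tower for $\mathcal R\langle y_i\rangle$, because adjoining a root of an equation of degree at most $k$, or an $m$-th root of an element, is a property transported by field isomorphisms. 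Concatenating these $n$ towers --- replacing, at each stage, an elementary adjunction over a field $F$ by the same adjunction over the larger field accumulated so far, still an adjunction of the same type --- yields a $k$-radical tower whose top contains every $\mathcal R\langle y_i\rangle$, hence contains $L$. So $L/\mathcal R$ is $k$-radical.

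The corollary now follows by combining the pieces: by Theorem~\ref{thm2.41} the Galois extension $L/\mathcal R$ is $k$-radical if and only if $\mathrm{Gal}(L/\mathcal R)$ is $k$-solvable; by the equivalence just proved this holds if and only if $z$ is representable by $k$-radicals; and $\mathrm{Gal}(L/\mathcal R)\cong M$ is the monodromy group of $z$. I expect the only real work to be the bookkeeping behind the three closure properties of the ``$k$-radical'' notion used above --- stability under passage to subfields, under field isomorphisms, and under finite composita --- each routine because every elementary step of a $k$-radical tower (a root of a degree-$\le k$ equation, or an $m$-th root) retains its form when the base field is enlarged or transported by an isomorphism. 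For an algebraic function of several complex variables nothing changes, since Theorem~\ref{thm2.3} is stated in that generality.
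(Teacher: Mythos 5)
Your proof is correct and follows essentially the same route the paper intends: combine Theorem~\ref{thm2.41} (a Galois extension is $k$-radical iff its Galois group is $k$-solvable) with C.~Jordan's identification of the Galois group of the splitting field with the monodromy group. The only addition is that you spell out the routine reduction from $\mathcal R\langle z\rangle$ to its Galois closure $L$ (stability of $k$-radicality under subfields, isomorphisms, and composita), which the paper leaves implicit.
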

 (Note that Theorem~\ref{thm2.28} above coincides with a  part of  Theorem~\ref{thm2.41}).

Let us outline briefly the main steps in the proof of Theorem~\ref{thm2.40}:

{\sc Outline of proof of Theorem~\ref{thm2.40}}:

1) Exactly as  in Ritt's theorem one can show that a composition of polynomials is invertible by $k$-radicals if and only if each polynomial in the composition is invertible by $k$-radicals. Thus one can reduce Theorem 40  to the case of primitive polynomials.
\smallskip

2) Feit and Jones totally classified all primitive permutation groups of $n$ elements containing a full cycle.
\smallskip

3) Using  this classification  and Riemann--Hurwitz formula, M\"{u}ller listed
all groups of permutations of $n$ elements  which are monodromy groups of inverses of degree $n$ primitive polynomials.
\smallskip

4) For each group from M\"{u}ller's list of groups of permutations of $n$ elements one can determine the smallest $k$ for which it is
$k$-solvable and choose the {\it exceptional groups} for which $k$ is smaller than $n$.
\smallskip

5) For each such exceptional group one can explicitly describe polynomials whose inverse has the exceptional monodromy group.

\subsection{Multidimensional Topological Galois Theory}\label{ch2sec3}
\subsubsection{Introduction}
In this section we present an outline of the multidimensional  version of topological Galois theory.
The presentation is based on the  book~\cite{[6]}. It contains definitions, statements of results and comments on them. Basically no proofs are presented.

In  topological Galois theory for functions of one variable (see section 2 and ~\cite{[6]}),  it is proved that
the way the Riemann surface of a function is positioned over the complex line can obstruct
the representability of this function ``in finite terms'' (i.e. its representability by radicals, by quadratures, by generalized quadratures and so on).
This not only explains why many algebraic and differential equations are not solvable in finite terms,
but also gives the strongest known results on their unsolvability.

In the multidimensional version of topological Galois theory  analogous results are proved. But in the multidimensional case all constructions and proofs are much more complicated and involved than in the one dimensional case (see~\cite{[6]}).

\subsubsection{Classes of functions}

An equation is solvable ``in finite terms'' (or is solvable ``explicitly'') if its solutions belong to a certain class of functions. Different classes of functions correspond to different notions of  solvability in finite terms.

A class of functions can be introduced by specifying a list of {\it basic functions}
and a list of {\it admissible operations}.
Given the two lists, the class of functions is defined as the set
of all functions that can be obtained
from the basic functions by repeated application of admissible operations.
Below, we define Liouvillian classes of functions  in exactly this way.

Classes of functions, which appear in the problems of integrability in finite terms,
contain multivalued functions.
Thus the basic terminology should be made clear.

We understand operations on multivalued functions
of several variables  in a slightly more
restrictive sense than operations on multivalued functions of
single variable (the one dimensional case is discussed  in section~\ref{ch2sec2} and in~\cite{[6]}.

 Fix a class of basic functions and some set of admissible operations. Can a given
function (which is obtained, say, by solving a certain
algebraic or a differential equation) be expressed through the
basic functions by means of admissible operations? We are
interested in various {\it single valued branches} of
multivalued functions over various domains. Every function,
even if it is multivalued, will be considered as a collection
of all its single valued branches. We will only apply
admissible operations (such as arithmetic operations and
composition) to single valued branches of the function over
various domains. Since we deal with analytic functions, it
suffices only to consider small neighborhoods of points as
domains.

We can now rephrase the question in the following way:
{\it can a given function germ at a given point be expressed through the germs of basic functions
with the help of admissible operations?}
Of course, the answer depends on the choice of a point and on the choice of a single valued
germ at this point belonging to the given multivalued function.
It turns out, however, that for the classes of functions interesting to us the desired expression
is either impossible for every germ of a given multivalued function at every point or
the ``same'' expression serves all germs of a given multivalued function at almost every point
of the space.

For  functions of one variable, we use a different, extended definition of operations on
multivalued functions, in which the multivalued function is viewed as a single object. This definition is essentially equivalent to including the operation of analytic continuation in the list of admissible operations on analytic germs (all details can be found in~\cite{[6]}).
For functions of many variables, we need to adopt the more restrictive understanding
of operations on multivalued functions, which is, however, no less (and perhaps even more) natural.

\subsubsection{Specifics of the multidimensional case}\label{ssec2.3.2}

I was always under impression that a full-fledged
multidimensional version of topological Galois theory was
impossible. The reason was that, to construct such a version
for the case of many variables, one would need to have
information on extendability of function germs not only outside
their ramification sets but also along these sets. It seemed
that there was nothing to extract such information from.

To illustrate the problem  consider the following  situation. Let $f$ be a multivalued analytic function on $\Bbb{C}^n$, whose set of singular points is an analytic set $\Sigma_f\subset \Bbb C^n$. Let $ f_a$ be an analytic  germ of  $f$ at a point $a\in \Bbb C^n$. Let $g:(\Bbb C^k,b)\to
(\Bbb{C}^n,a)$ be an analytic map. Consider a germ $\varphi_b$ at the point $b\in \Bbb C^k$ of the composition $ f_a \circ g_b $. One can ask the following questions:

1) Is it true that $\varphi_b$  is a germ of a
multivalued function $\varphi$ on $\Bbb C^k$, whose set of singular points $\Sigma_\varphi$ is contained in a proper analytic subset of $\Bbb C^k$?

2)  Is it true that  the monodromy group $M_\varphi$ of $\varphi$  corresponding to motions  around the  set $\Sigma _\varphi \subset \Bbb C^k$ can be estimated in terns of the monodromy group $M_f$ of $f$  corresponding to motions  around the  set $\Sigma_f\subset \Bbb C^n $? For example, if $M_f$ is a solvable group is it true that $M_\varphi$ also is a solvable group?

If the image $g(\Bbb C^k)$ is not contained in the singular set $\Sigma_f$  then the answers to the both questions are positive: the set $\Sigma _\varphi $ belongs to the analytic set $g^{-1}(\Sigma_f)$ and the group $M_\varphi$ is a subgroup of a certain factor group of $M_f$. These statements are not complicated and can be proved by the same arguments as in the one dimensional topological Galois theory.

Assume that the multivalued function $f$ has an analytic germ  $f_a$ at a point $a$ belonging to the singular set $\Sigma_f$ (some of the germs of the multivalued function $f$ may
appear to be nonsingular at singular points of this function). Assume now that the image $g(\Bbb C^k)$ is  contained in the singular set $\Sigma_f$ and $a=g(b)$. It turns out that for the germ $\varphi_b=f_a\circ g_b$ the answers to the both questions also are positive.
In  this situation all the proofs are  more involved. They use new arguments from multidimensional complex analysis  and from  group theory.

It turns out that  function germs can
sometimes be automatically extended along their ramification
sets (see~\cite{[6]}). That new statement from complex analysis suggests the positive answer to the first question.

To describe the connection between the monodromy group of the
function $f$ and the monodromy groups of the composition $\varphi = f\circ g$, we
introduce and develop the notion of pullback closure for groups (see~\cite{[6]}). The use of this operation, in turn, forces us
to reconsider all arguments we used in the one dimensional version of topological Galois theory. As a result  we obtain a positive answer to the second question.

\bigskip
\subsubsection{Liouvillian classes of multivariate functions}
In this section we define Liouvillian classes of functions  for the case of several variables.
These classes  are defined in the same way as the corresponding classes for functions of one variable (see section~\ref{ch2sec1ss2} and~\cite{[6]}).
The only difference is in the details.

We fix an ascending chain of standard coordinate subspaces of
strictly increasing dimension:
$0\subset\Bbb C^1\subset\dots\subset\Bbb C^n\subset\dots$
with coordinate functions $x_1$, $\dots$, $x_n$, $\dots$ (for
every $k>0$, the functions $x_1$, $\dots$, $x_k$ are coordinate
functions on $\Bbb C^k$).
Below, we define Liouvillian
classes of functions for each of the standard coordinate
subspaces $\Bbb C^k$.

To define Liouvillian classes, we will need the list of basic elementary functions and the list of classical operations.
\medskip

{\bf List of basic elementary functions.}
\begin{enumerate}
\item All complex constants and all coordinate functions
    $x_1$, $\dots$, $x_n$ for every standard coordinate
    subspace $\Bbb C^n$.

\item The exponential, the logarithm and the power
    $x^\alpha$, where $\alpha$ is any complex constant.

\item Trigonometric functions:  sine, cosine, tangent,
    cotangent.

\item Inverse trigonometric functions: arcsine, arccosine,
    arctangent, arccotangent.

 \end{enumerate}

Let us now turn to the list of classical operations on
functions.

\medskip

{\bf List of classical operations.}
\begin{enumerate}
\item {\it Operation of composition} that takes a function $f$
    of $k$ variables and functions $g_1$, $\dots$, $g_k$ of $n$
    variables to the function $f(g_1,\dots,g_k)$ of $n$
    variables.

\item  {\it Arithmetic operations} that take functions $f$
    and $g$ to the functions $f+g$, $f-g$, $fg$ and $f/g$.

\item {\it Operations of partial differentiation with
    respect to independent variables}. For functions of $n$
    variables, there are $n$ such operations: the $i$-th
    operation assigns the function $\frac{\partial
    f}{\partial x_i}$ to a function $f$ of the variables
    $x_1$, $\dots$, $x_n$.

\item  {\it Operation of integration} that takes $k$
    functions $f_1$, $\dots$, $f_k$ of the variables $x_1$,
    $\dots$, $x_n$, for which the differential one-form
    $\alpha=f_1 dx_1+\dots+f_k dx_k$ is closed, to the
    indefinite integral $y$ of the form $\alpha$ (i.e. to
    any function $y$ such that $dy=\alpha$). The function
    $y$ is determined by the functions $f_1$, $\dots$,
    $f_k$ up to an additive constant.

\item {\it Operation of solving an algebraic equation} that
    takes functions $f_1,\dots,f_n$ to the function $y$
    such that $y^n+f_1y^{n-1}+\dots+f_n=0$. The function
    $y$ may not be quite uniquely determined by the
    functions $f_1$, $\dots$, $f_n$, since an algebraic
    equation of degree $n$ can have $n$ solutions.

\end{enumerate}

We now resume defining Liouvillian classes of functions.
\medskip
\paragraph{Functions of $n$ variables representable by radicals.}
List of basic functions: All complex constants and all coordinate functions.
List of admissible operations:  composition,
arithmetic operations and the operation of taking the $m$-th root
$f^{\frac {1}{m}}$, $m=2,3,\dots$, of a given function~$f$.
\medskip

\paragraph{Functions of $n$ variables representable by
$k$-radicals.} This class of functions is defined in the same
way as the class of functions representable by radicals. We
only need to add the operation of solving algebraic equations
of degree $\leq k$ to the list of admissible operations.

\medskip

\paragraph{ Elementary functions of $n$ variables.}
List of basic functions: basic elementary functions.
List of admissible operations: composition, arithmetic
operations, differentiation.
\medskip

\paragraph{Generalized elementary functions of $n$
variables.} This class of functions is defined in the same way
as the class of elementary functions. We only need to add the
operation of solving algebraic equations to the list of
admissible operations.

\medskip

\paragraph{ Functions of $n$ variables representable by
quadratures.}
List of basic functions: basic elementary functions.
List of admissible operations: composition, arithmetic
operations, differentiation, integration.
\medskip

\paragraph{ Functions of $n$ variables representable by
$k$-quadratures.} This class of functions is defined in the
same way as the class of functions representable by
quadratures. We only need to add the operation of solving
algebraic equations of degree at most $k$ to the list of
admissible operations.

\medskip
\paragraph{ Functions of $n$ variables representable by
generalized quadratures.} This class of functions is defined in
the same way as the class of functions representable by
quadratures. We only need to add the operation of solving
algebraic equations to the list of admissible operations.

\subsubsection{Strong non representability in finite terms}\label{ssec3.3.5}
Topological obstructions to  the representability of functions in finite terms relate to
branching. It turns out that if a function does not belong to a certain Liouvillian class by topological reasons then it automatically does not belong to a much wider {\it extended Liouvillian class of functions}.

Such an extended Liouvillian class is defined as follows: its list of admissible operations is the same as in the original Liouvillian class and its list of basic functions is the  list of basic function in the original class extended by all single valued functions of any number of variables having a proper analytic set of singular points.

\begin{defn}A germ $f$ is a germ of function belonging to the {\it extended class of functions representable by  by quadratures}  if it can be represented  by germs of basic elementary functions and by germs of single valued functions, whose set of singular points is a proper analytic set,  by means of composition, integration,  arithmetic  operations and
differentiation.
\end{defn}

\begin{defn} A germ $f$ is {\it strongly non representable by quadratures}  if it is not a germ of function from the extended class of functions representable by  by quadratures.
\end{defn}

The definition of {\it strong non representability of a germ $f$} by radicals, by $k$-radical, by elementary functions, by generalized elementary functions, by $k$-quadratures and by generalized quadratures is similar to the above definition.

\subsubsection{Holonomic systems of linear differential equations}\label{ssec3.3.6}
Consider a system of $N$ linear
differential equations $L_j(y)=0$, $j=1,\dots, N$,
\begin{eqnarray}
L_j(y)=\sum a_{i_1,\dots, i_n}\frac{\partial^{i_1+\dots
+i_n}y}{\partial x_1^{i_1} \dots
\partial x^{i_n}_n} =0, \label{eq3.7}
\end{eqnarray}
of an unknown function $y$, whose coefficients $a_{i_1,\dots, i_n}$
are analytic  functions in a domain $U\subset \Bbb C^n$.

The system (\ref{eq3.7}) is {\it holonomic} if at every point $a\in U$ the $\Bbb C$-linear space  $V_a$ of germs $y_a$ satisfying the system (\ref{eq3.7})  has finite dimension, $\dim_{\Bbb C}V_a=d(a)<\infty$.
Holonomic systems  can be considered as a multidimensional generalization of linear differential equation  on one unknown function of a single variable. Kolchin obtained a generalization of the Picard--Vessiot theory (Galois theory for linear differential equations)
to the case of holonomic systems of differential equations~\cite{[10]}.

The holonomic system (\ref{eq3.7})  has the following properties:

1) There exists an analytic {\it singular hypersurface  $\Sigma \subset U$} such that the dimension $d(a)=\dim_{\Bbb C}V_a$ is constant $d(a) \equiv d$ on $U\setminus \Sigma$.

2) Let $\gamma:I\to U\setminus \Sigma$ be a continuous map,  where $I$ is
 the unit segment $0\leq t\leq 1$ and $\gamma(0)=a$, $\gamma(1)=b$. Then the space $V_a$ of solutions of (\ref{eq3.7})  at the point $a$  admits  analytic continuation  along $\gamma$ and the space obtained by the continuation at the point $b$ is the space $V_b$ of solutions of (\ref{eq3.7})   at the point $b$.

3) If all  equations of the system (\ref{eq3.7})  admit analytic continuation  to some domain $W$, then the system  obtained by such a continuation  is a holonomic system  in the domain $W$.

Let $a\notin \Sigma$ be a point not belonging to the hypersurface $\Sigma$.
Take an arbitrary path $\gamma(t)$ in the domain
$U$ originating and terminating at $a$ and avoiding the
hypersurface $\Sigma$.
Solutions of this system admit analytic continuations along the path $\gamma$,
which are also solutions of the system.
Therefore, every such path $\gamma$ gives rise to a linear map $M_{\gamma}$
of the solution space $V_a$ to itself.
The collection of linear transformations $M_{\gamma}$ corresponding to all paths
$\gamma$ form a group, which is called the
{\it monodromy group of the holonomic system}.

\subsubsection{$\mathcal {SC}$-germs}\label{ssec3.3.7}
There is a wide class of $\mathcal S$-functions in one variable  containing all Liouvillian functions and stable under classical operations, for which the
monodromy group is defined. The class of $\mathcal S$-functions plays an important role in the one dimensional  version of topological Galois theory (see~\cite[sec.~2]{[6]}). Is there a sufficiently wide class of multivariate function germs   with  similar
properties?

For a long time, I thought that the answer to this
question was negative. In this section the class of
$\mathcal{SC}$-germs is defined. This provides an affirmative answer to
this question. \smallskip

A subset $A$ in a connected $k$-dimensional analytic manifold
$Y$ is called {\it meager} if there exists a countable set of
open domains $U_i\subset M$ and a countable collection of
proper analytic subsets $A_i\subset U_i$ in these domains such
that $A\subset\bigcup A_i$.

The following definition plays a key role in what follows.

\begin{defn} A germ $f_a$ of an analytic function
at a point $a\in \Bbb C^n$ is an {\it
$\mathcal {SC}$-germ} if the following condition is fulfilled.
For every connected complex analytic manifold $Y$, every
analytic map $G: Y\to \Bbb C^n$ and every preimage $b$ of
the point $a$, $G(b)=a$, there exists a meager set $A\subset Y$
such that, for every path $\gamma: [0,1]\to Y$ originating at
the point $b$, $\gamma(0)=b$ and intersecting the set $A$ at
most at the initial moment, $\gamma (t)\not\in A$ for $t>0$,
the germ $f_a$ admits an analytic continuation along the path
$G\circ\gamma: [0,1]\to \Bbb C^n$.
\end{defn}
The following lemma is obvious.
\begin{lem}\label{lem3.43}
The class of $\mathcal SC$-germs contains all germs of analytic functions on $\Bbb C^N\setminus \Sigma$ where $\Sigma$ is an analytic subset in $\Bbb C^N$ where $N$ is a natural number. In particular the class contains all analytic germs of
$\mathcal S$-functions of one variable and all germs of
meromorphic functions of many variables.
\end{lem}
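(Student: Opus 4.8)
The plan is to isolate one elementary observation and derive all three assertions from it: \emph{if $f$ is a (possibly multivalued) analytic function, $f_a$ a regular germ of it at a point $a\in\Bbb C^n$, and $\Sigma\subset\Bbb C^n$ a set containing every singular point of $f_a$ but not $a$ itself, then $f_a$ is an $\mathcal{SC}$-germ provided that the preimage of $\Sigma$ under every analytic map into $\Bbb C^n$ from a connected complex analytic manifold through a point over $a$ is meager.} To prove this, I would fix $f_a$ as above together with a connected complex analytic manifold $Y$, an analytic map $G\colon Y\to\Bbb C^n$, and a point $b$ with $G(b)=a$, and take the set demanded by the definition to be $A=G^{-1}(\Sigma)$.

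First I would verify that $A$ is meager. If $G$ is constant then $G\equiv a$, so $A=\emptyset$ because $a\notin\Sigma$; so assume $G$ is non-constant. If $\Sigma$ is a proper analytic subset of $\Bbb C^n$, then $A$ is an analytic subset of the connected manifold $Y$ with $A\neq Y$ (indeed $b\notin A$, since $G(b)=a\notin\Sigma$), hence $A$ is meager. If $\Sigma$ is at most countable, then each fibre $G^{-1}(s)$, $s\in\Sigma$, is a proper analytic subset of $Y$ (a non-constant analytic function on a connected manifold does not vanish identically), so $A=\bigcup_{s\in\Sigma}G^{-1}(s)$ is a countable union of proper analytic subsets and again meager. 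Next I would check the continuation property. If $\gamma\colon[0,1]\to Y$ is a path with $\gamma(0)=b$ and $\gamma(t)\notin A$ for $t>0$, then $(G\circ\gamma)(0)=a\notin\Sigma$ and $(G\circ\gamma)(t)\notin\Sigma$ for $t>0$, so $G\circ\gamma$ lies entirely in $\Bbb C^n\setminus\Sigma$. Since $\Sigma$ contains all singular points of $f_a$, the germ continues analytically along $G\circ\gamma$: a breakdown of the continuation would produce, by the very definition of a singular point of a germ, a singular point of $f_a$ lying outside $\Sigma$, which is impossible. Hence $f_a$ is an $\mathcal{SC}$-germ.

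Finally I would treat the three cases. For a single-valued analytic function on $\Bbb C^N\setminus\Sigma$ with $\Sigma$ analytic, a germ at any $a\notin\Sigma$ has all its singular points in $\Sigma$ (continuation along any path in $\Bbb C^N\setminus\Sigma$ merely reproduces $f$), so the observation applies. For an $\mathcal S$-function of one variable the singular set is at most countable by definition and misses the base point of a regular germ, so the observation applies with $n=1$. For a meromorphic function of several variables, the union of the polar set and the set of points of indeterminacy is a proper analytic set $\Sigma$ off which the function is single-valued and holomorphic, which reduces this to the first case.

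Since the statement is essentially bookkeeping, I do not expect a genuine obstacle; the one place asking for a little care is the meagerness of $A=G^{-1}(\Sigma)$, which rests on the connectedness of $Y$ together with the fact that $b$ lies over $a\notin\Sigma$. The two background facts used — that analytic continuation of a germ can fail only at a singular point of the germ, and that a countable union of proper analytic subsets of a connected analytic manifold is meager — are standard, which is why the lemma is labelled obvious.
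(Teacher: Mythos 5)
Your argument is correct. The paper itself offers no proof here -- it simply declares the lemma obvious -- so there is nothing to compare against except the evident intended argument, which is exactly what you supply: take $A=G^{-1}(\Sigma)$, observe that it is a proper analytic subset of $Y$ (or, for an $\mathcal S$-function of one variable, a countable union of the proper analytic fibres $G^{-1}(s)$, $s\in\Sigma$) and hence meager since $b\notin A$, and note that a path avoiding $A$ pushes forward to a path avoiding $\Sigma$, along which continuation cannot break down without producing a singular point outside $\Sigma$. You correctly identify the two points that need any care at all -- the meagerness of the preimage, which uses connectedness of $Y$ and $G(b)=a\notin\Sigma$, and the separate treatment of the countable (non-analytic) singular set of an $\mathcal S$-function, which the first assertion of the lemma does not formally cover -- so your write-up is a faithful and complete filling-in of what the author left implicit.
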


The proof of the following Theorem~\ref{thm3.44} uses the results on extendability of
multivalued analytic functions along their singular point sets
(see~\cite{[6]}).

\begin{thm}\label{thm3.44} (on stability of the class of $\mathcal {SC}$-germs)
The class of $\mathcal {SC}$-germs on $\Bbb C^n$ is stable under the
operation of taking the composition with $\mathcal {SC}$-germs of
$m$-variable functions,
the operation of differentiation and integration. It is stable under
solving algebraic equations whose coefficients are $\mathcal {SC}$-germs and under solving holonomic systems of linear  differential equations whose coefficients are $\mathcal {SC}$-germs.
\end{thm}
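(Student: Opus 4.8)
The plan is to verify the stated closure properties straight from the definition of an $\mathcal{SC}$-germ. Fix one of the listed operations and the finitely many $\mathcal{SC}$-germs $f^{(1)}_{a},\dots,f^{(r)}_{a}$ to which it is applied, producing a germ $\varphi_{a}$ at $a$. To show $\varphi_{a}$ is an $\mathcal{SC}$-germ we fix an arbitrary connected complex analytic manifold $Y$, an analytic map $G\colon Y\to\mathbb{C}^{n}$ and a preimage $b$ of $a$, and we must exhibit a meager $A\subset Y$ such that $\varphi_{a}$ continues analytically along $G\circ\gamma$ for every path $\gamma$ in $Y$ from $b$ meeting $A$ at most at $t=0$. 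Each input germ supplies, for this same triple $(Y,G,b)$, a meager set $A_{j}\subset Y$; put $A'=\bigcup_{j}A_{j}$, still meager. I would first isolate two bookkeeping facts: (i) meager sets are closed under finite and countable unions, and (ii) if $p\colon\widehat{W}\to W$ is a covering of a connected open $W\subset Y$, then $p$ sends meager subsets of $\widehat{W}$ to meager subsets of $Y$ and pulls meager subsets of $W$ back to meager subsets of $\widehat{W}$ (both because $p$ is a local biholomorphism and the domains witnessing meagerness may be shrunk).

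The cases of differentiation, integration and arithmetic operations (more generally meromorphic operations) are then immediate with $A=A'$: along any path $G\circ\gamma$ avoiding $A'$ after the initial instant all the $f^{(j)}$ continue, hence so do $\partial f^{(j)}/\partial x_{i}$ and the rational combinations $f^{(j)}\pm f^{(k)}$, $f^{(j)}f^{(k)}$, $f^{(j)}/f^{(k)}$, and, since the one-form $\sum f_{i}\,dx_{i}$ is closed, its indefinite integral continues as well (possibly becoming multivalued, which the definition allows).

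The core is composition; solving algebraic equations and solving holonomic systems are then handled by the same device. For $\varphi_{a}=f_{a'}\circ(g^{1},\dots,g^{m})_{a}$ with $f$ an $\mathcal{SC}$-germ at $a'=(g^{1}(a),\dots,g^{m}(a))\in\mathbb{C}^{m}$: on the component $W$ of $Y\setminus A'$ near $b$ each $g^{j}\circ G$ is a genuine (multivalued) analytic function, so pass to the connected covering $p\colon\widehat{W}\to W$ on which all of them become single-valued; this gives an analytic map $H=(g^{1}\circ G\circ p,\dots,g^{m}\circ G\circ p)\colon\widehat{W}\to\mathbb{C}^{m}$ with $H(\widehat{b})=a'$ for a lift $\widehat{b}$ of $b$. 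Applying the $\mathcal{SC}$-property of $f$ to the triple $(\widehat{W},H,\widehat{b})$ produces a meager $B\subset\widehat{W}$; then $A=A'\cup p(B)$ is meager, and a path $\gamma$ in $Y$ avoiding $A$ after $t=0$ lies in $W$, lifts to a path $\widehat{\gamma}$ in $\widehat{W}$ avoiding $B$, and $f_{a'}$ continues along $H\circ\widehat{\gamma}=(g^{1},\dots,g^{m})\circ G\circ\gamma$, i.e. $\varphi_{a}$ continues along $G\circ\gamma$. Solving $y^{n}+f_{1}y^{n-1}+\dots+f_{n}=0$ is then the composition of the $\mathcal{SC}$-germs $f_{1},\dots,f_{n}$ with the algebroid ``universal root'' germ, which is analytic on $\mathbb{C}^{n}\setminus\Delta$ ($\Delta$ the discriminant hypersurface) and hence an $\mathcal{SC}$-germ by Lemma~\ref{lem3.43}; the degenerate sub-case where the discriminant vanishes identically along the relevant branch is removed by induction on $n$. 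For a holonomic system $L_{j}(y)=0$ with $\mathcal{SC}$-germ coefficients and a solution germ $y_{a}$: over the covering $\widehat{W}$ of $W$ on which the coefficients become single-valued, the continued coefficients define, by property 3 of holonomic systems, a holonomic system on $\widehat{W}$; by property 1 its singular set $\Sigma'$ is a proper analytic subset, and by property 2 the solution germ continues along every path avoiding $\Sigma'$, so $A=A'\cup p(\Sigma'\cup B)$ works, with $B$ again repairing the covering.

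The step I expect to be the main obstacle is precisely the composition/holonomic case when the intermediate map $H$ (respectively the pulled-back system) maps into the singular locus of $f$: a priori nothing forbids the pulled-back object from being singular on an entire hypersurface of $\widehat{W}$, in which case $B$ or $\Sigma'$ would fail to be proper and the argument above would collapse. This is exactly where the results on extendability of multivalued analytic functions along their singular point sets (cited from~\cite{[6]}) are indispensable: they guarantee that the pulled-back germ still extends off a proper analytic subset. Making that analytic input mesh correctly with the covering-space bookkeeping, with the behaviour of the basepoint $b$ (which may itself lie in $A'$, so that the continuation statements must be read as statements about germs at $G(b)$), and with the preservation of meagerness under all these manipulations is the real technical content of the proof.
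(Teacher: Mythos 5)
The paper does not actually prove Theorem~\ref{thm3.44}: section~\ref{ch2sec3} is announced as an outline with ``basically no proofs'', and the text only records that the proof ``uses the results on extendability of multivalued analytic functions along their singular point sets'' from~\cite{[6]}. So your proposal can only be measured against the definition of an $\mathcal{SC}$-germ and against what the paper says the proof needs. Your overall architecture --- reduce every operation to composition with an auxiliary germ, and handle composition by applying the $\mathcal{SC}$-property of the outer germ to a space on which the inner germs have become single-valued --- is the right one and is consistent with the paper's discussion in section~\ref{ssec2.3.2}. You also correctly and honestly flag that the case where the intermediate map lands in the singular locus is exactly where the extendability theorem from~\cite{[6]} must enter; since you take that as a black box, what you have is an outline rather than a proof, which is roughly the status the paper itself assigns to this material.

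Within the outline there are two concrete gaps. First, $Y\setminus A'$ is not open (a meager set is a countable union of proper analytic subsets of open domains, not a closed set), so ``the component $W$ of $Y\setminus A'$ near $b$'' and ``the connected covering $p\colon\widehat{W}\to W$ on which all the $g^{j}\circ G$ become single-valued'' are not defined as stated; moreover the basepoint $b$ is itself allowed to lie in $A'$. The object you need is the Riemann domain $R\to Y$ of all germs obtained by continuing the tuple $(g^{1}\circ G,\dots,g^{m}\circ G)_{b}$ along admissible paths; this is a connected manifold \'etale over $Y$, and to conclude that the image $p(B)$ of the meager set supplied by the $\mathcal{SC}$-property of $f$ is again meager you need the fibres of $R\to Y$ to be countable (Poincar\'e--Volterra). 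That is not automatic bookkeeping: the paper itself points out that fundamental groups of complements of countable dense sets can have the cardinality of the continuum, so countability of sheets has to be argued, not assumed. Second, in the holonomic case your appeal to properties 1)--3) of section~\ref{ssec3.3.6} is not legitimate: those properties concern analytic continuation of the system within domains of $\Bbb C^{n}$, not its pullback under an arbitrary analytic map $H$, and it is precisely when $H$ maps into the singular hypersurface of the system that holonomicity and finite-dimensionality of the pulled-back solution space must be re-established --- again the content of the results cited from~\cite{[6]}, not something the stated properties give you.
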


Theorem~\ref{thm3.44} implies the following corollary.
\begin{cor}\label{cor3.45} If a germ $f$ is not an $\mathcal {SC}$-germ  then $f$ is strongly non representable by generalized quadratures.  In particular it cannot be a germ of a function belonging to a certain Liouvillian class.
\end{cor}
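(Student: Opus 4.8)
The plan is a straightforward structural induction on the way a germ in the extended class of functions representable by generalized quadratures is assembled, with Theorem~\ref{thm3.44} supplying every inductive step. First I would settle the base case: each basic function of the extended class is an $\mathcal{SC}$-germ. For the single-valued basic functions (those with a proper analytic set of singular points) this is exactly Lemma~\ref{lem3.43}. For the basic elementary functions one observes that each of them ($\exp$, $\log$, $x^\alpha$, the trigonometric and inverse trigonometric functions) is analytic on $\mathbb{C}^N\setminus\Sigma$ for a suitable analytic hypersurface $\Sigma$, so Lemma~\ref{lem3.43} again applies.

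Next I would carry out the induction step. Suppose $f$ is obtained from germs already shown to be $\mathcal{SC}$-germs by one of the admissible operations for generalized quadratures — composition, an arithmetic operation, partial differentiation, integration of a closed one-form, or solving an algebraic equation. Theorem~\ref{thm3.44} asserts precisely that the class of $\mathcal{SC}$-germs is stable under each of these operations, so $f$ is again an $\mathcal{SC}$-germ. Hence every germ of a function in the extended class of functions representable by generalized quadratures is an $\mathcal{SC}$-germ, and the contrapositive is the first assertion of Corollary~\ref{cor3.45}. The second assertion follows because every Liouvillian class in our list — functions representable by radicals, by $k$-radicals, elementary functions, generalized elementary functions, functions representable by quadratures or $k$-quadratures — has its basic functions among the extended basic functions and its admissible operations among the operations just listed, so it is contained in the extended class of functions representable by generalized quadratures; a germ that is not an $\mathcal{SC}$-germ therefore cannot belong to any of them.

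The only genuine difficulty in this chain is Theorem~\ref{thm3.44} itself, which I am entitled to assume; once it is available the corollary is purely formal. It is worth flagging where the real work in that theorem lies: the hard case is a composition $f_a\circ g_b$ in which the image of $g$ is contained in the singular set $\Sigma_f$ of $f$, where one must invoke the results on automatic extendability of multivalued analytic functions along their ramification sets. But that work is done before we reach the corollary, so no further argument is needed here.
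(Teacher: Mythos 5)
Your argument is correct and is exactly the route the paper takes: the paper merely states that Theorem~\ref{thm3.44} implies Corollary~\ref{cor3.45}, and your structural induction (base case via Lemma~\ref{lem3.43}, inductive step via Theorem~\ref{thm3.44}, then the contrapositive together with the containment of all the other Liouvillian classes in the extended generalized-quadratures class) is the intended unpacking of that one-line deduction. The only detail worth a remark is that arithmetic operations are not listed verbatim in Theorem~\ref{thm3.44}; they are absorbed into composition with the germs of $u+v$, $u-v$, $uv$, $u/v$, which are $\mathcal{SC}$-germs of two-variable meromorphic functions by Lemma~\ref{lem3.43}.
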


\subsubsection{Monodromy group of a $\mathcal {SC}$-germs}\label{ssec3.3.8}

The {\it monodromy group and the  monodromy pair} of a $\mathcal {SC}$-germ $f_a$ can by defined in same way as for $\mathcal S$-functions of one variable. By definition the set $\Sigma\subset \Bbb C^n$ of singular points of  $f_a$ is a  meager set. Take any point $x_0\in \Bbb C^n\setminus \Sigma$ and consider the action of the fundamental group $\pi_1 (\Bbb C^n\setminus \Sigma,x_0)$ on the set $F_{x_0}$ of all germs equivalent to  the germ $f_a$. The {\it monodromy group} of $f_a$ is the image of the fundamental group under this action. The {\it monodromy pair} of $f_a$ is the pair $[\Gamma,\Gamma_0]$ where $\Gamma$ is the monodromy group and $\Gamma_0$ is the stationary subgroup of a germ  $f\in F_{x_0}$. Up to an isomorphism the monodromy group and the monodromy pair are independent of a choice  of the point $x_0$ and the germ $f$.

\begin{remark} If a  $\mathcal {SC}$-germ $f_a$ is defined at a singular point  $a\in \Sigma$  then  the {\it monodromy group of $f_a$ along  $\Sigma$ } is  defined: one can  consider continuations of $f_a$  along curves $\gamma$ belonging to $\Sigma$ and define a singular set $\Sigma_1\subset \Sigma$ for $f_a$ along $\Sigma$.  The monodromy group  of $f_a$ along $\Sigma$  corresponds to the action the fundamental group of $\pi_1(\Sigma\setminus \Sigma_1,x_1)$  on the set of germs at $x_1\in \Sigma \setminus \Sigma_1$ obtained by continuation of $f_a$ along $\Sigma$. If the point $a$ belongs to $\Sigma_1$ then one can define also a monodromy group of $f_a$ along $\Sigma_1$ and so on. Thus in the multidimensional case one can associate to an $\mathcal {SC}$-germ an hierarchy of monodromy groups. All these monodromy groups (and corresponding monodromy pairs)  appear in multidimensional topological Galois theory. But the monodromy group and the monodromy pair we discuss above are  most important for our purposes.
\end{remark}

\subsubsection{Stability of certain classes of $\mathcal{SC}$-germs}\label{ssec3.9}
One can prove the following  theorems.
\begin{thm}(see~\cite{[6]})\label{thm3.46}   The class of all
$\mathcal {SC}$-germs, having a solvable monodromy  group is stable
under composition, arithmetic operations, integration and differentiation.This class contains all germs of basic elementary functions and all  germs of single valued  functions whose set of singular points is a proper analytic set.
\end{thm}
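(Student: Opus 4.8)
This is the multidimensional counterpart of Theorem~\ref{thm2.24}, and the strategy parallels the one-dimensional case, with the substantial extra difficulty discussed in Section~\ref{ssec2.3.2}. By Theorem~\ref{thm3.44} the class of $\mathcal{SC}$-germs is already stable under composition, arithmetic operations, integration and differentiation; so the whole content of the statement is that, within this class, the subclass cut out by the condition ``the monodromy group is solvable'' stays closed under these four operations, and that it contains the indicated basic germs. The plan is to treat the operations one at a time and, for each, exhibit the monodromy group of the output as obtained from those of the inputs by the group-theoretic constructions under which solvability is preserved: passing to a subgroup, passing to a quotient, forming a finite direct product, and forming an extension of one such group by another.

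The base cases are immediate. A single valued function with a proper analytic singular set is an $\mathcal{SC}$-germ by Lemma~\ref{lem3.43} and has trivial monodromy group. Among the basic elementary functions, the exponential, sine, cosine, tangent and cotangent are single valued (meromorphic), hence have trivial monodromy; the logarithm, the power $x^\alpha$ and the inverse trigonometric functions are $\mathcal{S}$-functions of one variable, hence $\mathcal{SC}$-germs by Lemma~\ref{lem3.43}, and their monodromy groups are abelian (for $\ln$, $x^\alpha$, $\arctan$ and $\arccot$) or infinite dihedral (for $\arcsin$ and $\arccos$), so solvable in every case.

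For the three operations that do not essentially involve branching of a multivalued germ the bookkeeping is routine. Analytic continuation commutes with $\partial/\partial x_i$, so the monodromy group of $\partial f/\partial x_i$ is a quotient of that of $f$. For $f=F(f_1,\dots,f_m)$ with $F$ one of $+,-,\times,\div$, continuation of a tuple of germs of the $f_i$ along a loop avoiding the union of their singular sets yields a continuation of the corresponding germ of $f$, so $M_f$ is a quotient of a subgroup of $M_{f_1}\times\dots\times M_{f_m}$. For integration, if $dy=f_1\,dx_1+\dots+f_k\,dx_k$ with the one-form closed, then over a covering on which all $f_i$ are single valued the primitive $y$ acquires only period ambiguities, so $M_y$ fits into a short exact sequence $1\to A\to M_y\to Q\to 1$ with $A\subset\Bbb C$ abelian (the period group) and $Q$ a quotient of a subgroup of the monodromy group of the tuple $(f_1,\dots,f_k)$. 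In each of these three cases solvability of the inputs forces solvability of the output.

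The genuinely delicate operation is composition, and this is where I expect the main obstacle. For $\varphi=f\circ g$ with $g$ an analytic map there are two cases. If the image of $g$ is not contained in the singular set $\Sigma_f$, then --- exactly as in the one dimensional theory --- $\Sigma_\varphi$ is a proper analytic set, the branching of $\varphi$ is governed by the branching of $f$ pulled back along $g$ together with that of $g$, and $M_\varphi$ is a subgroup of a quotient built out of $M_f$ and the monodromy of $g$, so solvability survives. The hard case is the one singled out in Section~\ref{ssec2.3.2}: the image of $g$ lies inside $\Sigma_f$ while the germ $f_a$ is still regular enough to be composed with $g_b$. Here one must invoke the results of \cite{[6]} on extendability of multivalued analytic functions along their singular sets to see that $\varphi$ remains an $\mathcal{SC}$-germ, and the branching of $\varphi$ is then controlled not by $M_f$ alone but by the whole hierarchy of monodromy groups of $f$ along $\Sigma_f$, along the next stratum, and so on, in the sense of the Remark in Section~\ref{ssec3.3.8}. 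To package this one introduces the \emph{pullback closure} $G\mapsto\widehat G$ of a group, defined so that all these hierarchical monodromy groups of $f$, and hence a quotient of a subgroup of $M_\varphi$, embed into $\widehat{M_f}$. The heart of the matter is then the purely group-theoretic statement that the class of solvable groups is stable under the pullback closure operation; granting it, $M_f$ solvable forces $\widehat{M_f}$ solvable, hence $M_\varphi$ solvable, and the induction closes. Establishing this group-theoretic stability, and matching the complex-analytic picture along the successive singular strata to it, is the principal technical work; the rest is the bookkeeping sketched above.
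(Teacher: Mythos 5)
The paper does not actually prove Theorem~\ref{thm3.46}: it is stated with a citation to \cite{[6]}, and the surrounding section announces that ``basically no proofs are presented.'' So there is no in-paper argument to measure you against; the most one can say is that your outline tracks the strategy the author himself sketches in Sections~\ref{ssec2.3.2} and~\ref{ssec3.3.8} --- stability of the class of $\mathcal{SC}$-germs via Theorem~\ref{thm3.44}, extendability of germs along their singular strata, the hierarchy of monodromy groups along $\Sigma_f$, and the pullback closure of groups as the device controlling $M_{f\circ g}$ when $g(\Bbb C^k)\subset\Sigma_f$. Your base cases and the bookkeeping for differentiation, arithmetic operations (subquotient of a finite product) and integration (extension with abelian period kernel) are correct and are exactly the routine part.

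The gap is that the two load-bearing claims are named but not established, and they are precisely where the multidimensional theory stops being a transcription of the one-dimensional one. First, you assert that the class of solvable groups is stable under the pullback closure operation; you neither define that operation nor prove the stability, and the paper itself warns that introducing it ``forces us to reconsider all arguments'' of the one-dimensional theory --- so this is not a deferrable formality but the theorem's actual content. Second, in the degenerate composition case you assert that $M_\varphi$ embeds (up to subquotient) into the pullback closure $\widehat{M_f}$; justifying this requires the extendability results along singular strata and the matching of the whole hierarchy of monodromy groups of $f$ along $\Sigma_f,\Sigma_1,\dots$ to the group-theoretic construction, none of which is carried out. As written, your text is a correct plan of attack consistent with \cite{[6]}, not a proof: if the pullback-closure stability were granted as a black box the rest would go through, but that box is exactly what needs to be opened.
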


\begin{thm}(see~\cite{[6]})\label{thm3.47}  The class of all
$\mathcal {SC}$-germs, having a $k$-solvable monodromy pair  (see~\cite[sec.~2]{[6]}) is stable
under composition, arithmetic operations, integration, differentiation and solution of algebraic equations of degree at most $k$. This class contains all germs of basic elementary functions and all  germs of single valued  functions whose set of singular points is a proper analytic set.
\end{thm}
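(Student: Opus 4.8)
The plan is to deduce Theorem~\ref{thm3.47} from a purely group-theoretic statement, in the same way as (and in parallel with) the proof of Theorem~\ref{thm3.46}, replacing ``solvable'' by ``$k$-solvable'' throughout. By Theorem~\ref{thm3.44} the class of $\mathcal{SC}$-germs is already stable under composition, arithmetic operations, differentiation, integration and solving algebraic equations of degree at most $k$, so every germ produced by these operations from $\mathcal{SC}$-germs is again an $\mathcal{SC}$-germ and hence carries a well-defined monodromy pair (section~\ref{ssec3.3.8}). Write $\mathcal{C}_k$ for the class of $\mathcal{SC}$-germs whose monodromy pair $[\Gamma,\Gamma_0]$ is $k$-solvable in the sense of section~\ref{ssec2.2.7}. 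The two containment assertions are immediate: a single-valued germ with a proper analytic singular set has trivial monodromy group, so its monodromy pair is $[e,e]$, which is $k$-solvable for every $k$; and each basic elementary function has a solvable monodromy group (the logarithm and the powers $x^\alpha$ have monodromy group $\Bbb Z$ or $\Bbb Z/m\Bbb Z$, the exponential and the trigonometric functions are single-valued, and the inverse trigonometric functions are logarithms of algebraic functions of low degree), so their monodromy pairs also lie in $\mathcal{C}_k$. Everything thus reduces to stability of $\mathcal{C}_k$ under the five operations, i.e. to showing that the monodromy pair of the output lies in $\mathcal{C}_k$.

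First I would isolate the needed group theory. Let $\mathcal{K}_k$ be the class of pairs of groups that are $k$-solvable. Using the defining chain $\Gamma=\Gamma_1\supseteq\dots\supseteq\Gamma_m\subset\Gamma_0$ one checks the following elementary closure properties: (i) if $[\Gamma,\Gamma_0]\in\mathcal{K}_k$ and $\Gamma'\le\Gamma$, then the sub-pair $[\Gamma',\Gamma'\cap\Gamma_0]$ belongs to $\mathcal{K}_k$ (intersect the chain with $\Gamma'$); (ii) if $N\triangleleft\Gamma$, then the quotient pair $[\Gamma/N,\Gamma_0N/N]$ belongs to $\mathcal{K}_k$ (project the chain); (iii) $\mathcal{K}_k$ is closed under finite direct products; (iv) if $\Gamma'\triangleleft\Gamma$ with $[\Gamma',\Gamma_0\cap\Gamma']\in\mathcal{K}_k$ and $\Gamma/\Gamma'$ abelian or isomorphic to a subgroup of $S_k$, then $[\Gamma,\Gamma_0]\in\mathcal{K}_k$ (prepend the extra quotient to the chain). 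In addition I will need that $\mathcal{K}_k$ is stable under the pullback-closure construction on groups discussed in section~\ref{ssec2.3.2}; this is the one nontrivial closure property and is postponed to the end.

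Granting (i)--(iv), the five operations are treated one at a time, exactly as in the one-dimensional case (Theorems~\ref{thm2.24} and~\ref{thm2.26}) but for germs. Differentiation: the singular set of $\partial f/\partial x_i$ lies in that of $f$ and analytic continuation commutes with $\partial/\partial x_i$, so the monodromy pair of the derivative is a quotient of that of $f$; use (ii). Integration: around a loop an integral of a closed one-form changes only by the additive period of the form, so the monodromy pair of the integral is an abelian extension of a quotient of the monodromy pair of the data; use (ii) and (iv). Arithmetic, and more generally meromorphic, operations: the monodromy pair of $F(f_1,\dots,f_n)$ is a sub-pair of the product of the monodromy pairs of the $f_i$; use (iii) and (i). Solving $y^n+f_1y^{n-1}+\dots+f_n=0$ with $n\le k$: over the complement of the common singular set a loop permutes the at most $k$ branches of $y$, yielding a normal subgroup that fixes every branch (its pair being a sub-pair of a product of the pairs of the $f_i$) with quotient isomorphic to a subgroup of $S_n\le S_k$; use (i), (iii), (iv). Composition $f(g_1,\dots,g_n)$ when the image of $G=(g_1,\dots,g_n)$ is not contained in $\Sigma_f$: as recalled in section~\ref{ssec2.3.2}, the singular set of the composition lies in $G^{-1}(\Sigma_f)$ together with the singular sets of the $g_i$, and the monodromy group of the composition embeds into a factor group of a group built from $M_f$ and the $M_{g_i}$; use (i), (ii), (iii).

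The remaining, and genuinely hard, case is composition $f(g_1,\dots,g_n)$ when the image of $G$ is contained in the singular set $\Sigma_f$ of $f$ --- the phenomenon described in section~\ref{ssec2.3.2}. Here the monodromy of the composition cannot be read off from the monodromy of $f$ directly; one must use both the complex-analytic fact that $\mathcal{SC}$-germs extend along their own singular sets (the input to Theorem~\ref{thm3.44}) and the operation of pullback closure of groups. Concretely I would bring in the hierarchy of monodromy groups attached to an $\mathcal{SC}$-germ --- the monodromy along $\Sigma$, then along $\Sigma_1\subset\Sigma$, and so on (the Remark in section~\ref{ssec3.3.8}) --- and show that the monodromy group of the composition is assembled through a finite chain of normal subgroups out of pieces, each of which is a subgroup of one of these relative monodromy groups of $f$ or of a monodromy group of some $g_i$; the pullback closure is exactly the device that makes this assembly canonical. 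The crucial lemma to establish is then that $\mathcal{K}_k$ is stable under pullback closure: combining $k$-solvable pairs in the way forced by a pullback along a map into the branch locus again produces a $k$-solvable pair. This lemma is the main obstacle --- it refines the analogous statement behind Theorems~\ref{thm2.24} and~\ref{thm2.26}, and its proof requires the new arguments from multidimensional complex analysis and group theory alluded to in section~\ref{ssec2.3.2}. Once it is in place, the theorem follows formally: every germ obtained from germs of $\mathcal{C}_k$ by one of the five operations has monodromy pair in $\mathcal{K}_k$, hence lies in $\mathcal{C}_k$, while the two distinguished families of germs were already checked to lie in $\mathcal{C}_k$.
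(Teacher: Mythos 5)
The first thing to say is that the paper does not prove Theorem~\ref{thm3.47}: it is stated with a pointer to the book \cite{[6]}, and the surrounding sections explicitly announce that ``basically no proofs are presented'' there. So there is no in-paper argument to measure you against; one can only judge your outline against the strategy the paper describes in its commentary. Measured that way, your architecture is the right one and matches what the author indicates: reduce everything to closure properties of the class of $k$-solvable pairs, check the two containment claims directly (trivial monodromy for single-valued germs with a proper analytic singular set, abelian monodromy for the basic elementary functions), handle differentiation, integration, arithmetic operations and algebraic equations of degree at most $k$ by the same subgroup/quotient/product/extension bookkeeping as in the one-dimensional Theorems~\ref{thm2.24} and~\ref{thm2.26}, and isolate composition with image inside $\Sigma_f$ as the genuinely new multidimensional difficulty, to be handled via extendability of germs along their singular sets and the pullback closure of groups (section~\ref{ssec2.3.2}).

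The gap is that the one step carrying all the new content is asserted rather than proved. Your ``crucial lemma'' --- that the monodromy pair of a composition $f\circ g$ with $g(\Bbb C^k)\subset\Sigma_f$ is assembled from the hierarchy of relative monodromy groups of $f$ and the monodromy of the $g_i$ in a way that preserves $k$-solvability of pairs, i.e.\ that your class $\mathcal K_k$ is stable under pullback closure --- is essentially the theorem's content in disguise, and you explicitly postpone it to ``the new arguments from multidimensional complex analysis and group theory'' alluded to in section~\ref{ssec2.3.2}. Without a definition of pullback closure and a proof of that stability (and of the analytic extendability statement feeding into Theorem~\ref{thm3.44}), what you have is a correct reduction and a faithful plan, not a proof. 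A smaller point worth flagging: your closure property (ii) (passing to quotient pairs) requires that a quotient of a subgroup of $S_k$ again be admissible as a factor in the defining chain; with the literal definition of a $k$-solvable pair given in section~\ref{ssec2.2.7} this is not automatic and needs either a remark or the more robust formulation of the admissible class of factor groups used in \cite{[6]}.
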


\begin{thm} (see~\cite{[6]})\label{thm3.48}  The class of all
$\mathcal {SC}$-germs, having an almost solvable monodromy pair  (see~\cite[sec.~2]{[6]})  is stable
under composition, arithmetic operations, integration, differentiation and solution of algebraic equations. This class contains all germs of basic elementary functions and all  germs of single valued  functions whose set of singular points is a proper analytic set.
\end{thm}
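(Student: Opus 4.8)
The plan is to transport the proof of the one-dimensional stability Theorem~\ref{thm2.29} to the multidimensional setting, replacing, at the two points where the situation genuinely differs, the elementary covering-space arguments by the theorem on extendability of multivalued analytic functions along their singular sets and by the properties of the pullback closure operation on groups (both from~\cite{[6]}). By Theorem~\ref{thm3.44} every operation in the statement, applied to $\mathcal{SC}$-germs, again yields an $\mathcal{SC}$-germ, so the monodromy group and the monodromy pair of the result are always defined, and the only issue is to show that almost solvability of the monodromy pair is inherited. First I would record a small group-theoretic lemma: the class of almost solvable pairs $[\Gamma,\Gamma_0]$ is stable under passing to a subpair $[\Delta,\Delta\cap\Gamma_0]$ with $\Delta\subseteq\Gamma$, under images $[\Gamma/N,\Gamma_0N/N]$, and under ``a subgroup of a finite product of pairs''; and, using the observation from section~\ref{ssec2.2.7} that an almost normal pair is almost solvable precisely when its group is, one notes that a finite extension of an almost solvable group is almost solvable. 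The one genuinely new group-theoretic input, needed only for composition, is that the pullback closure of an almost solvable group is again almost solvable.

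Granting these lemmas, the operations other than composition go through essentially as in~\cite{[6]}. For the arithmetic operations, for differentiation and for integration, if $h$ is produced from $f_1,\dots,f_r$, then off a meager set $\Sigma_h$ is contained in the union of the $\Sigma_{f_i}$, and a loop avoiding $\Sigma_h$ acts on a germ of $h$ only through its simultaneous action on the attached germs of the $f_i$; hence the monodromy pair of $h$ is obtained from a subpair of the product of the monodromy pairs of the $f_i$ by taking images, and so is almost solvable. For the operation of solving an algebraic equation $y^m+f_1y^{m-1}+\dots+f_m=0$, the monodromy group $\Gamma$ of the full set of roots is a finite extension, by a subgroup of $S_m$, of a subquotient of the monodromy group of the coefficient vector, hence almost solvable as a \emph{group}; the pair $[\Gamma,\Gamma_0]$ attached to a single root is almost normal, because the monodromy group acts faithfully on the finite set of roots, so the stabilizers of the $m$ roots intersect in the identity; by the criterion of section~\ref{ssec2.2.7} this pair is therefore almost solvable.

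Composition is the main obstacle. For $\varphi_b=f_a\circ g_b$ with $g\colon(\Bbb C^k,b)\to(\Bbb C^n,a)$, the case when $g(\Bbb C^k)$ is not contained in the singular set $\Sigma_f$ is easy: off a meager set $\Sigma_\varphi$ lies in the union of $g^{-1}(\Sigma_f)$ and the singular set of $g$, the single-valued data of $g$ contribute nothing to the monodromy, and the monodromy pair of $\varphi$ is a subpair of an image of that of $f$. The hard case is $g(\Bbb C^k)\subseteq\Sigma_f$ with $a=g(b)$, where $f_a$ is a germ sitting on the singular locus of $f$. Here I would use that such a germ nonetheless admits controlled analytic continuation along $\Sigma_f$ (the extendability theorem of~\cite{[6]}), which attaches to $f_a$ a hierarchy of monodromy groups along $\Sigma_f$, along its own singular subset, and so on; then show that the monodromy group of $\varphi$ is contained in the pullback closure of $M_f$ relative to this hierarchy, and conclude by the group-theoretic fact that pullback closure preserves almost solvability. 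The bulk of the work, and the only genuinely new difficulty compared with the one-dimensional theory, is exactly this last case: making the extendability statement and the relevant hierarchy precise, proving the containment of $M_\varphi$ in the pullback closure, and establishing that the pullback closure of an almost solvable group is almost solvable.

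It remains to check that the class contains the asserted germs, which is routine. Every single valued function whose set of singular points is a proper analytic set is an $\mathcal{SC}$-germ by Lemma~\ref{lem3.43} and has trivial, hence almost solvable, monodromy pair. Each basic elementary function is an $\mathcal{SC}$-germ whose monodromy group is trivial or infinite cyclic, in any case abelian, so its monodromy pair is almost solvable. Finally, the same scheme with ``almost solvable'' replaced by ``solvable'' or by ``$k$-solvable'' proves Theorems~\ref{thm3.46} and~\ref{thm3.47}; in the $k$-solvable case the operation of solving algebraic equations must be restricted to degree at most $k$, so that the $S_m$-extension appearing in the algebraic-equation step lands in $S_k$, which is what that definition permits.
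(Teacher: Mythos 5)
First, a caveat about the comparison: the paper does not actually prove Theorem~\ref{thm3.48}. It is stated with a citation to~\cite{[6]}, and the surrounding text says explicitly that this part of the paper contains only statements and comments, with essentially no proofs. So your proposal can only be measured against the strategy the paper sketches in sections~\ref{ssec2.3.2} and~\ref{ssec3.3.7}, and there it matches well: you correctly identify the two genuinely new ingredients of the multidimensional case --- extendability of germs along their singular sets (needed precisely when $g(\Bbb C^k)\subset\Sigma_f$ and $f_a$ sits on the singular locus) and the pullback closure of groups together with the assertion that it preserves almost solvability --- and you correctly isolate composition as the only operation where the one-dimensional argument does not transport directly. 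That skeleton agrees with what the author describes as the content of the proof in~\cite{[6]}.

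There is, however, a genuine error in your treatment of the operation of solving algebraic equations. You argue that the pair $[\Gamma,\Gamma_0]$ of a root of $y^m+f_1y^{m-1}+\dots+f_m=0$ is almost normal ``because the monodromy group acts faithfully on the finite set of roots, so the stabilizers of the $m$ roots intersect in the identity.'' When the coefficients $f_i$ are themselves multivalued, the set of germs of $y$ at a base point is not an $m$-element set: it fibers over the (typically infinite) set of germs of the coefficient vector, with fibers of size $m$. The intersection of the stabilizers of the $m$ roots in one fiber is only the pointwise stabilizer of that fiber, which still acts nontrivially elsewhere, so the pair need not be almost normal; and your claim that $\Gamma$ is almost solvable \emph{as a group} is equally unjustified, since the kernel of $\Gamma\to M_{\mathrm{coeff}}$ sits in an infinite product of copies of $S_m$ and $M_{\mathrm{coeff}}$ is only almost solvable as a \emph{pair}. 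Indeed, the whole reason the theory works with monodromy pairs rather than groups is that the group of a root is typically not almost solvable. The correct argument chains through the stabilizer $\Delta$ of the coefficient germ: the pair $[\Gamma,\Delta]$ is controlled by the almost solvable pair of the coefficient vector, and one appends one further step whose quotient embeds in the finite group $S_m$ to descend from $\Delta$ into $\Gamma_0$. (A minor additional slip: the monodromy groups of arcsine and arccosine are infinite dihedral, not abelian, though of course still solvable.) With the algebraic-equation step repaired, and granting the two cited black boxes from~\cite{[6]}, your outline is consistent with the intended proof.
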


Theorems~\ref{thm3.46} --  \ref{thm3.46} imply  the  following corollaries.\\

{\sc Result on  quadratures} {\it If the monodromy group of a $\mathcal {SC}$-germ $f$ is not solvable, then $f$ is   strongly non representable by quadratures.}\\

{\sc Result on  $k$-quadratures} {\it If the monodromy pair of a $\mathcal {SC}$-germ $f$ is not $k$-solvable, then $f$ is   strongly non representable by $k$-quadratures.}\\

{\sc Result on  generalized quadratures} {\it If the monodromy pair  of a $\mathcal {SC}$-germ $f$ is not  almost solvable, then $f$ is   strongly non representable by generalized quadratures.}\\

\subsubsection{Solvability and non solvability of algebraic equation}\label{ssec3.3.10}
Consider an irreducible  algebraic equation
\begin{eqnarray}
 P_n y^n+P_{n-1}y^{n-1}+\dots +P_0=0 \label{eq3.8}
 \end{eqnarray}
  whose coefficients  $P_n,\dots, P_0$ are polynomials  of $N$ complex variables $x_1,\dots, x_N$. Let $\Sigma\subset \Bbb C^N$ be the singular set of the equation (\ref{eq3.8})
defined by the equation $P_nJ=0$ where $J$ is the discriminant of the polynomial (\ref{eq3.8}).

\begin{thm}\label{thm3.49} (see~\cite[sec.~2]{[6]}, \cite{[4]}) Let $y_{x_0}$ be a germ of analytic function at a point $x_0\in \Bbb C^N\setminus \Sigma$ satisfying the equation {\rm  (\ref{eq3.8})}. If the monogromy group of the equation {\rm  (\ref{eq3.8})} is solvable (is $k$-solvable) then the germ $y_{x_0})$ is representable by radicals (is representable by $k$-radicals).
\end{thm}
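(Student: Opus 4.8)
The plan is to reduce Theorem~\ref{thm3.49} to the purely algebraic machinery of Section~\ref{ch2sec1}, all of which was already set up for functions of $N$ complex variables. Since the equation~(\ref{eq3.8}) is irreducible, Corollary~\ref{cor2.5} tells us it defines a multivalued algebraic function $y(x)$ whose set of germs at $x_0$ is $Y_{x_0}=\{y_1,\dots,y_n\}$ (with $y_{x_0}$ one of these germs) and whose monodromy group coincides with the monodromy group of~(\ref{eq3.8}). Let $\mathcal R$ be the field of germs at $x_0$ of rational functions of $x_1,\dots,x_N$. By Lemma~\ref{lem2.1} each element of the monodromy group $M$ extends uniquely to an automorphism of $\mathcal R\langle y_1,\dots,y_n\rangle$ over $\mathcal R$, and by Theorem~\ref{thm2.2} the subfield fixed by all of $M$ is exactly $\mathcal R$. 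Hence $\mathcal R\langle y_1,\dots,y_n\rangle/\mathcal R$ is a finite Galois extension whose Galois group is the monodromy group $M$ (this is Jordan's theorem, Theorem~\ref{thm2.3}, in several variables).

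For the solvable case I would then quote Theorem~\ref{thm2.8} verbatim: the finite solvable group $M$ of order, say, $d$ acts by automorphisms on the $\mathbb C$-algebra $V=\mathcal R\langle y_1,\dots,y_n\rangle$ whose invariant subalgebra is $V_0=\mathcal R$, so every element of $V$ can be obtained from elements of $\mathcal R$ by successive extraction of $d$-th roots and addition. Applying this to the particular element $y_{x_0}\in V$ yields an expression for $y_{x_0}$ in terms of rational functions, arithmetic operations and radicals; this is precisely the assertion that $y_{x_0}$ is representable by radicals. This step is identical to the proof of Theorem~\ref{thm2.10}.

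For the $k$-solvable case I would invoke Theorem~\ref{thm2.41}: the Galois extension $\mathcal R\langle y_1,\dots,y_n\rangle/\mathcal R$ with Galois group $M$ is a $k$-radical extension precisely because $M$ is $k$-solvable, so $y_{x_0}$ lies in a tower built from $\mathcal R$ by adjoining radicals and roots of equations of degree at most $k$; that is, $y_{x_0}$ is representable by $k$-radicals. Equivalently one may simply cite Corollary~\ref{cor2.42} in the direction needed here.

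The part requiring the most care --- and the only genuine obstacle --- is the identification of the invariant field of the monodromy action with $\mathcal R$ in the several-variable setting, i.e.\ Theorem~\ref{thm2.2}: a germ in $\mathcal R\langle y_1,\dots,y_n\rangle$ fixed by all monodromy transformations is single-valued and algebraic over the field of rational functions of $N$ variables, and one must know that such a function is necessarily rational. Once this classical fact is granted, everything else is a direct transcription of the one-variable argument and no new difficulty arises; in particular the finiteness and the Galois property of the extension, and the applicability of Theorems~\ref{thm2.8} and~\ref{thm2.41}, are automatic.
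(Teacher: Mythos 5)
Your proposal is correct and follows essentially the same route as the paper, which proves Theorem~\ref{thm3.49} by invoking Jordan's theorem (the identification of the monodromy group with the Galois group over $\mathcal R$, already established for $N$ variables in section~\ref{ch2sec1ss2}) and then appealing to the Galois-theoretic machinery of Theorems~\ref{thm2.8}, \ref{thm2.10} and \ref{thm2.41}/Corollary~\ref{cor2.42}. You have merely written out explicitly the steps the paper leaves as references, and you correctly identify the one nontrivial input (that a single-valued algebraic function of several variables is rational, i.e.\ Theorem~\ref{thm2.2}).
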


According to Camille Jordan's theorem (see \cite{[4]})  the  Galois group of the equation (\ref{eq3.8}) over the field $\mathcal R$ of rational functions of $x_1,\dots,x_N$ it is isomorphic to the  monodromy group of  this equation (\ref{eq3.8}). Thus Theorem~\ref{thm3.49}follows from Galois theory (see~\cite{[6]}, \cite{[4]}).

\begin{thm}\label{thm3.50}(see~\cite{[6]}) Let $y_{x_0}$ be a germ of analytic function at a point $x_0\in \Bbb C^N$ satisfying the  equation {\rm  (\ref{eq3.8})}. If the monodromy group of the equation is not solvable (is not $k$-solvable) then the germ $y_{x_0}$ is strongly non representable by quadratures (is strongly non representable by $k$-quadratures).
\end{thm}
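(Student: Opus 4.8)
The plan is to deduce the theorem from the stability theorems for $\mathcal{SC}$-germs, exactly the way the one-dimensional \emph{Result on quadratures} of section~\ref{ssec2.2.6} is deduced from Theorem~\ref{thm2.24}, but now using Theorems~\ref{thm3.46} and~\ref{thm3.47}. First I would record that the germ $y_{x_0}$ is an $\mathcal{SC}$-germ: the coefficients $P_n,\dots,P_0$ are polynomials, hence germs of meromorphic functions and therefore $\mathcal{SC}$-germs by Lemma~\ref{lem3.43}, and $y_{x_0}$ is obtained from them by solving an algebraic equation, so it is an $\mathcal{SC}$-germ by Theorem~\ref{thm3.44}. Next I would check that the monodromy group of $y_{x_0}$ in the sense of section~\ref{ssec3.3.8} coincides with the monodromy group of equation~(\ref{eq3.8}): since (\ref{eq3.8}) is irreducible, the set of germs equivalent to $y_{x_0}$ consists of all $n$ branches of the algebraic function it defines (transitivity of the monodromy action, proved as in Lemma~\ref{lem2.4}), the singular set of this $\mathcal{SC}$-germ is a meager subset of the algebraic hypersurface $\{P_nJ=0\}$, and enlarging the deleted set to $\{P_nJ=0\}$ does not change the monodromy group because loops can always be pushed off a fixed algebraic hypersurface. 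For the $k$-solvable version I would moreover observe that the monodromy pair $[\Gamma,\Gamma_0]$ of an algebraic germ is an almost normal pair, since the intersection of the stabilizers of the finitely many branches is trivial; hence, by the remark in section~\ref{ssec2.2.7}, the pair $[\Gamma,\Gamma_0]$ is $k$-solvable if and only if $\Gamma$ is $k$-solvable, so ``monodromy group not $k$-solvable'' is the same as ``monodromy pair not $k$-solvable''.

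With these preliminaries the argument is a routine closure argument. Let $\mathcal{Q}$ denote the class of all $\mathcal{SC}$-germs having a solvable monodromy group. By Theorem~\ref{thm3.46}, $\mathcal{Q}$ contains all germs of basic elementary functions and all germs of single valued functions whose set of singular points is a proper analytic set, and it is stable under composition, arithmetic operations, integration and differentiation; these are precisely the basic functions and the admissible operations defining the extended class of functions representable by quadratures (section~\ref{ssec3.3.5}). Therefore, by induction on the length of a defining expression, every germ in the extended class of functions representable by quadratures belongs to $\mathcal{Q}$, i.e. is an $\mathcal{SC}$-germ with solvable monodromy group. Now suppose, toward a contradiction, that $y_{x_0}$ were representable by quadratures in the extended sense; then its monodromy group would be solvable, contradicting the hypothesis. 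The case of $k$-quadratures is identical, with Theorem~\ref{thm3.46} replaced by Theorem~\ref{thm3.47} (whose class is stable, in addition, under solving algebraic equations of degree $\leq k$, matching the extra admissible operation of the extended class of functions representable by $k$-quadratures), and with ``solvable monodromy group'' replaced by ``$k$-solvable monodromy pair'', which by the preliminary remark is equivalent to ``$k$-solvable monodromy group'' for the algebraic germ $y_{x_0}$.

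The only genuinely delicate point — everything else being bookkeeping — is the identification of the monodromy group, and for the $k$-solvable version the monodromy pair, of the abstract $\mathcal{SC}$-germ $y_{x_0}$ with the monodromy data of the algebraic equation~(\ref{eq3.8}): one must know that the singular set of $y_{x_0}$ as an $\mathcal{SC}$-germ is a meager subset of a genuine algebraic hypersurface, that deleting that hypersurface rather than the true singular set leaves the monodromy group unchanged, and that $y_{x_0}$, even if $x_0$ happens to lie on $\{P_nJ=0\}$, still generates all $n$ branches, so that its monodromy action is the full monodromy action of the equation. This is the multidimensional analogue of Corollary~\ref{cor2.5} together with the remarks following the definition of the monodromy group in section~\ref{ch2sec1ss2}, and it is the step where the machinery of the one-dimensional theory (and, in the more subtle cases touched on in section~\ref{ssec2.3.2}, the results on extendability of germs along their ramification sets) is actually invoked. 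Once this identification is in place, the theorem follows formally from Theorems~\ref{thm3.46} and~\ref{thm3.47}.
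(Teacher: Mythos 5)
Your proposal is correct and follows essentially the same route as the paper, which simply deduces Theorem~\ref{thm3.50} from the results on quadratures and on $k$-quadratures of section~\ref{ssec3.9} (themselves corollaries of Theorems~\ref{thm3.46} and~\ref{thm3.47}); you have merely unpacked that one-line deduction, adding the (correct) bookkeeping that $y_{x_0}$ is an $\mathcal{SC}$-germ, that its monodromy data coincide with those of equation~(\ref{eq3.8}), and that for an algebraic germ the almost normal monodromy pair is $k$-solvable exactly when the monodromy group is.
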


Theorem~\ref{thm3.50} follows from the results on quadratures and on $k$-quadratures from the previous section.

Consider the universal degree $n$ algebraic function $y(a_n,\dots, a_0)$ defined by the equation
\begin{eqnarray}
a_ny^n+\dots+a_0=0. \label{eq3.9}
\end{eqnarray}

It is easy to see that the monodromy group of the equation (\ref{eq3.9}) is isomorphic to the group $S_n$ of all permutations of $n$ element. For $n\geq 5$ the group $S_n$ is unsolvable and it is not $k$-solvable group for $k<n$. Thus Theorem~\ref{thm3.50} implies the following strongest known version of the Abel-–Ruffini Theorem.

\begin{thm}\label{thm3.51}({\sc a version of the Abel--Ruffini Theorem}) Let $y_{a}$ be a germ of analytic function at a point $a$ satisfying the universal degree $n\geq 5$ algebraic equation. If $n\geq 5$ then the germ $y_{a}$ is strongly non representable by $(n-1)$ quadratures. In particular the germ $y_{a}$ is strongly non representable by quadratures.
\end{thm}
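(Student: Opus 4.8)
The plan is to deduce the statement directly from Theorem~\ref{thm3.50}, reducing everything to two elementary inputs: the identification of the monodromy group of the universal equation, and a short group-theoretic verification that this group fails to be $(n-1)$-solvable. Since Theorem~\ref{thm3.50} already carries all of the analytic weight (it rests in turn on the stability of the class of $\mathcal{SC}$-germs with $k$-solvable monodromy pair, Theorem~\ref{thm3.47}), the remaining argument is essentially bookkeeping.

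First I would observe that the universal degree $n$ equation (\ref{eq3.9}) is an irreducible equation of the form (\ref{eq3.8}) (in the $N=n+1$ variables $a_0,\dots,a_n$: indeed (\ref{eq3.9}) is linear in $a_0$ with unit leading coefficient, hence irreducible), and that its monodromy group is isomorphic to the full symmetric group $S_n$ — exactly the fact recorded in the paragraph following (\ref{eq3.9}), which I would simply cite. Second — the only genuine computation — I would check that for $n\ge 5$ the group $S_n$ is not $(n-1)$-solvable in the sense that the pair $[S_n,e]$ is not $(n-1)$-solvable. Suppose a chain $S_n=\Gamma_1\supseteq\dots\supseteq\Gamma_m=e$ existed with each $\Gamma_{i+1}$ normal in $\Gamma_i$ and each quotient $\Gamma_i/\Gamma_{i+1}$ either abelian or isomorphic to a subgroup of $S_{n-1}$. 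For $n\ge5$ the only normal subgroups of $S_n$ are $e$, $A_n$, $S_n$, and $A_n$ is simple and nonabelian; moreover $|A_n|=n!/2$ does not divide $(n-1)!$ and $|S_n|=n!$ does not divide $(n-1)!$, so neither $A_n$ nor $S_n$ embeds in $S_{n-1}$. Hence, deleting repetitions, the chain must read $S_n\triangleright A_n\triangleright\dots$ and then can only continue with $A_n$ again or with $e$; the latter forces $A_n$ to be abelian or a subgroup of $S_{n-1}$, a contradiction. So no valid chain reaches $e$, i.e. $S_n$ is not $(n-1)$-solvable (and in particular not solvable). This, too, is asserted in the excerpt; I would include the three-line argument for completeness.

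Third, I would apply Theorem~\ref{thm3.50} to the equation (\ref{eq3.9}), with $k=n-1$ and with the given germ $y_a$: since the monodromy group $S_n$ is not $(n-1)$-solvable, $y_a$ is strongly non representable by $(n-1)$-quadratures. For the ``in particular'' clause, it suffices to note that every function representable by quadratures is a fortiori representable by $(n-1)$-quadratures — the admissible-operation list of the former is a sub-list of that of the latter — so strong non-representability by $(n-1)$-quadratures implies strong non-representability by quadratures; alternatively one invokes the ``not solvable'' branch of Theorem~\ref{thm3.50} directly, $S_n$ being unsolvable.

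The hard part is already packaged into Theorem~\ref{thm3.50}; once that is granted there is essentially no obstacle. The only place calling for a little care is the group theory — which is painless given the simplicity of $A_n$ — together with the observation that the point $a$ is allowed to lie on the discriminant hypersurface; but Theorem~\ref{thm3.50} is stated for an arbitrary point of $\mathbb{C}^N$, so this raises no issue. If one wanted a fully self-contained treatment, the real work to reconstruct would be the analytic content behind Theorem~\ref{thm3.50}, namely the stability statements for $\mathcal{SC}$-germs with non-$k$-solvable monodromy pairs and the extendability results along singular sets that underlie them.
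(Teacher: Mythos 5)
Your proposal is correct and follows exactly the route the paper takes: identify the monodromy group of the universal equation (\ref{eq3.9}) as $S_n$, observe that $S_n$ is not $(n-1)$-solvable for $n\ge 5$, and apply Theorem~\ref{thm3.50}; you merely supply the short chain argument (via simplicity of $A_n$ and the order count $n!/2>(n-1)!$) that the paper asserts without proof.
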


\subsubsection{Solvability and non solvability of  holonomic systems of linear differential equations}\label{ssec3.3.11}  Consider a system of $N$ linear
differential equations $L_j(y)=0$, $j=1,\dots, N$,
\begin{eqnarray}
L_j(y)=\sum a_{i_1,\dots, i_n}\frac{\partial^{i_1+\dots
+i_n}y}{\partial x_1^{i_1} \dots
\partial x^{i_n}_n} =0, \label{eq3.10}
\end{eqnarray}
on an unknown function $y$, whose coefficients $a_{i_1,\dots, i_n}$
are rational functions of $n$ complex variables $x_1$, $\dots$, $x_n$.
Assume that the system (\ref{eq3.10}) is holonomic in $\Bbb C^n\setminus \Sigma_1$ where $\Sigma_1$ is the union of poles of the coefficients $a_{i_1,\dots, i_n}$. Let $\Sigma_2\subset \Bbb C^n\setminus \Sigma_1$ be the  singular hypersurface of a holonomic system (\ref{eq3.10}).

Every germ $y_a$ of a solution of the system at a point $a\in \Bbb C^n \backslash\Sigma$ where $\Sigma=\Sigma_1\cup \Sigma_2$ admits an analytic continuation along every path
avoiding the hypersurface $\Sigma$ so the monodromy group of the system ((\ref{eq3.10}) is well-defined.

\begin{thm}\label{thm3.52} (see~\cite{[6]})
If the monodromy group of the holonomic system {\rm (\ref{eq3.10})} is not solvable (not $k$-solvable, not almost solvable), then a germ $y_a$ of almost every solution at a point $a\in \Bbb C^n\setminus \Sigma$ is strongly non representable by quadratures (is strongly non representable by $k$-quadratures, is strongly non representable by generalized quadratures).
\end{thm}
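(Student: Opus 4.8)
The plan is to reduce the statement to the stability Theorems~\ref{thm3.46},~\ref{thm3.47},~\ref{thm3.48} together with the multidimensional analogue of Lemma~\ref{lem2.31}. First one observes that every solution germ is an $\mathcal{SC}$-germ: the coefficients $a_{i_1,\dots,i_n}$ of~(\ref{eq3.10}) are rational, hence $\mathcal{SC}$-germs by Lemma~\ref{lem3.43}, so by Theorem~\ref{thm3.44} any germ $y_a$ of a solution at a point $a\in\Bbb C^n\setminus\Sigma$ is an $\mathcal{SC}$-germ, and its monodromy group $\Gamma_{y_a}$ and monodromy pair $[\Gamma_{y_a},(\Gamma_{y_a})_0]$ are well defined (section~\ref{ssec3.3.8}). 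By the Result on quadratures (resp. on $k$-quadratures, on generalized quadratures), which follows from Theorem~\ref{thm3.46} (resp.~\ref{thm3.47},~\ref{thm3.48}), it is then enough to prove that, for $y_a$ outside a meager subset of the solution space $V_a$, the group $\Gamma_{y_a}$ is not solvable (resp. the pair $[\Gamma_{y_a},(\Gamma_{y_a})_0]$ is not $k$-solvable, not almost solvable) whenever the monodromy group $M$ of the system has the corresponding property.

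Next I would produce the meager exceptional set just as in Lemma~\ref{lem2.31}. The monodromy group $M$ of the system is the image of $\pi_1(\Bbb C^n\setminus\Sigma,x_0)$ in $GL(V_a)$; since $\Bbb C^n\setminus\Sigma$ is a connected second-countable manifold its fundamental group, and hence $M$, is at most countable. For each $g\in M\setminus\{e\}$ the fixed subspace $V_a^{\,g}\subset V_a$ is proper, so $L:=\bigcup_{g\in M\setminus\{e\}}V_a^{\,g}$ is a countable union of proper linear subspaces, hence a meager (and measure-zero) subset of $V_a$, and for every $y_a\in V_a\setminus L$ the stabiliser of $y_a$ in $M$ is trivial.

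Then I would carry out the group-theoretic comparison for a fixed $y_a\in V_a\setminus L$. Every loop in $\Bbb C^n\setminus\Sigma$ also lies in $\Bbb C^n\setminus\Sigma_{y_a}$, where $\Sigma_{y_a}\subseteq\Sigma$ is the singular set of the $\mathcal{SC}$-germ $y_a$, so analytic continuation along such loops gives a homomorphism $M\to\Gamma_{y_a}$; since the $M$-stabiliser of $y_a$ is trivial, the intersection of all its conjugates is trivial, so this homomorphism is injective, with image $N\cong M$, and moreover $N\cap(\Gamma_{y_a})_0=\{e\}$, because an element of $N$ fixing the germ $y_a$ comes from an element of $M$ fixing $y_a$, hence is $e$. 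Now I invoke two elementary facts: a subgroup of a solvable group is solvable; and if a pair $[\Gamma,\Gamma_0]$ is $k$-solvable (almost solvable) and $N\le\Gamma$ satisfies $N\cap\Gamma_0=\{e\}$, then the pair $[N,\{e\}]$ is $k$-solvable (almost solvable) — one simply intersects the defining chain of $[\Gamma,\Gamma_0]$ with $N$ and notes that its last term lies in $N\cap\Gamma_0=\{e\}$. Hence, using the definition that a group is $k$-solvable (almost solvable) exactly when its pair with the trivial subgroup is: if $M$ is not solvable then $\Gamma_{y_a}\supseteq N$ is not solvable; if $M$ is not $k$-solvable (not almost solvable) then $[N,\{e\}]$, and therefore $[\Gamma_{y_a},(\Gamma_{y_a})_0]$, is not $k$-solvable (not almost solvable). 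Applying the corresponding Result of this section to $y_a$ yields the asserted strong non-representability, and since this holds for all $y_a\in V_a\setminus L$, almost every solution germ has the stated property.

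The routine analytic and topological inputs — that solution germs are $\mathcal{SC}$-germs, and that $\pi_1(\Bbb C^n\setminus\Sigma)$ is countable — are furnished by Theorem~\ref{thm3.44} and by general manifold topology. I expect the real work to be in the step identifying the monodromy structure of a single solution germ with that of the whole system: one must check that passing from the system's singular set $\Sigma$ to the possibly smaller singular set $\Sigma_{y_a}$ of the germ, and from the linear action of $M$ on $V_a$ to the permutation action defining $\Gamma_{y_a}$, neither destroys the embedding $M\hookrightarrow\Gamma_{y_a}$ nor the relation $N\cap(\Gamma_{y_a})_0=\{e\}$ — equivalently, that the exceptional set $L$ is genuinely meager once these enlargements are accounted for. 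This is exactly the point at which the multidimensional machinery underlying Theorem~\ref{thm3.44} (extendability of $\mathcal{SC}$-germs under pullback and along their ramification sets, and the pullback closure of monodromy groups) must be used, just as in the proof of the one-dimensional Theorem~\ref{thm2.32}.
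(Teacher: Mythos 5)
Your plan is correct and follows exactly the route the paper itself indicates: the paper offers no proof beyond the single remark that Theorem~\ref{thm3.52} ``follows from the results on quadratures, on $k$-quadratures and on generalized quadratures from section~\ref{ssec3.9},'' and your argument is precisely a careful elaboration of that reduction, mirroring the one-dimensional Lemma~\ref{lem2.31} and Theorem~\ref{thm2.32}. The details you supply (solution germs are $\mathcal{SC}$-germs via Lemma~\ref{lem3.43} and Theorem~\ref{thm3.44}, the countable union of fixed subspaces as the exceptional set, and the embedding $M\hookrightarrow\Gamma_{y_a}$ with $N\cap(\Gamma_{y_a})_0=\{e\}$ transferring non-(almost/$k$-)solvability to the monodromy pair) are all sound.
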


Theorem~\ref{thm3.52} follows from the results on quadratures, on $k$-quadratures and on generalized quadratures from  section~\ref{ssec3.9}.

A holonomic system is said to be
{\it regular}, if near the singular set $\Sigma$
and near infinity the solutions of the system grow at most polynomially.

\medskip

\begin{thm}\label{thm3.53}  (see~\cite{[6]}) If the monodromy group of a regular holonomic system  is  solvable (is $k$-solvable, is almost solvable), then a germ $y_a$ of almost every solution at a point $a\in \Bbb C^n\setminus \Sigma$ is  representable by quadratures (is  representable by $k$-quadratures, is  representable by generalized quadratures).
\end{thm}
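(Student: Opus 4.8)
The plan is to transfer to the multidimensional setting the argument that establishes Theorems~\ref{thm2.35} and~\ref{thm2.36} in the one-dimensional case; its three structural ingredients are the Picard--Vessiot theory, the Frobenius density theorem, and the Lie--Kolchin theorem. First I would replace the holonomic system~(\ref{eq3.10}) by an equivalent integrable first-order Pfaffian system $d\mathbf y=\mathbf A\mathbf y$, whose unknown vector $\mathbf y$ collects $y$ together with finitely many of its partial derivatives; finiteness of the spaces $V_a$ makes this reduction possible, the matrix of one-forms $\mathbf A$ has rational coefficients, and the compatibility conditions of a holonomic system amount to the integrability relation $d\mathbf A=\mathbf A\wedge\mathbf A$. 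By Kolchin's extension of the Picard--Vessiot theory to such systems~\cite{[10]}, one attaches to the system a Picard--Vessiot extension of the field $\mathcal R=\Bbb C(x_1,\dots,x_n)$, generated over $\mathcal R$ by the coordinates of a basis of $V_{x_0}$ together with all their derivatives, and its differential Galois group $G$ is a linear algebraic group acting faithfully on $V_{x_0}$. Exactly as in Lemma~\ref{lem2.34}, analytic continuation along loops in $\Bbb C^n\setminus\Sigma$ realizes the monodromy group as a subgroup of $G$.

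The crucial second step is a multidimensional analogue of the Frobenius theorem: for a \emph{regular} holonomic system the subfield of the Picard--Vessiot extension consisting of single-valued functions coincides with $\mathcal R$; equivalently, the monodromy group is Zariski dense in $G$. This is precisely where regularity --- at most polynomial growth of the solutions along $\Sigma$ and near infinity --- is used. The argument follows the classical pattern: an element fixed by the monodromy group is single-valued on $\Bbb C^n\setminus\Sigma$, grows at most polynomially along $\Sigma$ and at infinity, hence extends to a meromorphic function on $\Bbb C^n$ of polynomial growth, and therefore is rational; the extension across the (algebraic) hypersurface $\Sigma$ invokes the several-variables removable-singularity theorems, and the final rationality uses that a meromorphic function on $\Bbb C^n$ with polynomial growth at infinity is rational. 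Granting this density statement, solvability (respectively $k$-solvability, almost solvability) of the monodromy group passes to its Zariski closure $G$.

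The third step is the explicit integration. A solvable linear algebraic group $G$ --- more precisely, its identity component $G^0$, which is connected solvable, together with the finite solvable quotient $G/G^0$ --- admits a subnormal series whose successive quotients are each $\Bbb G_a$, $\Bbb G_m$, or a finite cyclic group (for a $k$-solvable group one takes instead quotients that are abelian or subgroups of $S_k$, and for an almost solvable group, quotients that are abelian or finite); the structure of connected solvable groups as a semidirect product of a torus and a unipotent group, i.e. the Lie--Kolchin theorem, furnishes the $\Bbb G_a$- and $\Bbb G_m$-part. Through the Galois correspondence of the Picard--Vessiot theory this produces a tower $\mathcal R=L_0\subset L_1\subset\dots\subset L_m$ of differential field extensions, containing all solution components, in which each step is one of the following: adjoining an integral of an element of $L_i$ (for a $\Bbb G_a$-quotient), adjoining an exponential of such an integral (for a $\Bbb G_m$-quotient), a radical extension $L_{i+1}=L_i(a^{1/N})$ with $a\in L_i$ (for a finite cyclic quotient, by Kummer theory), which is again a quadrature since $a^{1/N}=\exp\bigl(\tfrac1N\int da/a\bigr)$, or --- in the remaining cases --- an algebraic extension of degree $\le k$, respectively of arbitrary degree. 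At every $\Bbb G_a$- or $\Bbb G_m$-step the pertinent one-form ($d\theta$ or $d\theta/\theta$ for the new generator $\theta$) is semi-invariant under $G$, hence has coefficients in $L_i$, and it is closed because it is the differential of a function; thus each step uses only admissible operations of the class under consideration, and every solution germ is representable by the corresponding kind of quadrature, so in particular the stated ``almost every'' version holds (the countability argument of Lemma~\ref{lem2.31}, applied to the generic solution with trivial monodromy stabilizer, would already suffice for that weaker version).

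The step I expect to be the main obstacle is the second one --- the Frobenius-type density theorem for regular holonomic systems in several variables. Controlling the growth and single-valuedness of elements of the Picard--Vessiot extension along a possibly singular algebraic hypersurface $\Sigma\subset\Bbb C^n$ and near infinity, and deducing their rationality, is substantially more delicate than in dimension one; the cleanest route is presumably through Deligne's formalism of regular singular integrable connections, in which this density is part of the Riemann--Hilbert correspondence. By contrast the integration step, though notationally heavier than in one variable because ``integration'' now means integrating closed rational one-forms, is the routine solvable-group reduction already present in the proof of Theorems~\ref{thm2.35} and~\ref{thm2.36}.
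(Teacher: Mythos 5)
The paper states Theorem~\ref{thm3.53} without proof, deferring entirely to~\cite{[6]} (the section announces that ``basically no proofs are presented''), so there is no detailed argument to compare against; your outline reproduces exactly the strategy the paper itself indicates for the one-dimensional Theorems~\ref{thm2.35} and~\ref{thm2.36} --- Kolchin's Picard--Vessiot theory for holonomic systems, a Frobenius-type density statement whose proof uses regularity, and the Lie--Kolchin reduction to a tower of integrals, exponentials of integrals and algebraic extensions. This is essentially the same approach, and you correctly single out the multidimensional Frobenius density theorem (single-valued invariants of polynomial growth along $\Sigma$ and at infinity are rational) as the step carrying the real content.
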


\subsubsection{ Completely integrable systems of linear differential equations with small coefficients}\label{ssec3.12} Consider a completely integrable system of linear differential equations of the
following form
\begin{eqnarray}
 dy=Ay,\label{eq3.11}
\end{eqnarray}
where $y=y_1$, $\dots$, $y_N$ is an unknown vector-function, and $A$ is a
$(N\times N)$-matrix consisting of differential one-forms with rational coefficients
on the space $\Bbb C^n$ satisfying the condition of complete integrability
$dA+A\wedge A=0$ and having the following form:
 $$
 A=\sum _{i=1}^kA_i\frac{dl_i}{l_i},
 $$
where $A_i$ are constant matrices, and $l_i$ are linear (not necessarily homogeneous)
functions on $\Bbb C^n$.

If the matrices $A_i$ can be simultaneously reduced to the
triangular form, then system (\ref{eq3.11}), as any completely integrable
triangular system, is solvable by quadratures. Of course, there
exist solvable systems that are not triangular. However, if the
matrices $A_i$ are sufficiently small, then there are no such
systems. Namely, the following theorem holds.

\begin{thm}\label{thm3.54} (see~\cite{[6]})
A system  (\ref{eq3.11}) that does not reduce to the triangular form and
such that the matrices $A_i$ have sufficiently small norms is
unsolvable by generalized quadratures in the following strong sense. At every point $a\in \Bbb C^n$ where the matrix $A$ is regular, and for almost any germ $y_a=(y_1,\dots,y_N)_a$ of a vector-function satisfying the system (\ref{eq3.11}), there is a component $(y_i)_a$ which is strongly non representable by generalized quadratures.
\end{thm}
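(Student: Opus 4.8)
The strategy is to deduce the theorem from Theorem~\ref{thm3.52}: it suffices to show that, under the stated hypotheses, the monodromy group of the system~(\ref{eq3.11}) is not almost solvable. First I would record two easy preliminaries. Since $A=\sum_i A_i\,dl_i/l_i$ is holomorphic away from the hyperplane arrangement $H=\bigcup_i\{l_i=0\}$, and since complete integrability $dA+A\wedge A=0$ forces the germs of solutions to span a space of dimension exactly $N$ at every point of $\Bbb C^n\setminus H$, the system~(\ref{eq3.11}) is a holonomic system whose singular set (in the notation of section~\ref{ssec3.3.11}) is exactly $\Sigma=\Sigma_1=H$; thus ``$A$ regular at $a$'' and ``$a\in\Bbb C^n\setminus\Sigma$'' mean the same thing, and the monodromy group is the image of $\pi_1(\Bbb C^n\setminus H)$ under the monodromy representation $\rho$. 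Also, as recalled just before Theorem~\ref{thm2.33}, a vector solution $y=(y_1,\dots,y_N)$ is strongly non representable by generalized quadratures precisely when some component $y_i$ is, so the ``strong sense'' of the statement is literally the conclusion of Theorem~\ref{thm3.52}.

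Next I would reduce to dimension one by restricting~(\ref{eq3.11}) to a generic complex affine line. Fix a line $\ell\subset\Bbb C^n$, say $t\mapsto p+tv$, lying outside the proper algebraic subset of the space of affine lines for which some $l_i$ becomes constant on $\ell$ or for which two of the intersection points coincide. For such $\ell$ each $l_i$ restricts to a non-constant affine function of $t$ vanishing at a single point $b_i$, the $b_i$ being pairwise distinct, and $dl_i/l_i$ pulls back to $dt/(t-b_i)$. Hence the pullback of~(\ref{eq3.11}) is the one-dimensional linear system $dy/dt=(\sum_{i=1}^k A_i/(t-b_i))\,y$, which is precisely of the form treated in Theorem~\ref{thm2.37}, with the \emph{same} matrices $A_i$. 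Because~(\ref{eq3.11}) is completely integrable, restriction to $\ell$ is a bijection between solution germs of~(\ref{eq3.11}) near a point of $\ell$ and solution germs of the restricted system, and it is compatible with analytic continuation along loops; consequently the monodromy group $M_\ell$ of the restricted system is a subgroup of the monodromy group $M$ of~(\ref{eq3.11}).

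Now comes the main input. Having fixed $\ell$, let $\varepsilon=\varepsilon(b_1,\dots,b_k,N)$ be the constant furnished by Theorem~\ref{thm2.37}. The hypothesis that~(\ref{eq3.11}) does not reduce to triangular form says exactly that the $A_i$ cannot all be made upper triangular in one basis; so if $\|A_i\|<\varepsilon$ for every $i$, Theorem~\ref{thm2.37} gives that $M_\ell$ is not almost solvable. It remains to note that almost solvability is inherited by subgroups: given a subnormal chain of a group with abelian-or-finite successive quotients terminating at the identity, intersecting it with a subgroup $H$ yields such a chain for $H$. Hence $M\supseteq M_\ell$ is not almost solvable either. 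Applying Theorem~\ref{thm3.52} to the holonomic system~(\ref{eq3.11}) then finishes the argument: at every point $a$ where $A$ is regular and for almost every germ $y_a=(y_1,\dots,y_N)_a$ of a solution, some component $(y_i)_a$ is strongly non representable by generalized quadratures.

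The only genuinely non-formal ingredient is Theorem~\ref{thm2.37} (the one-dimensional small-coefficients result), which I am allowed to assume here. Within the present deduction the point that needs care is the passage from $n$ variables to one: I must make sure that restricting to a \emph{single} well-chosen line already suffices — that is, that the inclusion $M_\ell\subseteq M$ together with the hereditary nature of almost solvability is enough, so that no Lefschetz-type surjectivity statement for $\pi_1(\ell\setminus\{b_1,\dots,b_k\})\to\pi_1(\Bbb C^n\setminus H)$ is required — and I must check that the line can be chosen so that every $l_i$ restricts non-trivially and the $b_i$ are distinct, which indeed holds for $\ell$ outside a proper algebraic subset of the space of affine lines.
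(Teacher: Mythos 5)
Your argument is correct in substance but follows a genuinely different route from the paper's. The paper (following \cite{[6]}) treats the multidimensional system directly: it invokes a multidimensional version of the Lappo--Danilevsky theory from \cite{[9]} to show that, for small non-triangularizable $A_i$, the monodromy group of~(\ref{eq3.11}) itself fails to be almost solvable, and then concludes exactly as in the one-dimensional Corollary~\ref{cor38}. You instead restrict to a generic affine line $\ell$, observe that the pullback is a one-dimensional Fuchsian system with the \emph{same} residue matrices $A_i$ at distinct points $b_i$, apply the one-dimensional Theorem~\ref{thm2.37} to get non-almost-solvability of $M_\ell$, and transfer this to the full monodromy group $M$ via the inclusion $M_\ell\subseteq M$ together with the (correct, and worth recording) observation that almost solvability passes to subgroups. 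This is a real economy: it removes the need for the multidimensional Lappo--Danilevsky theory altogether, at the price of a smallness threshold $\varepsilon$ that depends on the chosen line rather than intrinsically on $l_1,\dots,l_k$; your order of quantifiers is legitimate because $\ell$ is chosen from the $l_i$ alone, before the $A_i$ are constrained. The reduction steps themselves --- genericity of the line (after discarding repeated hyperplanes), the bijection between solution germs furnished by complete integrability, and its compatibility with continuation along loops based on $\ell$ --- are all sound.

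One caveat on your last step: Theorem~\ref{thm3.52} is stated for scalar holonomic systems of the form~(\ref{eq3.10}), not for first-order vector systems $dy=Ay$, so it does not apply verbatim to~(\ref{eq3.11}). What you actually need is the multidimensional counterpart of Theorem~\ref{thm2.33}: non-almost-solvable monodromy of a completely integrable \emph{system} forces some component of almost every solution germ to be strongly non representable by generalized quadratures. That statement is not written down in the paper, although it follows by the same mechanism as Theorem~\ref{thm3.52} --- triviality of the stabilizer of a generic solution germ as in Lemma~\ref{lem2.31} combined with the Result on generalized quadratures of section~\ref{ssec3.9}. You should either state and use that analogue explicitly, or note that each component of a solution of~(\ref{eq3.11}) generates a holonomic scalar system whose monodromy you control, rather than citing Theorem~\ref{thm3.52} as if it covered vector systems.
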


Multidimensional Theorem~\ref{thm3.54} is similar to the one dimensional Corollary 38. Their proofs (see~\cite{[6]}) are also similar.  We only need to replace the reference to the (one-dimensional)
Lappo-Danilevsky theory with the reference to the
multidimensional version of it from~\cite{[9]}.

\addcontentsline{toc}{section}{References}
\bibliographystyle{alpha}
\bibliography{khov}

\newcommand{\SortNoop}[1]{}
\begin{thebibliography}{{\SortNoop{Put}}dPS03}

\bibitem[Ale04]{[1]}
V.~B. Alekseev.
\newblock {\em Abel's theorem in problems and solutions}.
\newblock Kluwer Academic Publishers, Dordrecht, 2004.
\newblock Based on the lectures of Professor V. I. Arnold, With a preface and
  an appendix by Arnold and an appendix by A. Khovanskii.

\bibitem[BK16]{[2]}
Y.~Burda and A.~Khovanskii.
\newblock Polynomials invertible in {$k$}-radicals.
\newblock {\em Arnold Math. J.}, 2(1):121--138, 2016.

\bibitem[Kho70]{[3]}
A.~G. Khovanski\u{\i}.
\newblock The representability of algebroidal functions by superpositions of
  analytic functions and of algebroidal functions of one variable.
\newblock {\em Funkcional. Anal. i Prilo\v{z}en.}, 4(2):74--79, 1970.

\bibitem[Kho71]{[4]}
A.~G. Khovanski\u{\i}.
\newblock Superpositions of holomorphic functions with radicals.
\newblock {\em Uspehi Mat. Nauk}, 26(3(159)):213--214, 1971.

\bibitem[Kho91]{[5]}
A.~G. Khovanski\u{\i}.
\newblock {\em Fewnomials}, volume~88 of {\em Translations of Mathematical
  Monographs}.
\newblock American Mathematical Society, Providence, RI, 1991.
\newblock Translated from the Russian by Smilka Zdravkovska.

\bibitem[Kho14]{[6]}
Askold Khovanskii.
\newblock {\em Topological {G}alois theory}.
\newblock Springer Monographs in Mathematics. Springer, Heidelberg, 2014.
\newblock Solvability and unsolvability of equations in finite terms,
  Appendices C and D by Khovanskii and Yuri Burda [Yura Burda on title page
  verso], Translated from the Russian by V. Timorin and V. Kirichenko (Chapters
  1--7) and Lucy Kadets (Appendices A and B).

\bibitem[Kho18a]{[8]}
Askold Khovanskii.
\newblock Integrability in finite terms and actions of {Lie} groups.
\newblock {\em Uspehi Mat. Nauk}, 2018.
\newblock To appear.

\bibitem[Kho18b]{[7]}
Askold Khovanskii.
\newblock Solvability of equations by quadratures and {N}ewton's theorem.
\newblock {\em Arnold Math. J.}, 4(2):193--211, 2018.

\bibitem[Lek91]{[9]}
V.~P. Leksin.
\newblock The {R}iemann-{H}ilbert problem for analytic families of
  representations.
\newblock {\em Mat. Zametki}, 50(2):89--97, 1991.

\bibitem[{\SortNoop{Put}}dPS03]{[10]}
Marius {\SortNoop{Put}}van~der Put and Michael~F. Singer.
\newblock {\em Galois theory of linear differential equations}, volume 328 of
  {\em Grundlehren der Mathematischen Wissenschaften [Fundamental Principles of
  Mathematical Sciences]}.
\newblock Springer-Verlag, Berlin, 2003.

\bibitem[Rit22]{[12]}
J.~F. Ritt.
\newblock On algebraic functions which can be expressed in terms of radicals.
\newblock {\em Trans. Amer. Math. Soc.}, 24(1):21--30, 1922.

\bibitem[Rit48]{[11]}
Joseph~Fels Ritt.
\newblock {\em Integration in {F}inite {T}erms. {L}iouville's {T}heory of
  {E}lementary {M}ethods}.
\newblock Columbia University Press, New York, N. Y., 1948.

\bibitem[Ros73]{[13]}
Maxwell Rosenlicht.
\newblock An analogue of l'{H}ospital's rule.
\newblock {\em Proc. Amer. Math. Soc.}, 37:369--373, 1973.

\end{thebibliography}
\end{document}